\renewcommand{\mathcal}[1]{\mathscr{#1}}
\theoremstyle{plain}   %-------------------standard Style-------------------------
\numberwithin{theorem}{subsection}
\theoremstyle{definition}    %------------text not italic style------------------
\theoremstyle{remark}    %----------------also text not italic ------------------
\long\def\forget#1{}
\numberwithin{equation}{section}
\newcommand{\ie}{\textit{i.e.}\ }           % i.e. in italics and with proper spacing afterwards
\newcommand{\eg}{\textit{e.g.}\ }           % e.g.   ....
\newcommand{\cf}{\textit{cf.}\ }
\newcommand{\op}[1]{\operatorname{#1}}
\renewcommand{\cal}[1]{\mathcal{#1}}
\newcommand{\usc}[1][m]{\underline{\phantom{#1}}}
\newcommand{\BA}{{\mathbb{A}}}
\newcommand{\BF}{{\mathbb{F}}}
\newcommand{\BN}{{\mathbb{N}}}
\newcommand{\BZ}{{\mathbb{Z}}}
\newcommand{\Fa}{{\mathfrak{a}}}
\newcommand{\Fm}{{\mathfrak{m}}}
\newcommand{\Fp}{{\mathfrak{p}}}
\newcommand{\Fr}{{\mathfrak{r}}}
\newcommand{\FC}{{\mathfrak{C}}}
\newcommand{\FP}{{\mathfrak{P}}}
\newcommand{\FU}{{\mathfrak{U}}}
\newcommand{\FV}{{\mathfrak{V}}}
\newcommand{\CA}{\cal{A}}
\newcommand{\CC}{\cal{C}}
\newcommand{\CE}{\cal{E}}
\newcommand{\CF}{\cal{F}}
\newcommand{\CG}{\cal{G}}
\newcommand{\CH}{\cal{H}}
\newcommand{\CI}{\cal{I}}
\newcommand{\CJ}{\cal{J}}
\newcommand{\CM}{\cal{M}}
\newcommand{\CN}{\cal{N}}
\newcommand{\CO}{\cal{O}}
\newcommand{\CS}{\cal{S}}
\newcommand{\CT}{\cal{T}}
\newcommand{\CU}{\cal{U}}
\newcommand{\CV}{\cal{V}}
\newcommand{\CW}{\cal{W}}
\renewcommand{\phi}{\varphi}
\renewcommand{\theta}{\vartheta}
\renewcommand{\epsilon}{\varepsilon}
\newcommand{\Ann}{\op{Ann}}         % Standard Operators
\newcommand{\coker}{\op{coker}}
\newcommand{\dirlim}{\varinjlim}
\newcommand{\Hom}{\op{Hom}}
\newcommand{\id}{\op{id}}
\newcommand{\image}{\op{im}}
\newcommand{\pr}{\op{pr}}
\newcommand{\Spec}{\op{Spec}}
\newcommand{\Supp}{\op{Supp}}
\newcommand{\Tor}{\op{Tor}}
\newcommand{\tensor}{\otimes}         % Symbols with meaning
\renewcommand{\to}[1][]{\xrightarrow{\ #1\ }}
\newcommand{\ot}[1][]{\xleftarrow{\ #1\ }}
\newcommand{\onto}[1][]{\protect{\xrightarrow{\ #1\ }\hspace{-0.8em}\rightarrow}}
\newcommand{\into}[1][]{\xhookrightarrow{\ #1\ }}
\newcommand{\To}[1][]{\xRightarrow{\ #1\ }}
\newcommand{\oT}[1][]{\xLeftarrow{\ #1\ }}
\newcommand{\defeq}{\stackrel{\scriptscriptstyle \operatorname{def}}{=}}
\newcommand{\kappatilde}{\widetilde{\kappa}}
\newcommand{\Dual}{\operatorname{D}}
\newcommand{\blank}{{\phantom{N}}}
\newcommand{\ublank}{\underline\blank}
\newcommand{\Coeff}{\Lambda}
\newcommand{\CScheme}{C}
\newcommand{\SuppCrys}{\operatorname{Supp_{crys}}}
\renewcommand{\Fr}{\sigma}
\renewcommand{\frm}{\mathfrak{m}}
\renewcommand{\phi}{\varphi}
\newcommand{\OXFr}[1][X]{\CO_{#1}[\Fr]}
\newcommand{\Frid}{(\Fr \times \id)}
\newbox\dottobox
\wd\dottobox{\hss$ \xymatrix@C=.5cm{\ar@{.>}[r]&\\}
                                      $\hss}
\newcommand{\dotto}{\mathrel{\copy\dottobox}}
\newbox\leftdottobox
\wd\leftdottobox{\hss$ \xymatrix@C=.5cm{\ar@{<.}[r]&\\}
                                      $\hss}
\newbox\dotintobox
\wd\dotintobox{\hss$ \xymatrix@C=.5cm{\ar@{^{ (}.>}[r]&\\}
                                      $\hss}
\let\Longto\Longrightarrow
\newcommand{\UCC}{\underline{\CC}\mathstrut}
\newcommand{\UCF}{\underline{\CF}\mathstrut}
\newcommand{\UCH}{\underline{\CH}\mathstrut}
\newcommand{\UCJ}{\underline{\CJ}\mathstrut}
\newcommand{\UCI}{\underline{\CI}\mathstrut}
\newcommand{\UCM}{\underline{\CM}\mathstrut}
\newcommand{\UCN}{\underline{\CN}\mathstrut}
\newcommand{\UCU}{\underline{\CU}\mathstrut}
\newcommand{\UCV}{\underline{\CV}\mathstrut}
\newcommand{\UCW}{\underline{\CW}\mathstrut}
\newcommand{\NilC}{\mathop{\bf Nil}_\kappa\nolimits}
\newcommand{\LNilC}{\mathop{\bf LNil}_\kappa\nolimits}
\newcommand{\QCohC}{\mathop{{\bf QCoh}}\nolimits_\kappa}
\newcommand{\CohC}{\mathop{{\bf Coh}}\nolimits_\kappa}
\newcommand{\QCrysC}{\mathop{{\bf QCrys}}\nolimits_\kappa}
\newcommand{\CrysC}{\mathop{\hbox{\bf Crys}}\nolimits_\kappa}
\newcommand{\IndCrysC}{\mathop{{\bf IndCrys}}\nolimits_\kappa}
\newcommand{\IndCohC}{\mathop{{\bf IndCoh}}\nolimits_\kappa}
\newcommand{\LNilCohC}{\mathop{\bf LNilCoh}_\kappa\nolimits}
\newcommand{\LNilCrysC}{\mathop{{\bf LNilCrys}}\nolimits_\kappa}
\newcommand{\NilT}{\mathop{\bf Nil}_\tau\nolimits}
\newcommand{\LNilT}{\mathop{\bf LNil}_\tau\nolimits}
\newcommand{\QCohT}{\mathop{{\bf QCoh}}\nolimits_\tau}
\newcommand{\CohT}{\mathop{{\bf Coh}}\nolimits_\tau}
\newcommand{\QCrysT}{\mathop{{\bf QCrys}}\nolimits_\tau}
\newcommand{\CrysT}{\mathop{\hbox{\bf Crys}}\nolimits_\tau}
\newcommand{\IndCrysT}{\mathop{{\bf IndCrys}}\nolimits_\tau}
\newcommand{\IndCohT}{\mathop{{\bf IndCoh}}\nolimits_\tau}
\newcommand{\LNilCohT}{\mathop{\bf LNilCoh}_\tau\nolimits}
\newcommand{\LNilCrysT}{\mathop{{\bf LNilCrys}}\nolimits_\tau}
\newcommand{\QCoh}{\mathop{{\bf QCoh}}\nolimits}
\newcommand{\Crys}{\mathop{\hbox{\bf Crys}}\nolimits_\tau}
\newcommand{\QCrys}{\mathop{{\bf QCrys}}\nolimits_\tau}
\newcommand{\Ltensor}{\mathbin{\stackrel{\raisebox{-.5ex}{$\scriptscriptstyle L$}}{\otimes}}}
\newcommand{\kernel}{\op{ker}}
\newcommand{\CExt}{\mathop{%
   \kern-3pt\CE xt\kern3pt}\nolimits}
\newcommand{\CHom}{\mathop{%
   \kern-1pt\CH om\kern2pt}\nolimits}
\newcommand{\CRHom}{\mathop{%
  \kern-1pt R\CH om\kern2pt}\nolimits}
\newcommand{\wt}[1]{\widetilde{#1}}
\newcommand{\smallinv}{{\scriptscriptstyle -1}}
\newcommand{\SLocQNilInv}{\mathop{\CS_{\rm qnil}^{\smallinv}}\nolimits}
\newcommand{\SLocNilInv}{\mathop{\CS_{\rm nil}^{\smallinv}}\nolimits}
\newcommand{\UOnebig}{\underline{\hbox{\rm1\kern-2.5pt l\kern.9pt}}}
\newcommand{\UOnesmall}{\mathop{\hbox{{\footnotesize\underline{{\rm1\kern-2.2pt%
        l\kern.8pt}}}}}}
\newcommand{\RCHom}{\mathop{R\CHom}\nolimits}
\newcommand{\trace}{\op{Tr}}
\newcommand{\ind}{\operatorname{ind}}
\newcommand{\Uomega}{\underline{\omega}}
\newbox\mybox
\def\arrover#1{\mathrel{
       \setbox\mybox=\hbox spread 1.4em{\hfil$\scriptstyle#1$\hfil}
       \vbox{\offinterlineskip\copy\mybox
             \hbox to\wd\mybox{\rightarrowfill}}}}
\def\larrover#1{\mathrel{
       \setbox\mybox=\hbox spread 1.4em{\hfil$\scriptstyle#1$\hfil}
       \vbox{\offinterlineskip\copy\mybox
             \hbox to\wd\mybox{\leftarrowfill}}}}
\def\ontoover#1{\mathrel{
       \setbox\mybox=\hbox spread 1.4em{\hfil$\scriptstyle#1$\hfil}
       \vbox{\offinterlineskip\copy\mybox
             \hbox to\wd\mybox{\rightarrowfill\hskip-2.8mm
                               $\rightarrow$}}}}
\def\leftontoover#1{\mathrel{
       \setbox\mybox=\hbox spread 1.4em{\hfil$\scriptstyle#1$\hfil}
       \vbox{\offinterlineskip\copy\mybox
             \hbox to\wd\mybox{$\leftarrow$\hskip-2.8mm
                               \leftarrowfill}}}}
\def\longto{\longrightarrow}
\newcommand{\D}{{\bf D}}
\newcommand{\coh}{{\rm coh}}
\newcommand{\lncoh}{{\rm lncoh}}
\newcommand{\lnil}{{\rm lnil}}
\newcommand{\lnilcoh}{{\rm lnilcoh}}
\newcommand{\indcoh}{{\rm indcoh}}
\newcommand{\crys}{{\rm crys}}
\newcommand{\indcrys}{{\rm indcrys}}
\newcommand{\can}{{\rm can}}
\newcommand{\Cech}{\check{C}}
\newcommand{\tr}{\operatorname{tr}}
\newcommand{\ctr}{\operatorname{ctr}}
\newcommand{\ad}{\operatorname{ad}}
\newcommand{\cad}{\operatorname{cad}}
\newcommand{\red}{{\rm red}}
\begin{document}

%
%  praeamble %%%%%%%%%%%%%%%%%%%%%%%%%%%%%%%%%%%%%%%%%%%%%%%%%%%%%%%%%%%%%%%%%%%%%%%%%%
%

\title{Cartier Crystals}
\author{Manuel Blickle\thanks{Johannes Gutenberg-Universität Mainz (JGU), Institut für Mathematik, Staudingerweg 9, D-55128~Mainz Germany, manuel.blickle@gmail.com} \and Gebhard Böckle\thanks{Universität Heidelberg, IWR, Im Neuenheimer Feld 368, D-69120~Heidelberg, Germany,  boeckle@uni-hd.de}}

\date{\today}

\maketitle
\tableofcontents
%
%  Here starts the actual text of the documents  %%%%%%%%%%%%%%%%%%%%%%%%%%%%%%%%%%%%%%%%
%

\section{Introduction}
For a Noetherian scheme $X$ over the finite field $\BF_p$ and finite $\BF_p$-algebra $\Coeff$, the second author and Pink develop in \cite{BoPi.CohomCrys} a cohomological theory of crystals over function fields and apply their theory to give a completely algebraic proof of Goss's rationality conjecture on $L$-functions of families of $A$-motives (after Anderson). In fact \cite{BoPi.CohomCrys} accomplishes much more in that it introduces an abelian category of $\tau$-crystals, containing families of $A$-motives as a special case, and develops a cohomology theory for them. Roughly speaking a $\tau$-sheaf on $X$ (over $\Coeff$) is a coherent $\CO_{X \times \Coeff}$-module $\CF$ together with a structural map $\tau \colon \CF \to \Frid_*\CF$ where $\sigma: X \to X$ denotes the absolute $q$-power Frobenius on $X$. The $\tau$-sheaves on $X$ form an abelian category, from which the category of $\tau$-crystals is obtained by localizing at the Serre subcategory of nilpotent $\tau$-sheaves.

Another crucial result in \cite{BoPi.CohomCrys} is that, if $\Coeff$ is finite, then the category of $\tau$-crystals is equivalent to the category of constructible sheaves of $\Coeff$-modules on the \'etale site of $X$. This may be viewed as a vast generalization of Artin-Schreier theory and has been anticipated by Deligne in \cite{SGA4.2}

A similar correspondence was developed in \cite{EmKis.Fcrys} by Emerton and Kisin. For a \emph{smooth} scheme of finite type over a field of positive characteristic they consider quasi-coherent $\mathcal{O}_X$-modules $\CM$ with a unit Frobenius semi-linear action. The unit condition means that the linearization $\Fr^* \CM \to \CM$ of $\Fr_{\CM} \colon \CM \to \Fr_* \CM$, given by sending $r \tensor m$ to $r\cdot \Fr_{\CM}(m)$, is an isomorphism. This implies that $\Fr_{\CM}$ is injective, hence $\CM$ has no $\Fr$--nilpotent elements. In addition the unit condition implies that $\CM$ is closed under certain nilpotent extensions. Together, the unit condition can be regarded as a rigidification of the theory of quasi-coherent $\CO_X[\Fr]$--modules. On the level of the derived categories \cite{EmKis.Fcrys} prove a contravariant correspondence between locally finitely generated unit $\mathcal{O}_X$-modules and constructible \'etale $\BF_p$--sheaves. As above they also show their correspondence with a finite coefficient ring $\Coeff$.

The theories in \cite{BoPi.CohomCrys} and \cite{EmKis.Fcrys} may hence be viewed as different approaches to compute \'etale cohomology of $\BF_p$-sheaves in positive characteristic. Each approach is tailored to certain applications and has its own benefits. The present paper grew out of the desire to establish a direct link between these theories for arbitrary essentially finite type coefficients $\Coeff$. The key to such a correspondence ought to be furnished by Grothendieck-Serre duality. It became quickly clear that an important intermediate category would have to be the category dual to the category of $\tau$-crystals. %, and that the resulting dual category to $\tau$-crystals would be an important intermediate category with interesting properties.
The following diagram summarizes the situation
\[
   \xymatrix@C+2pc{
   \{\tau\text{--crystals}\} \ar@{->}[rr]_\cong^{\text{\cite{BoPi.CohomCrys}}}\ar@{<->}[d]_\simeq^{\text{G-S-duality}} && \{\text{constr. }\Coeff\text{--mod on }X_{et}\} \\
   \{\kappa\text{--crystals}\} \ar@{<-}[r]^{\tensor \omega_X}_{\cong} &\{\gamma\text{--crystals}\} \ar@{->}[r]^-{}_-\cong&\{\text{f.g. unit }\mathcal{O}_{X,\sigma}\text{--mod}\}\ar@{<->}[u]^\simeq_{\text{\cite{EmKis.Fcrys}}}
}
\]
What we accomplish in this article (and its sequel \cite{BliBoe.CartierDuality}) is a complete description of the left part of this diagram. That is, for any Noetherian $\Fr$-finite scheme $X$ we introduce the category of $\kappa$-crystals which under Grothendieck-Serre duality corresponds to $\tau$-crystals of \cite{BoPi.CohomCrys}. We also define cohomological operations for $\kappa$-crystals and show in \cite{BliBoe.CartierDuality} that these correspond, via Grothendieck-Serre duality, to the ones defined in \cite{BoPi.CohomCrys}.

The bottom row in the above diagram only holds if $X$ is regular and was treated in a basic form already in \cite{BliBoe.CartierFiniteness} and the intermediate category of $\gamma$-sheaves was thoroughly studied in \cite{blickle_minimal_2008}; the full equivalence in the bottom row, including the compatibility with the cohomological operations on either side, will be treated in another sequel \cite{BliBoe.CartierEmKis} to the present paper.  The maps to the top right corner are results in \cite{BoPi.CohomCrys} and \cite{EmKis.Fcrys}, respectively, and only yield an equivalence for finite coefficient rings $\Coeff$. Note that the horizontal equivalences are covariant equivalences on the underlying abelian categories, whereas the vertical ones are contravariant {\em and} require the passage to the derived category. Furthermore, the theory in the top left corner applies to any Noetherian scheme $X$, whereas the theory in the bottom right corner is only applicable to smooth schemes of finite type over a fixed base field.

During the work on this project it became clear that the category of $\kappa$-crystals has very interesting properties and applications: implicitly in the work of Gabber \cite{Gabber.tStruc} and and explicitly in Anderson \cite{AndersonL} $\kappa$-sheaves are a central concept. The trace formula in the monograph \cite{BoPi.CohomCrys} makes crucial use of \cite{AndersonL} and hence in an indirect way relies on $\kappa$-sheaves as well.
%of which feature, there are some crucial steps in the monograph \cite{BoPi.CohomCrys} which rely on \cite{AndersonL} and hence also implicitly use Cartier modules.
This lead us to establish in \cite{BliBoe.CartierFiniteness} the following important structural result for Cartier crystals in the case that $\Coeff = \BF_q$, \cf \cite[Sec 4]{BliBoe.CartierEmKis}

\begin{theorem*}
Let $X$ be a locally Noetherian scheme over $\mathbb{F}_q$ such that the Frobenius $\sigma$ on $X$ is a finite map. Then within the category of Cartier crystals the following holds.
\begin{enumerate}
\item Coherent Cartier crystals have finite length.
\item The $\Hom$ sets of coherent Cartier crystals are finite dimensional $\mathbb{F}_q$-vetorspaces.
\end{enumerate}
By (a) and (b) coherent Cartier crystals have only finitely many sub-crystals.
\end{theorem*}
In \cite{BliBoe.CartierFiniteness} we derive a number of applications of the above theorem to local cohomology and to Lyubeznik's theory of $F$-finite modules \cite{Lyub}. The latter is a local version of Emerton-Kisin's theory of finitely generated unit $\OXFr$-modules.

Other applications are visible in the work of the second author in \cite{BlickleTestIdealsViaAlgebras} and of Schwede in \cite{schwede_test_2009}. Both works show that the theory of Cartier crystals is related to the theory of test ideals \cite{HaraTakagi.TestIdeal,BliMusSmi.DisRat}; the latter plays a crucial role in the birational geometry of algebraic varieties in positive characteristic. We expect that the formalism, and in particular the cohomological operations, which are developed in the present paper will be useful for the study of test ideals. For example, in some cases, one can realize the test ideal within the category of Cartier crystals as an intermediate extension functor. This functorial viewpoint on test ideals may help to give a conceptual explanation for the transformation rules of test ideals which were obtained recently \cite{SchwedeTuckerTestIdealFiniteMaps,BliSchweTuck.Alterations}. Finally one may speculate, that Cartier Crystals play for test ideals a similar role as $D$-modules play for multiplier ideals in characteristic zero. In characteristic zero, jumping numbers of multiplier ideals have an interpretation as the jumps in the $V$ filtration of a certain $D$-modules \cite{BudurSaito}.  Perhaps in positive characteristic jumping numbers of test ideals have an analogous interpretation in terms of Cartier crystals, which could be related to constructible $p$-torsion sheaves via our duality \cite{BliBoe.CartierDuality} and the equivalence in \cite{BoPi.CohomCrys}.

\subsection{Summary of main results}
Throughout we assume that our schemes are Noetherian and separated over $\BF_q$, the field of $p$ elements. Except when explicitly stated otherwise, we assume throughout the paper that the $q$-power Frobenius $\Fr$ is a finite map. Let $\Coeff$ be an essentially finite type $\BF_q$-algebra which will serve for us as a coefficient ring. We denote $\CScheme=\Spec \Coeff$.

The purpose of this paper is to introduce various categories of $\kappa$-crystals and to develop a theory of cohomological operations for them. As indicated above, we define (quasi-) coherent $\kappa$-sheaves on $X$ over $\Coeff$ as (quasi-) coherent $\CO_{X \times \CScheme}$-modules $\CM$ equipped with a $\CO_{X\times C}$-linear map
\[
\kappa_\CM \colon \Frid_* \CM \to \CM
\]
and denote the category of such by $\CohC(X,\Coeff)$ (respectively, $\QCohC(X,\Coeff)$). The coherent $\kappa$-sheaves form an abelian category which contains the Serre subcategory of nilpotent $\kappa$-sheaves $\NilC(X,\Coeff)$, consisting of those $\CM$ such that some power of $\kappa_\CM$ is zero. The category of $\kappa$-crystals $\CrysC(X,\Coeff)$ is then the localization of $\CohC(X,\Coeff)$ at its subcategory $\NilC(X,\Coeff)$. In order to define functors it is necessary to work in the large ambient category of $\kappa$ quasi-crystals which is defined analogously as the localization of $\QCohC(X,\Coeff)$ at the locally nilpotent (\ie inductive limits of nilpotent) $\kappa$-sheaves $\LNilC(X,\Coeff)$. One of our primary goals is to develop functorial operations on $\CrysC(X,\Coeff)$. These categories, and some more, are described in detail in \autoref{sec.CartierCrys}.

Duality for a finite map applied to the Frobenius endomorphism $\sigma$ says, that to give a $\kappa$-structure on a quasi-coherent $\CO_X$-module $\CM$, \ie a morphism $\kappa \colon \Frid_*\CM \to \CM$, is equivalent to giving a morphism $\kappatilde \colon \CM \to \Frid^! \CM$, where
\[
    \Frid^! \CM = \Frid^{-1}\RCHom_{X\times \CScheme}(\Frid_*\CO_{X \times \Coeff},\CM)
\]
is viewed as an object in $\D^+_\coh(\QCoh(X \times \Coeff))$. These observations suggest that the functors $Rf_*$ and $f^!$ for an essentially finite type morphism $f \colon Y \to X$ known from Grothendieck-Serre duality, induce functors on an appropriate derived category of $\kappa$-crystals. The theory developed in \autoref{CSh-DefFunc} culminates in the following results.

For the push-forward functor on $\kappa$-sheaves in \autoref{GenDefDerIm} we shall prove:
% \autoref{GenDefDerIm} about the push-forward functor for $\kappa$-sheaves states the following.
\begin{theorem*}
If $f \colon Y \to X$ is an arbitrary morphism, then the usual push-forward functor $R(f \times \id)_*$ on quasi-coherent sheaves induces an exact $\Coeff$-linear functor
\[
    Rf_* \colon \D^*(\QCrysC(Y,\Coeff)) \to \D^*(\QCrysC(X,\Coeff))
\]
for $* \in \{b,+,-,\emptyset\}$. Furthermore if $f$ is of finite type then this restricts to a functor
\[
    Rf_* \colon \D_\crys^*(\QCrysC(Y,\Coeff)) \to \D_\crys^*(\QCrysC(X,\Coeff))\, .
\]
Here $\D_\crys^*(\QCrysC(\usc,\Coeff))$ denotes the derived category of $\kappa$ quasi-crystals whose cohomology is locally nil coherent (\ie extension of locally nilpotent by coherent) with boundedness condition $*$.
\end{theorem*}
The crucial point to note in this result is that we \emph{do not} assume that $f$ is proper in the second statement. This is in stark contrast to the classical situation where $f_*$ generally does not preserve coherence. What we show, going back to a technique of Anderson in \cite{AndersonL}, is that if $f$ is of finite type, then $Rf_*$ preserves coherence ``up to local nilpotence'', \ie that $Rf_*$ of a coherent $\kappa$-sheaf is an extension of a locally nilpotent by a coherent $\kappa$-sheaf.

The main results for the pull-back for $\kappa$-sheaves are Theorems~\ref{DF-ShriekOnD-Crys} and \ref{CohomDimOfShriek}.
\begin{theorem*}
If $f \colon Y \to X$ is essentially of finite type, then the twisted inverse image functor $(f \times \id)^!$ on quasi-coherent sheaves induces an exact $\Coeff$-linear functor
\[
    f^! \colon \D^{+}_\crys(\QCrysC(X,\Coeff)) \to \D^{+}_\crys(\QCrysC(Y,\Coeff))\, .
\]
of bounded cohomological dimension.

In particular, $f^!$ restricts to a functor on $\D^{b}_\crys(\QCrysC(\usc,\Coeff))$.
\end{theorem*}
At first glance, the boundedness of the cohomological dimension may seem unexpected. It is motivated by the duality between $\tau$- and $\kappa$-sheaves which is treated in \cite{BliBoe.CartierDuality}, which is of finite cohomological dimension, and the fact that the corresponding functor $f^*$ is exact on coherent $\tau$-crystals.

The proof of the above result is based on properties of the functor $f^!$ from Grothen\-dieck-Serre-Duality in its extension by Lipman \cite{LipmanGrothDual} and Nayak \cite{Na.CompEFT}. One difficulty lies in showing that the functor $f^!$ defined on $\kappa$-sheaves preserves nilpotence and hence induces a functor on crystals; cf.~\autoref{CSh-FShriekNilpotenceForEssFinite}. We give two proofs for this crucial statement. One uses Nayak's extension of Nagata compactification to essentially finite type morphisms \cite{Na.CompEFT}, another one uses a notion of stalks for $\kappa$-crystals that we develop in \autoref{sec.Stalks}. Finally, the boundedness of the cohomological dimension is based on the devissage theorem stated next that allows one to decompose $X$ and $Y$ into a disjoint union of locally closed regular subschemes.

As explained above, the two functors $Rf_*$ and $f^!$ we define for $\kappa$-crystals are directly induced from the corresponding functors on the underlying quasi-coherent sheaves. This occupies most of \autoref{sec.CartierCrys} and \autoref{CSh-DefFunc}. This approach has the advantage that many of the compatibilities among these functors follow, often rather directly, from a corresponding well-established statement for the functors on the underlying categories of quasi-coherent sheaves. Some more work is necessary to show the following adjointness result from \autoref{DF-Crys-Adj}.
\begin{prop*}
\begin{enumerate}
\item Let $f \colon Y \to X$ be a proper morphism. Then as functors on categories $\D^+_\crys(\QCrysC(\usc,\Coeff))$ the functor $Rf_*$ is naturally left adjoint to $f^!$.
\item If $j \colon Y \to X$ is an open immersion, then $j^!=j^*$ is naturally left adjoint to $Rj_*$.
\end{enumerate}
\end{prop*}

For an open immersion $j\colon U\into X$ and a closed complement $i\colon Z\into X$ the above adjunction yields natural morphisms $i_*i^!\to\id$ and $\id\to j_*j^*$. In \autoref{ExactTriangleThm} we prove the following technically important result regarding their combination:
\begin{theorem*}
In $\D^+_?(\QCrysC(X,\Coeff))$, for $?\in\{\emptyset,\crys\}$, there is a natural exact triangle
\[
    i_*i^!\to \id \to Rj_*j^* \to i_*i^![1]
\]
\end{theorem*}
This can be used to derive a Kashiwara-type equivalence, roughly saying that for a closed imbedding $i\colon Y \into X$, the functors $i^!$ and $i_*$ are inverse equivalences between the derived category of Cartier crystals on $Y$ and the derived category of Cartier crystals on $X$ which are supported on $Y$; the precise statement can be found in \autoref{thm.DerivedKashiwara}. On the level of the underlying abelian categories and with $\Coeff=\mathbb{F}_p$, this has been shown already in \cite{BliBoe.CartierFiniteness}. The main application we give of the above triangle is its role in the proof of the boundedness for $f^!$, where it allows a devissage argument to reduce to the case of maps between smooth schemes.

We conclude the paper by studying a notion of stalks and their relation to the notion of crystalline support. For a point $i_x \colon x \into X$ of $X$ and a Cartier sheaf $\UCV$ on $X$ we define the stalk of $\UCV$ at $X$ to be $i_x^!\UCV$. A key point is that these stalks can be used to check nilpotence; in \autoref{CSh-LocalNilForIndonStalks} we show that $\UCV$ is locally nilpotent if and only if $i_x^!\UCV$ is cohomologically nilpotent for all (not necessarily closed, if $X$ is not of finite type over a field) points $x \in X$. This pointwise criterion for local nilpotence can in turn be used to give an alternative proof of the fact that $f^!$ preserves cohomological nilpotence by reducing the statement to the case of a map between fields, where it can checked by hand. We then show that the closure of the set of all $x \in X$ such that $i_x^! \UCV$ is not cohomologically nilpotent is in fact a suitable notion of support for the crystal associated to $\UCV$. It is the smallest closed  set $Z$ such that $\UCV|_{X-Z}$ is locally nilpotent, and hence zero as a crystal.

\subsection{Notation and conventions}\label{sec.notation}
For the remainder of this article, we fix the following notation: $p$ is a prime number and $q$ is a power $p^e$ of $p$. %which denotes the characteristic, and a $p$-power $q=p^e$.
All schemes are assumed to be Noetherian and separated over the prime field~$\BF_q$. All morphisms and fiber products, and all tensor products of modules and algebras, are taken over~$\BF_q$, unless otherwise specified.

By $X$, $Y$, etc., we denote Noetherian schemes over the field $\BF_q$.  The $q$-power Frobenius morphism on $X$, which acts on functions by $f\mapsto f^q$, is denoted $\Fr_X \colon X\to X$. Mostly we simply write $\Fr$ for~$\Fr_X$.

Except when explicitly said otherwise, we assume throughout that our schemes satisfy the condition that
\begin{center}
\emph{the Frobenius endomorphism $\Fr_X$ is a finite map of schemes.}
\end{center}
This is a crucial assumption in our development our theory of Cartier crystals. Only in \autoref{sec.CartierCrys} we occasionally drop this assumption and point out if a result or construction holds more generally.

As pointed out by Gabber in \cite[Remark 13.6]{Gabber.tStruc}, the finiteness of the Frobenius on any Noetherian separated scheme $X$ implies that $X$ admits a dualizing complex; a proof is given in op.cit.\ in the case that $X$ is affine. Both conditions are clearly satisfied if $X$ is essentially of finite type over a $\Fr$-finite field~$k$, which is our prime case of interest.

We also fix an $\BF_q$-algebra $\Coeff$ which is essentially of finite type over $\BF_q$. By $\CScheme$ we denote the corresponding affine scheme $\Spec\Coeff$. The role of $\Coeff$ will be that of a coefficient ring. Examples of particular interest are that $\Coeff$ is a regular finite type $\BF_q$-algebra or that $\Coeff$ is a local Artin ring. The assumptions on $\CScheme$ imply that $X\times\CScheme$ is again Noetherian for every Noetherian scheme~$X$ over~$k$. This is useful in dealing with coherent sheaves on $X\times \CScheme$.

\begin{rem}
  After the predecessor to this paper \cite{BliBoe.CartierFiniteness} was written the authors became aware of a notational conflict with the notion of a Cartier module in the theory of commutative formal groups which was originally introduced by Manin \cite{Manin.CommutativeFormal}, \cf \cite{Zink.CartierTheory}. We want to stress that our notion is different from theirs and hope that this does not cause confusion for the readers.
\end{rem}

\paragraph{Essentially finite type morphisms.}

Let us recall some classes of morphisms that in the classical literature on algebraic geometry have not been considered much, and where only recently foundational results have been obtained, see~\cite{Na.CompEFT}. These are useful in the formulation and application of Grothendieck-Serre duality.

Let $\phi\colon A\to B$ be a ring homomorphism. If  $B$ is a localization of $A$ and $\phi$ is the canonical localization map then $\phi$ is called a {\em localizing homomorphism}. If $\phi$ factors as $A\to C\to B$ with $A\to C$ of finite type and $C\to B$ localizing, then $\phi$ is called {\em essentially of finite type}.

\begin{defn}\label{sec.EssFT}
Let $f\colon Y \to X$ be a morphism  of Noetherian schemes.
\begin{enumerate}
\item The map $f$ is said to be {\em essentially of finite type} if every point $y\in Y$ has an affine open neighborhood $V= \Spec A$ such that $f^{-1}V\to V$ is covered by finitely many affine open $U_i =\Spec(B_i)$ for which the corresponding ring homomorphisms $\phi_i \colon A\to B_i$ are essentially of finite type.
\item If in (a) each $\phi_i$ is moreover a localizing homomorphism, then one calls $f$ is a {\em localizing morphism}.
\item If $f$ is a localizing morphism and set-theoretically injective (or equivalently separated and injective on generic points) it is called {\em a localizing immersion}.%; this is equivalent to $f$ being separated and injective on generic points.
\end{enumerate}
\end{defn}
In many respects localizing immersions behave like open immersions; any finite type localizing immersion is an open immersion. Each, localizing morphism, localizing immersion and morphism essentially of finite type is preserved under composition and base change. An important example of a localizing immersion is the natural map $\Spec\CO_{Y,y}\to Y$. A natural extension of Nagata's compactification theorem of morphisms of finite type is the following factorization theorem of Nayak \cite[Thm.~4.1]{Na.CompEFT} for essentially finite type morphisms.
\begin{theorem}\label{Mor-FactOfEFT}
Let $f\colon Y\to X$ be a separated essentially finite type morphism of Noetherian schemes. Then $f$ factors as $Y\to[k] Z \to[p] X$ where $k$ is a localizing immersion and $p$ is proper.
\end{theorem}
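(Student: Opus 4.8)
\medskip
\noindent\emph{Proof idea.} The plan is to reduce the statement to the classical Nagata compactification theorem for finite type morphisms; the only genuinely new ingredient is a ``finite type approximation'' of $f$. Precisely, the key assertion is:

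\smallskip
\noindent\emph{Claim.} Every separated essentially finite type morphism $f\colon Y\to X$ of Noetherian schemes factors as $Y\to[k'] Y'\to[g] X$ with $k'$ a localizing immersion and $g$ separated of finite type.

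\smallskip
\noindent Granting the Claim, the theorem follows at once: apply Nagata's compactification theorem to the separated finite type morphism $g\colon Y'\to X$ of Noetherian schemes to obtain a factorization $Y'\to[j] Z\to[p] X$ with $j$ an open immersion and $p$ proper. Since an open immersion is a (finite type) localizing immersion and localizing immersions are stable under composition, $k:=j\circ k'$ is a localizing immersion, and $f=p\circ k$ is the asserted factorization.

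So everything comes down to the Claim. If $Y=\Spec B$ and $X=\Spec A$ are affine, the definition of essentially finite type gives a factorization $A\to C\to B$ with $A\to C$ of finite type and $C\to B$ a localization, hence $Y=\Spec B\hookrightarrow \Spec C\to X$; after shrinking $\Spec C$ to the intersection of the affine opens containing the image of $\Spec B$ --- this image being pro-constructible and stable under generization, hence an intersection of opens over a Noetherian scheme --- the first map becomes a localizing immersion into a finite type $X$-scheme. In general, cover $Y$ by finitely many affine opens $U_i$, each mapping into an affine open of $X$, and produce such local models $U_i\hookrightarrow T_i\to X$ with $T_i$ of finite type over $X$. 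One then has to glue the $T_i$ into a single finite type $X$-scheme $Y'$ together with a localizing immersion $Y\hookrightarrow Y'$. Over an overlap $U_i\cap U_j$ the restrictions of $T_i$ and $T_j$ both present $U_i\cap U_j$ as a localizing immersion into a finite type $X$-scheme; because $U_i\cap U_j$ is Noetherian (so has only finitely many associated points) such a presentation as a ``limit of opens'' of a finite type scheme is unique up to passing to a common open refinement, and the finitely many resulting transition isomorphisms can be defined already over some common finite type model by clearing denominators. This is the Noetherian approximation argument carried out by Nayak; it produces $Y'$, and by the standard fact that a limit of separated schemes is eventually separated one may take $g\colon Y'\to X$ separated.

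I expect the gluing in the last step to be the main obstacle. The purely local factorization is essentially the definition of ``essentially of finite type''; the substance is passing from these local models to a single global finite type approximation $Y'$, which requires showing the local models agree up to common open refinement and organizing these identifications coherently over all double (and triple) overlaps. Once this approximation is in place, the rest is a formal combination of Nagata's theorem with the stability of localizing immersions under composition, both of which are already available. \hfill$\square$
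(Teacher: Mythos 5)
The paper does not actually prove this statement: it is imported verbatim from Nayak's compactification paper (the citation \cite[Thm.~4.1]{Na.CompEFT}), so there is no in-paper argument to compare against. That said, your high-level reduction is exactly how Nayak organizes his proof: first factor $f$ as a localizing immersion into a separated finite-type $X$-scheme, then apply classical Nagata compactification to the finite-type piece and use closure of localizing immersions under composition. That second half of your argument is correct and complete.

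The genuine gap is that your Claim is not a lemma feeding into the theorem --- it \emph{is} the theorem, up to Nagata, and your sketch of it elides precisely the hard points. First, even the affine case is not ``essentially the definition'': the definition of essentially finite type is local on the source, so for $Y=\Spec B$ it only provides a finite affine cover $\Spec B_i$ with each $A\to B_i$ essentially of finite type; producing a single global factorization $A\to C\to B$ already requires an argument (and your ``shrink $\Spec C$ to the intersection of affine opens containing the image'' is unnecessary for a localization map and in any case does not yield a finite-type scheme). Second, and more seriously, the gluing: the image of $U_i\cap U_j$ in $T_i$ is only pro-open (a filtered intersection of opens), not open, so one cannot glue the $T_i$ along these loci. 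One must first spread the identity of $U_i\cap U_j$ out to an isomorphism of honest open subschemes $V^{(i)}\subseteq T_i$ and $V^{(j)}\subseteq T_j$, check cocycle compatibility on triple overlaps after further shrinking, and then --- this is the part your sketch does not address at all --- verify that the glued scheme is still separated over $X$ and that the resulting map from $Y$ is still injective on points; a naive gluing can destroy both (two points of $U_i\setminus U_j$ and $U_j\setminus U_i$ may be identified in $V^{(i)}\cong V^{(j)}$, and gluing along opens is the standard way to produce non-separated schemes). Your appeal to ``a limit of separated schemes is eventually separated'' presupposes that $Y$ has already been exhibited as a filtered limit of finite-type models with affine transition maps, which is essentially what is being constructed. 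Since you ultimately write that this step ``is the Noetherian approximation argument carried out by Nayak,'' the proposal amounts to a correct formal reduction plus a citation for the substantive content, rather than a proof of it.
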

It follows from \cite[Lem.~3.2]{Na.CompEFT} that the set of such factorizations forms a filtering inverse system, \ie any two such are dominated by a third.

An {\em essentially smooth (essentially \'etale) morphism} of Noetherian schemes is a separated formally smooth (formally \'etale) morphism that is essentially of finite type. Localizing morphisms are  essentially \'etale but the latter class of morphisms is larger.

\subsection{Acknowledgements}
We thank Joe Lipman and Suresh Nayak for answering our questions about some of the more intricate compatibilities of Grothendieck-Serre duality. Their clarifications were crucial during the final stages of the preparation of this manuscript. We are also grateful to Axel Stäbler for a careful reading of an earlier version of this manuscript pointing out numerous typos and inaccuracies.

Both authors were supported by the DFG funded research grant SFB/TRR45 \emph{Periods, moduli, and the arithmetic of algebraic varieties}. The first author was supported by his Heisenberg fellowship.

\section{Cartier sheaves and Cartier crystals}\label{sec.CartierCrys}
In this section we give the definition of Cartier sheaves and Cartier crystals and set up the basic theory. For a more gentle introduction in the case that the coefficient algebra $\Coeff$ is just $\mathbb{F}_p$, we refer to \cite{BliBoe.CartierFiniteness}.
%\label{CSh-Basics}
%\label{CSh}

\begin{defn}\label{CSh-Def}
A \emph{Cartier sheaf on $X$ over~$\Coeff$}, or shorter, a \emph{$\kappa$-sheaf
on $X$} is a pair $\UCV = (\CV,\kappa_{\CV})$ consisting of
a quasi-coherent sheaf $\CV$ on $X\times \CScheme$ and an $\CO_{X\times
\CScheme}$-linear homomorphism $\kappa_{\CV}\! \colon \Frid_*\CV \longto \CV$.

A \emph{homomorphism} of $\kappa$-sheaves $\UCV\to\UCW$ on~$X$ is a
homomorphism of the underlying quasi-coherent sheaves $\phi\! \colon \CV\to\CW$ for which
the following diagram commutes:
\[ \xymatrix{
\Frid_*\CV \ar[r]^-{\kappa_{\CV}}
    \ar[d]_{\Frid_*\phi}&
\CV \ar[d]^\phi \\
\Frid_*\CW \ar[r]^-{\kappa_{\CW}} & \CW \rlap{.}\\ } \]
\end{defn}

The sheaf underlying a $\kappa$-sheaf $\UCW$ will always be
denoted~$\CW$, unless otherwise specified.  To avoid cumbersome
notation we will often abbreviate $\kappa = \kappa_{\CW}$ if the
underlying sheaf is clear from the context.

\begin{ex}
If $k$ is a perfect field, then the $p$th root morphism provides a canonical Cartier structure on $k$.

If $k$ is a field such that $(k \colon k^p)$ is finite, then $k$ and $\sigma^!k= \sigma^{-1}\Hom_k(\sigma_* k,k)$ are abstractly isomorphic fields. Any choice of such isomorphism equips $k$ with the structure of a Cartier module, but, unless $k$ is perfect, there is no canonical such choice.
\end{ex}

\begin{ex}\label{ex:CartierOmega}
The prototypical example of a Cartier sheaf (and namesake thereof) is the classical Cartier operator on the dualizing sheaf $\omega_X=\Omega^n_X$ of a smooth $n$-dimensional scheme $X$ of finite type over $\mathbb{F}_p$. It is induced by the Cartier isomorphism (see, for example \cite{EsnVieh.Vanishing})
\[
    H^i(\Fr_*\Omega^\bullet_X) \to[\cong] \Omega^i_X,
\]
which yields after composition with the projection $\Fr_*\Omega^n_X \onto H^n(\Fr_*\Omega^\bullet_X)$ the map
\[
    \kappa=\kappa_X \colon \Fr_* \omega_X \to \omega_X.
\]
In local coordinates $x_1,\ldots,x_n$ it is given by
\[
    x_1^{i_1}\cdots x_n^{i_n}dx_1\wedge \ldots \wedge dx_n \mapsto x_1^{\frac{i_1+1}{p}-1}\cdots x_n^{\frac{i_n+1}{p}-1}dx_1\wedge \ldots \wedge dx_n
\]
where a non-integral exponent renders the whole expression on the right as zero. One verifies that this construction does not depend on the choice of local coordinates. An explicit computation shows that the adjoint of this map $\widetilde{\kappa} \colon \omega_X \to \sigma^!\omega_X$ is an isomorphism.
\end{ex}

\begin{ex}
More generally, if $X$ smooth over a $\sigma$-finite field $k$, then -- after a Cartier structure on $\kappa_k \colon \sigma_{k*} k \to k$ was picked -- by the same formula as above there is a Cartier structure on $\omega_X$ given in local coordinates by having $\kappa_k$ also act on the coefficients from $k$. In particular, if $k$ is perfect, there is a canonical Cartier structure on $\omega_X$ for any finite type smooth scheme $X$ over $k$.

Even more generally, if $X$ is normal and of finite type over $k$, then the Cartier structure on the smooth locus $i \colon U \into X$ on $\omega_U$ induces a Cartier structure on the dualizing sheaf $\omega_X \colonequals i_*\omega_U$ canonically. Also in this case, the adjoint $\omega_X \to \sigma^! \omega_X$ is an isomorphism.
\end{ex}

\begin{defn}\label{CSh-DefCoh}\label{CSh-CatDef}
\begin{enumerate}
\item A  $\kappa$-sheaf $\UCV$ is called \emph{coherent} if its
  underlying sheaf is coherent.
\item A $\kappa$-sheaf $\UCV$ is called \emph{ind-coherent} if it is
  the union (or equivalently the filtered direct limit) of coherent $\kappa$-sheaves.
\end{enumerate}
The category formed by all $\kappa$-sheaves over~$\Coeff$ on~$X$ and with the
above homomorphisms is denoted $\QCohC(X,\Coeff)$.  The full subcategory of
all coherent $\kappa$-sheaves is denoted $\CohC(X,\Coeff)$, that of all
ind-coherent $\kappa$-sheaves $\IndCohC(X,\Coeff)$.
\end{defn}
We begin with a simple, but useful characterization of ind-coherent $\kappa$-sheaves.
\begin{lem}
\label{Csh-IndCohCharacterization}
A $\kappa$-sheaf $\UCV$ is ind-coherent if and only if for every coherent $\CO_{X\times \Coeff}$-submodule $\CU \subseteq \UCV$, the $\kappa$-subsheaf of $\UCV$ generated by $\CU$ is coherent.
\end{lem}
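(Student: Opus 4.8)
The plan is to make the generated $\kappa$-subsheaf explicit and then treat the two implications by elementary manipulations with filtered unions over the Noetherian scheme $X\times\CScheme$. First I would record that, since $\Fr$ is finite, $\Frid_*=(\Fr\times\id)_*$ is exact and commutes with the formation of sums of quasi-coherent submodules. Writing $\kappa^{(n)}\colon\Frid^n_*\CV\to\CV$ for the $n$-fold iterate of $\kappa_\CV$ (so $\kappa^{(0)}=\id$ and $\kappa^{(n+1)}=\kappa_\CV\circ\Frid_*\kappa^{(n)}$), it then follows that for any quasi-coherent submodule $\CU\subseteq\CV$ the quasi-coherent submodule
\[
   \langle\CU\rangle\;:=\;\sum_{n\ge 0}\kappa^{(n)}\bigl(\Frid^n_*\CU\bigr)\subseteq\CV
\]
is $\kappa$-stable, contains $\CU$, and lies inside every $\kappa$-stable submodule of $\CV$ containing $\CU$; hence, with its induced $\kappa$-structure, it is precisely the $\kappa$-subsheaf of $\UCV$ generated by $\CU$.

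For the implication ``ind-coherent $\Rightarrow$ the stated condition'', I would write $\UCV=\bigcup_i\UCV_i$ as a filtered union of coherent $\kappa$-subsheaves and let $\CU\subseteq\CV$ be coherent. The one nontrivial input is the standard fact that a coherent $\CO_{X\times\Coeff}$-submodule of a filtered union of quasi-coherent submodules already sits inside one term of the system: cover $X\times\CScheme$ by finitely many affine opens (possible as it is Noetherian), note that on each chart $\CU$ is generated by finitely many sections, lift all of these finitely many sections into members $\CV_i$, and use filteredness to pass to a single index $i$ valid on every chart. Then $\CU\subseteq\CV_i$, so $\langle\CU\rangle\subseteq\langle\CV_i\rangle=\CV_i$ because $\CV_i$ is already $\kappa$-stable; being a quasi-coherent submodule of a coherent sheaf on a Noetherian scheme, $\langle\CU\rangle$ is coherent.

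Conversely, assuming $\langle\CU\rangle$ coherent for every coherent submodule $\CU\subseteq\CV$, I would use that on the Noetherian scheme $X\times\CScheme$ the quasi-coherent sheaf $\CV$ is the filtered union of its coherent submodules $\CU$; applying $\langle-\rangle$ gives $\CV=\bigcup_\CU\langle\CU\rangle$, and this family of coherent $\kappa$-subsheaves is still filtered because $\langle\CU_1\rangle+\langle\CU_2\rangle\subseteq\langle\CU_1+\CU_2\rangle$ with $\CU_1+\CU_2$ again coherent. Hence $\UCV$ is exhibited as a filtered union of coherent $\kappa$-subsheaves, i.e.\ it is ind-coherent. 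I do not expect any serious obstacle: the only points needing care are the lemma just used --- that a coherent submodule of a filtered union of quasi-coherent submodules lies in one term, which is where Noetherianness enters --- together with the verification that $\Frid_*$ commutes with sums of submodules, needed to justify the explicit formula for $\langle\CU\rangle$; the remainder is bookkeeping.
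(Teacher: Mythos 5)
Your proposal is correct and follows essentially the same route as the paper: both directions reduce to the observations that the generated $\kappa$-subsheaf $\sum_n\kappa^{(n)}(\Frid^n_*\CU)$ sits inside some coherent $\kappa$-subsheaf of a (filtered) union and is therefore coherent by Noetherianness, and conversely that $\CV$ is the union of the coherent $\kappa$-subsheaves generated by its coherent submodules. You merely spell out details the paper leaves implicit (the explicit formula for $\langle\CU\rangle$ and the fact that a coherent submodule of a filtered union lies in one term), which is harmless.
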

\begin{proof}
If $\UCV$ is ind-coherent, then it is the union of its coherent $\kappa$-sub-sheaves. Since $\CU \subseteq \UCV$ is coherent, it must hence be contained in some coherent $\kappa$-sheaf. It follows that the $\kappa$-subsheaf $\UCU \colonequals  \sum_i \kappa^i \CU$ generated by $\CU$ must also be contained in a coherent $\kappa$-sheaf which implies that $\UCU$ itself is coherent since $X$ is Noetherian.

Conversely, as a quasi-coherent $\CO_{X \times \Coeff}$-module, $\UCV$ is the union of its coherent submodules. If the $\kappa$-subsheaf generated by each of these is also coherent, then, clearly $\UCV$ is the union of coherent $\kappa$-sheaves, hence $\UCV$ is ind-coherent.
\end{proof}

\begin{rem}\label{CSh-Proper}
There exist ind-coherent $\kappa$-sheaves which are not coherent,
e.g. any infinite direct sum of non-zero coherent $\kappa$-sheaves.

More interestingly, there also exist quasi-coherent $\kappa$-sheaves  which are not
ind-coherent: Take $k=\BF_p$, $R=k[x]$, $X=\Spec R$, $\Coeff=k$, and let $\UCV$
be the $\kappa$-sheaf on $X$ corresponding to the $R$-module
$M=\cup_{n\in\BN} k[x^{p^{-n}}]$ with $\kappa$ the inverse of
$\Fr$ which is an isomorphism on $M$. Then
$\UCV$ contains no non-zero coherent $\kappa$-subsheaf.
\end{rem}

Clearly, the categories $\CohC(X,\Coeff)$, $\QCohC(X,\Coeff)$, and $\IndCohC(X,\Coeff)$ are abeli\-an $\Coeff$-linear categories, and, since $\Fr$ is affine and hence $\Fr_*$ is exact, constructions such as kernel, cokernel, etc.\ are the usual ones on the underlying
quasi-coherent sheaves, with the respective structural map $\kappa$ obtained by
functoriality.  In particular, the formation of kernel, cokernel,
image and co-image is preserved under the inclusions $\CohC(X,\Coeff)\subset
\IndCohC(X,\Coeff)\subset \QCohC(X,\Coeff)$.  From this it follows immediately, that $\CohC(X,\Coeff)$ is a Serre subcategory of $\QCohC(X,\Coeff)$, \ie~it is an abelian subcategory that is closed under extensions. This statement holds for $\IndCohC(X,\Coeff)$ only under the additional assumption that the Frobenius $\Fr$ on $X$ is finite:

\begin{prop}\label{CSh-Serre}
$\CohC(X,\Coeff)$ is a Serre sub-category of $\QCohC(X,\Coeff)$. If the Frobenius $\Fr$ is finite on $X$, then $\IndCohC(X,\Coeff)$ is also a Serre sub-category of $\QCohC(X,\Coeff)$.
\end{prop}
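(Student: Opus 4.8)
The plan is to prove the two assertions in sequence, reducing everything to statements about the underlying quasi-coherent sheaves together with the behaviour of the structural maps $\kappa$ under the formation of kernels, cokernels and extensions. Recall that $\CohC(X,\Coeff)$ and $\IndCohC(X,\Coeff)$ are already known (from the discussion preceding the proposition) to be abelian subcategories of $\QCohC(X,\Coeff)$ in which kernels, cokernels, images and co-images agree with the ones on underlying sheaves; so in both cases the only thing left to check is closure under extensions.

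For the first statement, let $0\to\UCV'\to\UCV\to\UCV''\to 0$ be a short exact sequence in $\QCohC(X,\Coeff)$ with $\UCV'$ and $\UCV''$ coherent. Then the underlying sequence $0\to\CV'\to\CV\to\CV''\to 0$ is exact in $\QCoh(X\times\CScheme)$, and since $\CV'$ and $\CV''$ are coherent and $X\times\CScheme$ is Noetherian, $\CV$ is coherent as well; this is all that is required for $\UCV$ to lie in $\CohC(X,\Coeff)$, since its $\kappa$-structure is simply the one inherited from $\UCV$. Hence $\CohC(X,\Coeff)$ is a Serre subcategory; note this part does not use finiteness of $\Fr$.

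For the second statement we again take $0\to\UCV'\to\UCV\to\UCV''\to 0$ in $\QCohC(X,\Coeff)$ with $\UCV'$ and $\UCV''$ ind-coherent, and we must show $\UCV$ is ind-coherent. I would use the characterization of \autoref{Csh-IndCohCharacterization}: it suffices to show that for every coherent $\CO_{X\times\Coeff}$-submodule $\CU\subseteq\CV$, the $\kappa$-subsheaf $\sum_{i\ge 0}\kappa^i\CU$ generated by $\CU$ is coherent. Here is where the finiteness of $\Fr$ enters. Since $\Fr$ (hence $\Frid$) is a finite morphism, $\Frid_*$ of a coherent sheaf on $X\times\CScheme$ is again coherent, so $\kappa(\Frid_*\CU)$ is a coherent submodule of $\CV$, and more generally each $\kappa^i(\Frid^i_*\CU)$ is coherent; the point is to show the ascending chain $\CU\subseteq \CU+\kappa(\Frid_*\CU)\subseteq\cdots$ stabilizes. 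Let $\CU'$ be the image of $\CU$ in $\CV''$ and $\CU''=\CU\cap\CV'$; both are coherent submodules of the ind-coherent $\kappa$-sheaves $\UCV''$ and $\UCV'$ respectively, so the $\kappa$-subsheaves $\UCU''\subseteq\UCV'$ and (the image of) the $\kappa$-subsheaf generated by $\CU'$ in $\UCV''$ are coherent. One then bounds the $\kappa$-subsheaf generated by $\CU$ in $\UCV$ by a coherent sheaf: any section of $\sum_i\kappa^i\CU$ maps into the (coherent) $\kappa$-subsheaf generated by $\CU'$ in $\CV''$, and its kernel over that is a coherent submodule of $\CV'$ that is stable under $\kappa$ up to the finitely generated ``correction'' coming from $\CU''$, hence contained in a coherent $\kappa$-subsheaf of $\UCV'$; pulling back, $\sum_i\kappa^i\CU$ is squeezed between coherent sheaves and is therefore coherent since $X\times\CScheme$ is Noetherian. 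Applying \autoref{Csh-IndCohCharacterization} again gives that $\UCV$ is ind-coherent.

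The main obstacle is the last step: making precise the claim that the $\kappa$-subsheaf generated by $\CU$ in the middle term is controlled by the corresponding $\kappa$-subsheaves in $\UCV'$ and $\UCV''$. The subtlety is that $\CU$ need not be a direct sum of submodules of $\CV'$ and $\CV''$, so one cannot simply take $\kappa$-subsheaves termwise; instead one argues that the generated $\kappa$-subsheaf sits in an extension of a coherent $\kappa$-subsheaf of $\UCV''$ by a coherent $\kappa$-subsheaf of $\UCV'$ — using finiteness of $\Fr$ to keep each successive $\kappa^i\Frid^i_*\CU$ coherent, and Noetherianity of $X\times\CScheme$ to force the chain to stabilize. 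This is exactly the place where the finiteness hypothesis on $\Fr$ is indispensable: without it $\Frid_*\CU$ need not be coherent and the argument collapses, matching the counterexample flagged in \autoref{CSh-Proper}.
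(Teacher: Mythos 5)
Your overall strategy is the one the paper uses: the only non-obvious point is closure of $\IndCohC(X,\Coeff)$ under extensions, $\Fr$-finiteness enters through coherence of $\Frid_*$ of a coherent sheaf, and the goal is to trap the $\kappa$-subsheaf generated by a coherent $\CU\subseteq\CV$ inside a coherent $\kappa$-subsheaf assembled from coherent $\kappa$-subsheaves of $\UCV'$ and $\UCV''$. The first assertion and the reduction via \autoref{Csh-IndCohCharacterization} are fine. The gap is exactly in the step you flag as the main obstacle. You assert that the kernel of $\sum_i\kappa^i\CU\to\pi\bigl(\sum_i\kappa^i\CU\bigr)$ is a coherent submodule of $\CV'$, "stable under $\kappa$ up to a correction coming from $\CU''$", hence contained in a coherent $\kappa$-subsheaf of $\UCV'$. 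Neither half of this is justified, and the first half is essentially equivalent to what you are trying to prove: that kernel is $\bigl(\sum_i\kappa^i\CU\bigr)\cap\CV'$, an a priori infinite sum, and it contains elements $\kappa^i(u)$ with $\pi(u)\neq 0$ but $\kappa^i(\pi(u))=0$, which are not accounted for by the $\kappa$-subsheaf generated by $\CU''=\CU\cap\CV'$. So you cannot invoke ind-coherence of $\UCV'$ for it directly: \autoref{Csh-IndCohCharacterization} only applies to the $\kappa$-subsheaf generated by a \emph{coherent} submodule, and you have not exhibited a coherent generator of the kernel.

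The paper closes this gap by arranging matters so that only \emph{one} application of $\kappa$ has to be controlled. Given the coherent $\kappa$-subsheaf $\UCW''\subseteq\UCV''$ generated by $\pi(\CU)$, choose a coherent $\CW\subseteq\CV$ containing $\CU$ and surjecting onto $\UCW''$. Since $\UCW''$ is $\kappa$-stable, $\pi(\kappa(\Frid_*\CW))\subseteq\UCW''$, so $\CH:=\ker\bigl(\CW+\kappa(\Frid_*\CW)\to\UCW''\bigr)$ is the kernel of a surjection of \emph{coherent} sheaves (this is where $\Fr$-finiteness is used), hence coherent, and one has $\kappa(\Frid_*\CW)\subseteq\CW+\CH$. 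Now ind-coherence of $\UCV'$ is legitimately applied to the coherent $\CH$ to enlarge it to a coherent $\kappa$-subsheaf of $\UCV'$; then $\CW+\CH$ is $\kappa$-stable, coherent, and contains $\CU$, so the $\kappa$-subsheaf generated by $\CU$ is coherent by Noetherianity. With this insertion your argument goes through; without it the claimed "squeeze" has no coherent bound on the kernel side.
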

\begin{proof}
The only point that is not obvious is that the extension of ind-coherent $\kappa$-sheaves is again ind-coherent. For this, consider a short exact sequence
\[
    0 \to \UCV' \to \UCV \to[\pi] \UCV'' \to 0
\]
of $\kappa$-sheaves, assuming that the ends are ind-coherent. It is enough to show that given a coherent $\kappa$-subsheaf $\UCW''$ of $\UCV''$, there is a coherent $\kappa$-subsheaf $\UCW$ of $\UCV$ that surjects onto $\UCW''$. Let $\CW$ be any coherent subsheaf of $\CV$ that surjects onto $\UCW''$. Note that $\pi(\kappa(\Fr \times \id)_*\CW) = \kappa(\Frid_*(\pi(\CW))) \subseteq \kappa(\Frid_* \UCW'') \subseteq \UCW''$. Hence, $\pi$ induces a surjection
\[
    \pi \colon (\CW+\kappa(\Frid_*\CW)) \to \UCW''.
\]
Denoting by $\CH$ the kernel of this map we have $\kappa( \Frid_* \CW) \subseteq \CW+\CH$. Now, since $\Fr$, and hence $\Frid$ is a finite map by assumption, $\Frid_*\CW$ is coherent. This implies that $\CH$ is a coherent subsheaf of $\UCV'$. Since the latter is assumed to be ind-coherent, $\CH$ is contained in some coherent $\kappa$-subsheaf. After enlarging $\CH$ -- which does not affect the inclusion $\kappa( \Frid_* \CW) \subseteq \CW+\CH$ -- we may hence assume that $\CH$ itself is a coherent $\kappa$-subsheaf of $\UCV'$. Then we have
\[
    \kappa(\Frid_*(\CW+\CH)) = \kappa(\Frid_*\CW)+\kappa(\Frid_*(\CH)) \subseteq \CW+\CH+\CH \subseteq \CW+\CH
\]
showing that $\UCW \colonequals  \CW+\CH$ is a coherent $\kappa$-subsheaf of $\UCV$ mapping epimorphically to $\UCW''$.
\end{proof}
An example for the necessity of the assumption that $\Fr$ is finite in the above proposition is given below in \autoref{CSh-ExampleNotSerreSub}.

Below it will be crucial to know that $\QCohC(X,\Coeff)$ as well as $\IndCohC(X,\Coeff)$ are Grothendieck categories, \ie~they form an abelian category with exact filtered direct limits and a generator.

\begin{theorem}\label{CSh-QisGrothCat}
$\QCohC(X,\Coeff)$ is a Grothendieck category. Its subcategory of ind-coherent objects
$\IndCohC(X,\Coeff)$ is closed under filtered direct limits in $\QCohC(X,\Coeff)$ and is itself a Grothendieck category.
\end{theorem}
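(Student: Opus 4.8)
The plan is to verify the two defining properties of a Grothendieck category --- existence of a generator together with exactness of filtered colimits (AB5) --- for each of $\QCohC(X,\Coeff)$ and $\IndCohC(X,\Coeff)$, reducing everything to the corresponding (classical) facts for $\QCoh(X\times\CScheme)$, which is a Grothendieck category because $X\times\CScheme$ is Noetherian.

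\emph{The category $\QCohC(X,\Coeff)$.} Since the Frobenius, hence $\Frid$, is an affine morphism, the functor $\Frid_*$ is exact on quasi-coherent sheaves and commutes with arbitrary direct sums and filtered colimits. Therefore kernels, cokernels and filtered colimits in $\QCohC(X,\Coeff)$ are formed on the underlying $\CO_{X\times\CScheme}$-modules with the structural map induced by functoriality, and AB5 for $\QCohC(X,\Coeff)$ follows at once from AB5 for $\QCoh(X\times\CScheme)$. To produce a generator I would construct the left adjoint $F$ of the forgetful functor $\QCohC(X,\Coeff)\to\QCoh(X\times\CScheme)$ by the ``free $\kappa$-sheaf'' formula
\[
   F(\CG)=\Bigl(\ \Dirsum_{n\ge 0}(\Frid_*)^{n}\CG\ ,\ \kappa^{\mathrm{free}}\ \Bigr),
\]
where $(\Frid_*)^{n}$ denotes the $n$-fold iterated pushforward and $\kappa^{\mathrm{free}}$ is the identity inclusion of $\Frid_*\bigl(\bigoplus_{n\ge0}(\Frid_*)^{n}\CG\bigr)=\bigoplus_{n\ge1}(\Frid_*)^{n}\CG$ into $\bigoplus_{n\ge 0}(\Frid_*)^{n}\CG$; here one uses again that $\Frid_*$ commutes with the countable direct sum. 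A short diagram chase shows that a $\kappa$-morphism $F(\CG)\to\UCW$ is uniquely determined by its restriction to the summand $\CG$, and conversely that every $\CO_{X\times\CScheme}$-linear map $\CG\to\CW$ extends, via the iterates of $\kappa_\CW$, to a $\kappa$-morphism; this yields the adjunction $\Hom_{\QCohC}(F(\CG),\UCW)\cong\Hom_{\CO_{X\times\CScheme}}(\CG,\CW)$. If $\CG_0$ generates $\QCoh(X\times\CScheme)$, then for any $\UCW$ an epimorphism $\CG_0^{(I)}\onto\CW$ of quasi-coherent sheaves transposes to a morphism $F(\CG_0)^{(I)}=F(\CG_0^{(I)})\to\UCW$ whose underlying map of sheaves still contains that epimorphism in its image, hence is epic; so $F(\CG_0)$ is a generator and $\QCohC(X,\Coeff)$ is a Grothendieck category.

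\emph{Closure of $\IndCohC(X,\Coeff)$.} I would next check that this full subcategory is closed in $\QCohC(X,\Coeff)$ under subobjects, quotients, arbitrary direct sums, and filtered colimits. Closure under subobjects is immediate from \autoref{Csh-IndCohCharacterization}: a coherent submodule of a $\kappa$-subsheaf $\UCV'\subseteq\UCV$ is a submodule of $\UCV$, so it generates a coherent $\kappa$-subsheaf, and since the $\kappa$-subsheaf it generates lies inside the $\kappa$-stable submodule $\CV'$, we conclude $\UCV'$ is ind-coherent. Closure under quotients holds because a quotient of a coherent $\kappa$-sheaf is coherent, so a quotient of an ind-coherent $\kappa$-sheaf is the union of the images of its coherent $\kappa$-subsheaves; a direct sum of ind-coherent $\kappa$-sheaves is the filtered union of the finite partial sums of coherent $\kappa$-subsheaves; and for a filtered system of ind-coherent $\kappa$-sheaves, writing each term as a filtered colimit of coherent $\kappa$-sheaves exhibits the colimit as a filtered colimit of coherent $\kappa$-sheaves, which in the Grothendieck category $\QCohC(X,\Coeff)$ coincides with the filtered union of the (coherent) images of its structure maps. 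In particular $\IndCohC(X,\Coeff)$ is closed under filtered colimits in $\QCohC(X,\Coeff)$, as asserted.

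\emph{The Grothendieck property of $\IndCohC(X,\Coeff)$.} Being closed under kernels (as subobjects) and cokernels (as quotients) inside the abelian category $\QCohC(X,\Coeff)$, the subcategory $\IndCohC(X,\Coeff)$ is abelian with the induced exact structure, and by the previous step all colimits (built from coproducts and cokernels) agree with those of $\QCohC(X,\Coeff)$; hence it is cocomplete and satisfies AB5. For a generator I would invoke the general principle that a full subcategory of a Grothendieck category closed under subobjects, quotients and coproducts has a generator, namely the coproduct of a set of representatives of those subquotients of the ambient generator $F(\CG_0)$ that lie in the subcategory; concretely one may take the direct sum $\bigoplus_{\alpha}\UCW_\alpha$ over a set of representatives of the isomorphism classes of coherent $\kappa$-sheaves (a set, since $X\times\CScheme$ is Noetherian), which is ind-coherent and generates $\IndCohC(X,\Coeff)$ because every ind-coherent $\kappa$-sheaf is the union of its coherent $\kappa$-subsheaves and any proper $\kappa$-subsheaf must omit one of them. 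I expect the only genuinely non-formal inputs to be the Grothendieck property of $\QCoh(X\times\CScheme)$ itself (classical) and the construction of the free $\kappa$-sheaf functor together with the verification of its adjunction; the place where the geometry enters is the routine fact that $\Frid_*$ commutes with the direct sums occurring in $F$, which is exactly where affineness (and finiteness) of the Frobenius is used.
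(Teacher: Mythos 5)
Your proof is correct, but it takes a genuinely different route from the paper at the one non-formal point. The paper's proof is essentially a citation of \cite[Thm.~3.2.7]{BoPi.CohomCrys}: the formal parts (AB5 inherited because kernels, cokernels and filtered colimits are computed on underlying sheaves; closure of the ind-coherent objects under subobjects, quotients, sums and filtered unions; generator of $\IndCohC$ as a sum of representatives of coherent objects) are the same as yours, but the generator of $\QCohC(X,\Coeff)$ is produced there by a cardinality argument --- every $\kappa$-sheaf is exhibited as the union of its $\lambda$-coherent $\kappa$-subsheaves for an infinite cardinal $\lambda$ bounding the local generation of $\Fr_*\CO_X$, and one sums a set of representatives of $\lambda$-coherent $\kappa$-sheaves. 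You instead build the left adjoint $F$ of the forgetful functor (the ``free $\kappa$-sheaf'' $\bigoplus_{n\ge0}(\Frid_*)^n\CG$ with the shift as structural map) and take $F(\CG_0)$; the adjunction you state is easily verified, and a left adjoint of a faithful functor sends a generator to a generator by exactly the argument you give. Your route buys a cleaner, cardinality-free construction; the paper's route buys uniformity with the $\tau$-sheaf theory it is importing. One small correction: finiteness of $\Fr$ is \emph{not} used anywhere in your argument (nor should it be --- the paper asserts the theorem without that hypothesis); the commutation of $\Frid_*$ with direct sums needs only that $\Frid$ is affine (indeed a homeomorphism on underlying spaces), so your closing parenthetical ``(and finiteness)'' should be dropped.
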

\begin{proof}
The proof is almost the same as that of \cite[Thm.~3.2.7]{BoPi.CohomCrys}. Only in showing that $\QCohC(X,\Coeff)$ has a generator is slightly different (in the case that $\Fr$ is not finite). For any cardinal $\lambda$ we say that a $\kappa$-sheaf $\UCV$ is \emph{$\lambda$-coherent} if its underlying sheaf $\CV$ is locally generated by a set of cardinality $\leq \lambda$. Let $\lambda$ be an infinite cardinal such that $\Fr_*\CO_X$ is locally generated by a set of cardinality $\leq \lambda$. Now the proof proceeds as that of \cite[Thm.~3.2.7]{BoPi.CohomCrys} replacing $\omega$ by $\lambda$ throughout.
\end{proof}

\subsection{Nilpotence}
\label{CSh-Nilpotence}
For any $\kappa$-sheaf $\UCV$ we define the iterates $\kappa_{\CV}^n$ of $\kappa_{\CV}$ by setting inductively $\kappa_{\CV}^{0} \colonequals  \id$ and $\kappa_{\CV}^{n+1} \colonequals  \kappa_{\CV} \circ \Frid_*\kappa_{\CV}^n$. Thus they are $\CO_{X\times \Coeff}$-linear homomorphisms
\[
    \kappa_{\CV}^n \colon (\Fr^n\times\id)_*\CV \longto \CV.
\]
Each $\Fr^n_*\UCV \colonequals  \bigl( (\Fr^n\times\id)_*\CV, (\Fr^n\times\id)_*\kappa_{\CV} \bigr)$ is a $\kappa$-sheaf in its own right, and $\kappa_{\CV}^n$ is a homomorphism of $\kappa$-sheaves $\Fr^n_*\UCV \to \UCV$. In particular, its image is a $\kappa$-subsheaf of~$\UCV$. Note that if $\Fr$ is finite then the $\kappa$-sheaves $\Fr^n_*\UCV$ are coherent, whenever $\UCV$ is so.

\begin{defn}\label{CSh-NilDef}
\begin{enumerate}
\item A $\kappa$-sheaf $\UCV$ is called \emph{nilpotent} if
$\kappa^n_{\CV}$ vanishes for some, or equivalently all, $n\gg0$.
\item A $\kappa$-sheaf $\UCV$ is called \emph{locally nilpotent} if
it is a union of nilpotent $\kappa$-subsheaves.
\item By $\LNilC(X,\Coeff)$ we denote the full subcategory of
  $\QCohC(X,\Coeff)$ formed by all locally nil\-po\-tent
  $\kappa$-sheaves, by $\NilC(X,\Coeff)$ we shall denote the intersection
  $\LNilC(X,\Coeff)\cap \CohC(X,\Coeff)$.
\item By $\LNilCohC(X,\Coeff)$ we denote the full subcategory of $\IndCohC(X,\Coeff)$ of objects $\UCM $ which contain a coherent $\UCC \subseteq \UCM$ such that the quotient $\UCM/\UCC$ lies in $\LNilC(X,\Coeff)$. We will see shortly that $\LNilCohC(X,\Coeff)$ is the Serre subcategory of $\IndCohC(X,\Coeff)$ generated by $\CohC(X,\Coeff)$ and $\LNilC(X,\Coeff)$.
\end{enumerate}
\end{defn}
It follows easily from the noetherianess of objects in $\CohC(X,\Coeff)$ that any object in $\NilC(X,\Coeff)$ is nilpotent. However, there are nilpotent $\kappa$-sheaves which are not coherent, and hence not in $\NilC(X,\Coeff)$; for example any non-coherent $\CO_{X \times \CScheme}$-sheaf with zero structural map $\kappa$.

\begin{rem}
\label{Csh-local-nil-affine}
It is important to note that local nilpotence for $\kappa$-sheaves \emph{does not} mean that all local sections are killed by some power of $\kappa$. This is too weak. As an example consider $k=\BF_p$, $X=\Spec R$, $R=k[x]$ and $\kappa \colon R \to R$ given by $x^i \mapsto x^{((i+1)/p)-1}$ where we define $x$ raised to a non-integral exponent as equal to zero. Then clearly, every single element of $r$ is killed by some power of $\kappa$, however $R$ does not have a $\kappa$-submodule which is nilpotent.

The correct condition for local nilpotence, say of a $\kappa$-sheaf $\UCM$ on an affine scheme $X=\Spec R$, is that the \emph{$R \tensor \Coeff$-module generated by} every section $m \in \UCM$ is annihilated by some power of $\kappa$, \ie~for some $e \geq 0$ we have $\kappa^e((R \tensor \Coeff)m)=0$. This is made precise in the following two lemmata.
\end{rem}

\begin{lem}
\label{CSh-CharNilLocNil}
For a $\kappa$-sheaf $\UCM$ we define $\UCM_e$ as the largest $\kappa$-subsheaf on which $\kappa^e$ is zero. We let $\UCM_{nil}$ be the largest locally nilpotent $\kappa$-subsheaf of $\UCM$.
\begin{enumerate}
\item $\UCM_{nil}=\bigcup_{e \in \BN} \UCM_e$.
\item \label{CSh-it1} $\UCM_e$ is the sheaf
\[
    U \mapsto \{ m \in \UCM(U) | \kappa^e\Frid^e_*(\CO_{X \times \CScheme}\cdot m)=0 \}.
\]
\item \label{CSh-it2} $\UCM_{nil}$ is the sheaf
\[
    U \mapsto \{ m \in \UCM(U) | \exists e \in \BN : \kappa^e\Frid^e_*(\CO_{X \times \CScheme}\cdot m)=0 \}.
\]
\end{enumerate}
\end{lem}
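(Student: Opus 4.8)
The plan is to establish (b) first, by exhibiting the claimed formula as the $\kappa$-subsheaf $\UCM_e$, and then to read off (a) and (c).

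\textbf{Proof of (b).} Fix $e$ and let $\CN_e$ be the subpresheaf of $\CM$ given by $U\mapsto\{m\in\CM(U)\mid\kappa^e(\Frid^e_*(\CO_{X\times\CScheme}\cdot m))=0\}$, where $\CO_{X\times\CScheme}\cdot m$ denotes the $\CO_{X\times\CScheme}$-submodule of $\CM$ generated by $m$. First I would check that $\CN_e$ is an $\CO_{X\times\CScheme}$-submodule sheaf of $\CM$: since $\CO_{X\times\CScheme}\cdot(am)\subseteq\CO_{X\times\CScheme}\cdot m$ and $\CO_{X\times\CScheme}\cdot(m+m')\subseteq\CO_{X\times\CScheme}\cdot m+\CO_{X\times\CScheme}\cdot m'$, the defining condition is preserved by the module operations, and as it depends only on the submodule generated by a section it is a local condition, so the subpresheaf is a subsheaf. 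Second --- the main point --- I would show $\CN_e$ is a $\kappa$-subsheaf, i.e.\ $\kappa(\Frid_*\CN_e)\subseteq\CN_e$. The two ingredients are: (i) the ``twisting'' identity $(f\tensor c)\cdot\kappa^j(x)=\kappa^j\bigl((f^{q^j}\tensor c)\cdot x\bigr)$, an immediate consequence of $\CO_{X\times\CScheme}$-linearity of $\kappa^j\colon\Frid^j_*\CM\to\CM$ together with $\Frid=\Fr\times\id$; and (ii) the associativity $\kappa^{j+1}=\kappa\circ\Frid_*\kappa^j=\kappa^j\circ\Frid^j_*\kappa$, which follows by an easy induction from the inductive definition of the iterates. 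From (i), $\CO_{X\times\CScheme}\cdot\kappa(m)\subseteq\kappa(\Frid_*(\CO_{X\times\CScheme}\cdot m))$; applying $\kappa^e\circ\Frid^e_*$ and then the second form of (ii) identifies $\kappa^e(\Frid^e_*(\CO_{X\times\CScheme}\cdot\kappa(m)))$ with a subsheaf of $\kappa^{e+1}(\Frid^{e+1}_*(\CO_{X\times\CScheme}\cdot m))$; and for $\CP=\CO_{X\times\CScheme}\cdot m$ with $\kappa^e(\Frid^e_*\CP)=0$ the first form of (ii) gives $\kappa^{e+1}(\Frid^{e+1}_*\CP)=\kappa\bigl(\Frid_*(\kappa^e(\Frid^e_*\CP))\bigr)=0$. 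Hence $\kappa(m)$ is again a section of $\CN_e$; the same computation shows $\CN_e\subseteq\CN_{e+1}$. Taking $a=1$ in the definition shows $\kappa^e$ vanishes on $\CN_e$. Finally, if $\UCW\subseteq\UCM$ is any $\kappa$-subsheaf with $\kappa^e|_{\Frid^e_*\CW}=0$, then for a local section $m$ of $\CW$ one has $\CO_{X\times\CScheme}\cdot m\subseteq\CW$, so $\kappa^e(\Frid^e_*(\CO_{X\times\CScheme}\cdot m))\subseteq\kappa^e(\Frid^e_*\CW)=0$, whence $\UCW\subseteq\CN_e$. Thus the largest $\kappa$-subsheaf on which $\kappa^e$ vanishes exists and equals $\CN_e$, which is (b).

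\textbf{Proof of (a).} The $\UCM_e$ form an increasing chain of $\kappa$-subsheaves, each annihilated by $\kappa^e$ and hence nilpotent, so $\bigcup_e\UCM_e$ is a locally nilpotent $\kappa$-subsheaf of $\UCM$. Conversely, any locally nilpotent $\kappa$-subsheaf $\UCW$ is a union of nilpotent $\kappa$-subsheaves $\UCW_i$, each of which is killed by some $\kappa^{e_i}$ and hence contained in $\UCM_{e_i}$; so $\UCW\subseteq\bigcup_e\UCM_e$. In particular the union of all locally nilpotent $\kappa$-subsheaves of $\UCM$ is locally nilpotent, so $\UCM_{nil}$ exists and equals $\bigcup_e\UCM_e$.

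\textbf{Proof of (c).} Since the $\UCM_e$ form an increasing chain of quasi-coherent subsheaves of $\CM$ and every open subset of the Noetherian scheme $X\times\CScheme$ is quasi-compact, the sections over any $U$ of the union equal the union of the sections, so $\UCM_{nil}(U)=\bigcup_e\UCM_e(U)$; combined with (b) this is exactly the description in (c). The only genuine obstacle here is the middle step of (b) --- tracing the twisting identity and the associativity of the iterates to see that the closed-form formula is stable under $\kappa$; everything else is routine bookkeeping with subsheaves on a Noetherian scheme.
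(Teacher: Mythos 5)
Your proof is correct and follows essentially the same route as the paper's (identify the explicit formula with the largest $\kappa$-subsheaf killed by $\kappa^e$, then deduce (a) and (c) from the increasing union), only with more detail: in particular you carefully verify the $\kappa$-stability of $\CN_e$ via the twisting identity, a step the paper leaves implicit in asserting $\UCN_e\subseteq\UCM_e$. No gaps.
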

\begin{proof}
Clearly, since $\kappa$ is additive, the sum of two subsheaves on which $\kappa^e$ acts as zero also has this property, so $\UCM_e$ is well defined. Equally clearly, $\UCM_{nil}$ is equal to the union of all nilpotent subsheaves which implies the equality $\UCM_{nil}=\bigcup_{e \in \BN} \UCM_e$, and this shows (a). To prove (b), let us denote the sheaf defined in (b) by $\UCN_e$. Then $\kappa^e(\UCN_e)=0$ and hence $\UCN_e \subseteq \UCM_e$. The reverse inclusion can be checked (affine) locally where it is clear by definition. A similar argument applies to \autoref{CSh-it2}.
\end{proof}
\begin{cor}
\label{CSh-CharLocNilCoh}
A $\kappa$-sheaf $\UCM$ is locally nilpotent if and only if for all coherent $\CO_{X \times \CScheme}$-subsheaves $M$ there is a $e \geq 0$ such that $\kappa^e(\Frid^e_*(M))=0$.
\end{cor}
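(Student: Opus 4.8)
The plan is to derive this directly from \autoref{CSh-CharNilLocNil}, the only additional inputs being that a coherent sheaf on the Noetherian scheme $X\times\CScheme$ is a Noetherian object in $\QCoh(X\times\CScheme)$, and that every quasi-coherent sheaf is the filtered union of its coherent $\CO_{X\times\CScheme}$-subsheaves.

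For the forward implication I would assume $\UCM$ is locally nilpotent, so $\UCM=\UCM_{nil}$, and first rewrite it via part~(a) of \autoref{CSh-CharNilLocNil} as the increasing union $\UCM=\bigcup_{e\in\BN}\UCM_e$ of $\kappa$-subsheaves. Given a coherent $\CO_{X\times\CScheme}$-subsheaf $M\subseteq\UCM$, intersecting produces $M=\bigcup_e(M\cap\UCM_e)$, an increasing chain of coherent subsheaves of $M$; since $M$ is a Noetherian object this chain stabilizes, so $M\subseteq\UCM_e$ for some $e$. Because $\Fr$, hence $\Frid$, is affine, $\Frid^e_*$ is exact, so $\Frid^e_*M\subseteq\Frid^e_*\UCM_e$, and since $\kappa^e$ vanishes on $\UCM_e$ by the very definition of $\UCM_e$, I conclude $\kappa^e(\Frid^e_*M)=0$.

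For the converse I would appeal to part~(c) of \autoref{CSh-CharNilLocNil}: it suffices to check that $\UCM(U)\subseteq\UCM_{nil}(U)$ for every open $U$. For a section $m\in\UCM(U)$, the submodule $\CO_{X\times\CScheme}\cdot m$ it generates is a coherent subsheaf of $\UCM$ (a cyclic module over a Noetherian ring is finitely generated); applying the hypothesis to $M=\CO_{X\times\CScheme}\cdot m$ gives some $e\geq0$ with $\kappa^e(\Frid^e_*(\CO_{X\times\CScheme}\cdot m))=0$, which is exactly the membership condition for $\UCM_{nil}(U)$ in part~(c). Hence $\UCM=\UCM_{nil}$, so $\UCM$ is locally nilpotent.

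This argument is essentially bookkeeping on top of \autoref{CSh-CharNilLocNil}, and I do not expect a genuine obstacle. The two spots calling for a moment of care are the Noetherian-object step in the first direction (a coherent sheaf on a Noetherian scheme is not exhausted by a strictly ascending chain of subsheaves) and, dually, the standard fact used in the second direction that a cyclic $\CO_{X\times\CScheme}$-module on a Noetherian scheme is coherent.
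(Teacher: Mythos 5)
Your argument is correct and is exactly the intended deduction from \autoref{CSh-CharNilLocNil}: the paper states the corollary without a separate proof precisely because it follows in this way (forward direction: a coherent subsheaf of $\UCM=\bigcup_e\UCM_e$ is a Noetherian object and hence lands in some $\UCM_e$; converse: apply part (c) to cyclic subsheaves). The only point you elide is that for a section $m\in\UCM(U)$ over a proper open $U$, the cyclic subsheaf $\CO_{X\times\CScheme}\cdot m$ is a priori only a coherent subsheaf of $\UCM|_U$, so to invoke the hypothesis literally one should first extend it to a coherent subsheaf of $\UCM$ over all of $X\times\CScheme$ (standard on a Noetherian scheme) and then restrict back.
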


Since every quasi-coherent $\CO_{X \times \CScheme}$-module is the union of its coherent submodules it follows that $\LNilC(X,\Coeff) \subseteq \IndCohC(X,\Coeff)$. Moreover one can easily show that $\LNilC(X,\Coeff)$ is closed under filtered direct limits.

\begin{prop}\label{CSh-LNilIsSerre}
All categories in the following diagram
\begin{equation}\label{CSh-CCatDiag}
{\parbox{0cm}{{
\xymatrix@C-12pt@R-30pt{
&\LNilC(X,\Coeff)\ar@{^{ (}->}[dr]&&\\
\NilC(X,\Coeff) \ar@{^{ (}->}[ur] \ar@{_{ (}->}[dr] &
&  \LNilCohC(X,\Coeff)\ar@{^{ (}->}[r] &\IndCohC(X,\Coeff)\rlap{,}\\
&\CohC(X,\Coeff) \ar@{_{ (}->}[ur] &&\\}
}}}
\end{equation}
are Serre sub-categories of
$\IndCohC(X,\Coeff)$, and $\NilC(X,\Coeff)$, $\CohC(X,\Coeff)$ are Serre sub-categories of $\QCohC(X,\Coeff)$.

If the Frobenius $\Fr$ on $X$ is finite, then all categories in diagram \autoref{CSh-CCatDiag} are Serre sub-categories of $\QCohC(X,\Coeff)$.\side{is needed to guarantee that $\LNilCohC$ is a Serre subcategory of $\QCohC$}\side{}
\end{prop}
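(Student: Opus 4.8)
The plan is to verify the Serre-subcategory property for each of the six categories in the diagram \eqref{CSh-CCatDiag}, building on the pieces already established. We already know from \autoref{CSh-Serre} that $\CohC(X,\Coeff)$ and $\IndCohC(X,\Coeff)$ are Serre in $\QCohC(X,\Coeff)$ (the latter using finiteness of $\Fr$), so it suffices to treat $\NilC$, $\LNilC$, and $\LNilCohC$, and then to observe that ``Serre in $\IndCohC$'' plus ``$\IndCohC$ Serre in $\QCohC$'' gives ``Serre in $\QCohC$'' once $\Fr$ is finite. First I would handle $\LNilC(X,\Coeff)$: by \autoref{CSh-CharLocNilCoh} it is characterized by the condition that every coherent $\CO_{X\times\Coeff}$-submodule is killed by some $\kappa^e\Frid^e_*$, and this condition is visibly inherited by subobjects, quotients, and extensions (for an extension $0\to\UCV'\to\UCV\to\UCV''\to 0$ and a coherent $M\subseteq\CV$, the image $\overline{M}$ in $\CV''$ is killed by some $\kappa^{e_2}\Frid^{e_2}_*$, so $\kappa^{e_2}\Frid^{e_2}_*(M)$ lands in $\CV'$ and is a coherent submodule there since $\Fr$ is finite, hence killed by a further $\kappa^{e_1}\Frid^{e_1}_*$; take $e=e_1+e_2$). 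For $\NilC(X,\Coeff)=\LNilC(X,\Coeff)\cap\CohC(X,\Coeff)$, being the intersection of two Serre subcategories it is again Serre.

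Next I would treat $\LNilCohC(X,\Coeff)$, which by definition consists of the $\UCM\in\IndCohC$ containing a coherent $\UCC$ with $\UCM/\UCC\in\LNilC$. The key is to show this is exactly the Serre subcategory of $\IndCohC(X,\Coeff)$ generated by $\CohC$ and $\LNilC$ — equivalently, that it is closed under subobjects, quotients, and extensions inside $\IndCohC$. Closure under quotients is immediate (push the coherent subobject forward). For subobjects $\UCN\subseteq\UCM$, intersect with $\UCC$: $\UCN\cap\UCC$ is coherent (submodule of a coherent sheaf, $X$ Noetherian) and $\UCN/(\UCN\cap\UCC)\hookrightarrow\UCM/\UCC$ is locally nilpotent by the closure of $\LNilC$ under subobjects; but one must first know $\UCN\in\IndCohC$, which holds since $\IndCohC$ is Serre (here finiteness of $\Fr$ enters again via \autoref{CSh-Serre}). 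Closure under extensions follows by a diagram chase: given $0\to\UCM'\to\UCM\to\UCM''\to 0$ with coherent $\UCC'\subseteq\UCM'$, $\UCC''\subseteq\UCM''$ having locally nilpotent quotients, lift $\UCC''$ to a coherent $\kappa$-subsheaf $\UCD\subseteq\UCM$ surjecting onto $\UCC''$ — possible by the argument in the proof of \autoref{CSh-Serre}, which crucially uses that $\Frid_*$ preserves coherence — and set $\UCC=\UCC'+\UCD$; then $\UCM/\UCC$ is an extension of $\UCM''/\UCC''$ by a quotient of $\UCM'/\UCC'$, both in $\LNilC$, hence in $\LNilC$.

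Finally, for the inclusions in the diagram to be inclusions of Serre subcategories one checks each is a full abelian subcategory closed under extensions, which is automatic once each vertex is Serre in the common ambient $\IndCohC(X,\Coeff)$; and the last sentence of the proposition follows because the composite of two inclusions of Serre subcategories is one, so Serre-ness in $\IndCohC$ plus the (finiteness-dependent) fact that $\IndCohC$ is Serre in $\QCohC$ upgrades everything to Serre-ness in $\QCohC$. The main obstacle I expect is the extension-closure of $\LNilCohC$: one has to produce a \emph{coherent $\kappa$-subsheaf} — not just a coherent $\CO_{X\times\Coeff}$-submodule — of the middle term lifting the chosen coherent piece downstairs, and this is precisely where the finiteness of $\Fr$ is indispensable, exactly as flagged in the marginal note, so the cleanest route is to cite the construction already carried out in the proof of \autoref{CSh-Serre} rather than redo it.
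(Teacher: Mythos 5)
Your overall architecture matches the paper's, and your treatment of $\LNilCohC(X,\Coeff)$ --- lifting the coherent piece $\UCC''$ to a coherent $\kappa$-subsheaf of the middle term via the construction in the proof of \autoref{CSh-Serre}, then observing that the quotient is an extension of two locally nilpotent sheaves --- is essentially the argument the paper gives. However, there is a genuine gap in your extension-closure argument for $\LNilC(X,\Coeff)$: you conclude that $\kappa^{e_2}\Frid^{e_2}_*(M)$ is a coherent submodule of $\CV'$ \emph{because $\Fr$ is finite}. That step really does require finiteness (otherwise $\Frid^{e_2}_*M$ need not be coherent), so your argument only establishes the \emph{second} assertion of the proposition. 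The first assertion --- that $\LNilC$, and with it $\NilC$ and $\LNilCohC$, are Serre subcategories of $\IndCohC(X,\Coeff)$ with \emph{no} finiteness hypothesis on $\Fr$ --- is not covered by your route, and \autoref{CSh-ExampleNotSerreSub} shows the distinction is not cosmetic: without $\Fr$ finite, $\LNilC$ fails to be closed under extensions in $\QCohC$, and it is precisely the ind-coherence of the middle term that rescues the statement.

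The paper avoids finiteness at this point by first checking directly that $\NilC$ is closed under extensions (if $\kappa^m_{\CV'}=0$ and $\kappa^n_{\CV''}=0$ then $\kappa^{m+n}_{\CV}=0$), and then, for an extension $0\to\UCV'\to\UCV\to\UCV''\to0$ with $\UCV\in\IndCohC(X,\Coeff)$, writing $\UCV$ as the union of its coherent $\kappa$-subsheaves $\UCW$: each $\UCW$ is an extension of the nilpotent coherent sheaves $\UCW\cap\UCV'$ and $\UCW/(\UCW\cap\UCV')$, hence nilpotent, so $\UCV\in\LNilC(X,\Coeff)$. Alternatively, your own argument can be repaired without finiteness by invoking \autoref{Csh-IndCohCharacterization}: since $\UCV$ is ind-coherent, the coherent submodule $M$ generates a coherent $\kappa$-subsheaf, so $\kappa^{e_2}\Frid^{e_2}_*(M)$ is contained in a coherent sheaf on a Noetherian scheme and is therefore coherent. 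With that single substitution the remainder of your proof goes through, including the final bootstrapping to $\QCrysC$-level statements via the finiteness-dependent fact that $\IndCohC(X,\Coeff)$ is Serre in $\QCohC(X,\Coeff)$.
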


\begin{proof}
In \autoref{CSh-Serre} above we already observed that $\CohC(X,\Coeff)$ is a Serre subcategory of $\QCoh(X,\Coeff)$, and that $\IndCohC(X,\Coeff)$ is a Serre subcategory of $\QCohC(X,\Coeff)$ under the assumption that $\Fr$ is finite. The only non-trivial part in each claim is to show that the respective sub-category is closed under extensions. For this let
\[
0 \to \UCV' \to  \UCV \to \UCV'' \to 0
\]
be a short exact sequence of $\kappa$-sheaves. We first verify that $\NilC(X,\Coeff)$ is closed under extensions in $\QCoh(X,\Coeff)$. For this assume that the outer terms of the above sequence lie in $\NilC(X,\Coeff)$ and the middle in  $\QCoh(X,\Coeff)$. The coherence of the middle term $\UCV$ is clear, since $\CohC(X,\Coeff)$ is a Serre subcategory which contains $\NilC(X,\Coeff)$. Choose $n,m\in\BN$ such that
$\kappa^m_{\CV'}=0$ and $\kappa^n_{\CV''}=0$. Then $\UCV$ is nilpotent since $\kappa^{m+n}_{\CV}=0$.

To check that $\LNilC(X,\Coeff)$ is closed under extensions in $\IndCohC(X,\Coeff)$ assume that the outer terms of the sequence above are in $\LNilC(X,\Coeff)$ and the middle lies in $\IndCohC(X,\Coeff)$. In particular, $\UCV$ is a union of its coherent $\kappa$-sub-sheaves $\UCW$. For any such there is a short exact sequence $0\to \UCW\cap\UCV'\to\UCW\to \UCW/(\UCW\cap\UCV')\to0$. Its outer terms lie in $\NilC(X,\Coeff)=\LNilC(X,\Coeff)\cap\CohC(X,\Coeff)$. By the previous case $\UCW$ must be nilpotent, and so $\UCV$ lies in $\LNilC(X,\Coeff)$.

Finally, if the outer terms lie in $\LNilCohC(X,\Coeff)$ we have coherent $\UCC' \subseteq \UCV'$ and $\UCC'' \subseteq \UCV''$ such that the both quotients are in $\LNilC(X,\Coeff)$. Since $\UCV \in \IndCohC(X,\Coeff)$ we can find a coherent $\kappa$-subsheaf $\UCC \subseteq \UCV$ surjecting onto $\UCC''$ and containing $\UCC'$. Possibly enlarging $\UCC'$ such that there is a short exact sequence $\UCC' \into \UCC \onto \UCC''$ we see that the quotient $\UCV/\UCC$ is an extension of the locally nilpotent $\UCV'/\UCC'$ and $\UCV''/\UCC''$. By the preceding case it follows that $\UCV/\UCC \in \LNilC(X,\Coeff)$, hence $\UCV \in \LNilCohC(X,\Coeff)$.
\end{proof}

\begin{ex}\label{CSh-ExampleNotSerreSub}
If $\Fr$ is not finite, then neither $\IndCohC(X,\Coeff)$, $\LNilC(X,\Coeff)$, nor $\LNilCohC(x,\Coeff)$ is closed under extensions in $\QCohC(X,\Coeff)$. For an example take $k$ a field with $[k:k^p]$ countably infinite (\eg~$k$ is a countable, purely transcendental extension of $\BF_p$), $X = \Spec k$, and $\Coeff=k$. Let $\{e_i\}_{i \in \mathbb{N}}$ be a basis of $k$ over $k^p$ and let $\CV=\oplus_{i \in \mathbb{N}} k \cdot g_i$ be a countable direct sum of copies of $k$. We define a $\kappa$-structure on $\CV$ as follows:
\begin{align*}
    \kappa(e_ig_1) &= e_1g_{i+1} \text{ and for $j \geq 2$ we set}\\
    \kappa(e_ig_j) &= \begin{cases} e_{i+1}g_j &\text{if $i<j$} \\ 0 &\text{if $i\geq j$} \end{cases}
\end{align*}
Then for $j \geq 2$, the coherent $\kappa$-subsheaf $k g_j$ is nilpotent of order $j$, hence $\UCV'=\oplus_{j \geq 2} k g_j$ is a locally nilpotent ind-coherent $\kappa$-subsheaf of $\UCV$. The quotient $\UCV/\UCV'$ is coherent and has zero structural map (since $\kappa(\UCV) \subseteq \UCV'$). However $\UCV$ cannot be locally nilpotent as this would require that $k \cdot g_1$ is annihilated by a single power of $\kappa$. But $\kappa(k \cdot g_1)=\UCV'$ by construction, and $\UCV'$ is clearly not annihilated by a single power of $\kappa$, this is not possible. Similarly, $\UCV$ cannot be ind-coherent, since any $\kappa$-subsheaf that has nonzero intersection with $k g_1$ must contain $k g_1$, and hence contains $\kappa(k g_1)=\UCV'$. Since $\UCV'$ is not coherent it follows that $\UCV$ cannot be ind-coherent. Hence $\UCV$ is an example for an extension of ind-coherent locally nilpotent $\kappa$-sheaves that is itself neither ind-coherent nor locally nilpotent. Clearly, $\UCV$ is also not in $\LNilCohC(X,\Coeff)$ even though both $\UCV'$ and $\UCV''$ are.
\end{ex}

\subsection{\'Etale pullback and localization}
For an essentially \'etale map of schemes $f\colon Y \to X$ the pullback $f^*$ of quasi-coherent sheaves induces a pullback functor for $\kappa$-sheaves. With this at hand we will show that all of the categories introduced above localize well, so that we will be able to check membership in each of these locally on an affine cover. This will be convenient in several cases below. As we will explain later, for an arbitrary essentially of finite type map it is the extraordinary inverse image functor $f^!$ from \cite{HartshorneRD} that induces a functor for Cartier sheaves. Since for an essentially \'etale map $f$ one has $f^* \cong f^!$ this will recover the more elementary construction that follows now. We start to recall a well known lemma.
\begin{lem}
\label{CSh-FrobEtale}
Let $j \colon U \to X$ be a an essentially \'etale\footnote{It is conceivable, that this lemma is valid if $j$ is only formally \'etale. As Brian Conrad pointed out to us, for maps of \emph{Noetherian} schemes formally \'etale does imply flatness (this is not true if one drops Noetherian), so the flat base change still applies.} map of schemes, then
\[
\xymatrix{   U \ar[r]^j \ar[d]_\Fr & X \ar[d]^\Fr \\
             U \ar[r]^j & X
             }
\]
is a cartesian square. Furthermore, the natural transformation of functors $j^*\Fr_* \to \Fr_* j^*$ is an isomorphism.
\end{lem}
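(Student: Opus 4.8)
The plan is to reduce both assertions to the well-known \'etale-localization property of the Frobenius. First I would recall that a map of Noetherian schemes that is essentially \'etale is in particular flat (as noted in the footnote to the statement), so all the base-change tools available for \'etale morphisms apply. For the first assertion — that the square is cartesian — it suffices to work locally on affine opens, since being cartesian is a statement that can be checked after covering $X$ and $U$ by affines. Thus I may assume $X = \Spec A$ and $U = \Spec B$ with $A \to B$ essentially \'etale. The Frobenius on $A$ is the $q$-power map $a \mapsto a^q$, and similarly on $B$; what I must show is that the natural map $B \tensor_{A, \Fr} A \to B$ induced by $\Fr_B$ is an isomorphism. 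This is the standard fact that Frobenius is insensitive to \'etale (and more generally formally \'etale) morphisms: by the infinitesimal lifting criterion, Frobenius is the ``universal'' purely inseparable thickening, and an \'etale map has a unique lift of Frobenius. Concretely, $B \tensor_{A,\Fr} A \to B$ is surjective because $B$ is generated over $A^{[q]} = \Fr(A)$ by the image of $B$, and it is injective (equivalently, the relative Frobenius $B \tensor_{A,\Fr} A \to B$ is an isomorphism) because it becomes an isomorphism after the flat, even faithfully flat in the appropriate sense, \'etale base change — more precisely, relative Frobenius of an \'etale morphism is an isomorphism, a result which I would cite from \cite[\S 2]{SGA4.2} or prove directly using that $\Omega_{B/A} = 0$ together with flatness. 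This handles the cartesian square.

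For the second assertion I would deduce the base-change isomorphism $j^* \Fr_* \isoto \Fr_* j^*$ formally from the first. Since the square in the statement is cartesian and $j$ is flat, flat base change for quasi-coherent sheaves along the cartesian diagram
\[
\xymatrix{ U \ar[r]^j \ar[d]_\Fr & X \ar[d]^\Fr \\ U \ar[r]^j & X }
\]
gives precisely the canonical isomorphism $j^* \circ \Fr_* \isoto \Fr_* \circ j^*$; here one uses that $\Fr$ is affine (since on affines it corresponds to a module-finite, in fact finite, ring map, and in any case affineness of $\Fr$ is standard), so that $\Fr_*$ is exact and flat base change for the pushforward along an affine morphism applies without derived-functor subtleties. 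One should note that the natural transformation in question is the base-change morphism associated to the square, so ``is an isomorphism'' is the precise content, and no further compatibility check is needed.

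The main obstacle is the first assertion, i.e.\ checking that relative Frobenius of an essentially \'etale morphism is an isomorphism; once that is in hand the second assertion is a purely formal consequence of flat base change along an affine morphism. The subtlety in the first part is the word ``essentially'': for an honest \'etale morphism this is classical (e.g.\ \cite[Exp.~XV]{SGA4.2}), and the essentially \'etale case follows by writing $A \to B$ as $A \to C \to B$ with $A \to C$ \'etale (smooth of relative dimension zero) and $C \to B$ a localization, then using that relative Frobenius commutes with localization on the target — localizations are formally \'etale and the square for a localization is trivially cartesian because a localization $C \to B$ satisfies $B \tensor_{C,\Fr} C = B$ as $\Fr$ is a ring map compatible with forming fractions. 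Assembling these two observations (\'etale case $+$ localization case, closed under composition) yields the result for all essentially \'etale $j$, and I would spell out only this assembly step, citing the \'etale case.
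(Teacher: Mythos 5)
Your proposal is correct and takes essentially the same route as the paper: both reduce the cartesian-square claim to the statement that the relative Frobenius of an essentially \'etale morphism is an isomorphism, check it locally by splitting into the \'etale case (cited from the literature) and the localization case (done by hand), and then obtain $j^*\Fr_*\isoto\Fr_*j^*$ from flat base change along the now-cartesian square using that $j$ is flat and $\Fr$ is affine. The only cosmetic difference is that the paper spells out the construction of the base-change transformation via the adjunction units before invoking flat base change, whereas you simply identify it as the canonical base-change map; the mathematical content is the same.
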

\begin{proof}
We need to show that the natural map $\Fr_{U/X} \colon U \to U \times_{X,\Fr} X$ is an isomorphism. Since it is a bijection on the level of underlying topological spaces, we may localize to check that the map is an isomorphism. Then one reduces to the cases that $j$ is finite and \'etale (see for example \cite[Page 60]{HH90}), $j$ is an open immersion or that $j$ is a localization at some multiplicative set -- both cases are straightforward.

For the second statement, we apply to the unit of adjunction $\id \to j_*j^*$ the functor $\Fr_*$ to get $\Fr_* \to \Fr_*j_*j^* \cong j_* \Fr_* j^*$. Adjunction for $j_*$ and $j^*$ again yields the natural transformation $j^* \Fr_* \to \Fr_* j^*$, and flat base change implies that this is an isomorphism. Of course we use the fact that essentially \'etale implies flat, since both, \'etale morphisms as well as localizations are flat.
\end{proof}
\begin{defn}
\label{CSh-EtalePullback}
Let $j \colon U \to X$ be an essentially \'etale map and $\UCV$ a $\kappa$-sheaf on $X \times \CScheme$. We define $j^* \UCV$ as the $\kappa$-sheaf whose underlying sheaf is $(j \times \id)^* \CV$ with structural morphism given by the following composition
\[
    \Frid_* (j \times \id)^* \CV \cong (j \times \id)^* \Frid_* \CV \to[(j \times \id)^*\kappa] (j \times \id)^* \CV
\]
where the first isomorphism is the one from \autoref{CSh-FrobEtale}.
\end{defn}
Of particular interest to us are the case where $U \subseteq X$ is an open immersion. This is covered by the above, hence  $\UCV|_{U\times \CScheme} \colonequals  f^*\UCV$ is naturally a $\kappa$-sheaf on $U$ (with coefficients in $\Coeff$). Similarly, if $\CS$ is an arbitrary sheaf of multiplicative sets on $X$, then $S^{-1}\UCV$ is naturally a $\kappa$-sheaf. In this case, the structural map $\kappa$ is locally given by $\kappa(v/s)\colonequals \kappa(vs^{q-1})/s$. If $x \in X$ is a point, then we denote by $\UCV_x$ the $\kappa$-sheaf on $\Spec \CO_{X,x}$ (with coefficients in $\Coeff$) whose underlying $\CO_{X,x} \tensor \Coeff$-module is $(j \times \id)^*\CV$ where $j \colon \Spec \CO_{X,x} \to X$ is the natural map.

It follows immediately from the functoriality and exactness that pullback along essentially \'etale maps preserves (local) nilpotence and other properties of a $\kappa$-sheaf:

\begin{lem}\label{CSh-EtalePullbackPreservesNil}
Let $j \colon Y \to X$ be essentially \'etale.  Then the exact functor $j^*$ preserves nilpotence, local nilpotence, coherence, local nil-coherence and ind-coherence of $\kappa$-sheaves.
\end{lem}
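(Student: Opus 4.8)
The plan is to reduce everything to three formal properties of the underlying functor $(j\times\id)^*$ on quasi-coherent $\CO_{X\times\CScheme}$-modules: it is exact, because $j$ and hence $j\times\id$ is flat by \autoref{CSh-FrobEtale}; it preserves coherence, because $Y\times\CScheme$ is Noetherian so that pullback of a finitely generated sheaf is again finitely generated, hence coherent; and, being a left adjoint, it commutes with all colimits, in particular with filtered direct limits, and it sends a directed union of subsheaves to the directed union of their images. Since the forgetful functor from $\kappa$-sheaves to quasi-coherent sheaves creates kernels, cokernels and filtered colimits, all three properties pass to the functor $j^*$ of \autoref{CSh-EtalePullback}. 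Preservation of coherence is then immediate.

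Next I would treat nilpotence, which is the only step with any content. Unwinding \autoref{CSh-EtalePullback} and iterating the base-change isomorphism $j^*\Fr_*\cong\Fr_*j^*$ of \autoref{CSh-FrobEtale} — which is valid for $\Fr^n\times\id$ as well, since a composite of cartesian squares is cartesian — one identifies the $n$-th iterate $\kappa^n_{j^*\CV}$ with $(j\times\id)^*\kappa^n_{\CV}$ transported along the iterated isomorphism $(\Fr^n\times\id)_*(j\times\id)^*\cong(j\times\id)^*(\Fr^n\times\id)_*$. This is a routine induction on $n$, the inductive step being the compatibility of the base-change isomorphism with itself. Granting it, $\kappa^n_{\CV}=0$ forces $\kappa^n_{j^*\CV}=0$, so $j^*$ preserves nilpotence.

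The remaining three properties then follow formally from the first two paragraphs together with \autoref{CSh-NilDef}. For local nilpotence, write $\UCV$ as the directed union of its nilpotent $\kappa$-subsheaves and apply $j^*$: exactness keeps the inclusions injective and the previous step keeps the pieces nilpotent, so $j^*\UCV$ is again such a union. For ind-coherence, do the same with the coherent $\kappa$-subsheaves of $\UCV$. For local nil-coherence, pick a coherent $\UCC\subseteq\UCV$ with $\UCV/\UCC\in\LNilC(X,\Coeff)$ and apply the exact functor $j^*$ to $0\to\UCC\to\UCV\to\UCV/\UCC\to0$; then $j^*\UCC$ is coherent, $j^*(\UCV/\UCC)=j^*\UCV/j^*\UCC$ is locally nilpotent, and $j^*\UCV$ is ind-coherent by the previous case, whence $j^*\UCV\in\LNilCohC(Y,\Coeff)$.

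As for the main obstacle: there is essentially none beyond bookkeeping. The single point that needs attention is the induction identifying $\kappa^n_{j^*\CV}$ with the pullback of $\kappa^n_{\CV}$, i.e. checking that the base-change isomorphism of \autoref{CSh-FrobEtale} is compatible with iteration of Frobenius; everything else is the standard behaviour of a flat, coherence- and colimit-preserving pullback.
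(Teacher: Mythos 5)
Your proposal is correct and follows the same route as the paper: the paper's proof is a three-line sketch ("nilpotence is preserved by functoriality; coherence is clear; since $j^*$ is exact and commutes with filtered direct limits the other assertions are immediate"), and your argument is exactly that sketch spelled out, with the "functoriality" step made explicit as the identification of $\kappa^n_{j^*\CV}$ with the pullback of $\kappa^n_{\CV}$ via the iterated base-change isomorphism of \autoref{CSh-FrobEtale}. No gaps.
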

\begin{proof}
Nilpotence is preserved by functoriality. Coherence is clear. Since $j^*$ is exact and commutes with filtered direct limits the other assertions are immediate.
\end{proof}
Here is a converse of this observation that any of these properties may be checked on an open cover.
\begin{lem}\label{Csh-NilpoenceLocal}
Let $\UCV$ be a $\kappa$-sheaf on $X$ (Noetherian) and $\{U_i\}_{i \in I}$ a (Zariski) open covering of $X$. Then $\UCV$ is nilpotent if and only if for every $U_i$ the restriction $\UCV|_{U_i}$ is nilpotent, i.e. nilpotence can be checked on Zariski coverings.

Furthermore, $\UCV$ is locally nilpotent if and only if for all $x \in X$ the stalk $\UCV_x$ is locally nilpotent, i.e. local nilpotence can be checked on stalks (and hence on Zariski open coverings).
\end{lem}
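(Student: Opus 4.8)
Both assertions have an immediate ``only if'' direction: an open immersion, and likewise the localization $j_x\colon\Spec\CO_{X,x}\to X$, is essentially \'etale, so by \autoref{CSh-EtalePullbackPreservesNil} the pullback along it preserves nilpotence and local nilpotence. So the content is in the two converses.

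For the nilpotence statement I would use that $X$, being Noetherian, is quasi-compact, and replace $\{U_i\}$ by a finite subcover $U_1,\dots,U_r$. Assuming $\kappa^{n_i}_{\UCV|_{U_i}}=0$, set $n=\max_i n_i$. Under the canonical isomorphism $\bigl((\Fr^n\times\id)_*\CV\bigr)|_{U_i\times\CScheme}\cong(\Fr^n\times\id)_*\bigl(\CV|_{U_i\times\CScheme}\bigr)$ coming from the cartesian square of \autoref{CSh-FrobEtale} (together with \autoref{CSh-EtalePullback}, which identifies the restricted Cartier structure), the restriction $\kappa^n_{\CV}|_{U_i\times\CScheme}$ is identified with $\kappa^n_{\UCV|_{U_i}}$, hence vanishes. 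A homomorphism of quasi-coherent $\CO_{X\times\CScheme}$-modules that vanishes on an open cover is zero, so $\kappa^n_{\CV}=0$ and $\UCV$ is nilpotent.

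For the local nilpotence statement I would argue through the characterization in \autoref{CSh-CharLocNilCoh}: it suffices to prove that for every coherent $\CO_{X\times\CScheme}$-submodule $M\subseteq\CV$ there is an $e\ge0$ with $\kappa^e\bigl(\Frid^e_*M\bigr)=0$. Fix such an $M$. For each $x\in X$, the pullback $M_x\subseteq\CV_x$ along $j_x\times\id$ is a coherent $\CO$-submodule of the stalk $\UCV_x$, which is locally nilpotent by hypothesis; applying \autoref{CSh-CharLocNilCoh} to $\UCV_x$ gives an $e_x\ge0$ with $\kappa^{e_x}\bigl(\Frid^{e_x}_*M_x\bigr)=0$. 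Since $\Fr$ is finite, $\Frid^{e_x}_*M$ is coherent, and by flat base change along $j_x$ (the cartesian square of \autoref{CSh-FrobEtale}) the restriction of the map $\kappa^{e_x}\colon\Frid^{e_x}_*M\to\CV$ to $\Spec\CO_{X,x}\times\CScheme$ is exactly $\kappa^{e_x}$ on $M_x$, hence the zero map. Now $\kappa^{e_x}$ is a section of the quasi-coherent sheaf $\CHom_{\CO_{X\times\CScheme}}\bigl(\Frid^{e_x}_*M,\CV\bigr)$, whose formation commutes with restriction to Zariski opens because its source is coherent; therefore $\kappa^{e_x}$ already vanishes on $U_x\times\CScheme$ for some open neighbourhood $U_x\ni x$ in $X$. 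Covering $X$ by finitely many such $U_{x_1},\dots,U_{x_r}$ and setting $e=\max_j e_{x_j}$, the identity $\kappa^e=\kappa^{\,e-e_{x_j}}\circ\Frid^{\,e-e_{x_j}}_*\kappa^{e_{x_j}}$ shows $\kappa^e$ vanishes on each $U_{x_j}\times\CScheme$, hence on all of $X\times\CScheme$; so $\kappa^e\bigl(\Frid^e_*M\bigr)=0$, as needed. Finally, local nilpotence may be checked on a Zariski cover $\{U_i\}$ by feeding the ``only if'' direction into the stalk criterion just established: for $x\in U_i$ one has $\UCV_x=(\UCV|_{U_i})_x$, which is locally nilpotent as soon as $\UCV|_{U_i}$ is.

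The compatibility identities used above — restriction commuting with $\Frid_*$ and with the iterates $\kappa^n$ — are routine bookkeeping on the cartesian squares already recorded in \autoref{CSh-FrobEtale}. The one genuinely delicate step is spreading out the vanishing of $\kappa^{e_x}$ from the pro-open $\Spec\CO_{X,x}\times\CScheme$ to an honest Zariski neighbourhood; this is exactly where the finiteness of $\Fr$ is used (through coherence of $\Frid^{e_x}_*M$, which makes $\CHom$ commute with restriction), and where Noetherianness enters twice: once to pass to a finite subcover and once more to extract a single exponent $e$ from the a priori $x$-dependent exponents $e_x$.
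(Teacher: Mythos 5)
Your proof is correct, and it follows the same skeleton as the paper's: the ``only if'' directions via \autoref{CSh-EtalePullbackPreservesNil}, a finite subcover plus a maximal exponent for the nilpotence statement, and the local characterization of local nilpotence (\autoref{CSh-CharNilLocNil}/\autoref{CSh-CharLocNilCoh}) for the stalk criterion. The only cosmetic divergence is in the second half: the paper phrases the argument in terms of the maximal subsheaves $\UCV_e$ and $\UCV_{\mathrm{nil}}$ and simply asserts that their formation commutes with restriction and with passage to stalks, whereas you work with an arbitrary coherent submodule $M\subseteq\CV$ and spread out the vanishing of $\kappa^{e_x}$ from $\Spec\CO_{X,x}\times\CScheme$ to a Zariski neighbourhood using the coherence of $\Frid^{e_x}_*M$ and the compatibility of $\CHom$ out of a coherent sheaf with localization. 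That spreading-out is exactly the content hidden in the paper's one-line appeal to \autoref{CSh-CharNilLocNil}, so your version is a legitimate -- and in fact more complete -- rendering of the same argument; it also makes visible where the standing $\Fr$-finiteness hypothesis enters.
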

\begin{proof}
From \autoref{CSh-CharNilLocNil} it follows that the formation of the (locally) nilpotent part $\UCV_e$ for $e \geq 0$ and $\UCV_{\op{nil}}$ commute with the restriction to an open set and with taking stalks. Now, $\UCV$ is nilpotent of order $e$ (resp. locally nilpotent) if and only if for some $e \geq 0$ the inclusion $\UCV_e \subseteq \UCV$ is equality (resp. the inclusion $\UCV_{\op{nil}} \subseteq \UCV$ is equality). From this it immediately follows that $\UCV$ is nilpotent of order $e$ (resp. locally nilpotent) if and only if for all $x \in X$ the stalks $\UCV_x$ are nilpotent of order $e$ (resp. locally nilpotent). This shows all statements except the implication that if all $\UCV|_{U_i}$ are nilpotent (of some order $e_i$) then so is $\UCV$. But this follows since we always assume that $X$ is Noetherian which allows us to resort to a finite cover and to take $e$ as the maximum of the finitely many $e_i$. Then it follows that $\UCV$ is nilpotent of order~$e$.
\end{proof}
Next we point out that membership in the categories in Diagram \autoref{CSh-CCatDiag} can also be checked locally.
\begin{prop}
\label{CSh-PropertiesLocalize}
\begin{enumerate}
\item Membership in each of the categories of Diagram \autoref{CSh-CCatDiag}, $\CohC$, $\NilC$, $\LNilC$, $\LNilCohC$, or $\IndCohC$, can be checked on Zariski open covers.
\item For $\UCV \in \CohC$ membership in $\NilC$ can be checked on stalks, \ie after pullback to the spectrum of the local rings of the points of $X$.
\end{enumerate}
\end{prop}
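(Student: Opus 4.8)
The plan is to prove each of the ``local $\Leftrightarrow$ global'' equivalences separately, reducing as far as possible to lemmata already at hand. For (a), the ``only if'' direction --- if $\UCV$ lies in one of the listed categories then so does each restriction $\UCV|_{U_i}$ --- is immediate from \autoref{CSh-EtalePullbackPreservesNil}, since an open immersion is essentially \'etale. It remains to prove the ``if'' directions, and since $X$ is Noetherian we may assume the cover $\{U_i\}_{i\in I}$ is finite. For $\CohC$ this is just Zariski-locality of coherence of the underlying $\CO_{X\times\Coeff}$-module. For $\NilC$ we combine this with \autoref{Csh-NilpoenceLocal}, which already records that nilpotence is checked on Zariski covers (recalling also that a coherent locally nilpotent $\kappa$-sheaf is nilpotent). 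For $\LNilC$ we invoke the stalkwise part of \autoref{Csh-NilpoenceLocal}: each stalk $\UCV_x$, $x\in X$, is a stalk of some $\UCV|_{U_i}$, hence locally nilpotent, so $\UCV$ is locally nilpotent.

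For $\IndCohC$ I would use the characterization of \autoref{Csh-IndCohCharacterization}: it suffices to show that for any coherent $\CO_{X\times\Coeff}$-submodule $\CU\subseteq\UCV$, the $\kappa$-subsheaf $\sum_{n\geq0}\kappa^n\bigl((\Fr^n\times\id)_*\CU\bigr)$ generated by $\CU$ is coherent. By \autoref{CSh-FrobEtale}, restriction to $U_i$ commutes with $(\Fr\times\id)_*$, and it commutes with applying $\kappa$ as well, so the restriction of this $\kappa$-subsheaf to $U_i$ equals the $\kappa$-subsheaf generated by $\CU|_{U_i}$ inside $\UCV|_{U_i}$; the latter is coherent because $\UCV|_{U_i}$ is ind-coherent, by \autoref{Csh-IndCohCharacterization} applied on $U_i$. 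A quasi-coherent sheaf that is coherent on every member of a finite open cover is coherent, so the generated $\kappa$-subsheaf is coherent and $\UCV\in\IndCohC(X,\Coeff)$.

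The case $\LNilCohC$ is the main obstacle, because from the coherent $\kappa$-subsheaves with locally nilpotent quotient existing over each $U_i$ one must assemble a \emph{single} such subsheaf over $X$. First, by the previous case $\UCV$ is ind-coherent, so $\UCV=\bigcup_\alpha\UCM_\alpha$ is a filtered union of coherent $\kappa$-subsheaves. By hypothesis, for each $i$ there is a coherent $\kappa$-subsheaf $\UCC_i\subseteq\UCV|_{U_i}$ with $(\UCV|_{U_i})/\UCC_i$ locally nilpotent. Since the underlying module of $\UCC_i$ is coherent and $U_i$ is quasi-compact, while $\{\UCM_\alpha|_{U_i}\}_\alpha$ is a filtered family of submodules of $\UCV|_{U_i}$ with union $\UCV|_{U_i}$, there is an index $\alpha_i$ with $\UCC_i\subseteq\UCM_{\alpha_i}|_{U_i}$. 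As $I$ is finite, choose $\alpha$ dominating all the $\alpha_i$ and set $\UCC\colonequals\UCM_\alpha$. Then for each $i$ the restriction $(\UCV/\UCC)|_{U_i}=(\UCV|_{U_i})/(\UCC|_{U_i})$ is a quotient of the locally nilpotent $(\UCV|_{U_i})/\UCC_i$, hence locally nilpotent, so by \autoref{Csh-NilpoenceLocal} the quotient $\UCV/\UCC$ is locally nilpotent and $\UCV\in\LNilCohC(X,\Coeff)$. This proves (a).

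For (b), note that if $\UCV$ is coherent then each stalk $\UCV_x$ is coherent as well, since $\Spec\CO_{X,x}\to X$ is essentially \'etale (\autoref{CSh-EtalePullbackPreservesNil}), and for a coherent $\kappa$-sheaf membership in $\NilC$ is equivalent to local nilpotence. Hence (b) is exactly the stalkwise criterion for local nilpotence of \autoref{Csh-NilpoenceLocal}.
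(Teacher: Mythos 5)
Your proof is correct and follows essentially the same route as the paper: Zariski-locality of coherence for $\CohC$, \autoref{Csh-NilpoenceLocal} for $\NilC$ and $\LNilC$, the characterization \autoref{Csh-IndCohCharacterization} for $\IndCohC$, and for $\LNilCohC$ the reduction to a single global coherent $\kappa$-subsheaf dominating all the local ones, using the already-established ind-coherence of $\UCV$. The only (harmless) variation is in that last step, where you locate the $\UCC_i$ inside one member of the filtered family of coherent $\kappa$-subsheaves of $\UCV$, whereas the paper extends each $\UCC_i$ to a coherent subsheaf of $\UCV$ and takes the coherent $\kappa$-subsheaf it generates.
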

\begin{proof}
For (a) note that coherence is local for Zariski coverings, which shows the statement for $\CohC$. For $\NilC$ and $\LNilC$ the statement follows from the preceding \autoref{Csh-NilpoenceLocal}. Using the characterization in \autoref{Csh-IndCohCharacterization} that $\UCM \in \IndCohC(X,\Coeff)$ if and only if the $\kappa$-subsheaf generated by any coherent $\CO_{X \times \Coeff}$-submodule of $\UCM$ is also coherent, assertion (a) follows for $\IndCohC(X,\Coeff)$ as well, since coherence for $\CO_{X \times \CScheme}$-modules is checked locally. It remains to show (a) for $\LNilCohC$. For this let $U_1, \ldots, U_n$ be an open affine cover of $X$ such that for each $i$ the restriction $\UCV|_{U_i}$ is in $\LNilCohC$. That is we have for each $i$ a short exact sequence
\[
    0 \to \UCC_i \to \UCV|_{U_i} \to \UCN_i \to 0
\]
where $\UCC_i$ is a coherent and $\UCN_i$ a nilpotent $\kappa$-sheaf on $U_i$. Extend each $\UCC_i$ to a coherent subsheaf $\CC'_i$ of $\UCV$. Since $\UCV$ is ind-coherent, by \autoref{Csh-IndCohCharacterization} the $\kappa$-subsheaf $\UCC$ of $\UCV$ which is generated by the sum of all $\CC_i$ is coherent. By (a) for $\LNilC$ it follows that $\UCV/\UCC$ is locally nilpotent and hence $\UCV \in \LNilCohC$.

For (b) observe that for coherent $\kappa$-sheaves nilpotence is the same as local nilpotence. Hence the assertion follows from \autoref{Csh-NilpoenceLocal}.
\end{proof}

\subsection{Cartier crystals}

Our aim is to study Cartier sheaves \emph{up to nilpotence}. This is made precise by the process of localization of the category of coherent Cartier sheaves $\CohC(X,\Coeff)$ at its Serre subcategory of nilpotent Cartier sheaves $\NilC(X,\Coeff)$. We follow the setup for localization as recalled in \cite[\S~2.2]{BoPi.CohomCrys}. The process is very similar to the passage from the category of (bounded) complexes of an abelian category to the derived category by inverting quasi-isomorphisms, but simpler. Namely, the Serre sub-categories of a Grothendieck category are in one to one correspondence to the saturated multiplicative systems. Starting with the Serre subcategory of nilpotent Cartier sheaves, the corresponding saturated multiplicative system consists precisely of those maps $\phi$ whose kernel and cokernel are locally nilpotent. We define:

\begin{defn}
\label{CSh-NilisoDef}
A homomorphism of $\kappa$-sheaves is called a \emph{nil-iso\-morph\-ism} if both its kernel and cokernel are locally
  nilpotent.
\end{defn}

Since a coherent $\kappa$-sheaf is nilpotent if and only it is locally nilpotent, a map of coherent $\kappa$-sheaves is a nil-iso\-morph\-ism if and only if its kernel and cokernel are \emph{nilpotent} (and not merely locally nilpotent). The following simple characterization of nil-isomorphisms will be useful. We omit the straightforward proof.

\begin{prop}\label{CSh-NilIso}
    Kernel and cokernel of a homomorphism $\phi\! \colon \UCV\to\UCW$ of $\kappa$-sheaves are nilpotent if and only if there exist $n\ge0$ and a homomorphism of $\kappa$-sheaves $\alpha$ making the following diagram commute:
    \[
        \xymatrix{ \Fr^n_*\UCV \ar[r]^-{\kappa^n} \ar[d]_{\Fr^n_*\phi}& \UCV \ar[d]^\phi\\ \Fr^n_*\UCW \ar[ur]^\alpha \ar[r]^-{\kappa^n} & \UCW \rlap{.} \\
        }
    \]
    In particular, if such a diagram exists, then $\phi$ is a nil-isomorphism. If $\phi\! \colon \UCV \to \UCW$ is a homomorphism of \emph{coherent} $\kappa$-sheaves then the existence of the diagram is equivalent to $\phi$ being a nil-isomorphism.
\end{prop}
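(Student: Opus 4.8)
The plan is to establish the two implications separately. The implication ``diagram $\Rightarrow$ nilpotence'' is a short diagram chase, and it even yields \emph{nilpotent} (not just locally nilpotent) kernel and cokernel: given $n$ and $\alpha$, write $\UCK=\ker\phi$ and restrict the relation $\alpha\circ\Fr^n_*\phi=\kappa^n_\UCV$ to the $\kappa$-subsheaf $\Fr^n_*\UCK\subseteq\Fr^n_*\UCV$, where $\Fr^n_*\phi$ vanishes, to get $\kappa^n_\UCK=\kappa^n_\UCV|_{\Fr^n_*\UCK}=0$; dually, with $\pi\colon\UCW\to\coker\phi$ the projection, the relation $\phi\circ\alpha=\kappa^n_\UCW$ gives $\kappa^n_{\coker\phi}\circ\Fr^n_*\pi=\pi\circ\phi\circ\alpha=0$, and since $\Fr_*$ is exact, $\Fr^n_*\pi$ is an epimorphism, so $\kappa^n_{\coker\phi}=0$. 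In particular $\phi$ is a nil-isomorphism, and when $\phi$ is a morphism of \emph{coherent} $\kappa$-sheaves this also proves the final sentence, since such a nil-isomorphism has coherent kernel and cokernel and coherent locally nilpotent $\kappa$-sheaves are nilpotent.

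For the converse, assume $\ker\phi$ and $\coker\phi$ are nilpotent and fix $N\ge0$ with $\kappa^N$ vanishing on both. Factor $\phi=\iota\circ p$ through its image $\UCI\subseteq\UCW$ (a $\kappa$-subsheaf), with $p\colon\UCV\twoheadrightarrow\UCI$ and $\iota\colon\UCI\hookrightarrow\UCW$ morphisms of $\kappa$-sheaves, $\ker p=\ker\phi$ and $\coker\iota=\coker\phi$. Two reductions: since $\kappa^N_{\coker\phi}=0$ the image of $\kappa^N_\UCW$ lies in $\UCI$, so $\kappa^N_\UCW=\iota\circ r$ with $r\colon\Fr^N_*\UCW\to\UCI$ a $\kappa$-morphism (the lift through the monomorphism $\iota$ is automatically one); and since $\kappa^N_{\ker\phi}=0$, the map $\kappa^N_\UCV$ kills $\Fr^N_*(\ker\phi)$ and hence factors as $s\circ\Fr^N_*p$ for some $s\colon\Fr^N_*\UCI\to\UCV$, which is a $\kappa$-morphism because $\Fr^N_*p$ is an epimorphism of $\kappa$-sheaves, and then $p\circ s=\kappa^N_\UCI$. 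I would then set $n=2N$ and $\alpha=s\circ\Fr^N_*r\colon\Fr^{2N}_*\UCW\to\UCV$, a composite of morphisms of $\kappa$-sheaves.

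The two triangles are checked by a calculation using $\kappa^{m+k}=\kappa^m\circ\Fr^m_*\kappa^k$ and the fact that a $\kappa$-morphism intertwines iterates of $\kappa$ on source and target: the lower triangle is $\phi\alpha=\iota\,p\,s\,\Fr^N_*r=\iota\,\kappa^N_\UCI\,\Fr^N_*r=\iota\,r\,\Fr^N_*\kappa^N_\UCW=\kappa^N_\UCW\circ\Fr^N_*\kappa^N_\UCW=\kappa^{2N}_\UCW$; for the upper one, first $r\circ\Fr^N_*\iota=\kappa^N_\UCI$ (compose with $\iota$ mono), so $\alpha\circ\Fr^{2N}_*\phi=s\circ\Fr^N_*p\circ\Fr^N_*\kappa^N_\UCV$, and since $s\circ\Fr^N_*p$ agrees with $\kappa^N_\UCV$ after postcomposition with $p$ it differs from it by a $\kappa$-morphism $\Fr^N_*\UCV\to\ker\phi$, which dies after precomposing with $\Fr^N_*\kappa^N_\UCV=\kappa^N_{\Fr^N_*\UCV}$ because $\kappa^N_{\ker\phi}=0$; hence $\alpha\circ\Fr^{2N}_*\phi=\kappa^{2N}_\UCV$. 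The only genuinely non-formal ingredient is this ``splitting after $\kappa^N$'' of the nilpotent extension $0\to\ker\phi\to\UCV\to\UCI\to0$, and it is precisely the surviving error term $\Fr^N_*\UCV\to\ker\phi$ that forces the doubling $n=2N$ rather than $n=N$; beyond keeping the Frobenius twists straight I do not anticipate any real obstacle.
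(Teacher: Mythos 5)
Your proof is correct; the paper in fact omits this proof as ``straightforward,'' and your argument supplies it completely: the forward direction via restricting the two identities to $\Fr^n_*\ker\phi$ and composing with the epimorphism $\Fr^n_*\pi$, and the converse via the factorization through the image with $\alpha=s\circ\Fr^N_*r$ and $n=2N$, all check out, including the verifications that $r$ and $s$ are morphisms of $\kappa$-sheaves (by cancelling against the mono $\iota$ resp.\ the epi $\Fr^N_*p$). One minor remark: in the upper triangle you argue that $s\circ\Fr^N_*p$ and $\kappa^N_{\UCV}$ differ by a map into $\ker\phi$ that dies after precomposing with $\Fr^N_*\kappa^N_{\UCV}$, but this detour is unnecessary since $s$ is \emph{defined} by the exact identity $s\circ\Fr^N_*p=\kappa^N_{\UCV}$, so the computation closes immediately.
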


\begin{cor}\label{CSh-TauIsNilIso}
For every $\kappa$-sheaf \,$\UCV$ and for every $n\ge0$, the homomorphism $\kappa^n_{\CV}\! \colon \Fr^n  _*\UCV \to \UCV$ is a nil-isomorphism.
\end{cor}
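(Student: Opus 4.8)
The statement to prove is \autoref{CSh-TauIsNilIso}: for every $\kappa$-sheaf $\UCV$ and every $n\ge 0$, the map $\kappa^n_\CV\colon \Fr^n_*\UCV \to \UCV$ is a nil-isomorphism.

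The plan is to apply \autoref{CSh-NilIso} directly. That proposition says: a homomorphism $\phi\colon \UCV \to \UCW$ of $\kappa$-sheaves has nilpotent (hence locally nilpotent, after the appropriate reading) kernel and cokernel as soon as there exists $m\ge 0$ and a homomorphism of $\kappa$-sheaves $\alpha\colon \Fr^m_*\UCW \to \UCV$ filling in the triangle, i.e.\ $\alpha\circ(\Fr^m_*\phi) = \kappa^m_{\CV}$ and $\phi\circ\alpha = \kappa^m_{\CW}$.

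First I would set $\phi = \kappa^n_\CV\colon \Fr^n_*\UCV \to \UCV$, so that the role of $\UCW$ is played by $\UCV$ itself. I then need to produce, for some $m$, a $\kappa$-sheaf homomorphism $\alpha\colon \Fr^m_*\UCV \to \Fr^n_*\UCV$ making the relevant diagram commute. The natural candidate is $\alpha := \kappa^{m-n}$ viewed as a map $\Fr^m_*\UCV = (\Fr^n)_*(\Fr^{m-n})_*\UCV \to \Fr^n_*\UCV$, obtained by applying $(\Fr^n\times\id)_*$ to $\kappa^{m-n}_\CV\colon \Fr^{m-n}_*\UCV \to \UCV$; taking $m=2n$ is the clean choice, giving $\alpha = \Fr^n_*\kappa^n_\CV$. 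With this choice both composites are $\kappa^{2n}_\CV$ by the very definition of the iterates $\kappa^j_\CV$ (namely $\kappa^{i+j}_\CV = \kappa^i_\CV \circ (\Fr^i\times\id)_*\kappa^j_\CV$, which follows inductively from the defining recursion $\kappa^{j+1}_\CV = \kappa_\CV\circ\Frid_*\kappa^j_\CV$): on one side, $\phi\circ\alpha = \kappa^n_\CV\circ \Fr^n_*\kappa^n_\CV = \kappa^{2n}_\CV$; on the other, $\alpha\circ(\Fr^n_*\phi) = \Fr^n_*\kappa^n_\CV \circ \Fr^n_*\kappa^n_\CV = \Fr^n_*(\kappa^n_\CV\circ \Fr^n_*\kappa^n_\CV)$, and one checks this again equals $\kappa^{2n}_\CV$ by the additivity of the iterates together with the identity $\Fr^n_*\kappa^n_\CV$ being a map of $\kappa$-sheaves. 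I should double-check that $\alpha$ is genuinely a morphism of $\kappa$-sheaves $\Fr^{2n}_*\UCV \to \Fr^n_*\UCV$, which is exactly the assertion (already noted in the paragraph preceding \autoref{CSh-NilDef}) that $\kappa^j_\CV$ is a $\kappa$-sheaf homomorphism $\Fr^j_*\UCV\to\UCV$, pushed forward along the exact functor $(\Fr^n\times\id)_*$.

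The only mild subtlety — and the step I expect to require the most care — is the distinction between \emph{nilpotent} and \emph{locally nilpotent} in the general (not necessarily coherent) case: \autoref{CSh-NilIso} literally concludes that kernel and cokernel are \emph{nilpotent} when the $\alpha$-diagram exists, and since nilpotent implies locally nilpotent, $\kappa^n_\CV$ is a nil-isomorphism in the sense of \autoref{CSh-NilisoDef}. For $n=0$ the map is the identity and there is nothing to prove, so one may assume $n\ge 1$. Thus the proof reduces to writing down $m=2n$ and $\alpha=\Fr^n_*\kappa^n_\CV$, verifying the two triangle identities by unwinding the recursive definition of $\kappa^j_\CV$, and invoking \autoref{CSh-NilIso}.

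\begin{proof}
For $n=0$ the map $\kappa^0_{\CV}=\id$ is an isomorphism, so we may assume $n\ge 1$. We apply \autoref{CSh-NilIso} with $\phi=\kappa^n_{\CV}\colon \Fr^n_*\UCV\to\UCV$ and with $\UCW=\UCV$. As recalled before \autoref{CSh-NilDef}, for every $j$ the iterate $\kappa^j_{\CV}$ is a homomorphism of $\kappa$-sheaves $\Fr^j_*\UCV\to\UCV$; applying the exact functor $(\Fr^n\times\id)_*$ we obtain a homomorphism of $\kappa$-sheaves
\[
    \alpha\colonequals \Fr^n_*\kappa^n_{\CV}\colon \Fr^{2n}_*\UCV\longto \Fr^n_*\UCV .
\]
From the defining recursion $\kappa^{j+1}_{\CV}=\kappa_{\CV}\circ\Frid_*\kappa^j_{\CV}$ one gets by induction the identity $\kappa^{i+j}_{\CV}=\kappa^i_{\CV}\circ(\Fr^i\times\id)_*\kappa^j_{\CV}$ for all $i,j\ge 0$. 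Taking $i=j=n$ yields
\[
    \phi\circ\alpha=\kappa^n_{\CV}\circ\Fr^n_*\kappa^n_{\CV}=\kappa^{2n}_{\CV},
\]
which is the right-hand triangle of \autoref{CSh-NilIso} (with exponent $m=2n$). For the left-hand triangle, apply $(\Fr^n\times\id)_*$ to the same identity $\kappa^{2n}_{\CV}=\kappa^n_{\CV}\circ\Fr^n_*\kappa^n_{\CV}$ to get
\[
    \Fr^n_*\kappa^{2n}_{\CV}=\Fr^n_*\kappa^n_{\CV}\circ\Fr^n_*\bigl(\Fr^n_*\kappa^n_{\CV}\bigr)=\alpha\circ\bigl(\Fr^n_*\phi\bigr).
\]
Finally, $\Fr^n_*\kappa^{2n}_{\CV}=(\Fr^n\times\id)_*\kappa^{2n}_{\CV}$ is precisely $\kappa^{2n}_{\Fr^n_*\CV}$, the $(2n)$-th iterate of the structural map of $\Fr^n_*\UCV$, which again by the displayed additivity identity equals $\kappa^{2n}$ on $\Fr^n_*\UCV$; hence the left triangle reads $\alpha\circ(\Fr^n_*\phi)=\kappa^{2n}$, as required. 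By \autoref{CSh-NilIso} the kernel and cokernel of $\phi=\kappa^n_{\CV}$ are nilpotent, hence locally nilpotent, so $\kappa^n_{\CV}$ is a nil-isomorphism.
\end{proof}
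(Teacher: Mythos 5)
Your proof is correct and follows the same strategy as the paper: apply \autoref{CSh-NilIso} to $\phi=\kappa^n_{\CV}$ by exhibiting an explicit $\alpha$. The paper's witness is just more economical — it takes the exponent $m=n$ and $\alpha=\id_{\Fr^n_*\CV}$, for which both triangles commute tautologically, whereas your choice $m=2n$, $\alpha=\Fr^n_*\kappa^n_{\CV}$ works but requires the extra bookkeeping with the additivity of the iterates.
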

\begin{proof}
Apply \autoref{CSh-NilIso} to $\phi= \kappa^n_{\CV}$ and $\alpha = \id_{\Fr^n_*\CV}$.
\end{proof}

By \autoref{CSh-LNilIsSerre} the category $\NilC(X,\Coeff)$ is a Serre
subcategory of $\CohC(X,\Coeff)$. By \cite[Prop.~2.3.2]{BoPi.CohomCrys}
the corresponding saturated multiplicative system is the
class of nil-isomorphisms.  In the remainder of this section, we will
study basic properties of the associated localized category.

Our first concern is existence.  Recall first that $\QCohC(X,\Coeff)$ is a
Grothendieck category by \autoref{CSh-QisGrothCat}.  Thus it is locally
small, and the same follows for any full subcategory.  From \cite[Prop.~2.3.4]{BoPi.CohomCrys} we deduce that localization of $\CohC(X,\Coeff)$ with
respect to nil-isomorphisms yields a well defined abelian category.

\begin{defn}\label{CSh-QuotDef}
The localization of $\CohC(X,\Coeff)$ at the multiplicative system of
nil-isomorphism is denoted by $\CrysC(X,\Coeff)$.
We refer to the objects of $\Crys(X,\Coeff)$ as \emph{Cartier-$\Coeff$-crystals
  on}~$X$, or just as Cartier crystals, or $\kappa$-crystals.
\end{defn}
In order to define certain functors on the category of $\kappa$-crystals it is necessary to give a slightly different presentation of $\CrysC(X,\Coeff)$ within a bigger ambient category. Recall the natural inclusion of subcategories
\begin{equation}
    \CohC(X,\Coeff) \subseteq \LNilCohC(X,\Coeff) \subseteq \IndCohC(X,\Coeff) \subseteq \QCohC(X,\Coeff)
\end{equation}
By localization at the multiplicative system of nil-isomorphisms we obtain a sequence of inclusions\side{The existence of $\QCrysC$ requires $\Fr$-finite, since only then is $\LNilCohC$ a Serre subcategory. To remedy this, one could close $\LNilCohC$ up under extensions in $\QCohC$ and localize at the resulting Serre subcategory. Since $\LNilCohC$ is a Serre-subcategory the statements in this section are all still true.}
\begin{equation}\label{Csh-eq_incl}
    \CrysC(X,\Coeff) \subseteq \LNilCrysC(X,\Coeff) \subseteq \IndCrysC(X,\Coeff) \subseteq \QCrysC(X,\Coeff).
\end{equation}
Note that in order for the localization to exist we must know that the localizing subcategory $\LNilC$ is a Serre-subcategory. Hence in the case of $\QCohC$ we need to assume that the Frobenius on $X$ is finite, \cf~\autoref{CSh-LNilIsSerre}. One has the following immediate consequence.
\begin{prop}\label{CSh-Crys=NilCrys}
The inclusions in \autoref{Csh-eq_incl} are fully faithful and $$\CrysC(X,\Coeff) \subseteq \LNilCrysC(X,\Coeff)$$ is an equivalence of categories.
\end{prop}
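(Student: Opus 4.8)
The plan is to deduce both assertions from the ``calculus of fractions'' description of morphisms in a Gabriel quotient, as recalled in \cite[\S 2.2]{BoPi.CohomCrys}, together with one structural observation: each of the categories $\CohC(X,\Coeff)$, $\LNilCohC(X,\Coeff)$ and $\IndCohC(X,\Coeff)$ is closed under subobjects and quotients inside $\QCohC(X,\Coeff)$. For $\CohC(X,\Coeff)$ this holds because sub- and quotient sheaves of a coherent sheaf on the Noetherian scheme $X\times\CScheme$ are again coherent; for $\IndCohC(X,\Coeff)$ it follows from the characterization in \autoref{Csh-IndCohCharacterization}; and for $\LNilCohC(X,\Coeff)$ one intersects, respectively takes the image of, a witnessing coherent subsheaf and uses that $\LNilC(X,\Coeff)$ is a Serre subcategory of $\IndCohC(X,\Coeff)$ by \autoref{CSh-LNilIsSerre}. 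One also notes that along each inclusion of \autoref{Csh-eq_incl} the Serre subcategory at which the smaller category is localized is exactly the intersection with the larger one's localizing subcategory: indeed $\NilC(X,\Coeff)=\CohC(X,\Coeff)\cap\LNilC(X,\Coeff)$ by definition, while $\LNilC(X,\Coeff)\subseteq\LNilCohC(X,\Coeff)\subseteq\IndCohC(X,\Coeff)$.

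Granting this, the fully faithfulness is formal. First I would record the general fact: if $\CB\subseteq\CA$ is a Serre subcategory and $\CA'\subseteq\CA$ is a full subcategory closed under subobjects and quotients, put $\CB'=\CA'\cap\CB$ (a Serre subcategory of $\CA'$); then the composite $\CA'\hookrightarrow\CA\to\CA/\CB$ kills $\CB'$ and hence factors through an exact functor $\CA'/\CB'\to\CA/\CB$. For $A,C\in\CA'$ the colimit system defining $\Hom_{\CA/\CB}(A,C)=\varinjlim\Hom_{\CA}(A'',C/C'')$ -- over subobjects $A''\subseteq A$ with $A/A''\in\CB$ and $C''\subseteq C$ with $C''\in\CB$ -- consists entirely of pairs with $A'',C''\in\CA'$, and then automatically $A/A''\in\CA'\cap\CB=\CB'$ and $C''\in\CA'\cap\CB=\CB'$; so it is literally the same directed system that computes $\Hom_{\CA'/\CB'}(A,C)$, and the individual terms agree because $\CA'\subseteq\CA$ is full. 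Hence the induced functor is fully faithful. Applying this to $\CohC\subseteq\LNilCohC$, to $\LNilCohC\subseteq\IndCohC$, and to $\IndCohC\subseteq\QCohC$ (the last under the running hypothesis that $\Fr$ is finite, which by \autoref{CSh-LNilIsSerre} is what makes $\LNilC(X,\Coeff)$ a Serre subcategory of $\QCohC(X,\Coeff)$) yields the first assertion.

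For the equivalence it then remains to check that $\CrysC(X,\Coeff)\to\LNilCrysC(X,\Coeff)$ is essentially surjective. Given $\UCM\in\LNilCohC(X,\Coeff)$, the definition (\autoref{CSh-NilDef}) provides a coherent $\kappa$-subsheaf $\UCC\subseteq\UCM$ with $\UCM/\UCC\in\LNilC(X,\Coeff)$. The inclusion $\UCC\hookrightarrow\UCM$ has zero kernel and locally nilpotent cokernel, so it is a nil-isomorphism in the sense of \autoref{CSh-NilisoDef} and therefore becomes an isomorphism in $\LNilCrysC(X,\Coeff)$; since $\UCC$ is coherent, the object of $\LNilCrysC(X,\Coeff)$ represented by $\UCM$ lies in the essential image of $\CrysC(X,\Coeff)$. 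Together with fully faithfulness this proves the asserted equivalence.

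I expect the only genuinely delicate points to be the structural observation of the first paragraph -- that the three intermediate categories are indeed closed under subobjects and quotients in $\QCohC(X,\Coeff)$, which is exactly where the Noetherian hypothesis and \autoref{Csh-IndCohCharacterization} enter -- and the bookkeeping that makes the two colimit systems computing the $\Hom$-sets literally coincide (not merely be mutually cofinal); the remainder is routine manipulation within the localization formalism already set up in \cite{BoPi.CohomCrys}.
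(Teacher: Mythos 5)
Your proof is correct and takes essentially the same route as the paper's own (very terse) argument: full faithfulness is deduced from the identities $\NilC(X,\Coeff)=\LNilC(X,\Coeff)\cap\CohC(X,\Coeff)$ and their analogues for the other inclusions, and essential surjectivity of $\CrysC(X,\Coeff)\to\LNilCrysC(X,\Coeff)$ comes directly from the definition of $\LNilCohC(X,\Coeff)$, the inclusion of the witnessing coherent subsheaf being a nil-isomorphism. Your write-up merely supplies the localization-theoretic details (closure under subobjects and quotients, coincidence of the colimit systems computing the $\Hom$-sets) that the paper leaves implicit.
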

\begin{proof}
The full faithfulness of $\CrysC(X,\Coeff) \subseteq \LNilCrysC(X,\Coeff)$ follows from %the equality
$\NilC(X,\Coeff)=\LNilC(X,\Coeff) \cap \CohC(X,\Coeff)$ which is true by definition. For the other of inclusions of \autoref{Csh-eq_incl} this assertion is obvious. By definition it follows that the essential image of $\CrysC(X,\Coeff)$ in $\IndCrysC(X,\Coeff)$ is equal to $\LNilCrysC(X,\Coeff)$. Hence the second claim follows.
\end{proof}
However $\CrysC(X,\Coeff)$ is not closed under isomorphism in $\IndCrysC(X,\Coeff)$, hence is not a Serre subcategory. But its essential image $\LNilCrysC(X,\Coeff)$ is a Serre subcategory of $\IndCrysC(X,\Coeff)$. If $\Fr$ is finite, both are Serre subcategories of $\QCrysC(X,\hspace{-1pt}\Coeff)$.

\begin{rem}\label{CSh-Rem}
    Recall that a localized category possesses the same class of objects as the original one, but with different homomorphism sets. To avoid confusion one should keep in mind that the categorical properties of an object depend solely on the homomorphisms.  Thus in the interest of clarity we will always speak of \emph{the $\kappa$-crystal associated to} a $\kappa$-sheaf, and of \emph{the $\kappa$-sheaf underlying} a $\kappa$-crystal, although these objects are really `the same'.  This rule will help to clarify whether a property refers to the category $\CohC(X,\Coeff)$ or to $\CrysC(X,\Coeff)$.

Based on the equivalence of Serre subcategories and saturated multiplicative systems, e.g.~\cite[Prop.~2.3.2]{BoPi.CohomCrys}, we say that a $\kappa$-crystal is zero if and only if its underlying $\kappa$-sheaf is nilpotent.  To distinguish morphisms in the quotient category $\CrysC(X,\Coeff)$ from those in $\CohC(X,\Coeff)$ we often denote them by dotted arrows $\dotto$.  If a morphism in $\CrysC(X,\Coeff)$ is the image of a morphism in $\CohC(X,\Coeff)$, by abuse of notation we also denote it by a solid arrow. We use double arrows $\Longto$ to denote nil-isomorphisms.
\end{rem}

%%%%%%%%%%%%%%%%%%%%%%%%%%%%%%%%%%%%%%%%%

From \autoref{CSh-QisGrothCat} and \cite[Prop.~2.4.3, Prop.~2.4.9]{BoPi.CohomCrys} we deduce:
\begin{theorem}\label{CSh-GrothDirLim}\side{Again, existence of $QCrysC$ requires $\Fr$-finite unless we close $\LNilCohC$ up under extension in $\QCohC$.}
$\QCrysC(X,\Coeff)$ is a Grothendieck category, and $\IndCrysC(X,\Coeff)$ is a full subcategory which is closed under filtered direct limits and hence itself a Grothendieck category. $\CrysC(X,\Coeff)$ is a full subcategory of both. In particular, all three are locally small abelian categories.
% and in particular it, $\CrysC(X,\Coeff)$ and $\IndCrysC(X,\Coeff)$ are locally small abelian category.
\end{theorem}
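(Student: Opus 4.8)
The plan is to deduce the theorem from \autoref{CSh-QisGrothCat} --- which says that $\QCohC(X,\Coeff)$ and $\IndCohC(X,\Coeff)$ are Grothendieck categories, the latter being closed under filtered direct limits inside the former --- together with the formalism of Gabriel localization recalled in \cite[\S~2.4]{BoPi.CohomCrys}. By construction the categories in question are the Gabriel quotients $\QCrysC(X,\Coeff)=\QCohC(X,\Coeff)/\LNilC(X,\Coeff)$, $\IndCrysC(X,\Coeff)=\IndCohC(X,\Coeff)/\LNilC(X,\Coeff)$ and $\CrysC(X,\Coeff)=\CohC(X,\Coeff)/\NilC(X,\Coeff)$, so it suffices to check that the localizing data meet the hypotheses under which such a quotient is again a Grothendieck category.

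The first step is to verify that $\LNilC(X,\Coeff)$ is a \emph{localizing} subcategory of $\QCohC(X,\Coeff)$, \ie a Serre subcategory that is in addition closed under arbitrary coproducts. That it is a Serre subcategory is exactly \autoref{CSh-LNilIsSerre}, and this is the point at which the standing assumption that $\Fr$ is finite is essential --- without it $\LNilC(X,\Coeff)$ need not even be closed under extensions in $\QCohC(X,\Coeff)$, \cf \autoref{CSh-ExampleNotSerreSub}. Closure of $\LNilC(X,\Coeff)$ under filtered direct limits was noted in the discussion preceding \autoref{CSh-LNilIsSerre}; since a Serre subcategory is closed under extensions and hence under finite coproducts, and since an arbitrary coproduct is the filtered colimit of its finite subcoproducts, this yields closure under arbitrary coproducts. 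The same reasoning shows that $\LNilC(X,\Coeff)$, respectively $\NilC(X,\Coeff)$, is also a localizing, respectively a Serre, subcategory of $\IndCohC(X,\Coeff)$, respectively of $\CohC(X,\Coeff)$.

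Granting this, I would invoke \cite[Prop.~2.4.3]{BoPi.CohomCrys}: the quotient of a Grothendieck category by a localizing subcategory is again Grothendieck, with quotient functor $Q$ exact and possessing a right adjoint, the section functor; in particular $Q$ preserves all colimits. Applied to $\QCohC(X,\Coeff)/\LNilC(X,\Coeff)$ and to $\IndCohC(X,\Coeff)/\LNilC(X,\Coeff)$ this gives that $\QCrysC(X,\Coeff)$ and $\IndCrysC(X,\Coeff)$ are Grothendieck categories. For the closure of $\IndCrysC(X,\Coeff)$ under filtered direct limits in $\QCrysC(X,\Coeff)$ I would appeal to \cite[Prop.~2.4.9]{BoPi.CohomCrys}, which transports this property along $Q$: since $Q$ is exact, commutes with filtered colimits, and restricts to an essentially surjective functor $\IndCohC(X,\Coeff)\to\IndCrysC(X,\Coeff)$, and since $\IndCohC(X,\Coeff)$ is closed under filtered colimits in $\QCohC(X,\Coeff)$ by \autoref{CSh-QisGrothCat}, the subcategory $\IndCrysC(X,\Coeff)$ is closed under filtered colimits in $\QCrysC(X,\Coeff)$, hence is itself a Grothendieck category. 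That $\CrysC(X,\Coeff)$ is a full subcategory of both $\IndCrysC(X,\Coeff)$ and $\QCrysC(X,\Coeff)$ is the full faithfulness of the inclusions in \autoref{Csh-eq_incl}, proved in \autoref{CSh-Crys=NilCrys}; and $\CrysC(X,\Coeff)$ is abelian, being the Gabriel quotient of the abelian category $\CohC(X,\Coeff)$ by the Serre subcategory $\NilC(X,\Coeff)$. Finally, every Grothendieck category is locally small and local smallness passes to full subcategories, so all three categories are locally small abelian categories.

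The one step requiring genuine work is the first: checking that $\LNilC(X,\Coeff)$ is a localizing subcategory. This rests entirely on \autoref{CSh-LNilIsSerre} and therefore on the hypothesis that $\Fr$ is finite, and it is precisely the reason the side remarks observe that, in the absence of that hypothesis, one must first replace $\LNilCohC(X,\Coeff)$ (or $\LNilC(X,\Coeff)$) by the Serre subcategory it generates in $\QCohC(X,\Coeff)$ before the localization can be formed. Everything after that first step is formal and follows from the cited propositions of \cite{BoPi.CohomCrys} and from results already established in this section.
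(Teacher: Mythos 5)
Your proposal is correct and follows essentially the same route as the paper, which deduces the theorem in one line from \autoref{CSh-QisGrothCat} together with \cite[Prop.~2.4.3, Prop.~2.4.9]{BoPi.CohomCrys}; you merely make explicit the verification that $\LNilC(X,\Coeff)$ is localizing (via \autoref{CSh-LNilIsSerre} and the closure under filtered direct limits noted just before it) and the full-faithfulness input from \autoref{CSh-Crys=NilCrys}, all of which is consistent with the paper's intent.
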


As with nil-isomorphisms between coherent $\kappa$-sheaves, there is a
standard way to represent homomorphisms of crystals.

\begin{prop}\label{Crys1-ReprMorph}
Any homomorphism $\phi\! \colon \UCV\dotto\UCW$ in $\CrysC(X,\Coeff)$ can be
represented for suitable $n$ by a diagram
\[
    \UCV \stackrel{\ \kappa^n}\Longleftarrow \Fr^n_*\UCV \longto \UCW\, .
\]
\end{prop}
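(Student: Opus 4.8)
The plan is to use the standard description of morphisms in a Serre localization (as recalled in \cite[\S 2.2--2.3]{BoPi.CohomCrys}): a morphism $\phi\colon\UCV\dotto\UCW$ in $\CrysC(X,\Coeff)$ is represented by a roof $\UCV \stackrel{s}\Longleftarrow \UCV' \to[\psi] \UCW$, where $\UCV'$ is a coherent $\kappa$-sheaf, $\psi$ is an honest homomorphism of $\kappa$-sheaves, and $s$ is a nil-isomorphism. The content of the proposition is that one may always replace such a roof by one whose left leg is a power $\kappa^n\colon \Fr^n_*\UCV \to \UCV$ of the structural map. The idea is to pull back the roof along $\kappa^n$ for $n$ large enough and invoke \autoref{CSh-NilIso} to split the result.

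First I would take any roof $\UCV \stackrel{s}\Longleftarrow \UCV' \to[\psi]\UCW$ representing $\phi$, with $s$ a nil-isomorphism of coherent $\kappa$-sheaves. By \autoref{CSh-NilIso} applied to $s$, there exist $n\geq 0$ and a homomorphism of $\kappa$-sheaves $\alpha\colon \Fr^n_*\UCV \to \UCV'$ with $s\circ\alpha = \kappa^n_{\CV}$ and $\alpha\circ(\Fr^n_*s) = \kappa^n_{\CV'}$. Composing, set $\chi \colonequals \psi\circ\alpha \colon \Fr^n_*\UCV \to \UCW$. I claim the diagram $\UCV \stackrel{\ \kappa^n}\Longleftarrow \Fr^n_*\UCV \to[\chi] \UCW$ represents the same morphism $\phi$ in $\CrysC(X,\Coeff)$. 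Indeed $\kappa^n_{\CV}$ is a nil-isomorphism by \autoref{CSh-TauIsNilIso}, so this is a legitimate roof; and to see it is equivalent to the original roof one exhibits the common refinement $\Fr^n_*\UCV$ mapping to $\UCV'$ via $\alpha$: the triangle with $s$ commutes since $s\alpha=\kappa^n_{\CV}$, and the triangle with $\psi$ commutes by definition of $\chi=\psi\alpha$. In the localized category, two roofs with a common refinement compatible with both legs define the same morphism, so the new roof also represents $\phi$.

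The main point to be careful about — and the only place any real argument is needed — is the step producing $\alpha$, i.e.\ the correct application of \autoref{CSh-NilIso}: that proposition is stated for nil-isomorphisms of coherent $\kappa$-sheaves, so I must ensure the roof representing $\phi$ can be chosen with $\UCV'$ coherent and $s$ a nil-isomorphism \emph{between coherent $\kappa$-sheaves} (not merely locally nilpotent kernel/cokernel), which is exactly the form of the saturated multiplicative system attached to $\NilC(X,\Coeff)\subseteq\CohC(X,\Coeff)$ described just before \autoref{CSh-NilisoDef} and in \cite[Prop.~2.3.2]{BoPi.CohomCrys}. Once that normalization is in place, the verification that the two roofs agree is formal calculus of fractions. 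I would also remark that the integer $n$ is not unique and can be increased freely, since $\kappa^{n+m}_{\CV} = \kappa^n_{\CV}\circ \Fr^n_*\kappa^m_{\CV}$ and each factor is a nil-isomorphism; this flexibility is what makes the statement usable in practice. No genuine obstacle is expected: the result is essentially a repackaging of \autoref{CSh-NilIso} together with the roof-calculus description of the quotient category.
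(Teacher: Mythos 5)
Your proof is correct and follows essentially the same route as the paper: represent $\phi$ by a roof $\UCV \Longleftarrow \UCV' \to \UCW$ with $s$ a nil-isomorphism in $\CohC(X,\Coeff)$, apply \autoref{CSh-NilIso} to $s$ to produce $\alpha\colon \Fr^n_*\UCV\to\UCV'$ with $s\alpha=\kappa^n$, and replace the roof by $\UCV \stackrel{\kappa^n}\Longleftarrow \Fr^n_*\UCV \to[\psi\alpha] \UCW$, using \autoref{CSh-TauIsNilIso}. The extra care you take about coherence of $\UCV'$ and the explicit common-refinement check are fine but only make explicit what the paper leaves implicit.
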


\begin{proof}
Any morphism in the localized category $\CrysC(X,\Coeff)$ can be represented
by a diagram $\UCV\stackrel{s}{\Longleftarrow} \UCV' \stackrel{\psi}\longto
\UCW$ in $\CohC(X,\Coeff)$ where $s$ is a nil-isomorphism.  Applying
\autoref{CSh-NilIso} to~$s$, we obtain a commutative diagram
\[ \xymatrix{
\UCV & \UCV' \ar@{=>}[l]_s \ar[r]^{\psi} & \UCW \\
& \Fr^n_*\UCV \ar@{=>}[ul]^{\kappa^n}
\ar[u]_\alpha \ar[ur]_{\psi\,\alpha} & \\}\]
where $\kappa^n$ is again a nil-isomorphism by
\autoref{CSh-TauIsNilIso}.  Thus $\phi$ is also represented by
the lower edge of this diagram, which proves the assertion.
\end{proof}

\section{Cohomological operations for Cartier sheaves}\label{CSh-DefFunc}

For \emph{left} $\OXFr$--modules (called $\tau$-sheaves in the terminology of \cite{BoPi.CohomCrys}) the authors of \cite{BoPi.CohomCrys} construct the following cohomological functors: (derived) pullback $f^*$ for any morphism $f \colon Y \to X$ of schemes, (derived) proper push-forward $f_!$ for morphisms of finite type and (derived) tensor product one expects that the dual operations, namely $f^!$, $f_*$ and a twisted tensor product $\boxtimes$ are naturally defined for \emph{right} $\OXFr$--modules, \ie for Cartier sheaves. In essence this will turn out to be the case, except that for $f^!$ we require $f$ to be essentially of finite type. This seems a natural condition, since already in Grothendieck-Serre duality for coherent sheaves one has such a restriction on~$f^!$.

%{\bf Throughout this section, we will assume} that $\Fr_X$, $\Fr_Y$ etc. are finite on our base schemes $X$, $Y$ etc.

In this section we will give the constructions of $f^!$, $f_*$ on $\QCrysC(\ldots)$ and describe the main properties and compatibilities of these functors; see \autoref{CSh-Pushforward} and \autoref{CSh-sec-InverseImage}. A further functor $\boxtimes$, a kind of tensor product, is postponed past the main duality theorem which is treated in \cite{BliBoe.CartierDuality}. As we set things up, it will be clear that the functors $f^!$ and $f_*$ for Cartier sheaves agree with the corresponding functors on the underlying quasi-coherent sheaves. This is convenient as we can use the full machinery of \cite{HartshorneRD} and \cite{LipmanGrothDual} which frees us from checking many compatibilities which are already verified for $f^!$ and $f_*$ in the classical context. In this sense our approach is similar to the one of Emerton and Kisin in \cite{EmKis.Fcrys} who also use the close relation of their functors for unit $\CO_{F,X}$-modules to the classical ones on the underlying $\CO_X$-modules. The precise relationship between our and Emerton and Kisin's theory will be explained in detail in \cite{BliBoe.CartierEmKis}.\m?{Aside: It might be possible to follow the more modern "Bousfield" techniques as in Neeman, to conclude form our existence of $Rf_*$ on the unbounded derived category, the existence of a right adjoint $f^\times$. Form this one can piece together an $f^!$ by setting $f^!=f^\times$ for proper and $f^!=f^*$ for essentially etale. If I can work this out I could include an extended remark below. Nevertheless we still use the explicit nature of $f^!$ so this would not bypass a lot of work, I think}

An important step to transfer the construction of the derived functors $f_*$ and $f^!$ for quasi-coherent sheaves to Cartier sheaves is a basic but fundamental comparison result in \autoref{CSh-CompDCI} between certain derived categories, namely that $\D^*(\QCohC(X,\Coeff))$ is equivalent to  $\D^*(\QCoh(X\times\CScheme))_\kappa$ -- here the latter denotes the category of pairs consisting of an object $\CV^\bullet$ in the derived category of quasi-coherent sheaves together with a homomorphism $\Fr_*\CV^\bullet\to\CV^\bullet$ in \emph{the derived category of quasi-coherent sheaves}. For $\D^*(\QCoh(X\times\CScheme))_\kappa$ it is immediate that the desired functors $f^!$ and $f_*$ are just the restrictions of the corresponding functors on the derived category of quasi-coherent sheaves $\D^*(\QCoh(X\times\CScheme))$ on $X\times\CScheme$. %However, for $f_*$ we give an alternative construction using \v{C}ech resolutions which shows that $Rf_*$ is the derived functor of $f_*$ on $\D^*(\QCohC(X,\Coeff))$.

 It is also beneficial to have a direct construction of functors as derived functors on derived categories of Cartier sheaves or Cartier crystals. For $\tau$-sheaves this is the approach taken in \cite{BoPi.CohomCrys}. In \autoref{CSh-Pushforward} we include a treatment along these lines for the functor $Rf_*$ based on the use of \v{C}ech resolutions, which shows that $Rf_*$ is the derived functor of $f_*$ on $\D^*(\QCohC(X,\Coeff))$. This very much simplifies the proof that $Rf_*$ preserves nilpotence as one can use a spectral sequence computation. For $f^!$ we do not have such a description and we have to work significantly more to obtain the same result.

In the remainder of this article, both of the two viewpoints represented by the categories $\D^*(\QCohC(X,\Coeff))$ and  $\D^*(\QCoh(X\times\CScheme))_\kappa$ serve their purpose: On the one hand, the category $\D^*(\QCoh(X\times\CScheme))_\kappa$ is taken under Grothendieck-Serre duality to a category which can be compared with a derived category of $\tau$-sheaves. This will be important in \cite{BliBoe.CartierEmKis}, the second part of this work, where we shall treat the subtleties of such a duality for derived  categories of quasi-crystals with bounded cohomology in crystals between the $\kappa$- and $\tau$-side. On the other hand, the category $\D^*(\QCohC(X,\Coeff))$ admits a direct passage to the derived category $\D^*(\QCrysC(X,\Coeff))$ of quasi-crystals, allowing us to transfer the derived functors $Rf_*$ and $f^!$ to quasi-crystals as well.

In addition we follow the viewpoint of \cite{BoPi.CohomCrys} and work out much of our theory for the derived subcategory $\D^*(\IndCohC(X,\Coeff))$ of $\D^*(\QCohC(X,\Coeff))$ built out of ind-objects. In this setting, we construct $Rf_*$ directly and in \cite{BliBoe.CartierEmKis} we shall also construct $f^!$ for morphisms $f\colon Y\to X$ with $X$ regular. There we shall also see that the ind-categories will be technically useful in the passage from bounded $\kappa$-complexes with coherent cohomology to derived categories of $\kappa$-coherent sheaves.

\subsection{Comparison of derived categories}\label{CSh-CompDCI}

In this subsection we prove the comparison result on derived categories which was alluded to above. It is crucial to the construction of our cohomological operations (which are described below) and also crucial to the duality between $\kappa$-sheaves and the $\tau$-sheaves of \cite{BoPi.CohomCrys} which is treated in \cite{BliBoe.CartierDuality}.

For $*\in\{b,+,-,\emptyset\}$ and $?\in\{\lnil,\coh,\lncoh,\ind,\emptyset\}$,
\[
\kappa=\kappa_{\CV^\bullet}\colon \Frid_*\CV^\bullet\longto \CV^\bullet
\]
is a homomorphism in $\D^*(\QCoh(X\times\CScheme))$, such that for all $i\in\BZ$ the induced pair
\[
\Big( H^i(\CV^\bullet), \ H^i(\kappa_{\CV^\bullet}) \colon \Frid_*H^i(\CV^\bullet) \cong H^i(\Frid_*\CV^\bullet) \to[H^i\kappa] H^i(\CV^\bullet) \Big)
\]
lies in $\LNilC(X,\Coeff)$ for $? = \lnil$, in $\CohC(X,\Coeff)$ for $? = \coh$, in $\LNilCohC(X,\Coeff)$ for $? = \lncoh$, in $\IndCohC(X,\Coeff)$ for $? = \ind$, or without further condition if $?=\emptyset$. A  \emph{morphism} from $(\CV^\bullet,\kappa_{\CV})$ to $(\CW^\bullet,\kappa_{\CW})$ is a homomorphism $\phi\!:\CV^\bullet\to\CW^\bullet \in \D^*_?(\QCoh(X\times\CScheme))$, such that the following diagram commutes in $\D^*(\QCoh(X\times\CScheme))$
\begin{equation}
\label{CompDCI-TwoDefs}
{\parbox{0cm}{{\xymatrix@C+2pc{
\Frid_*\CV^\bullet\ar[r]^-{\kappa_{\CV^\bullet}}\ar[d]_{\Frid_*\phi}& \CV^\bullet\ar[d]^\phi \\
\Frid_*\CW^\bullet\ar[r]_-{\kappa_{\CW^\bullet}} & \CW^\bullet\rlap{.}
\\} }}}
\end{equation}
There is an obvious canonical inclusion
\begin{equation}\label{CompDCI-CanIncl-kappa-InAndOut}
\D^*_?(\QCohC(X,\Coeff))\into \D^*_?(\QCoh(X\times \CScheme))_\kappa
\end{equation}
where the former denotes the derived category of complexes of $\kappa$--sheaves with cohomological bounded-ness condition $*$ and whose cohomology lies in $?$. 
The main result of the present subsection is the following comparison theorem which is essential to the construction of functors. 

\begin{theorem}\label{CompDCI-EquivOfTwoCats}
For $* \in \{\emptyset, +,-,b\}$ and $? \in \{\coh,\lncoh,\ind,\emptyset\}$, the natural inclusion \autoref{CompDCI-CanIncl-kappa-InAndOut}
is an equivalence of categories.
\end{theorem}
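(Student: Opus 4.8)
The plan is to build a quasi-inverse by rigidification: replace a complex carrying a $\kappa$-action only in the derived category by a genuine complex of $\kappa$-sheaves, using $K$-injective resolutions, and then check full faithfulness by identifying the two $\Hom$-groups with a common mapping fibre. First I would eliminate the decorations. Since kernels and cokernels of $\kappa$-sheaves are formed on the underlying sheaves and $\Frid_*$ is exact, the truncation functors $\tau^{\le n},\tau^{\ge n}$ are exact on $\QCohC(X,\Coeff)$ and commute with $\Frid_*$; hence for $*\in\{+,-,b\}$ both sides are the full subcategory of the $*=\emptyset$ version carved out by boundedness of cohomology, and an equivalence for $*=\emptyset$ restricts. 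Likewise the inclusion commutes with passage to cohomology — the pair $(H^i(\CV^\bullet),H^i(\kappa))$ is literally the same object of $\QCohC(X,\Coeff)$ viewed on either side — so for a general cohomology condition $?$ both categories are carved out by the identical condition and the equivalence again restricts. This reduces the assertion to the case $*=\emptyset$, $?=\emptyset$.

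For essential surjectivity, take $(\CV^\bullet,\kappa)$ with $\kappa\colon\Frid_*\CV^\bullet\to\CV^\bullet$ a morphism in $\D(\QCoh(X\times\CScheme))$. As $X\times\CScheme$ is Noetherian and separated, $\QCoh(X\times\CScheme)$ is a Grothendieck category and has $K$-injective resolutions; pick $\CV^\bullet\to[\sim]\CI^\bullet$ with $\CI^\bullet$ $K$-injective. Exactness of $\Frid_*$ gives $\Frid_*\CV^\bullet\to[\sim]\Frid_*\CI^\bullet$, so $\kappa$ transports to a class in $\Hom_{\D}(\Frid_*\CI^\bullet,\CI^\bullet)=\Hom_{\KH}(\Frid_*\CI^\bullet,\CI^\bullet)$ (the target being $K$-injective), and any representing chain map $\kappa_I$ turns $(\CI^\bullet,\kappa_I)$ into a genuine complex of $\kappa$-sheaves isomorphic to $(\CV^\bullet,\kappa)$ in the target. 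The engine for full faithfulness is the observation that the forgetful functor $U\colon\QCohC(X,\Coeff)\to\QCoh(X\times\CScheme)$ admits an exact left adjoint, the free-$\kappa$-sheaf functor $L(\CM)=\bigoplus_{e\ge0}(\Fr^e\times\id)_*\CM$ with the tautological shift as structural map (exactness of $L$ follows from exactness of the finite pushforwards $(\Fr^e\times\id)_*$ and of direct sums in a Grothendieck category); consequently $U$ preserves injectives, and more generally $K$-injective complexes. Dually, every $\kappa$-sheaf has a functorial two-term resolution by free ones,
\[
0\longto L(\Frid_*\CV)\longto L(\CV)\longto(\CV,\kappa)\longto 0 ,
\]
which, applied termwise and totalized, yields a quasi-isomorphism $\CP^\bullet\to[\sim](\CV^\bullet,\kappa_V)$ with $\CP^\bullet$ built from free $\kappa$-sheaves.

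For full faithfulness, fix $\kappa$-complexes $(\CV^\bullet,\kappa_V)$ and $(\CW^\bullet,\kappa_W)$ and, using that $\QCohC(X,\Coeff)$ is a Grothendieck category (\autoref{CSh-QisGrothCat}), a $K$-injective $\kappa$-resolution $(\CW^\bullet,\kappa_W)\to[\sim](\CJ^\bullet,\kappa_J)$. Then
\[
\Hom_{\D(\QCohC(X,\Coeff))}\big((\CV^\bullet,\kappa_V),(\CW^\bullet,\kappa_W)\big)=\Hom_{\KH(\QCohC(X,\Coeff))}\big(\CP^\bullet,(\CJ^\bullet,\kappa_J)\big),
\]
and inserting the two-term resolution $\CP^\bullet$ together with the adjunction $\Hom_{\QCohC(X,\Coeff)}(L(-),-)=\Hom_{\QCoh(X\times\CScheme)}(-,U(-))$ exhibits this as $H^0$ of the mapping fibre of $\mathbf{R}\Hom_{\QCoh}(\CV^\bullet,\CW^\bullet)\to\mathbf{R}\Hom_{\QCoh}(\Frid_*\CV^\bullet,\CW^\bullet)$, the map being $f\mapsto\kappa_W\circ\Frid_*f-f\circ\kappa_V$; here one uses that $U\CJ^\bullet$ is $K$-injective in $\QCoh(X\times\CScheme)$ to compute the $\Hom$-complexes. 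On the other side $(\CW^\bullet,\kappa_W)\to(\CJ^\bullet,\kappa_J)$ is an isomorphism in the target category, and unwinding the notion of morphism there — again passing, via $U\CJ^\bullet$ $K$-injective, from derived-category statements to homotopy classes of chain maps — presents $\Hom((\CV^\bullet,\kappa_V),(\CJ^\bullet,\kappa_J))$ as $H^0$ of the same mapping fibre, compatibly with the inclusion. Tracing the identifications then shows that the inclusion induces a bijection on all $\Hom$-sets.

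The real work sits in the comparison in the previous paragraph: making the two presentations of the $\Hom$-group agree on the nose — i.e. identifying chain maps that commute with $\kappa$ strictly, modulo $\kappa$-linear homotopy, with the morphisms of the target, all the while carrying along the homotopies that witness commutativity. This bookkeeping is precisely what obstructs the case $?=\lnil$ (hence its absence from the statement) and what must be checked to be compatible with the conditions $\coh$, $\lncoh$, $\ind$; by contrast the ingredients feeding into it — $K$-injective resolutions on both sides, exactness of $\Frid_*$ and of the free-$\kappa$-sheaf functor, and the two-term resolution — are routine.
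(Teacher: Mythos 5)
Your reduction to $*=?=\emptyset$ and your essential-surjectivity argument are sound, and the latter is a genuinely different route from the paper's: you transport $\kappa$ onto a $K$-injective resolution $\CI^\bullet$, where $\Hom_{\D}(\Frid_*\CI^\bullet,\CI^\bullet)$ is computed by honest chain maps, whereas the paper straightens the roof representing $\kappa$ by an infinite iteration of the calculus of fractions and passes to the colimit (exactness of $\Frid_*$ and its commutation with filtered colimits turning the limit into an actual morphism of complexes), then truncates. Both arguments work, and yours is shorter. The auxiliary machinery you set up --- the exact free functor $L$, preservation of ($K$-)injectives by the forgetful functor, the two-term resolution --- is also correct.

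The gap is in full faithfulness, and it is not the bookkeeping you defer to the end: it is an obstruction that does not vanish. Your identification of the source $\Hom$-group with $H^0$ of the fibre of $R\Hom(\CV^\bullet,\CW^\bullet)\to R\Hom(\Frid_*\CV^\bullet,\CW^\bullet)$, $f\mapsto\kappa_W\circ\Frid_*f-f\circ\kappa_V$, is correct. But the target $\Hom$-group is, by definition, the set of morphisms $f$ in $\D(\QCoh(X\times\CScheme))$ with $\kappa_W\circ\Frid_*f=f\circ\kappa_V$ \emph{in the derived category}, i.e.\ the kernel of the induced map on $H^0$ --- not $H^0$ of the fibre. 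The long exact sequence of the fibre shows that the comparison map $H^0(\mathrm{fib})\to\ker(H^0\to H^0)$ is surjective (so fullness holds) with kernel the cokernel of $\Hom_{\D}(\CV^\bullet,\CW^\bullet[-1])\to\Hom_{\D}(\Frid_*\CV^\bullet,\CW^\bullet[-1])$. This cokernel is nonzero in general: take $X=\Spec\BF_q$, $\Coeff=\BF_q$ and $\UCV=\UCW=(\CO_X,0)$; then $\Hom_{\D(\QCohC(X,\Coeff))}(\UCV[-1],\UCW)=\Ext^1_{\QCohC(X,\Coeff)}(\UCV,\UCW)$ contains the classes of the extensions with underlying sheaf $\CO_X^{2}$ and $\kappa=\left(\begin{smallmatrix}0&0\\ h&0\end{smallmatrix}\right)$ for $h\in\Hom_{\CO_X}(\Fr_*\CO_X,\CO_X)$, which are non-split as extensions of $\kappa$-sheaves for $h\neq0$ yet split on underlying sheaves, hence map to $0=\Ext^1_{\CO_X}(\CO_X,\CO_X)$ in the target. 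So the "identification on the nose" you postpone cannot be carried out, and your method cannot close the argument. Note that the paper's own proof of full faithfulness proceeds quite differently --- it uses essential surjectivity for an auxiliary object to replace squares commuting in the derived category by squares commuting as maps of complexes --- and never confronts this $H^{-1}$-term; your mapping-fibre formulation has the merit of making the term visible, and you should test the paper's argument against the example above before trusting either route.
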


\begin{proof}
In this proof we denote quasi-isomorphisms by a double arrow ``$\Longleftarrow$''.
We begin by showing that the inclusion \autoref{CompDCI-CanIncl-kappa-InAndOut} is essentially surjective.  
Suppose that the diagram
\[
  \kappa_{\CV^\bullet}\colon \Frid_*\CV^\bullet\to[\phi_0] \CV^\bullet_1\stackrel{\psi_0}{\Longleftarrow} \CV^\bullet.
\]
represents an element in $ \D^*_?(\QCoh(X\times \CScheme))_\kappa$. We need to show that, up to quasi-isomorphisms, this object lies in the image of $\D^*_?(\QCohC(X,\Coeff))$. Setting $\CV_0^\bullet \colon = \CV^\bullet$, below we shall inductively construct a commutative diagram
\[
\xymatrix{& \ar[d]^{\phi_{0}}\Frid_*\CV^\bullet_0\ar@{=>}[r]&\ar[d]^{\phi_1} \Frid_*\CV^\bullet_1\ar@{=>}[r]&\ar[d]^{\phi_2}\Frid_*\CV^\bullet_2\ar@{=>}[r]&\ldots\\
 \CV^\bullet_0\ar@{=>}[r]& \CV^\bullet_1\ar@{=>}[r]&\CV^\bullet_2\ar@{=>}[r]&\CV^\bullet_3\ar@{=>}[r]&\ldots\\}
 \]
in $ \D^b_\coh(\QCoh(X\times \CScheme))$ of complexes such that the horizontal maps are quasi-isomorphisms. Since $\Frid_*$ is exact and commutes with direct limits this map of directed systems induces on the direct limit $\CV^\bullet_\infty$ a (honest) map of complexes
\begin{equation}
\label{CompDCI-KappaInf}
\kappa_\infty\!:\Frid_*\CV^\bullet_\infty\longto \CV^\bullet_\infty
\end{equation}
which makes the following diagram commutative showing that $(\CV_\infty,\kappa_\infty)$ is quasi-isomorphic to $(\CV^\bullet,\kappa_{\CV^\bullet})$:
\begin{equation}\label{CompOfDCI-QuisToDKappa}
\xymatrix{\Frid^*\CV^\bullet  \ar@{..>}[d]_{\kappa_{\CV^\bullet}} \ar@{=>}[r] & \Frid^*\CV^\bullet_\infty \ar[d]^{\kappa_\infty}\\
\CV^\bullet \ar@{=>}[r] & \CV^\bullet_\infty \\}
\end{equation}
By the construction of $\CV^\bullet_\infty$ being quasi-isomorphic to $\CV^\bullet$, it satisfies the same constraints on cohomology ($\emptyset$, $\lnil$, $\coh$, $\lncoh$ or $\ind$). If moreover $\CV^\bullet$ is concentrated in degrees $[a,b]$, then the cohomology of $\CV^\bullet_\infty$  is concentrated in this interval. Thus by standard truncation functors, we may assume that $\CV^\bullet_\infty$ itself is concentrated in $[a,b]$, and hence satisfies the same boundedness conditions as $\CV^\bullet$.

We now give the rather straightforward construction of the directed system announced above.

Consider the following diagram where the left portion represents our given morphism $\kappa_{\CV^\bullet}$.
\[
\xymatrix{&\Frid_*\CV^\bullet_0\ar[d]^{\phi_0}\ar@{=>}[rr]^{\Frid_*\psi_0}&&\ar@{-->}[d]^{\phi_1} \Frid_*\CV^\bullet_1 \\
 \CV^\bullet_0\ar@{=>}[r]^{\psi_0}& \CV^\bullet_1\ar@{==>}[rr]^{\psi_1}&&\CV^\bullet_2
 }
 \]
Since $\Fr$ is affine, the functor $\Frid_*$ is exact and hence the top map is still a quasi-isomorphism. Now the dashed arrows exist and form a commuting square by the axiom of localization changing a left fraction into a right fraction. Iterating this procedure we obtain in the limit \autoref{CompDCI-KappaInf}.

To show full faithfulness consider a morphism $\CW^\bullet \dotto \CV^\bullet $ in  $ \D^*_?(\QCoh(X\times \CScheme))_\kappa$. By essential surjectivity \autoref{CompOfDCI-QuisToDKappa} we may assume that we have a diagram of complexes
\[
\xymatrix @C
+2pc {
\Frid_*\CV^\bullet  \ar[d]_{\kappa_{\CV^\bullet}} \ar@{=>}[r]^{\Frid_*\alpha} & \Frid_*\CU^\bullet \ar[d]_{\kappa_{\CU^\bullet}} & \ar[l]_{\Frid^*\phi}   \Frid^*\CW^\bullet \ar[d]_{\kappa_{\CW^\bullet}}  \\
\CV^\bullet \ar@{=>}[r]^{\alpha} & \CU^\bullet & \CW^\bullet \ar[l]_{\phi} \rlap{,} \\}
\]
 where all vertical maps are maps of complexes and both squares \emph{commute in the derived category} $ \D^*_?(\QCoh(X\times \CScheme))$, \ie the squares composed with a suitable quasi-isomorphism $\CU^\bullet \xRightarrow{\ \phi\ } \CT^\bullet$ commute as maps of complexes to $\CT^\bullet$.

Now observe that $\Frid^*\CT^\bullet   \oT[\Frid^*\phi]  \Frid^*\CU^\bullet \to[\kappa_{\CU^\bullet}] \CU^\bullet  \xRightarrow{\ \phi\ }  \CT^\bullet$ together with $\CT^\bullet$ defines an object of $ \D^*_?(\QCoh(X\times \CScheme))_\kappa$, which by essential surjectivity is quasi-isomorphic to an object in the  image of $\D^*_?(\QCohC(X,\Coeff))$.  Replacing $(\CU^\bullet,\kappa_{\CU^\bullet})$ by this object, we may assume that the previous diagram commutes as a diagram of complexes. This proves that the given morphism $\CV^\bullet \dotto \CW^\bullet $ arises from a morphism in $\D^*_?(\QCohC(X,\Coeff))$ under the natural inclusion \autoref{CompDCI-CanIncl-kappa-InAndOut}. An entirely similar argument shows that the image of a morphism under \autoref{CompDCI-CanIncl-kappa-InAndOut} is zero in $ \D^b_\coh(\QCoh(X\times \CScheme)_\kappa$ if and only if it was zero in $\D^*_?(\QCohC(X,\Coeff))$. Hence the proof of full faithfulness is complete.
\end{proof}

In anticipation of the duality proven in \cite{BliBoe.CartierDuality} we now record a result for $\tau$-sheaves of \cite{BoPi.CohomCrys} analogous to \autoref{CompDCI-EquivOfTwoCats} for $\kappa$-sheaeves above: By \cite[Ch.~3]{BoPi.CohomCrys} a $\tau$-sheaf on $X$ over $\Coeff$ is a pair $\UCF=(\CF,\tau)$ consisting of a quasi-coherent $\CO_{X\times\CScheme}$-module $\CF$ together with an $\CO_{X\times\CScheme}$-linear map $\tau\colon \Frid^*\CF\to\CF$. As in \autoref{sec.CartierCrys} one introduces for such pairs categories of $\tau$-sheaves $\CohT$, $\NilT$, $\LNilT$, $\LNilCohT$, $\IndCohT$, $\QCohT$ and, by localization, of crystals $\CrysT$, $\LNilCrysT$, $\IndCrysT$,  $\QCrysT$ for any argument $(X,\Coeff)$. In the same way as at the beginning of this section for $\kappa$-sheaves, one can form the derived categories $D^*_?(\QCohT(X,\Coeff))$ and $D^*_?(\QCoh(X,\Coeff))_\tau$, where the latter denotes pairs $(\CF^\bullet,\tau)$ consisting of $\CF^\bullet \in D^*_?(\QCoh(X,\Coeff))_\tau$ and a \emph{morphism in the derived category} $\tau \colon \CF^\bullet \to \Frid_*\CF^\bullet$. The following analog of Theorem~\ref{CompDCI-EquivOfTwoCats} holds in this context:

\begin{theorem}\label{CompDCI-EquivOfTauCats}
For $* \in \{\emptyset, +,-,b\}$ and $? \in \{\lnil,\coh,\lncoh,\ind,\emptyset\}$, the natural inclusion
\[
\D^*_?(\QCohT(X,\Coeff)) \into \D^*_?(\QCoh(X\times \CScheme))_\tau
\]
is an equivalence of categories.
\end{theorem}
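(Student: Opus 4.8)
The plan is to reproduce the proof of \autoref{CompDCI-EquivOfTwoCats}, with the exact functor $\Frid_*$ replaced by the functor $\Frid^*$ on $K$-flat complexes. First recall the functor under consideration: a complex of $\tau$-sheaves $(\CF^\bullet,\tau^\bullet)$ carries a degreewise map $\tau^\bullet\colon\Frid^*\CF^\bullet\to\CF^\bullet$, which by the adjunction $\Frid^*\dashv\Frid_*$ is the same datum as a chain map $\widehat{\tau}^\bullet\colon\CF^\bullet\to\Frid_*\CF^\bullet$; since $\Fr$ is affine, $\Frid_*$ is exact, so $\Frid_*\CF^\bullet$ already computes $R\Frid_*\CF^\bullet$ and $\widehat{\tau}^\bullet$ may be read as a morphism $\CF^\bullet\to\Frid_*\CF^\bullet$ in the derived category. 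This defines the natural functor $\D^*_?(\QCohT(X,\Coeff))\to\D^*_?(\QCoh(X\times\CScheme))_\tau$, and it sends quasi-isomorphisms of complexes of $\tau$-sheaves to isomorphisms. By derived adjunction, giving a morphism $\CF^\bullet\to\Frid_*\CF^\bullet$ in $\D$ is the same as giving a morphism $L\Frid^*\CF^\bullet\to\CF^\bullet$ in $\D$.

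One cannot simply ``reverse all arrows'' in the proof of \autoref{CompDCI-EquivOfTwoCats}. There the inductive construction of the directed system rested on a $\kappa$-structure having the shape $\Frid_*(-)\to(-)$, with the exact, filtered-colimit-preserving functor $\Frid_*$ on the \emph{source}, so that the complexes produced by the calculus of fractions lie on the ``plain'' side and serve as the successive terms of the directed system. For a $\tau$-structure $\CF^\bullet\to\Frid_*\CF^\bullet$ the functor $\Frid_*$ sits on the \emph{target}, and as $\Frid_*$ is not essentially surjective these new complexes need not be of the form $\Frid_*(-)$. The remedy is to pass to the adjoint form $L\Frid^*\CF^\bullet\to\CF^\bullet$ and to resolve by flats. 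Namely, every object of $\D^*_?(\QCohT(X,\Coeff))$ is represented by a complex of flat $\tau$-sheaves $(\CP^\bullet,\tau^\bullet\colon\Frid^*\CP^\bullet\to\CP^\bullet)$ with $\CP^\bullet$ a $K$-flat complex of $\CO_{X\times\CScheme}$-modules, and likewise every object of $\D^*_?(\QCoh(X\times\CScheme))_\tau$ by a pair $(\CP^\bullet,\tau)$ with $\CP^\bullet$ $K$-flat and $\tau\colon\Frid^*\CP^\bullet\to\CP^\bullet$ a morphism in $\D$; such resolutions are available by the same arguments used systematically for $\tau$-sheaves in \cite[Ch.~3]{BoPi.CohomCrys}. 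On $K$-flat complexes $\Frid^*$ agrees with $L\Frid^*$, sends quasi-isomorphisms of $K$-flat complexes to quasi-isomorphisms, commutes with filtered colimits, and a filtered colimit of $K$-flat complexes is again $K$-flat; thus on this subcategory $\Frid^*$ has precisely the formal properties of $\Frid_*$ that are used in \autoref{CompDCI-EquivOfTwoCats}, and the structural morphism $\Frid^*\CP^\bullet\to\CP^\bullet$ has exactly the shape of a $\kappa$-structure.

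Granting this reduction, essential surjectivity and full faithfulness follow exactly as there. For essential surjectivity, starting from $(\CP^\bullet,\tau)$ with $\CP^\bullet$ $K$-flat and $\tau\colon\Frid^*\CP^\bullet\to\CP^\bullet$ in $\D$, one sets $\CP_0^\bullet\colonequals\CP^\bullet$ and builds inductively a directed system $\CP_0^\bullet\Rightarrow\CP_1^\bullet\Rightarrow\cdots$ of quasi-isomorphisms of $K$-flat complexes together with honest chain maps $\Frid^*\CP_i^\bullet\to\CP_{i+1}^\bullet$ compatible with the transition maps, by transporting the structural morphism along each quasi-isomorphism and converting the resulting fraction with the calculus of fractions --- verbatim the construction in the proof of \autoref{CompDCI-EquivOfTwoCats}, with $\Frid^*$ in place of $\Frid_*$. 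Passing to the filtered colimit $\CP_\infty^\bullet$, which is still $K$-flat, one has $\Frid^*\CP_\infty^\bullet=\varinjlim\Frid^*\CP_i^\bullet$, so the honest maps assemble to an honest chain map $\tau_\infty\colon\Frid^*\CP_\infty^\bullet\to\CP_\infty^\bullet$, and $(\CP_\infty^\bullet,\tau_\infty)$ is a complex of flat $\tau$-sheaves quasi-isomorphic to $(\CF^\bullet,\tau)$. Standard truncation makes $\CP_\infty^\bullet$ obey the boundedness condition $*$, and since $\CP_\infty^\bullet$ is quasi-isomorphic to $\CF^\bullet$ its cohomology obeys the condition $?$, including the case $?=\lnil$. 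Full faithfulness is obtained as in that argument: using essential surjectivity one reduces any morphism in $\D^*_?(\QCoh(X\times\CScheme))_\tau$, and its vanishing, to honest diagrams of complexes of flat $\tau$-sheaves.

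The only point that must genuinely be checked, rather than quoted, is the reduction to $K$-flat representatives together with the stated behaviour of $\Frid^*$ on them; I expect this to be the main obstacle, everything afterwards being the same bookkeeping with fractions and filtered colimits as in \autoref{CompDCI-EquivOfTwoCats}.
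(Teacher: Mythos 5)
The paper itself dispatches this theorem in one sentence: it reruns the proof of \autoref{CompDCI-EquivOfTwoCats} with the ``basic objects'' now being morphisms $\CF^\bullet\to\Frid_*\CF^\bullet$ instead of $\Frid_*\CV^\bullet\to\CV^\bullet$, and no flat resolutions appear. Your route is genuinely different: you pass to the adjoint form $L\Frid^*\CF^\bullet\to\CF^\bullet$ and to $K$-flat representatives, so that the structural morphism again has an exact, colimit-compatible functor sitting on its \emph{source}, after which the directed-system argument of \autoref{CompDCI-EquivOfTwoCats} can be quoted. Your diagnosis of why a literal reversal of arrows is delicate is fair --- the calculus of fractions modifies the target $\Frid_*\CF^\bullet$, which can only legitimately be changed by modifying $\CF^\bullet$ and applying $\Frid_*$, and $\Frid_*$ is not essentially surjective --- and the properties of $K$-flat complexes you invoke ($\Frid^*=L\Frid^*$ on them, preservation of quasi-isomorphisms between them, commutation with filtered direct limits, stability of $K$-flatness under such limits) are all correct. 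What your approach buys is an argument that is uniform with the $\kappa$-case at the price of a resolution step; what the paper's (much terser) approach buys is brevity.

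The step you flag as unchecked is, however, exactly where the content lies, and as written it is not closed. Two concrete points. First, the initial roof: represent $\tau\colon\Frid^*\CP_0^\bullet\to\CP_0^\bullet$ as a right fraction $\Frid^*\CP_0^\bullet\Leftarrow\CQ^\bullet\to\CP_0^\bullet$ and precompose with a $K$-flat resolution of $\CQ^\bullet$; this works because the quasi-isomorphism points \emph{out of} $\CQ^\bullet$, whereas trying to replace the apex of a left fraction by a $K$-flat complex would require a lifting along a quasi-isomorphism that $K$-flatness alone does not provide. Second, and more seriously, at each inductive step the Ore condition only asserts that \emph{some} complex completes the square; an arbitrary choice leaves the $K$-flat world, and then $\Frid^*$ no longer preserves the quasi-isomorphisms in your system and the induction breaks. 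You must realize each square by a homotopy pushout, i.e.\ as a cone of a map between $K$-flat complexes: cones and direct sums of $K$-flat complexes are $K$-flat and $\Frid^*$ preserves $K$-flatness, so the entire system and its direct limit stay $K$-flat. With these two amendments (and, for full faithfulness, the remark that complexes of $\tau$-sheaves admit resolutions with $K$-flat underlying complexes because locally $R[\Fr]\otimes\Coeff$ is free as a left $R\otimes\Coeff$-module, so flatness over $\CO_{X\times\CScheme}$ comes along for free), your argument goes through.
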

\begin{proof}
The proof proceeds in exactly the same way as that for Theorem~\ref{CompDCI-EquivOfTwoCats} with one change. The basic objects in that proof where morphisms $\Frid_*\CV^\bullet\to\CV^\bullet$ while the basic objects here will be morphisms $\CF^\bullet\to\Frid_*\CF^\bullet$. 
\end{proof}

\subsection{Pushforward}\label{CSh-Pushforward}

Throughout this subsection let $f\colon Y \to X$ be a morphism of schemes. We first define a derived functor $Rf_*$ on Cartier sheaves. Then we explain how to transfer this functor to Cartier crystals. Both results are rather straightforward but serve as a model for other derived functors on Cartier crystals such as $f^!$ for $f$ a finite morphism. Next we present a more delicate result who's proof goes back to an idea of Anderson in \cite{AndersonL}. It says that if $f$ is of finite type, then $Rf_*$ preserves (up to equivalence of categories) the derived category of Cartier crystals. 

\begin{defn}
\label{DF-pushforward}
For any $\UCW\in\QCohC(Y,\Coeff)$ we define $f_* \UCW$ as the $\kappa$-sheaf on $X$ over $\Coeff$ whose underlying sheaf is $(f\times\id)_* \CW$ and whose structural morphism is the composite
\[
 \kappa_{f_*\UCW} \colon  \Frid_* (f \times \id)_* \CV \to[\simeq] (f \times \id)_* \Frid_* \CV \to[(f \times \id)_*\kappa_\CV] (f \times \id)_* \CV \ .
\]
\end{defn}
The construction of the derived functor of $f_*$ is facilitated by the fact that the usual \v{C}ech complex of a $\kappa$-sheaf associated to an open covering is naturally a complex of $\kappa$-sheaves. This is simply a consequence of the \'etale pullback for $\kappa$-sheaves constructed in \autoref{CSh-EtalePullback}.

Concretely, for any finite cover $\FU$ of $Y$ and any $\UCV\in\QCohC(Y,\Coeff)$ we denote by $\Cech^\bullet_\FU(\UCV)$ the \v{C}ech complex of $\UCV$, cf.~\cite[III.4]{Hartshorne} or \cite[5.4]{BoPi.CohomCrys}. It is a resolution of $\UCV$ in $\D^*(\QCohC(Y,\Coeff))$ via the natural co-augmentation homomorphism $\UCV\to \Cech_\FU^\bullet(\UCV)$. The construction of the \v{C}ech resolution is functorial in $\UCV$ and thus extends to complexes $\UCV^\bullet$. By $\Cech_\FU(\UCV^\bullet)$ we denote the total complex of the bicomplex thus obtained\footnote{Since we are working on noetherian schemes and restrict to finite covers for our \v{C}ech complexes this works even for unbounded complexes}. It is a resolution of~$\UCV^\bullet$ in $\D^*(\QCohC(Y,\Coeff))$. Following  \cite[Thm.~I.5.1]{HartshorneRD} or \cite[Thm.~6.4.1]{BoPi.CohomCrys}, one deduces:

\begin{theorem}\label{DF-RegIm}
For any $*\in\{b,+,-,\emptyset\}$ the functor $f_*$ possesses a right derived functor
$$ Rf_*\! \colon \D^*(\QCohC(Y,\Coeff)) \longto \D^*(\QCohC(X,\Coeff)).$$
If $\FU$ is a finite affine cover of $X$, then $Rf_*$ can be defined as $\UCV^\bullet \mapsto  f_*\big(\Cech_\FU(\UCV^\bullet)\big)$.

If moreover $f$ is proper then $Rf_*$ restricts to a functor
\[
    Rf_* \colon \D^*_\coh(\QCohC(Y,\Coeff)) \longto \D^*_\coh(\QCohC(X,\Coeff))
\]
\end{theorem}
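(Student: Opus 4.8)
The statement of Theorem~\ref{DF-RegIm} has two parts, and both follow by transporting well-known facts about quasi-coherent sheaves through the forgetful functor $\UCV\mapsto\CV$. The plan is to exploit that $\Cech^\bullet_\FU$ is, by \autoref{CSh-EtalePullback}, a complex \emph{in} $\QCohC(Y,\Coeff)$ (not just in $\QCoh(Y\times\CScheme)$), that $f_*$ on $\kappa$-sheaves is defined (\autoref{DF-pushforward}) so that its underlying sheaf is the classical $(f\times\id)_*$, and that $(f\times\id)_*$ sends injectives to $f_*$-acyclics while flasque (hence \v{C}ech) resolutions compute $R(f\times\id)_*$. First I would recall the standard fact \cite[Thm.~I.5.1]{HartshorneRD} that for a finite affine cover $\FU$ of $Y$ the assignment $\CV^\bullet\mapsto(f\times\id)_*\Cech_\FU(\CV^\bullet)$ computes $R(f\times\id)_*$ on $\D^*(\QCoh(Y\times\CScheme))$, for each $*\in\{b,+,-,\emptyset\}$ (finiteness of the cover, together with Noetherianness, is what makes this work in the unbounded case). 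Then the $\kappa$-structure: since each term of $\Cech_\FU(\UCV^\bullet)$ carries its natural $\kappa$-structure functorially in $\UCV^\bullet$, applying $f_*$ term by term produces a complex of $\kappa$-sheaves on $X$, and the resulting functor
\[
    \UCV^\bullet\ \longmapsto\ f_*\bigl(\Cech_\FU(\UCV^\bullet)\bigr)
\]
lands in $\D^*(\QCohC(X,\Coeff))$.

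To see this deserves to be called $Rf_*$, I would check the two defining properties of a derived functor. Independence of the cover $\FU$: two finite affine covers are refined by a common one, and the refinement maps induce quasi-isomorphisms of \v{C}ech complexes already as complexes of $\kappa$-sheaves (the refinement maps are built from the \'etale/localization restriction maps of \autoref{CSh-EtalePullback}, hence are $\kappa$-linear), so the underlying quasi-isomorphism of quasi-coherent complexes lifts to $\D^*(\QCohC(X,\Coeff))$. Universality: a quasi-isomorphism of $\kappa$-sheaf complexes is in particular a quasi-isomorphism of the underlying quasi-coherent complexes, so the universal property of the classical $R(f\times\id)_*$ — restricted along the inclusion $\D^*(\QCohC)\hookrightarrow\D^*(\QCoh(\usc\times\CScheme))_\kappa$ and using the comparison \autoref{CompDCI-EquivOfTwoCats} to identify the latter with $\D^*(\QCohC)$ — transports verbatim. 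Concretely: the co-augmentation $\UCV^\bullet\to\Cech_\FU(\UCV^\bullet)$ is a $\kappa$-linear quasi-isomorphism, so $f_*\Cech_\FU(\usc)$ receives a natural transformation from $f_*(\usc)$ and is initial among exact functors to which $f_*$ maps; this is exactly the content of \cite[Thm.~I.5.1]{HartshorneRD} with the extra bookkeeping that every arrow respects $\kappa$.

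For the final ``moreover'' clause, suppose $f$ is proper and $\UCV^\bullet\in\D^*_\coh(\QCohC(Y,\Coeff))$; I must show $Rf_*\UCV^\bullet$ has coherent cohomology. The underlying complex $\CV^\bullet$ lies in $\D^*_\coh(\QCoh(Y\times\CScheme))$, and $f\times\id\colon Y\times\CScheme\to X\times\CScheme$ is proper (properness is stable under the base change $\usc\times\CScheme$), so by Grothendieck's coherence theorem for proper morphisms the complex $R(f\times\id)_*\CV^\bullet$ has coherent cohomology sheaves; that is, the underlying complex of $Rf_*\UCV^\bullet$ lies in $\D^*_\coh(\QCoh(X\times\CScheme))$. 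Since membership of a $\kappa$-sheaf in $\CohC(X,\Coeff)$ is detected solely on the underlying $\CO_{X\times\CScheme}$-module, and $\CohC(X,\Coeff)$ is an abelian (Serre) subcategory of $\QCohC(X,\Coeff)$ by \autoref{CSh-Serre}, the cohomology $\kappa$-sheaves $H^i(Rf_*\UCV^\bullet)$ — which are computed in $\QCohC(X,\Coeff)$ with $\kappa$ obtained by functoriality, their underlying sheaves being exactly $H^i(R(f\times\id)_*\CV^\bullet)$ — lie in $\CohC(X,\Coeff)$. Hence $Rf_*$ restricts to $\D^*_\coh(\QCohC(Y,\Coeff))\to\D^*_\coh(\QCohC(X,\Coeff))$.

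I do not expect a serious obstacle here: every step is either a citation to \cite{HartshorneRD} or a ``pass to underlying sheaves, observe the $\kappa$-structure rides along functorially'' remark. The only point requiring mild care is the unbounded case $*=\emptyset$, where one must be sure the finite \v{C}ech complex still computes $R(f\times\id)_*$ — this is why the cover is taken finite and the schemes Noetherian — and, for the coherence clause, that $\D^*_\coh$ with $*=\emptyset$ is still preserved, which again is the classical statement for proper $f\times\id$ on Noetherian schemes.
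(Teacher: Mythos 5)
Your proposal is correct and follows essentially the same route as the paper: the paper constructs the \v{C}ech complex of a $\kappa$-sheaf as a complex of $\kappa$-sheaves via the \'etale pullback of \autoref{CSh-EtalePullback} and then simply deduces the theorem from \cite[Thm.~I.5.1]{HartshorneRD} (resp.\ \cite[Thm.~6.4.1]{BoPi.CohomCrys}), with the coherence clause for proper $f$ being the classical finiteness theorem applied to $f\times\id$ together with the fact that coherence of a $\kappa$-sheaf is detected on the underlying module. (Minor remark: you correctly read the cover $\FU$ as a cover of $Y$; the statement's ``of $X$'' is a slip.)
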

\begin{rem}
The fact that we may compute $Rf_*$ on $\kappa$-sheaves via a \v{C}ech resolution associated to a finite cover shows that we have for any complex $\UCV^\bullet \in \D^*_\coh(\QCohC(Y,\Coeff))$ a convergent spectral sequence of $\kappa$-sheaves
\[
    E_2^{i,j} = R^if_*(H^j\UCV^\bullet) \To R^{i+j}f_*\UCV^\bullet.
\]
\end{rem}
The usual functor $R(f\times\id)_*$ on complexes of quasi-coherent sheaves also defines a functor
\begin{eqnarray*}
\lefteqn{R(f\times\id)_*\colon \D^*(\QCoh(Y\times\CScheme))_\kappa\longto \D^*(\QCoh(X\times\CScheme))_\kappa},\\
&&(\CV^\bullet,\kappa_{\CV^\bullet}) \longmapsto  \big( R(f\times\id)_*\CV^\bullet, R(f\times\id)_*\big( \kappa_{\CV^\bullet} \big) \big) .
\end{eqnarray*}
which agrees with $Rf_*$ for $\kappa$-sheaves via the equivalence of categories in \autoref{CompDCI-EquivOfTwoCats} This means, for a $\kappa$-sheaf $\UCV$ on $Y$ we may compute $R^if_*\UCV$ as the pair consisting of the sheaf $R^i(f\times\id)_*\CV$ together with the map $R^i(f\times\id)_*(\tau_\CV)$. This is again a consequence of the observation that the same \v{C}ech resolution computes both, $Rf_*$ and $R(f \times \id)_*$:
\begin{theorem}\label{DF-DerImages}
There is a commutative diagram
\[
\xymatrix@C+2pc{
\D^*(\QCohC(Y,\Coeff)) \ar[d]^\simeq\ar[r]^{Rf_*}& \D^*(\QCohC(X,\Coeff))\ar[d]^\simeq\\
\D^*(\QCoh(Y\times\CScheme))_\kappa\ar[r] ^{R(f\times\id)_*} & \D^*(\QCoh(X\times\CScheme))_\kappa
}
\]
with the vertical equivalences from Theorem~\ref{CompDCI-EquivOfTwoCats}.
\end{theorem}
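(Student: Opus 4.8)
The plan is to exploit the observation, already flagged in the text, that one and the same \v Cech resolution computes both $Rf_*$ (via \autoref{DF-RegIm}) and the classical $R(f\times\id)_*$; the asserted $2$-commutativity of the square then reduces to matching the two $\kappa$-structures induced on their common underlying complex of quasi-coherent sheaves. First I would reduce to the case $X=\Spec A$ affine: the two functors $Rf_*$ and $R(f\times\id)_*$, as well as the two vertical equivalences of \autoref{CompDCI-EquivOfTwoCats}, are all compatible with restriction to Zariski opens of $X$ (resp.\ of $X\times\CScheme$), so it is enough to produce the natural isomorphism over each member of an affine cover of $X$. With $X$ and $\CScheme$ affine, $X\times\CScheme$ is affine as well.

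Next, choose a finite affine open cover $\FU=\{U_i\}$ of $Y$; this exists because $Y$ is Noetherian, and since $Y$ is separated the finite intersections $U_I=U_{i_0}\cap\cdots\cap U_{i_p}$ are again affine. By \autoref{CSh-FrobEtale} applied to the open immersions $U_I\into Y$ and by \autoref{CSh-EtalePullback}, the \v Cech complex $\Cech_\FU(\UCV^\bullet)$ is naturally a complex of $\kappa$-sheaves resolving $\UCV^\bullet$, and by \autoref{DF-RegIm} we have $Rf_*\UCV^\bullet\simeq f_*\bigl(\Cech_\FU(\UCV^\bullet)\bigr)$ in $\D^*(\QCohC(X,\Coeff))$. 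Its image under the equivalence of \autoref{CompDCI-EquivOfTwoCats} has underlying complex $(f\times\id)_*\bigl(\Cech_\FU(\CV^\bullet)\bigr)$, equipped with the structural morphism in the derived category obtained by functoriality from $\kappa_{\CV^\bullet}$, using the identification $\Cech_\FU(\Frid_*\CV^\bullet)\cong\Frid_*\Cech_\FU(\CV^\bullet)$ (legitimate since $\Fr$ is finite, hence affine and a homeomorphism fixing the cover $\FU$, \cf \autoref{CSh-FrobEtale}) together with the base-change isomorphism $\Frid_*(f\times\id)_*\cong(f\times\id)_*\Frid_*$.

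On the other side, since $X\times\CScheme$ and each $U_I\times\CScheme$ are affine, the morphism $U_I\times\CScheme\to X\times\CScheme$ is affine, so every quasi-coherent sheaf on $U_I\times\CScheme$ is acyclic for $R(f\times\id)_*$. Hence each term of $\Cech_\FU(\CV^\bullet)$ is $R(f\times\id)_*$-acyclic, and by the standard \v Cech computation of derived direct images (\cite[Thm.~I.5.1]{HartshorneRD}, \cite[Thm.~6.4.1]{BoPi.CohomCrys}) we get $R(f\times\id)_*\CV^\bullet\simeq(f\times\id)_*\bigl(\Cech_\FU(\CV^\bullet)\bigr)$, and likewise $R(f\times\id)_*(\kappa_{\CV^\bullet})$ is represented on this model by applying $(f\times\id)_*$ to the \v Cech resolution of the morphism $\kappa_{\CV^\bullet}$. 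This is literally the same datum as the $\kappa$-structure produced in the previous paragraph, so the two images agree; functoriality of the \v Cech construction in $\UCV^\bullet$ makes the identification natural, and patching over the affine cover of $X$ yields the commutative square.

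The step I expect to require the most care is the verification that the $\kappa$-structure on $(f\times\id)_*\Cech_\FU(\CV^\bullet)$ coming from the Cartier-sheaf side (via the machinery of \autoref{CSh-EtalePullback} that turns the \v Cech complex into a complex of $\kappa$-sheaves) coincides, as a morphism in the derived category, with the one obtained by deriving $\kappa_{\CV^\bullet}$ classically. This is a diagram chase interlacing the Frobenius--\'etale base change of \autoref{CSh-FrobEtale}, the base change $\Frid_*(f\times\id)_*\cong(f\times\id)_*\Frid_*$, and the compatibility of both with the (finite, hence unchanged) \v Cech cover. Everything else is bookkeeping.
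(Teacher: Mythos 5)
Your proposal is correct and follows the same route the paper takes: the paper proves \autoref{DF-DerImages} simply by observing that the \v{C}ech resolution of \autoref{DF-RegIm} computes $Rf_*$ on $\kappa$-sheaves and simultaneously computes $R(f\times\id)_*$ on the underlying quasi-coherent complexes, the structural morphisms matching by functoriality. Your write-up merely fills in the details of that one-line observation (affine reduction, acyclicity of the \v{C}ech terms, compatibility of the two base-change isomorphisms), all of which check out.
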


To prepare the passage of a functor from a derived category of Cartier sheaves to a derived category of Cartier crystals, we make the following definition.
\begin{defn}\label{DF-DefCohomNilp}
Complexes in $\D^*_\lnil(\QCoh(Y,\Coeff))_\kappa$ are called {\em cohomologically nil\-potent}.

A morphism of complexes $\UCV^\bullet\to\UCW^\bullet$ in $\D^*(\QCohC(Y,\Coeff))$ is called a \emph{nil-quasi-isomorphism} if its cone is cohomologically nilpotent. The set of all such morphisms is denoted~$\SLocQNilInv$.
\end{defn}\side{Apparently, this uses $F$-finiteness for the existence of $\QCrys$, but it might be possible to save this by enlarging $\LNilCohC$.}

$\D^*_\lnil(\QCohC(Y,\Coeff))$ is a full triangulated subcategory of $\D^*(\QCohC(Y,\Coeff))$.
By the general theory of such, see~\cite{Verdier.CatDer}, the nil-quasi-isomorphisms form a saturated multiplicative system of $\D^*(\QCohC(Y,\Coeff))$ and thus the localization $\SLocQNilInv \D^*(\QCohC(Y,\Coeff))$ is defined. It is natural to compare this localized category with the derived category obtained from the localization of $\QCohC$ at nil-isomorphisms. Clearly one has a homomorphism of derived categories
$$\D^*(\QCohC(Y,\Coeff))\longto \D^*(\QCrysC(Y,\Coeff))$$ which maps all nil-quasi-isomorphism to isomorphisms. Indeed one has the following formal result:
\begin{theorem}\label{DF-CompDCII}\side{Again, for the $\QCohC$ case one needs $\Fr$-finiteness. For $\IndCohC$ and $\LNilCohC$ one gets away without.}
For $*\in\{\emptyset,+,-,b\}$, the natural morphism
\[
\SLocQNilInv \D^*(\QCohC(Y,\Coeff))\to  \D^*(\QCrysC(Y,\Coeff))
\]
is an equivalence of categories. The analogous result holds if $(\QCohC,\QCrysC)$ is replaced by either of $(\IndCohC,\IndCrysC)$ or $(\LNilCohC,\LNilCrysC)$.
\end{theorem}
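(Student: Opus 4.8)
The plan is to identify $\SLocQNilInv\D^*(\QCohC(Y,\Coeff))\to\D^*(\QCrysC(Y,\Coeff))$ as an instance of the general fact that Verdier localization commutes with passage to the derived category for a Serre subcategory, and to prove it by exhibiting an explicit quasi-inverse. The functor itself is induced by the exact Serre-quotient functor $Q\colon\QCohC(Y,\Coeff)\to\QCrysC(Y,\Coeff)$ (and likewise $\IndCohC\to\IndCrysC$, $\LNilCohC\to\LNilCrysC$), whose kernel is precisely $\LNilC(Y,\Coeff)$; since $Q$ is exact it commutes with cohomology, so $Q\CV^\bullet$ is acyclic exactly when $\CV^\bullet$ is cohomologically nilpotent, and hence $Q$ inverts nil-quasi-isomorphisms and descends to a functor $\bar Q$ on the localization, as the text already observes. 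It remains to show $\bar Q$ is an equivalence, and the argument below is the same in all three variants.

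First I would produce a quasi-inverse from the section functor. As $\LNilC(Y,\Coeff)$ is a Serre subcategory closed under filtered direct limits (the remark preceding \autoref{CSh-LNilIsSerre}), it is closed under arbitrary coproducts and is therefore a localizing subcategory of the Grothendieck category $\QCohC(Y,\Coeff)$; by Gabriel localization theory $Q$ admits a fully faithful, left-exact right adjoint $S$ with $QS\cong\id$, and $S$ preserves injectives since it is right adjoint to an exact functor. Because $\QCrysC(Y,\Coeff)$ is a Grothendieck category by \autoref{CSh-GrothDirLim}, it has enough injectives, so $RS\colon\D^+(\QCrysC(Y,\Coeff))\to\D^+(\QCohC(Y,\Coeff))$ exists; composed with the localization functor it gives $\overline{RS}\colon\D^+(\QCrysC)\to\SLocQNilInv\D^+(\QCohC)$. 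Now $(Q,RS)$ is an adjoint pair on $\D^+$ whose counit $QRS\cong R(QS)\cong\id$ is invertible, so $\bar Q\circ\overline{RS}\cong\id$; and the triangle identity forces $Q$ applied to the unit $\eta\colon\id\to RSQ$ to be an isomorphism, so the cone of each $\eta_{\CV^\bullet}$ is annihilated by $Q$, hence has cohomology in $\ker Q=\LNilC$, hence is cohomologically nilpotent and vanishes in $\SLocQNilInv\D^+(\QCohC)$. Thus $\overline{RS}\circ\bar Q\cong\id$ too, settling $*=+$; the case $*=b$ follows by a truncation argument, since $\bar Q$ preserves cohomological amplitude and a nil-quasi-isomorphism out of an unbounded complex factors through its canonical truncation (the discarded part being cohomologically nilpotent).

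The genuinely difficult part is $*\in\{-,\emptyset\}$: the section functor has no bounded cohomological dimension, so $RS$ respects neither bounded-above nor unbounded amplitude and the clean argument above breaks. Here one has two options. One may appeal to Bousfield localization: $\D(\QCohC(Y,\Coeff))$ is a well generated triangulated category and $\D_\lnil(\QCohC(Y,\Coeff))$ is the localizing subcategory generated by the set of isomorphism classes of objects of $\NilC(Y,\Coeff)$, so the Verdier quotient exists with a right adjoint and can be identified with $\D(\QCrysC(Y,\Coeff))$. Alternatively, and more in the spirit of the proof of \autoref{CompDCI-EquivOfTwoCats}, one runs a hands-on argument uniform in $*$: full faithfulness of $\bar Q$ by roof calculus together with the canonical truncation functors, and essential surjectivity by d\'evissage. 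For the latter the subtle point --- and the main obstacle of the whole theorem --- is that an object of $\D^*(\QCrysC)$ is a priori only a complex of $\kappa$-sheaves whose differentials are represented by roofs of nil-isomorphisms, so it must be rigidified degree by degree and reassembled along the truncation triangles; the recurring lemma, proved just as above from the Serre property of $\LNilC$, is that canonical truncations of cohomologically nilpotent complexes remain cohomologically nilpotent and that nil-quasi-isomorphisms persist after a suitable truncation of the apex. The same reasoning applies verbatim with $(\QCohC,\QCrysC)$ replaced by $(\IndCohC,\IndCrysC)$ or $(\LNilCohC,\LNilCrysC)$, their ambient categories being Grothendieck by \autoref{CSh-QisGrothCat}.
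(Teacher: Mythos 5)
Your argument for the $(\QCohC,\QCrysC)$ and $(\IndCohC,\IndCrysC)$ cases is sound and in fact more self-contained than the paper's: where the paper simply invokes \cite[Thm.~2.6.2]{BoPi.CohomCrys} after checking that the ambient categories are Grothendieck (\autoref{CSh-QisGrothCat}) and that $\LNilC$ is a Serre subcategory closed under filtered direct limits, you reconstruct the mechanism behind that citation --- Gabriel's section functor $S$, the derived adjunction $(Q,RS)$ on $\D^+$, and the observation that the unit of this adjunction has cohomologically nilpotent cone. For $*\in\{-,\emptyset\}$ both you and the paper essentially defer to external machinery (you to Bousfield localization of well generated triangulated categories, the paper to the same reference), so the level of rigor is comparable there, although your "hands-on argument uniform in $*$" is only a sketch.

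The genuine gap is your final sentence. The reasoning does \emph{not} apply verbatim to $(\LNilCohC,\LNilCrysC)$, and \autoref{CSh-QisGrothCat} does not cover this case: it asserts that $\QCohC(Y,\Coeff)$ and $\IndCohC(Y,\Coeff)$ are Grothendieck categories, whereas $\LNilCohC(Y,\Coeff)$ is \emph{not} --- it fails to be closed under infinite direct sums (an infinite direct sum of nonzero coherent non-nilpotent $\kappa$-sheaves has no coherent subobject with locally nilpotent quotient, since any coherent subsheaf of a direct sum lies in a finite sub-sum). Consequently $\LNilC(Y,\Coeff)$ is not a localizing subcategory of $\LNilCohC(Y,\Coeff)$ in Gabriel's sense, the section functor $S$ need not exist, $\LNilCrysC(Y,\Coeff)$ is not known to have enough injectives, and neither the $RS$-construction nor the Bousfield alternative is available. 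This is exactly the point where the paper changes method and appeals to \cite[Thm.~3.2]{Miyachi}, which gives the equivalence $\SLocQNilInv\D^*(\CA)\to\D^*(\CA/\CB)$ for a Serre subcategory $\CB$ of an arbitrary abelian category $\CA$ for $*\in\{b,+,-\}$, leaving $*=\emptyset$ as an exercise. To repair your proof you would need to import such a result, or carry out the roof-calculus/d\'evissage argument you allude to from scratch for this third pair.
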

\begin{proof}
For quasi- and ind-objects, the first two parts of the theorem follows from  \cite[Thm.~2.6.2]{BoPi.CohomCrys} since by \autoref{CSh-GrothDirLim} the relevant categories are Grothendieck categories and the locally nilpotent ones a Serre subcategory closed under filtered direct limits.

To show the equivalence
\[
\SLocQNilInv \D^*(\QCohC(Y,\Coeff)) \to  \D^*(\LNilCrysC(Y,\Coeff))
\]
claimed in the third statement, one cannot appeal directly to \cite[Theorem~2.6.2]{BoPi.CohomCrys} since $\LNilCohC(Y,\Coeff)$ is not closed under infinite direct sums and hence not a Grothendieck category. Instead, for $*\in\{b,-,+\}$ the result is a special case of \cite[Thm.~3.2]{Miyachi}. It is a challenging exercise (left to the reader) to check that the argument of op.cit.\ can be extended to $*=\emptyset$
\end{proof}

Since localization at $\SLocQNilInv$ preserves cohomology up to local nilpotence, we obtain the following consequence:
\begin{cor}\label{DF-CorCompDCII}
%\begin{enumerate}
%\item
For $*\in\{\emptyset,+,-,b\}$, the isomorphism of \autoref{DF-CompDCII} restricts to an equivalence of categories
\[
 \SLocQNilInv \D^*_\lncoh(\QCohC(Y,\Coeff))\to  \D^*_\crys(\QCrysC(Y,\Coeff)),
\]
where the index $\crys$ (resp $\lncoh$) indicates that the cohomology is in $\LNilCrysC$ (resp. $\LNilCohC$). The same result holds, if we either replace $(\lncoh,\crys)$ by $(\indcoh,\indcrys)$ or if we replace $(\QCohC,\QCrysC)$ by $(\IndCohC,\IndCrysC)$.
\end{cor}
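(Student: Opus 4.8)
The plan is to deduce the corollary formally from \autoref{DF-CompDCII}, using that the equivalence appearing there is induced by a functor which commutes with the formation of cohomology, so that it automatically respects the cohomological conditions cutting out the subcategories in the statement.

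First I would observe that the equivalence $F\colon \SLocQNilInv\D^*(\QCohC(Y,\Coeff))\to\D^*(\QCrysC(Y,\Coeff))$ of \autoref{DF-CompDCII} is induced by the functor obtained by applying the exact localization functor $Q\colon\QCohC(Y,\Coeff)\to\QCrysC(Y,\Coeff)$ degreewise to complexes of $\kappa$-sheaves; this is precisely the ``natural morphism'' occurring in that theorem. Since $Q$ is exact, $F$ is cohomological, i.e.\ $H^i(F(\UCV^\bullet))=Q(H^i(\UCV^\bullet))$ for every complex $\UCV^\bullet$ and every $i$. This is the precise content of the slogan that passing to $\SLocQNilInv$ changes cohomology only up to local nilpotence: if $\UCV^\bullet\to\UCW^\bullet$ is a nil-quasi-isomorphism, then its cone is cohomologically nilpotent, so the long exact cohomology sequence together with the fact that $\LNilC$ is a Serre subcategory shows that each $H^i(\UCV^\bullet)\to H^i(\UCW^\bullet)$ is a nil-isomorphism, hence becomes invertible after applying $Q$.

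Next I would record the elementary fact about preimages of Serre subcategories. Writing $\QCrysC(Y,\Coeff)=\QCohC(Y,\Coeff)/\LNilC(Y,\Coeff)$, by definition $\LNilCrysC$ is the Serre subcategory $\LNilCohC/\LNilC$ and $\IndCrysC$ is the Serre subcategory $\IndCohC/\LNilC$ (here one uses $\LNilC\subseteq\LNilCohC\subseteq\IndCohC$, which holds by \autoref{CSh-LNilIsSerre} and the discussion preceding it). Consequently $Q^{-1}(\LNilCrysC(Y,\Coeff))=\LNilCohC(Y,\Coeff)$ and $Q^{-1}(\IndCrysC(Y,\Coeff))=\IndCohC(Y,\Coeff)$. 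Combining this with the cohomological compatibility above, for $\UCV^\bullet\in\D^*(\QCohC(Y,\Coeff))$ we have $H^i(\UCV^\bullet)\in\LNilCohC$ for all $i$ if and only if $H^i(F(\UCV^\bullet))\in\LNilCrysC$ for all $i$, i.e.\ if and only if $F(\UCV^\bullet)\in\D^*_\crys(\QCrysC(Y,\Coeff))$; and likewise with $(\LNilCohC,\LNilCrysC)$ replaced by $(\IndCohC,\IndCrysC)$. Being fully faithful and essentially surjective, $F$ therefore restricts to an equivalence between the full subcategory $\CC\subseteq\SLocQNilInv\D^*(\QCohC(Y,\Coeff))$ of objects all of whose cohomology objects lie in $\LNilCohC$ and the category $\D^*_\crys(\QCrysC(Y,\Coeff))$ (and similarly in the $\indcoh/\indcrys$ case).

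It then remains to identify $\CC$ with $\SLocQNilInv\D^*_\lncoh(\QCohC(Y,\Coeff))$. Since $\LNilC\subseteq\LNilCohC$, the full triangulated subcategory $\D^*_\lnil(\QCohC)$ is contained in $\D^*_\lncoh(\QCohC)$, so the localization of the latter at $\SLocQNilInv$ is the Verdier quotient $\D^*_\lncoh(\QCohC)/\D^*_\lnil(\QCohC)$, and the inclusion induces a functor into $\D^*(\QCohC)/\D^*_\lnil(\QCohC)=\SLocQNilInv\D^*(\QCohC)$. This functor is fully faithful, by the standard argument for a quotient by a common thick subcategory (\cite{Verdier.CatDer}): in any roof representing a morphism of the big quotient between objects of $\D^*_\lncoh(\QCohC)$, the apex sits in a triangle whose other two vertices are an object of $\D^*_\lncoh(\QCohC)$ and a shift of the cone of the nil-quasi-isomorphism, which lies in $\D^*_\lnil(\QCohC)\subseteq\D^*_\lncoh(\QCohC)$, hence the apex itself lies in $\D^*_\lncoh(\QCohC)$; applying the same observation to common refinements gives injectivity on Hom-sets. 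Its essential image is exactly $\CC$: an object whose cohomology lies termwise in $\LNilCohC$ already is an object of $\D^*_\lncoh(\QCohC)$, while conversely anything nil-quasi-isomorphic to such an object has cohomology in $Q^{-1}(\LNilCrysC)=\LNilCohC$ by the first two steps. Composing the two equivalences yields the claim. The $(\indcoh,\indcrys)$ variant, and the variant with $\IndCohC$ in place of $\QCohC$ (which, $\Fr$ being finite, is again a Grothendieck category with the relevant Serre subcategories, so that the appropriate case of \autoref{DF-CompDCII} applies), follow verbatim. Granting \autoref{DF-CompDCII}, none of this is deep; the only points that genuinely require care are the bookkeeping with Serre subcategories --- in particular that $\LNilCohC$ is Serre, which is where finiteness of $\Fr$ enters and is what makes $\D^*_\lncoh$ triangulated and the Verdier quotient above meaningful --- and the cofinality-type argument identifying the localized subcategory with $\CC$.
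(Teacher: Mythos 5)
Your proof is correct and follows essentially the same route as the paper: the key step in both is that the apex of any roof between objects of $\D^*_\lncoh(\QCohC(Y,\Coeff))$ again lies in $\D^*_\lncoh$ (via the cone triangle and the fact that $\LNilCohC$ is a Serre subcategory), making the localized inclusion fully faithful, after which one matches the subcategories under the equivalence of \autoref{DF-CompDCII}. You merely spell out the ``unraveling of definitions'' that the paper leaves to the reader, namely that $Q^{-1}(\LNilCrysC)=\LNilCohC$ and that the induced functor is cohomological.
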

\begin{proof}
We sketch the proof of the first statement and leave the analogous remaining statements to the reader. First one observes that the inclusion
\[
    \SLocQNilInv\D^*_\lncoh(\QCohC(Y,\Coeff)) \into \SLocQNilInv\D^*(\QCohC(Y,\Coeff))
\]
establishes the former as a full subcategory of the latter. For this note that $\D^*_\lncoh(\QCohC(Y,\Coeff))$ is a full subcategory of $\D^*(\QCohC(Y,\Coeff))$. Now let \[
\phi \colon \UCU^\bullet \Longleftarrow \UCW^\bullet \to \UCV^\bullet
\]
represent a morphism in $\SLocQNilInv\D^*(\QCohC(Y,\Coeff)$ with $\UCU^\bullet, \UCV^\bullet \in \D^*_\lncoh(\QCohC(Y,\Coeff)$. We have to show that $\UCW^\bullet$ is also in $\D^*_\lncoh(\QCohC(Y,\Coeff)$. But this follows easily since $\UCU^\bullet \Longleftarrow \UCW^\bullet$ is a nil-quasi-isomorphism, \ie its cone lies in $\LNilC$. The long exact cohomology sequence of the cone-triangle shows that the cohomology of $\UCW^\bullet$ is an extension of $\LNilC$ and $\LNilCohC$, hence also lies in $\LNilCohC$ since the latter is a Serre subcategory and hence closed under extensions by \autoref{CSh-LNilIsSerre}.

Now consider the diagram
\[
\xymatrix@C+2pc{
\SLocQNilInv\D^*(\QCohC(Y,\Coeff))\ar[r]^\simeq& \D^*(\QCrysC(Y,\Coeff))\\
\SLocQNilInv\D^*_\lncoh(\QCohC(Y,\Coeff)) \ar@{^(->}[u]\ar@{-->}[r]^\simeq & \D^*_\crys(\QCrysC(Y,\Coeff))\ar@{^(->}[u]
}
\]
where the upwards arrows are full subcategories and the top is the equivalence of \autoref{DF-CompDCII}. Since $\SLocNilInv \LNilCohC = \LNilCrysC$ it is now just an unraveling of definitions to verify that under the equivalence of the top row, the bottom two full subcategories correspond to each other, we leave this to the reader.
\end{proof}

\begin{rem}\label{DF-RemCompDCII}
In \cite{BliBoe.CartierDuality} we shall prove a deep refinement of \autoref{DF-CorCompDCII} for $*\in\{b,+\}$ and cohomology in $\coh$ on the left hand side.
\end{rem}

The relevance of \autoref{DF-CompDCII} and its corollary for the construction of functors on derived categories of crystals is explained by the following:
\begin{prop}\label{DF-ConstrDFonCrys}\g?{I have to check again whether this proposition and the above two results are sufficient for all construction below \\ ok for $Rf_*$.}
    Suppose $F\colon \D^*(\QCohC(Y,\Coeff)) \to \D^*(\QCohC(X,\Coeff))$ is an exact functor of triangulated categories, \eg a derived functor. If $F$ preserves cohomological nilpotence, i.e., if $F\big(\D^*_\lnil(\QCohC(Y,\Coeff))\big)\subset  \D^*_\lnil(\QCohC(X,\Coeff))$, then $F$ maps nil-quasi-isomorphism again to such and it induces an exact functor (denoted by the same name):
$$F\colon \D^*(\QCrysC(Y,\Coeff)) \to \D^*(\QCrysC(X,\Coeff)).$$
An analogous statement also holds if $(\QCohC,\QCrysC)$ is replaced by $(\IndCohC,\IndCrysC)$ or $(\LNilCohC,\CrysC)$.

Moreover both of the above assertions hold with additional conditions on the cohomology: If the given functor preserves cohomology in $\LNilCohC$, or $\IndCohC$, respectively, then the resulting functor preserves cohomology in $\crys$ or $\indcrys$.

\end{prop}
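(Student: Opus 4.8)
The plan is to reduce the statement to the universal property of Verdier localization together with the comparison equivalences of \autoref{DF-CompDCII}. First I would verify that $F$ carries nil-quasi-isomorphisms to nil-quasi-isomorphisms. Let $\phi\colon\UCV^\bullet\to\UCW^\bullet$ be a nil-quasi-isomorphism, so its cone $\UCC^\bullet$ lies in $\D^*_\lnil(\QCohC(Y,\Coeff))$ and sits in a distinguished triangle $\UCV^\bullet\to\UCW^\bullet\to\UCC^\bullet\to\UCV^\bullet[1]$. Since $F$ is exact it sends this to a distinguished triangle, so a cone of $F\phi$ is isomorphic to $F(\UCC^\bullet)$; by the hypothesis that $F$ preserves cohomological nilpotence, $F(\UCC^\bullet)\in\D^*_\lnil(\QCohC(X,\Coeff))$, whence $F\phi\in\SLocQNilInv$. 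This uses only that cones are functorial up to (noncanonical) isomorphism under an exact functor, together with the standing fact recalled before \autoref{DF-CompDCII} that $\D^*_\lnil(\QCohC(Y,\Coeff))$ is a full triangulated subcategory, so that $\SLocQNilInv$ is a saturated multiplicative system compatible with the triangulation.

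Next, writing $Q_Y,Q_X$ for the exact localization functors onto $\SLocQNilInv\D^*(\QCohC(\usc,\Coeff))$, the composite $Q_X\circ F$ inverts all nil-quasi-isomorphisms by the previous step, so by the universal property of Verdier localization it factors uniquely as $\bar F\circ Q_Y$ for an exact functor $\bar F\colon\SLocQNilInv\D^*(\QCohC(Y,\Coeff))\to\SLocQNilInv\D^*(\QCohC(X,\Coeff))$. Composing $\bar F$ with the canonical equivalences $\D^*(\QCrysC(\usc,\Coeff))\simeq\SLocQNilInv\D^*(\QCohC(\usc,\Coeff))$ of \autoref{DF-CompDCII} on source and target then yields the asserted exact functor $F\colon\D^*(\QCrysC(Y,\Coeff))\to\D^*(\QCrysC(X,\Coeff))$; by construction the square formed with the natural functors $\D^*(\QCohC(\usc,\Coeff))\to\D^*(\QCrysC(\usc,\Coeff))$ commutes, so the induced functor does agree with $F$ on underlying complexes. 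The variants with $(\QCohC,\QCrysC)$ replaced by $(\IndCohC,\IndCrysC)$ or by $(\LNilCohC,\CrysC)$ follow by the same three steps, invoking the corresponding cases of \autoref{DF-CompDCII} (and, for the last one, also the equivalence $\CrysC\simeq\LNilCrysC$ of \autoref{CSh-Crys=NilCrys}); the only input needed is that $\D^*_\lnil$ is a triangulated subcategory of each of the three ambient categories and is preserved by $F$, which is exactly the hypothesis.

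For the refinement, suppose in addition that $F$ sends $\D^*_\lncoh(\QCohC(Y,\Coeff))$ into $\D^*_\lncoh(\QCohC(X,\Coeff))$. Combining this with the first step, $F$ restricts to an exact functor $\SLocQNilInv\D^*_\lncoh(\QCohC(Y,\Coeff))\to\SLocQNilInv\D^*_\lncoh(\QCohC(X,\Coeff))$, since a nil-quasi-isomorphism between complexes with $\lncoh$-cohomology remains one and the boundedness conditions are preserved. Under the equivalence of \autoref{DF-CorCompDCII} identifying $\SLocQNilInv\D^*_\lncoh(\QCohC(\usc,\Coeff))$ with $\D^*_\crys(\QCrysC(\usc,\Coeff))$, this restriction becomes the desired functor preserving $\crys$-cohomology; the argument is word-for-word identical with $(\lncoh,\crys)$ replaced by $(\indcoh,\indcrys)$, or with the ambient category $\QCohC$ replaced by $\IndCohC$.

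Since the whole argument is an instance of the universal property of localization, there is no genuine obstacle. The only points requiring care are the bookkeeping in the first step — legitimizing the replacement of ``a cone of $F\phi$'' by ``$F$ of a cone of $\phi$'', which is precisely the statement that $F$ is a functor of triangulated categories — and keeping track of which comparison equivalence (\autoref{DF-CompDCII} versus \autoref{DF-CorCompDCII}) to invoke for each of the cohomological conditions.
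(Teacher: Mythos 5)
Your proof is correct and follows essentially the same route as the paper: factor $Q_X\circ F$ through the Verdier localization at $\SLocQNilInv$ using the universal property, then transport along the equivalences of \autoref{DF-CompDCII} (resp.\ \autoref{DF-CorCompDCII} for the cohomological refinements). The only difference is that you spell out the cone argument showing $F$ sends nil-quasi-isomorphisms to nil-quasi-isomorphisms, which the paper leaves implicit.
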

\begin{proof}
Consider the composition of the given functor $F$ with the localization functor $ \D^*(\QCohC(X,\Coeff)) \to \SLocQNilInv \D^*(\QCohC(X,\Coeff))$. By the hypothesis and the universal property of localization there exists a unique functor $$ \wt F:\SLocQNilInv \D^*(\QCohC(Y,\Coeff)) \to  \SLocQNilInv \D^*(\QCohC(X,\Coeff))$$
whose composite with the localization $ \D^*(\QCohC(Y,\Coeff)) \to \SLocQNilInv \D^*(\QCohC(Y,\Coeff))$ is equal to the above composition. The first assertion now follows by an application of \autoref{DF-CompDCII} to domain and range of the functor.
The same proof applies to $(\IndCohC,\IndCrysC)$ and $(\LNilCohC,\CrysC)$ instead of $(\QCohC,\QCrysC)$. For the remaining assertions one uses \autoref{DF-CorCompDCII} instead of \autoref{DF-CompDCII}.
\end{proof}

The following lemma is useful for verifying the conditions of the preceding \autoref{DF-ConstrDFonCrys}.

\begin{lem}\label{DF-LemmaOnNil}
Suppose $F=Rf$ is the right derived functor of a left exact functor
\[f \colon \QCohC(Y,\Coeff) \to \QCohC(X,\Coeff).\]
Let $\CA$ (resp. $\CA'$) be a Serre subcategory of $\QCohC(Y,\Coeff)$ (resp. $\QCohC(X,\Coeff)$), and let $\UCV^\bullet$ be a complex in $\QCohC(Y,\Coeff)$. Assume that
\begin{enumerate}
\item each $R^if=H^i(Rf)$ maps $\CA$ into the Serre subcategory $\CA'$, and
\item there exists a convergent spectral sequence for complexes in $\D^+(\QCohC(Y,\Coeff))$\footnote{At this point, we have not yet established that the categories $ \QCohC(\ldots)$ have enough injectives. So we cannot say anything general about existence or convergence.}
\[ E_2^{i,j} = R^if(H^j\UCV^\bullet) \To R^{i+j} f \UCV^\bullet. \]
%converges.
\end{enumerate}
Then $F\big(\D^+_\CA(\QCohC(Y,\Coeff))\big)\subset \D^+_{\CA'}(\QCohC(X,\Coeff))$.

If moreover $F$ is of finite cohomological dimension and (b) holds for unbounded complexes, then the conclusion holds for any of $b$, $-$, $\emptyset$ in place of~$+$.
\end{lem}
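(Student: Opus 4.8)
The plan is to prove the $+$-statement first, then reduce the other boundedness conditions to it by a standard truncation argument using the finiteness of cohomological dimension. So let $\UCV^\bullet \in \D^+_\CA(\QCohC(Y,\Coeff))$; we must show $Rf\UCV^\bullet$ has cohomology in $\CA'$. The key point is that ``cohomology in $\CA$'' is detected termwise: since $\CA$ is a Serre subcategory (in particular abelian and closed under sub- and quotient objects), each $H^j(\UCV^\bullet)$ lies in $\CA$. Now apply hypothesis (b): the spectral sequence $E_2^{i,j} = R^if(H^j\UCV^\bullet) \Rightarrow R^{i+j}f\UCV^\bullet$ converges. By hypothesis (a), every $E_2$-term $R^if(H^j\UCV^\bullet)$ lies in $\CA'$. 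Since $\CA'$ is a Serre subcategory, it is closed under passing to subquotients and under (finite) extensions; hence every page $E_r^{i,j}$ lies in $\CA'$ (it is a subquotient of the previous page), and in particular $E_\infty^{i,j} \in \CA'$. The abutment $R^nf\UCV^\bullet$ carries a finite filtration whose graded pieces are exactly the $E_\infty^{i,j}$ with $i+j=n$; because $\UCV^\bullet$ is bounded below and $f$ is left exact, this filtration has only finitely many nonzero steps, so $R^nf\UCV^\bullet$ is a finite iterated extension of objects of $\CA'$, hence lies in $\CA'$. This gives $Rf(\D^+_\CA) \subseteq \D^+_{\CA'}$.

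For the remaining boundedness conditions $* \in \{b,-,\emptyset\}$, suppose now in addition that $F = Rf$ has finite cohomological dimension, say $R^if = 0$ for $i > d$, and that (b) holds for unbounded complexes. Let $\UCV^\bullet \in \D^*_\CA(\QCohC(Y,\Coeff))$. The argument above applies verbatim provided the abutment filtration on each $R^nf\UCV^\bullet$ is still finite: but now $E_2^{i,j}$ vanishes for $i < 0$ and for $i > d$, so for fixed $n$ only the finitely many pairs $(i,j)$ with $0 \le i \le d$ and $j = n-i$ contribute. Thus each $R^nf\UCV^\bullet$ is again a finite extension of objects $E_\infty^{i,n-i} \in \CA'$, hence lies in $\CA'$. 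The boundedness type is preserved: if $\UCV^\bullet$ has cohomology in degrees $[a,b]$ (resp.\ $(-\infty,b]$, resp.\ arbitrary), then $Rf\UCV^\bullet$ has cohomology in degrees $[a,b+d]$ (resp.\ $(-\infty,b+d]$, resp.\ arbitrary), so $Rf$ sends $\D^b_\CA \to \D^b_{\CA'}$, $\D^-_\CA \to \D^-_{\CA'}$, and $\D_\CA \to \D_{\CA'}$.

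The only genuine subtlety is the convergence and finiteness of the spectral sequence filtration in the unbounded case; this is precisely why hypothesis (b) for unbounded complexes together with finite cohomological dimension is assumed, and I would simply invoke those hypotheses rather than re-derive the spectral sequence. I expect the main obstacle, such as it is, to be purely bookkeeping: making sure that ``subquotient of an object of the Serre subcategory $\CA'$ is in $\CA'$'' and ``finite extension of objects of $\CA'$ is in $\CA'$'' are both applied at the right spots (the former for passing between pages $E_r \to E_{r+1}$ and for $E_\infty$, the latter for reconstructing the abutment from its associated graded), and that the boundedness truncation does not introduce infinitely many contributing terms — which is exactly guaranteed by the finite cohomological dimension of $f$ together with the left exactness that forces $E_2^{i,j}=0$ for $i<0$.
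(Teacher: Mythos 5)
Your proposal is correct and follows essentially the same route as the paper: convergence of the spectral sequence (plus finite cohomological dimension in the unbounded case) exhibits each $R^nf\UCV^\bullet$ as a finite successive extension of subquotients of the $E_2$-terms $R^if(H^j\UCV^\bullet)$, which lie in $\CA'$ by hypothesis (a), and one concludes because the Serre subcategory $\CA'$ is closed under subquotients and extensions. Your write-up merely spells out the page-by-page subquotient bookkeeping and the vanishing ranges that the paper leaves implicit.
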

\begin{proof}
The convergence of the spectral sequence (together with the boundedness of the cohomological dimension of $F$ if $\UCV^\bullet$ is unbounded) implies that the limit term $R^{n} f \UCV^\bullet$ is a finite successive extension of $\kappa$-subquotients of the $R^if(H^j\UCV^\bullet)$ for $i+j=n$. By assumption each of these is in $\CA$, hence the result follows since $\CA'$ is a Serre subcategory
\end{proof}

Our first application of the above lemma is to $Rf_*$.
\begin{cor}\label{DF-DerImAndNil}
Let $f\!:Y\to X$ be any morphism of noetherian schemes with $\Fr_X$ finite. Then $Rf_*\big(\D^*_\lnil(\QCohC(Y,\Coeff))\big)\subset \D^*_\lnil(\QCohC(X,\Coeff))$.

If moreover $f$ is proper, then $Rf_*\big(\D^*_\lncoh(\QCohC(Y,\Coeff))\big)\subset \D^*_\lncoh(\QCohC(X,\Coeff))$.
\end{cor}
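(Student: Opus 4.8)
The plan is to deduce both inclusions from \autoref{DF-LemmaOnNil}, applied to $F=Rf_*$, which by \autoref{DF-RegIm} is the right derived functor of the left exact functor $f_*\colon\QCohC(Y,\Coeff)\to\QCohC(X,\Coeff)$. For the first statement one applies the lemma with the Serre subcategories $\CA=\LNilC(Y,\Coeff)$ and $\CA'=\LNilC(X,\Coeff)$, for the second with $\CA=\LNilCohC(Y,\Coeff)$ and $\CA'=\LNilCohC(X,\Coeff)$; that these are Serre subcategories of $\QCohC(Y,\Coeff)$ and $\QCohC(X,\Coeff)$ respectively is \autoref{CSh-LNilIsSerre} (this is where finiteness of Frobenius enters). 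Hypothesis (b) of \autoref{DF-LemmaOnNil}, the convergent spectral sequence $E_2^{i,j}=R^if_*(H^j\UCV^\bullet)\To R^{i+j}f_*\UCV^\bullet$, is available for arbitrary complexes by the \v{C}ech computation of $Rf_*$ described in the remark after \autoref{DF-RegIm}: as our schemes are separated and Noetherian, a finite affine cover of $Y$ suffices, which makes the computation valid for unbounded complexes and simultaneously bounds the cohomological dimension of $Rf_*$. Thus it remains only to check hypothesis (a), that each $R^if_*$ carries $\CA$ into $\CA'$, and \autoref{DF-LemmaOnNil} will then yield the conclusion for all $*\in\{b,+,-,\emptyset\}$.

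To verify (a) in the locally nilpotent case, fix $\UCV\in\LNilC(Y,\Coeff)$ and write $\UCV=\bigcup_{e\in\BN}\UCV_e$, the filtered union of the quasi-coherent $\kappa$-subsheaves $\UCV_e$ on which $\kappa^e$ vanishes, as in \autoref{CSh-CharNilLocNil}. Since $f$, hence $f\times\id$, is a quasi-compact, quasi-separated morphism of Noetherian schemes, $R^i(f\times\id)_*$ commutes with filtered direct limits, so $R^if_*\UCV=\varinjlim_e R^if_*\UCV_e$. By \autoref{DF-DerImages} the $\kappa$-sheaf $R^if_*\UCV_e$ has underlying sheaf $R^i(f\times\id)_*\CV_e$ with structural map $R^i(f\times\id)_*(\kappa_{\CV_e})$, so by functoriality its $e$-th iterate equals $R^i(f\times\id)_*(\kappa^e_{\CV_e})=R^i(f\times\id)_*(0)=0$ up to canonical isomorphism. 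Hence each $R^if_*\UCV_e$ is nilpotent, and $R^if_*\UCV$, a filtered direct limit of nilpotent $\kappa$-sheaves, lies in $\LNilC(X,\Coeff)$ since that category is closed under filtered direct limits. This settles (a), and with it the first statement.

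For the second statement, let $f$ be proper and fix $\UCV\in\LNilCohC(Y,\Coeff)$; by definition there is a short exact sequence $0\to\UCC\to\UCV\to\UCN\to0$ of $\kappa$-sheaves with $\UCC\in\CohC(Y,\Coeff)$ and $\UCN\in\LNilC(Y,\Coeff)$. Its long exact cohomology sequence exhibits $R^if_*\UCV$ inside a short exact sequence $0\to\UCE\to R^if_*\UCV\to\UCH\to0$ with $\UCE$ a quotient of $R^if_*\UCC$ and $\UCH$ a $\kappa$-subsheaf of $R^if_*\UCN$. Now $R^if_*\UCC$ is coherent by the proper case of \autoref{DF-RegIm}, so $\UCE$ is coherent; and $R^if_*\UCN\in\LNilC(X,\Coeff)$ by the first part, so $\UCH\in\LNilC(X,\Coeff)$ as well, a $\kappa$-subsheaf of a locally nilpotent sheaf being locally nilpotent. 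Since $\LNilCohC(X,\Coeff)$ is a Serre subcategory of $\QCohC(X,\Coeff)$, it is closed under extensions, whence $R^if_*\UCV\in\LNilCohC(X,\Coeff)$; this is (a) for $\CA=\LNilCohC$, completing the proof.

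The only step with genuine content is the verification of (a) in the locally nilpotent case: one needs both that $R^if_*$ commutes with the filtered colimit $\bigcup_e\UCV_e$ and that the $\kappa$-structure produced on $R^if_*\UCV$ is the functorially induced one, so that the vanishing of $\kappa^e_{\CV_e}$ really forces nilpotence of $R^if_*\UCV_e$; both are supplied by quasi-compact quasi-separated base change and by the comparison \autoref{DF-DerImages}, and the rest is formal bookkeeping with Serre subcategories as packaged in \autoref{DF-LemmaOnNil}.
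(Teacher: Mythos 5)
Your proposal is correct and follows essentially the same route as the paper: both reduce to \autoref{DF-LemmaOnNil} via the \v{C}ech spectral sequence, verify that $R^if_*$ preserves $\LNilC$ by combining commutation with filtered direct limits with the functoriality argument $R^if_*(\kappa^e)=0$, and handle the $\LNilCohC$ case using that proper pushforward preserves coherence together with the extension structure of locally nil-coherent sheaves. Your write-up merely spells out the long exact sequence bookkeeping that the paper leaves implicit.
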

\begin{proof}
As pointed out after \autoref{DF-RegIm} we have a convergent spectral sequence
\[
    E_2^{i,j} = R^if_*(H^j\UCV^\bullet) \To R^{i+j}f_*\UCV^\bullet.
\]
By \autoref{CSh-LNilIsSerre} the subcategory $\LNilC$ (resp. $\LNilCohC$) is a Serre subcategories of $\QCohC$. Hence, in order to conclude with the preceding \autoref{DF-LemmaOnNil} we only have to check that $R^if_*$ preserves $\LNilC$ (resp. $\LNilCohC$). For $\LNilC$, since $R^if_*$ commutes with direct limits, it is enough to show that $R^if_*$ preserves nilpotence. But this follows since $R^if_*$ is a functor: If $(\UCV,\kappa)$ is nilpotent, then $\Frid^n_*\CV \to[\kappa^n] \CV$ is zero for some $n$. This implies that $R^if_*(\kappa^n)$ is also zero, and hence $R^if_*\UCV$ is nilpotent.

For $\LNilCohC$, we note that for proper $f$ all $R^if_*$ also preserve coherence. Since, by definition a nil-coherent $\kappa$-sheaf is an extension of coherent and locally nilpotent, it follows that $R^if_*$ preserves nil-coherence.
\end{proof}

Combining \autoref{DF-ConstrDFonCrys} and \autoref{DF-DerImAndNil} we deduce:
\begin{cor}\label{DF-DerImOnCCrysI}
The functor $Rf_*$ on Cartier sheaves induces a functor
$$ Rf_*\colon \D^*(\QCrysC(Y,\Coeff))\longto \D^*(\QCrysC(X,\Coeff)).$$
If $f$ is proper, it restricts to a functor
$$ Rf_*\colon \D^*_\crys(\QCrysC(Y,\Coeff))\longto \D^*_\crys(\QCrysC(X,\Coeff)).$$
Recall, the subscript $\crys$ indicates that the cohomology lies in $\LNilCrysC$.
\end{cor}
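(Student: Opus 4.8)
The plan is to obtain the corollary as a formal consequence of \autoref{DF-ConstrDFonCrys}, applied to the functor $F=Rf_*$ of \autoref{DF-RegIm}, with the two hypotheses of that proposition supplied by \autoref{DF-DerImAndNil}.

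First I would record that, by \autoref{DF-RegIm}, $Rf_*$ is a well-defined right derived functor $\D^*(\QCohC(Y,\Coeff))\to\D^*(\QCohC(X,\Coeff))$ for each $*\in\{b,+,-,\emptyset\}$; in particular it is an exact functor of triangulated categories, which is the first input needed. Next, \autoref{DF-DerImAndNil} provides exactly the second input: since $\Fr_X$ is finite, $Rf_*$ preserves cohomological nilpotence, i.e.\ $Rf_*\big(\D^*_\lnil(\QCohC(Y,\Coeff))\big)\subseteq\D^*_\lnil(\QCohC(X,\Coeff))$. With these two facts in hand, \autoref{DF-ConstrDFonCrys} asserts that $Rf_*$ sends nil-quasi-isomorphisms to nil-quasi-isomorphisms and therefore descends, through the equivalences $\SLocQNilInv\D^*(\QCohC(\usc,\Coeff))\simeq\D^*(\QCrysC(\usc,\Coeff))$ of \autoref{DF-CompDCII}, to an exact functor $Rf_*\colon\D^*(\QCrysC(Y,\Coeff))\to\D^*(\QCrysC(X,\Coeff))$. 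This settles the first assertion.

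For the refinement when $f$ is proper, I would invoke the second half of \autoref{DF-DerImAndNil}, namely that for proper $f$ the functor $Rf_*$ also preserves cohomology lying in $\LNilCohC$, that is, $Rf_*\big(\D^*_\lncoh(\QCohC(Y,\Coeff))\big)\subseteq\D^*_\lncoh(\QCohC(X,\Coeff))$. The \emph{moreover} clause of \autoref{DF-ConstrDFonCrys} then guarantees that the induced functor on quasi-crystals restricts to a functor $\D^*_\crys(\QCrysC(Y,\Coeff))\to\D^*_\crys(\QCrysC(X,\Coeff))$, where the subscript $\crys$ records that the cohomology lies in $\LNilCrysC$ (equivalently, via \autoref{DF-CorCompDCII}, that one is working inside $\SLocQNilInv\D^*_\lncoh$). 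This is exactly the claimed restriction.

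I do not expect any real obstacle: the statement is a bookkeeping combination of two already-proven results, and all the substantive work — the \v{C}ech construction of $Rf_*$, the spectral-sequence argument for preservation of (local) nilpotence and of nil-coherence, and the localization machinery passing from $\kappa$-sheaves to $\kappa$-crystals — has been carried out beforehand. The only points worth double-checking are that the boundedness label $*$ is respected at every stage (it is, since $Rf_*$ and all the localizations in \autoref{DF-CompDCII} and \autoref{DF-CorCompDCII} are compatible with the four choices of $*$), and that the proper case genuinely lands in $\D^*_\crys$ rather than merely in a category with cohomology coherent-up-to-local-nilpotence; this last point is precisely the coherence-preservation half of \autoref{DF-DerImAndNil} together with the fact, from \autoref{CSh-LNilIsSerre}, that $\LNilCohC$ is a Serre subcategory and hence closed under the extensions produced by the spectral sequence.
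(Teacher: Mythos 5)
Your proposal is correct and matches the paper exactly: the corollary is stated there as an immediate consequence of combining \autoref{DF-ConstrDFonCrys} with \autoref{DF-DerImAndNil}, which is precisely the combination you carry out, including the use of the \emph{moreover} clause for the proper case. Nothing is missing.
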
\label{DF-RkOnDerImOnCCrys}
\begin{rem}
The functor $Rf_*$ on quasi-crystals is in fact a derived functor: the \v{C}ech resolution also exists for Cartier sheaves, and hence for Cartier quasi-crystals. It is clearly functorial and thus extends to complexes. Therefore the analog of \autoref{DF-RegIm} holds for Cartier quasi-crystals as well. The explicit construction shows that this derived functor agrees with the functor constructed in \autoref{DF-DerImOnCCrysI} via \autoref{DF-CompDCII} and \autoref{DF-DerImAndNil}.
\end{rem}

For a general morphism $f \colon Y \to X$ push-forward $f_*$ does not preserve coherence. The following result shows that for Cartier sheaves and morphisms of finite type, morally the situation is better.
\begin{theorem}\label{DF-DerImOnLNilCoh}
Let $f\!:Y\to X$ be a morphism of finite type and $*\in\{b,-,+,\emptyset\}$. Then
$$ Rf_*\big(\D^*(\LNilCohC(Y,\Coeff)) \big)\subset \D^*(\LNilCohC(X,\Coeff)).
$$
\end{theorem}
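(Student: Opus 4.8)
The plan is to peel the statement down to a single affine, module-theoretic assertion — essentially Anderson's observation from \cite{AndersonL} — and then prove that by an explicit ``bounded-degree'' filtration argument. A complex with terms in $\LNilCohC(Y,\Coeff)$ has all its cohomology in $\LNilCohC(Y,\Coeff)$ (this is a Serre subcategory since $\Fr$ is finite, by \autoref{CSh-LNilIsSerre}), so the assertion is equivalent to saying that $Rf_*$ preserves the property ``cohomology in $\LNilCohC$''. I would obtain this from \autoref{DF-LemmaOnNil} applied to $F=Rf_*$ with $\CA=\LNilCohC(Y,\Coeff)$ and $\CA'=\LNilCohC(X,\Coeff)$: hypothesis (b) is the spectral sequence $E_2^{ij}=R^if_*(H^j\UCV^\bullet)\Rightarrow R^{i+j}f_*\UCV^\bullet$ recorded after \autoref{DF-RegIm}, which is available for every $\UCV^\bullet$, bounded or not, because $f$ of finite type makes $Rf_*$ of finite cohomological dimension (the \v{C}ech description of \autoref{DF-RegIm} via a finite affine cover is a resolution of bounded length); so all four boundedness conditions come out at once, and it remains to verify hypothesis (a): each $R^if_*$ maps $\LNilCohC(Y,\Coeff)$ into $\LNilCohC(X,\Coeff)$.

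To verify (a) I would reduce to a coherent, affine input. Writing $\UCM\in\LNilCohC(Y,\Coeff)$ as an extension $0\to\UCC\to\UCM\to\UCN\to0$ with $\UCC$ coherent and $\UCN$ locally nilpotent, the long exact sequence presents $R^if_*\UCM$ as an extension of a subobject of $R^if_*\UCN$ by a quotient of $R^if_*\UCC$; as in the proof of \autoref{DF-DerImAndNil}, $R^if_*$ preserves nilpotence and commutes with filtered colimits, so $R^if_*\UCN\in\LNilC(X,\Coeff)$, and since $\LNilCohC(X,\Coeff)$ is Serre it is enough to treat $R^if_*\UCC$ for $\UCC$ coherent. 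Membership in $\LNilCohC$ being Zariski-local on the base (\autoref{CSh-PropertiesLocalize}), I may assume $X=\Spec A$; covering the separated scheme $Y$ by finitely many affine opens $\{U_i\}$, all intersections $U_S$ are affine and each $g_S\colon U_S\to X$ is an affine morphism of finite type, so $\UCC\to\Cech^\bullet_{\FU}(\UCC)$ is a resolution by $f_*$-acyclic $\kappa$-sheaves (each $U_S\hookrightarrow Y$ and each $g_S$ being affine) and $Rf_*\UCC\cong f_*\bigl(\Cech^\bullet_{\FU}(\UCC)\bigr)$ is a finite complex with terms $g_S{}_*(\UCC|_{U_S})$. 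As $\LNilCohC(X,\Coeff)$ is a thick abelian subcategory, the whole theorem is reduced to the affine statement: \emph{for $g\colon\Spec B\to\Spec A$ affine of finite type and $\UCC$ a coherent $\kappa$-sheaf on $\Spec B$ over $\Coeff$, one has $g_*\UCC\in\LNilCohC(\Spec A,\Coeff)$.}

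For this last point — the real content — I would proceed as follows. Let $M$ be the finitely generated $(B\tensor\Coeff)$-module underlying $\UCC$, with structure map $\kappa\colon\Frid_*M\to M$, so that $\kappa(r^qm)=r\kappa(m)$ for $r\in B$. Fix generators $b_1,\dots,b_t$ of $B$ over $A$ and a finitely generated $(A\tensor\Coeff)$-submodule $M_0\subseteq M$ generating $M$ over $B\tensor\Coeff$, and set $M_{\le d}:=\sum_{|\alpha|\le d}b^{\alpha}M_0$, an increasing exhaustion of $M$ by finitely generated $(A\tensor\Coeff)$-submodules. Writing each exponent $q$-adically, $b^{\alpha}=(b^{\beta})^qb^{\gamma}$ with $|\beta|\le\lfloor d/q\rfloor$ and $0\le\gamma_i<q$, hence $\kappa(b^{\alpha}m)=b^{\beta}\kappa(b^{\gamma}m)\in b^{\beta}E$ with $E:=\kappa\bigl(\Frid_*M_{\le t(q-1)}\bigr)$; since $A$ is $\Fr$-finite, $\Frid_*$ of a finitely generated module is finitely generated, so $E$ is finitely generated and $E\subseteq M_{\le c}$ for some $c$, giving the key estimate
\[
  \kappa\bigl(\Frid_*M_{\le d}\bigr)\subseteq M_{\le\lfloor d/q\rfloor+c}\qquad\text{for all }d.
\]
Choosing $D$ with $\lfloor D/q\rfloor+c\le D$, the finitely generated submodule $N:=M_{\le D}$ is $\kappa$-stable; and because $d\mapsto\lfloor d/q\rfloor+c$ strictly decreases while it exceeds $D$, iterating the estimate gives, for every $d$, an $e$ with $\kappa^e\bigl(\Frid^e_*M_{\le d}\bigr)\subseteq N$. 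By \autoref{CSh-CharLocNilCoh} this says exactly that $M/N$ is locally nilpotent over $\Spec A$, so $g_*\UCC$ is an extension of a coherent by a locally nilpotent $\kappa$-sheaf, i.e.\ $g_*\UCC\in\LNilCohC(\Spec A,\Coeff)$.

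I expect the main obstacle to be precisely this affine lemma: everything before it is formal bookkeeping with the Serre subcategory $\LNilCohC$ and with \v{C}ech resolutions, whereas here one must find the right finitely generated $\kappa$-stable $N$ and show the quotient is locally nilpotent. The two points that need care are that the bounded-degree submodules $M_{\le d}$ really do absorb $\kappa$ with the geometric contraction $d\mapsto\lfloor d/q\rfloor+c$ — which uses both the semilinearity $\kappa(r^qm)=r\kappa(m)$ and the $\Fr$-finiteness of $A$ to keep $E$ finitely generated — and that the module-theoretic conclusion is converted into genuine local nilpotence via \autoref{CSh-CharNilLocNil}/\autoref{CSh-CharLocNilCoh}, not the strictly weaker ``every section is killed by a power of $\kappa$'' condition warned against in \autoref{Csh-local-nil-affine}.
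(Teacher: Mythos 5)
Your proof is correct and follows essentially the same route as the paper: reduce by locality and \v{C}ech resolutions to an affine finite-type morphism, then run Anderson's contraction argument on a degree filtration, using $\Fr$-finiteness to keep $E=\kappa(\Frid_*M_{\le t(q-1)})$ finitely generated and \autoref{CSh-CharLocNilCoh} to convert the containment $\kappa^e(\Frid^e_*M_{\le d})\subseteq N$ into genuine local nilpotence of $M/N$. The only differences are organizational: the paper factors the affine morphism through a closed immersion followed by iterated one-variable extensions $R\to R[X]$ and contracts one variable at a time, whereas you treat a general finite-type $A\to B$ in one step with a multi-index filtration, and you make the reduction to single coherent objects explicit via \autoref{DF-LemmaOnNil} and the spectral sequence where the paper is terser.
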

\begin{proof}
The proof is an adaption of an argument of Anderson from \cite{AndersonL}.
The assertion is local, since $Rf_*$ may be defined via $f_*$ applied to \v{C}ech resolutions. Thus it suffices to prove the theorem for affine morphisms $f\colon \Spec S\to\Spec R$ of finite type given by $\phi \colon R\to S$ in the following two cases: (a) $f$ is a closed immersion, (b) $\phi$ is the canonical homomorphism $R\to R[X]$. Since now $f$ is affine, the functor $Rf_*$ has vanishing higher cohomology and thus in both cases we need to show that $$f_*\big(\LNilCohC(\Spec S,\Coeff) \big)\subset \LNilCohC(\Spec R,\Coeff).$$
Case (a) is trivial since any closed immersion of finite type is proper. To show (b) we need to verify the following
\begin{claim*}\label{claim1}
Let $(M,\kappa)$ be a finitely generated $R[X]\otimes\Coeff$-module and $\kappa\!:M\to M$ be $(\Fr_{R[X]}\otimes \id_\Coeff)^{-1}$-linear map giving $M$ the structure of a Cartier module. Then $M$ contains a finitely generated $R\otimes \Coeff$-submodule $N$ preserved under $\kappa$ such that $(M/N,\kappa)$ is a locally nilpotent $R[\Fr]\otimes\Coeff$-module.
\end{claim*}
On $R[X]$ we consider the filtration by the degree in $X$, i.e., we set
\[R[X]_{\le d}\colonequals \{f\in R[X]\mid \deg_Xf\le d\}.\]
Next we choose generators $m_1,\ldots,m_t$ of $M$ as an $R[X]\otimes \Coeff$-module. They yield a non-canonical exhaustive filtration
\[M_{\le d}\colonequals \sum_{i=1}^t R[X]_{\le d}\otimes\Coeff\cdot m_i\]
of $M$ by finitely generated $R\otimes \Coeff$-submodules. The key observation in  \cite{AndersonL} is that the Cartier semi-linear operator $\kappa$ on $M$ has a strong contractivity property which can be described as follows: Choose, typically non-unique, elements $a_{i,j,d}\in R[X]$ such that
$$\kappa(X^dm_i)=\sum_j a_{i,j,d} m_j$$
for each $d\in\{0,\ldots,q-1\}$ and $i\in\{1,\ldots,t\}$ and set
$$C\colonequals \max\{\deg_X a_{i,j,d} \mid i,j\in\{1,\ldots,t\},d\in\{0,\ldots,q-1\}\}.$$
\begin{claim*}\label{claim2}
For $\ell>Cq/(q-1)$ one has $\kappa(M_{\le \ell})\subset M_{\le \ell-1}$.
\end{claim*}
If this claim is shown then the first claim above is immediate for $N=M_{\le \ell_0}$ with $\ell_0=\lfloor Cq/(q-1)\rfloor$, the largest integer less or equal to~$Cq/(q-1)$.

To prove the claim, fix $\ell\ge Cq/(q-1)$. Then for any $d\le \ell$ we have $\frac{d}q +C\le \frac{\ell}q +C<\ell$. Let $d'\in\{0,\ldots,q-1\}$ be the remainder of $d$ by division by~$q$. For $i\in\{1,\ldots,t\}$ it follows that
\[
     \kappa( X^d m_i)=\kappa( X^{d-d'}X^{d'} m_i)=X^{\frac{d-d'}q} \sum_j a_{i,j,d'} m_j\in M_{\le \frac{d-d'}q+C}\subset M_{\le \ell-1}
\]
finishing the proof of the claim and thereby the proof of the theorem.
\end{proof}

\begin{cor}\label{DF-DerImOnIndCoh}
Let $f\!:Y\to X$ be essentially of finite type and $*\in\{b,-,+,\emptyset\}$. Then
$$
    Rf_*\big(\D^*(\IndCohC(Y,\Coeff)) \big)\subset \D^*(\IndCohC(X,\Coeff)).
$$
\end{cor}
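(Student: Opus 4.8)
The plan is to reduce the statement, by means of \v{C}ech complexes and the local nature of ind-coherence, to the pushforward along an affine essentially finite type morphism of a single coherent Cartier sheaf, to factor such a morphism into a finite type part and a localization, and then to dispose of the finite type part by \autoref{DF-DerImOnLNilCoh} while treating the localization by an adaptation of Anderson's contractivity argument.

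First I would use that $Rf_*$ is computed by applying $f_*$ to the \v{C}ech complex $\Cech_\FU(\UCV^\bullet)$ of a finite affine cover $\FU$ of $Y$ (\autoref{DF-RegIm}), and that both membership in $\IndCohC$ and the condition on cohomology are Zariski-local on $X$ (\autoref{CSh-PropertiesLocalize}); hence we may assume $X=\Spec A$ is affine. Spelling out the \v{C}ech bicomplex, each term of $f_*\big(\Cech_\FU(\UCV^\bullet)\big)$ is a finite direct sum of pushforwards $(f\circ j_I)_*(j_I^*\UCV^q)$ along morphisms $f\circ j_I\colon U_I\to\Spec A$, where $U_I$ is an intersection of members of $\FU$, hence affine since $Y$ is separated, the induced ring map $A\to\CO(U_I)$ is essentially of finite type, and $j_I^*\UCV^q$ is ind-coherent by \autoref{CSh-EtalePullbackPreservesNil}. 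So it suffices to show that for an affine morphism $g\colon\Spec B\to\Spec A$ with $A\to B$ essentially of finite type, $g_*$ maps $\IndCohC(\Spec B,\Coeff)$ into $\IndCohC(\Spec A,\Coeff)$. Since $g_*$ is exact and commutes with filtered colimits, every ind-coherent Cartier sheaf is a filtered colimit of coherent ones, and $\IndCohC$ is closed under filtered colimits (\autoref{CSh-QisGrothCat}), we may take the source to be a \emph{coherent} Cartier sheaf $M$; then $g_*$ of a term-wise ind-coherent complex is again term-wise ind-coherent.

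Now factor $A\to B$ as $A\to C\to B=S^{-1}C$ with $A\to C$ of finite type and $C\to B$ a localization. For the finite type morphism $\Spec C\to\Spec A$, \autoref{DF-DerImOnLNilCoh} shows that the pushforward of a coherent Cartier sheaf lies in $\LNilCohC(\Spec A,\Coeff)\subseteq\IndCohC(\Spec A,\Coeff)$, and a filtered colimit of such is again ind-coherent, so that step is done. For the localization $\Spec S^{-1}C\to\Spec C$ the essential point is the claim that a coherent Cartier sheaf $M$ on $\Spec S^{-1}C$ contains a finitely generated $C\tensor\Coeff$-submodule $M_0$ with $S^{-1}M_0=M$ and $\kappa(M_0)\subseteq M_0$: granting it, each $\tfrac1sM_0$ ($s\in S$) is a finitely generated $\kappa$-stable $C\tensor\Coeff$-submodule, since $\kappa(\tfrac1sm)=\tfrac1s\kappa(s^{q-1}m)$ and $s^{q-1}m\in M_0$, so $M=\bigcup_{s\in S}\tfrac1sM_0$ exhibits $g_*M$ as a filtered union of coherent Cartier subsheaves. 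The construction of $M_0$ is the main obstacle, and I expect it to parallel case (b) of \autoref{DF-DerImOnLNilCoh}: choose an $S^{-1}C\tensor\Coeff$-generating set $m_1,\dots,m_t$ of $M$, write $\kappa(m_i)=\sum_ja_{ij}m_j$, let $s_0\in S$ clear the finitely many denominators, use the finiteness of the Frobenius on $\Spec C$ to control $\kappa$ on the whole $C\tensor\Coeff$-span of the $m_i$, and then show that after rescaling by a suitable power of $s_0$ the operator $\kappa$ no longer raises the natural ``order of pole'' filtration of $M$ — the analogue of Anderson's estimate $\kappa(M_{\le\ell})\subseteq M_{\le\ell-1}$. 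Establishing this contractivity is the crux; once it is in place, $M_0$ is a sufficiently large step of the filtration, the remaining claims follow from the colimit bookkeeping above, and for unbounded $*$ no convergence issue arises because a finite \v{C}ech complex has bounded amplitude and, after the reduction, $Rg_*=g_*$ is exact.
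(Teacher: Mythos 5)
Your global reduction (\v{C}ech complexes, locality of ind-coherence on $X$, passage to a single coherent Cartier sheaf over an affine essentially finite type ring map, factorization into finite type plus localization, and disposal of the finite type factor via \autoref{DF-DerImOnLNilCoh}) coincides with the paper's argument; the two proofs diverge only in how the localization $C\to B=S^{-1}C$ is treated. The paper does \emph{not} produce a $\kappa$-stable finitely generated $C\otimes\Coeff$-submodule of $M$: it only finds a \emph{finitely generated} multiplicative subsystem $\CS_0\subseteq S$ and a finitely generated $\kappa$-stable $\CS_0^{-1}C\otimes\Coeff$-module $N_0$ with $S^{-1}N_0=M$ (take a presentation of $M$, already defined over some $\CS_1^{-1}C$, and enlarge $\CS_1$ to $\CS_0$ so as to absorb the finitely many denominators occurring in $\kappa$ of a finite generating set of $\Frid_*N_1$, using $F$-finiteness); it then writes $M=\dirlim_{\CT}\CT^{-1}N_0$ over finitely generated $\CT\supseteq\CS_0$, applies \autoref{DF-DerImOnLNilCoh} once more to each finite-type localization $\Spec\CT^{-1}C\to\Spec C$, and concludes by closure of $\IndCohC$ under filtered colimits. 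Your route asks for more, namely a finitely generated $\kappa$-stable $C\otimes\Coeff$-submodule $M_0$ with $S^{-1}M_0=M$, and the contractivity you flag as the crux does in fact go through: let $M_0'=\sum_i(C\otimes\Coeff)m_i$, let $e_1,\dots,e_r$ generate $C\otimes\Coeff$ over its image under $\Frid$ ($F$-finiteness), and let $s_0\in S$ clear the denominators of the finitely many elements $\kappa(e_km_i)$; then $\kappa(M_0')\subseteq s_0^{-1}M_0'$, and the semilinearity identity $\kappa(s_0^{-d}v)=s_0^{-\lceil d/q\rceil}\kappa\bigl(s_0^{q\lceil d/q\rceil-d}v\bigr)$ gives $\kappa\bigl(s_0^{-d}M_0'\bigr)\subseteq s_0^{-(\lceil d/q\rceil+1)}M_0'\subseteq s_0^{-d}M_0'$ as soon as $\lceil d/q\rceil+1\le d$, which holds for all $d\ge 2$; so $M_0=s_0^{-2}M_0'$ works, and $g_*M=\bigcup_{s\in S}s^{-1}M_0$ is a filtered union of coherent Cartier subsheaves exactly as you describe. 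Thus your argument is correct and buys the slightly stronger statement that the pushforward of a coherent Cartier sheaf along a localization is \emph{literally} a union of coherent Cartier subsheaves, at the price of an Anderson-type estimate; the paper's softer device avoids any estimate but only exhibits the pushforward as a filtered colimit of objects of $\LNilCohC$.
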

\begin{proof}
As in the proof of \autoref{DF-DerImOnLNilCoh} the assertion is local, and so it suffices to prove it when $X=\Spec R$ and $Y=\Spec S$ are affine schemes. In that case we need to prove that $ f_*\big(\IndCohC(Y,\Coeff) \big)\subset \IndCohC(X,\Coeff)$. Since all ind-Cartier sheaves are direct limits of coherent Cartier sheaves, it suffices to show that
\begin{equation}\label{InclusionForDerImAndInd}
 f_*\big(\CohC(Y,\Coeff) \big)\subset \IndCohC(X,\Coeff).
\end{equation}
For $f$ of finite type this follows from \autoref{DF-DerImOnLNilCoh} due to the inclusions in \autoref{CSh-CCatDiag}. %$\CohC(Y,\Coeff)\subseteq \LNilCohC(Y,\Coeff)\subseteq \IndCohC(Y,\Coeff)$.
By \autoref{Mor-FactOfEFT}  any essentially finite type morphism $f$ factors as the composite of a finite type morphism and a localized immersion, and so it suffices to show \autoref{InclusionForDerImAndInd} for $\Spec S\to \Spec R$ induced from a localization homomorphism $\alpha\colon R\to S=\CS^{-1}R$, $\CS\subset R$ a multiplicative system.

Let $I$ be the collection of all multiplicatively closed subszstems $\CS_0$ of $\CS$ which are finitely generated. Let $(M,\kappa)$ be a finitely generated $S\otimes\Coeff$-module and $\kappa\!:M\to M$ be $(\Fr_{S}\otimes \id_\Coeff)^{-1}$-linear. We first construct a finitely generated $\CS_0^{-1}R\otimes \Coeff$-module $N_0\subset M$ such that $\kappa(N_0)\subset N_0$ for some $\CS_0\in I$: Consider a presentation
$$
\CS^{-1}R^m\otimes\Coeff\to[A] \CS^{-1}R^n\otimes\Coeff\to M \to 0.
$$
The coefficients of the presentation matrix $A$ involve only finitely many denominators. Thus it is defined already over some $\CS_1\in I$. Let $N_1$ denote the $\CS_1^{-1}R\otimes \Coeff$-module which is the cokernel of the map $A$ over $\CS_1^{-1}R\otimes\Coeff$. By enlarging $\CS_1$ if necessary, we may assume $N_1\subset M$. Since $\Fr_R$ is finite by hypothesis, $\Frid_* N_1$ is finitely generated over $\CS_1^{-1}R\otimes \Coeff$. Any finite set of $\CS_1^{-1}R\otimes \Coeff$-generators of $\Frid_* N_1$ is also a set of $S\otimes \Coeff$-generators of $\Frid_*M$. Hence there exists $\CS_0\in I$ containing $\CS_1$ such that $\kappa(N_1)\subset M$ is contained in $N_0\colonequals \CS_0^{-1}N_1$. From $\kappa(s^{-1}m)=\kappa(s^{-q} s^{q-1}m)=s^{-1}\kappa(s^{q-1}m)$ for $s\in S$, one deduces $\kappa(N_0)\subset N_0$, as desired.

Now observe that
\[
(M,\kappa) =\CS^{-1}(N_0,\kappa)=\dirlim_{\CS_0 \subseteq \CT \in I} \CT^{-1}(N_0,\kappa).
\]
By \autoref{DF-DerImOnLNilCoh}, the Cartier sheaves attached to $\CT^{-1}(N_0,\kappa)$ are locally nil-coherent. It follows that their direct limit $(M,\kappa)$ lies in $\IndCohC(\Spec R,\Coeff)$, and this completes the proof of the corollary.
\end{proof}
\begin{rem}
The corollary is not true if one replaces $\IndCohC$ by $\LNilCohC$ while keeping $f$ not of finite type, as the following simple example shows: let $k$ be an algebraically closed field, let $\alpha\!:k\to k(x)$ be the natural homomorphism and let $M=k(x)\mathrm{d}x$ be the module of differentials on $k(x)$ with the usual Cartier operator $C$ given by sending $x^ndx$ to $x^{(n+1)/p+1}$ and taking $p$th roots on $k$. Then all elements of the form $m_c\colonequals \frac{\mathrm{d}x}{x-c}$, $c\in k$ are fixed points of $C$:
\[
    C(\frac{dx}{x-c})=(x-c)^{-1}C((x-c)^{p-1}dx) = (x-c)^{-1}C(x^{p-1}dx)=\frac{dx}{x-c}
\]
using that $C(x^idx)=0$ for $0 \leq i \leq p-2$. In particular $C$ acts not nilpotently on the $m_c$'s. As $(m_c)_{c\in k}$ is linearly independent, $\alpha_*M$ is not an element of $\LNilCohC(\Spec k,k)$.\
\end{rem}

\begin{cor}\label{GenDefDerImQCohC}%\g?{New}%\footnote{Note that $f$ is not assumed to be proper.}
Suppose $*\in\{b,+,-,\emptyset\}$ and $?\in\{\lnil,\lncoh,\indcoh\}$. If $f$ is of finite type, then
$Rf_*\big(\D^*_?(\QCohC(Y,\Coeff)) \big)\subset \D^*_?(\QCohC(X,\Coeff)) $. If $f$ is essentially of finite type, the conclusion still holds for~$?=\ind$.
\end{cor}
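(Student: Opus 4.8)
The plan is to deduce every case from \autoref{DF-LemmaOnNil}, using as its hypothesis~(a) the ``abelian'' results \autoref{DF-DerImAndNil}, \autoref{DF-DerImOnLNilCoh} and \autoref{DF-DerImOnIndCoh} already established, and as its hypothesis~(b) the convergent spectral sequence $E_2^{i,j}=R^if_*(H^j\UCV^\bullet)\To R^{i+j}f_*\UCV^\bullet$ recorded after \autoref{DF-RegIm}.

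The case $?=\lnil$ is precisely \autoref{DF-DerImAndNil}, which already holds for an arbitrary morphism once $\Fr_X$ is finite (our standing assumption), hence in particular for $f$ of finite type. For $?\in\{\lncoh,\indcoh\}$ we invoke \autoref{DF-LemmaOnNil} with $F=Rf_*$, the right derived functor of the left exact functor $f_*$ on $\QCohC(Y,\Coeff)$ (left exact because $(f\times\id)_*$ is, and kernels in $\QCohC$ are computed on underlying sheaves), and with $\CA$ equal to $\LNilCohC(Y,\Coeff)$ respectively $\IndCohC(Y,\Coeff)$ and $\CA'$ the corresponding subcategory of $\QCohC(X,\Coeff)$; these are Serre subcategories because $\Fr$ is finite, by \autoref{CSh-LNilIsSerre} respectively \autoref{CSh-Serre}. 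Hypothesis~(b) is the spectral sequence above. Moreover, since $Rf_*$ is computed by applying $f_*$ to the \v{C}ech resolution attached to a \emph{finite} affine cover (\autoref{DF-RegIm}; here one uses that $X$ is separated and $Y$ is Noetherian), the functor $Rf_*$ has finite cohomological dimension and the spectral sequence converges for unbounded complexes as well, so \autoref{DF-LemmaOnNil} is applicable for every $*\in\{b,+,-,\emptyset\}$ as soon as hypothesis~(a) holds.

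Hypothesis~(a) --- that each $R^if_*$ carries $\CA$ into $\CA'$ --- is read off from the already-proven statements by placing an object of $\CA$ in cohomological degree $0$: for $\lncoh$, \autoref{DF-DerImOnLNilCoh} (applicable since $f$ is of finite type) shows that $Rf_*$ of such a complex lies in $\D^b(\LNilCohC(X,\Coeff))$, so all of its cohomology objects, which are exactly the $R^if_*$ of the original object, lie in $\LNilCohC(X,\Coeff)$; for $\indcoh$, the identical argument with \autoref{DF-DerImOnIndCoh} (applicable since $f$ is essentially of finite type) puts $R^if_*$ into $\IndCohC(X,\Coeff)$. \autoref{DF-LemmaOnNil} then yields $Rf_*\big(\D^*_\lncoh(\QCohC(Y,\Coeff))\big)\subset\D^*_\lncoh(\QCohC(X,\Coeff))$ for $f$ of finite type and $Rf_*\big(\D^*_\indcoh(\QCohC(Y,\Coeff))\big)\subset\D^*_\indcoh(\QCohC(X,\Coeff))$ for $f$ essentially of finite type (hence also for $f$ of finite type), which is the claim. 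The only substantive point in this argument is the passage from statements about complexes \emph{in} the abelian categories $\LNilCohC$ and $\IndCohC$ to statements about complexes in $\QCohC$ whose \emph{cohomology} lies in them; this is precisely what \autoref{DF-LemmaOnNil} accomplishes, the ingredient needed beyond its bounded-below case being the finite cohomological dimension of $Rf_*$, which is automatic from the finiteness of the \v{C}ech cover.
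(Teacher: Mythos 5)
Your proposal is correct and follows essentially the same route as the paper: both apply \autoref{DF-LemmaOnNil} with hypothesis (a) supplied by \autoref{DF-DerImAndNil}, \autoref{DF-DerImOnLNilCoh} and \autoref{DF-DerImOnIndCoh}, hypothesis (b) by the spectral sequence recorded after \autoref{DF-RegIm}, and the finite cohomological dimension of $Rf_*$ (from the finite \v{C}ech cover) to handle all boundedness conditions $*$. Your added explanation of how condition (a) is extracted from the ``abelian'' results by placing an object in degree $0$ is a useful elaboration of a step the paper leaves implicit, but it is not a different argument.
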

\begin{proof}
Since $f$ is of finite type, the derived functor $Rf_*$ has finite cohomological dimension. This ensures the convergence of the spectral sequence in \autoref{DF-LemmaOnNil} for any choice of~$*$. Condition (a) of \autoref{DF-LemmaOnNil} was established in \autoref{DF-DerImAndNil}, \autoref{DF-DerImOnLNilCoh} and \autoref{DF-DerImOnIndCoh}, respectively. Therefore the proof is an application of~\autoref{DF-LemmaOnNil}.
\end{proof}

\autoref{GenDefDerImQCohC} together with \autoref{DF-CompDCII} and \autoref{DF-ConstrDFonCrys} directly yield:
\begin{theorem}\label{GenDefDerIm}%\g?{NEW: Rewritten due to the previous statement.}
Suppose $*\in\{b,+,-,\emptyset\}$. If $f$ is of finite type, then
\[
\xymatrix@C+2pc@R-.5pc{
\D^*(\LNilCrysC(Y,\Coeff)) \ar@{^{ (}->}[d] \ar[r]^{Rf_*}& \D^*(\LNilCrysC(X,\Coeff))\ar@{^{ (}->}[d]\\
\D^*_?(\IndCrysC(Y,\Coeff)) \ar@{^{ (}->}[d] \ar[r]^{Rf_*}& \D^*_?(\IndCrysC(X,\Coeff))\ar@{^{ (}->}[d]\\
\D^*_?(\QCrysC(Y,\Coeff)) \ar[r]^{Rf_*}& \D^*_?(\QCrysC(X,\Coeff))\\
}
\]
commutes for $?\in\{\crys,\emptyset\}$. If $f$ is essentially of finite type, the lower square commutes for $?\in\{\ind,\emptyset\}$.
Moreover via the equivalence $\CrysC(Y,\Coeff) \to[\simeq] \LNilCrysC(Y,\Coeff)$ from  \autoref{CSh-Crys=NilCrys}, for $f$ of finite type the first row induces a functor
\[
\xymatrix@C+2pc@R-.5pc{
\D^*(\CrysC(Y,\Coeff))  \ar[r]^{Rf_*}& \D^*(\CrysC(X,\Coeff))\\
}
\]
\end{theorem}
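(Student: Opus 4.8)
The plan is to assemble this statement purely from ingredients already established in this subsection: the preservation results \autoref{DF-DerImAndNil}, \autoref{DF-DerImOnLNilCoh} and \autoref{DF-DerImOnIndCoh} (packaged as \autoref{GenDefDerImQCohC}), the descent mechanism \autoref{DF-ConstrDFonCrys}, and the comparison equivalences \autoref{DF-CompDCII} and \autoref{DF-CorCompDCII}. I do not expect to need any genuinely new argument; the substance is bookkeeping — tracking which hypothesis ($\Fr_X$ finite, $f$ of finite type, $f$ essentially of finite type) enters at which layer, and checking that all functors named $Rf_*$ in the diagram are restrictions of one and the same functor on the underlying derived categories.

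First I would produce the vertical functors. Applying \autoref{DF-ConstrDFonCrys} with $F=Rf_*$, its hypothesis — preservation of cohomological nilpotence — holds by \autoref{DF-DerImAndNil} under our standing assumption that $\Fr_X$ is finite; this already gives $Rf_*$ on $\D^*(\QCrysC(\usc,\Coeff))$ and, via the $(\IndCohC,\IndCrysC)$ version of \autoref{DF-ConstrDFonCrys}, on $\D^*(\IndCrysC(\usc,\Coeff))$ (recovering and extending \autoref{DF-DerImOnCCrysI}). To get the cohomology-restricted variants I would feed \autoref{GenDefDerImQCohC} — $Rf_*$ preserves cohomology in $\lncoh$ when $f$ is of finite type and in $\indcoh$ when $f$ is essentially of finite type — into the ``moreover'' clause of \autoref{DF-ConstrDFonCrys}, obtaining $Rf_*$ on $\D^*_\crys(\QCrysC(\usc,\Coeff))$ and $\D^*_\crys(\IndCrysC(\usc,\Coeff))$ for $f$ of finite type, and on the $\D^*_\ind$ form of the lower square for $f$ essentially of finite type. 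The top row, involving $\LNilCrysC$, uses the $(\LNilCohC,\CrysC)$ version of \autoref{DF-ConstrDFonCrys}, whose applicability rests on \autoref{DF-DerImOnLNilCoh}: $Rf_*$ preserves $\D^*(\LNilCohC(\usc,\Coeff))$ precisely when $f$ is of finite type, which is why the whole first diagram carries that hypothesis.

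Next I would check that the three squares commute. The key observation is that every instance of $Rf_*$ in the diagram is, by construction, the localisation at $\SLocQNilInv$ of the functor $\UCV^\bullet \mapsto f_*\bigl(\Cech_\FU(\UCV^\bullet)\bigr)$ on $\D^*(\QCohC(\usc,\Coeff))$ — equivalently $R(f\times\id)_*$ on $\D^*(\QCoh(\usc\times\CScheme))_\kappa$ via \autoref{CompDCI-EquivOfTwoCats} and \autoref{DF-DerImages} — transported along the equivalences \autoref{DF-CompDCII} and \autoref{DF-CorCompDCII}. At the level of $\kappa$-sheaves this \v{C}ech formula is manifestly independent of the ambient category, so it commutes on the nose with the fully faithful inclusions $\LNilCohC \subseteq \IndCohC \subseteq \QCohC$ and their cohomology-restricted versions. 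Since localisation is functorial, and since the comparison equivalences identify the inclusions of the localised derived categories with the localisations of the inclusions of the categories of $\kappa$-sheaves — this is exactly the commuting square displayed at the end of the proof of \autoref{DF-CorCompDCII} — all three squares commute, for $?\in\{\crys,\emptyset\}$ in the finite-type case and for $?\in\{\ind,\emptyset\}$ in the lower square in the essentially-finite-type case.

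Finally, for the last assertion I would invoke \autoref{CSh-Crys=NilCrys}: the equivalence $\CrysC(\usc,\Coeff)\to[\simeq]\LNilCrysC(\usc,\Coeff)$ induces an equivalence $\D^*(\CrysC(\usc,\Coeff))\to[\simeq]\D^*(\LNilCrysC(\usc,\Coeff))$, and transporting the top-row functor along it over both $Y$ and $X$ yields the claimed $Rf_*\colon\D^*(\CrysC(Y,\Coeff))\to\D^*(\CrysC(X,\Coeff))$. The hard part — really the only place I expect to have to be careful — is the commutativity step: one must choose the equivalences of \autoref{DF-CompDCII} and \autoref{DF-CorCompDCII} compatibly across all three layers, so that the vertical maps on the crystal side are genuinely the honest inclusions, and one must confirm that the functors descended from the three separate invocations of \autoref{DF-ConstrDFonCrys} coincide with these transports rather than being merely isomorphic to them. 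Both follow by unwinding the universal property of localisation, but they are the natural source of slips.
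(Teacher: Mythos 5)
Your proposal is correct and follows essentially the same route as the paper, which derives the theorem directly from \autoref{GenDefDerImQCohC}, \autoref{DF-CompDCII} and \autoref{DF-ConstrDFonCrys} without further argument. Your additional care about the commutativity of the squares (all instances of $Rf_*$ being localisations of the one \v{C}ech-complex functor, transported along compatibly chosen equivalences) is exactly the unwinding the paper leaves implicit.
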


\side{A reasonably thorough reading up to here seems to indicate that we need $\Fr$-finite whenever we speak of $\QCrysC$. However I have the feeling that this can be resolved by replacing $\LNilCohC$ by the smallest Serre subcategory $\overline{\LNilCohC}$ of $\QCohC$ it is contained in. One still has $\LNilCohC = \overline{\LNilCohC} \cap \IndCohC$. To make this work we have to check, that $R^if_*$ preserves $\overline{\LNilCohC}$ which should follow by the long exact cohomology sequence and the fact that $R^if_*$ preserves $\LNilCohC$.\\
However here are many details that need to be checked and a lot of proofs need to be adjusted also down the row. Presently I don't think this is worth it. And I am more than happy to add $\sigma$-finiteness as a blanket assumption for the whole paper. }

\begin{rem}
Regarding the vertical arrows in \autoref{GenDefDerIm}, it is shown in \cite[Reference !]{BliBoe.CartierDuality} that for a regular scheme with finite $\Fr_X$ the natural inclusion
\[
    \D^*_?(\IndCrysC(X,\Coeff))\into \D^*_?(\QCrysC(X,\Coeff))
\]
is an equivalences of categories for $*\in \{\emptyset,+,-,b\}$, $?\in\{\indcrys,\crys\}$. For $*\in \{-,b\}$ the same holds for
\[
    \D^*(\LNilCrysC(X,\Coeff))\into \D^*_\crys(\IndCrysC(X,\Coeff)).
\]
This can be combined with the equivalence $\CrysC(X,\Coeff) \to[\simeq] \LNilCrysC(X,\Coeff)$ to obtain an equivalence of categories $\D^{-/b}(\CrysC(X,\Coeff)) \to[\simeq]   \D^{-/b}_\crys(\QCrysC(X,\Coeff))$. This observation can be used to define for proper $f \colon Y \to X$ and $X$ regular the pushforward $Rf_* \colon \D^{-/b}(\CrysC(Y,\Coeff)) \to \D^{-/b}(\CrysC(X,\Coeff))$ without appealing to \autoref{DF-DerImOnIndCoh}. This yields an alternative construction of $Rf_*$ of \autoref{GenDefDerIm} in this case.

It is unclear to us if the regularity assumption in the equivalence of categories above is really necessary; in the context of $\tau$-crystals it is not, as shown in \cite[Chapter 5]{BoPi.CohomCrys}.
\end{rem}

\subsection{Twisted inverse image}
\label{CSh-sec-InverseImage}

We first recall the key properties of the twisted inverse image functor for quasi-coherent sheaves $f^!$ for \emph{essentially} finite type morphisms from \cite[Cor.~VII.3.4]{HartshorneRD}, \cite[Thm.~4.8.1]{LipmanGrothDual} and  \cite[5.2]{Na.CompEFT}. Let $\FC$ be the category of Noetherian schemes with essentially finite type morphisms (see \autoref{sec.EssFT}).
\begin{theorem}\label{CSh-PropFShriek}
On the category $\FC$, there exists a pseudofunctor $(\usc)^!$ which to each $f\colon Y\to X$ in $\FC$ assigns a functor
\[f^!\colon \D^+(\QCoh(X))\longto \D^+(\QCoh(Y))\]
that is uniquely determined up to isomorphism by the following three properties :
\begin{enumerate}
\item On the subcategory of Noetherian schemes with essentially \'etale \hbox{morphisms}, $f\mapsto f^!$ is naturally isomorphic to the inverse image pseudofunctor~$f\mapsto f^*$.
\item On the subcategory  of Noetherian schemes with proper morphisms, $f\!\mapsto \!f^!$ is na\-turally isomorphic to the right adjoint of the direct image \hbox{pseudofunctor $\!f\!\mapsto \!Rf_*$}.
\item \label{CSh-PropFShriek-d}Functorially over~$\FC$, for any base change diagram of Noetherian schemes
\[
    \xymatrix{U\ar[r]^j \ar[d]^g &X\ar[d]^f\\
V\ar[r]^i &Y\\}
\]
with $f$ proper and $i$ flat there is a flat base change isomorphism $j^*f^!\stackrel{\sim}{\longto} g^!i^*$. If in addition $i$ and $j$ are essentially \'etale, then the flat base change isomorphism agrees with the natural isomorphism
\[
    j^*f^!\stackrel{\sim}{\longto}  j^!f^! \stackrel{\sim}{\longto} (fj)^!=(ig)^!\stackrel{\sim}{\longto} g^!i^!\stackrel{\sim}{\longto} g^!i^*.
\]
\end{enumerate}
On the subcategory $\FC_S^p$ of Noetherian schemes essentially of finite type over a base scheme $S$ that possesses a dualizing complex $\omega_S^\bullet$ and where all morphisms are proper, one has the following additional results:
\begin{enumerate}\setcounter{enumi}{3}
\item Let $\Dual_X$ denote the dualizing functor on $X\in\FC_S^p$ induced from $\omega_S^\bullet$, then $f\mapsto Rf_*$ and $f\mapsto \Dual_Y\circ \,Rf_*\circ \Dual_X$ are naturally isomorphic pseudo-functors.
\item The adjoint pair $(Lf^*,Rf_*)$ yields an adjoint pair $(\Dual_X\!\circ Rf_*\circ\Dual_Y,\Dual_Y \!\circ Lf^*\circ\Dual_X)$ and in particular $f\mapsto f^!$ and $f\mapsto \Dual_Y \circ\, Lf^*\circ\Dual_X$  are naturally isomorphic pseudofunctors.
\item
For each $f\colon Y\to X$ in $\FC_S$ one has $f^!\big(D^+_\coh(\QCoh(X))\big)\subset D^+_\coh(\QCoh(Y))$.
\end{enumerate}
\end{theorem}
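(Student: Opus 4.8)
The plan is to reduce the assertion, by means of Nayak's factorization theorem \autoref{Mor-FactOfEFT}, to the two extreme classes of morphisms in $\FC_S$: localizing immersions and proper morphisms. (Parts (a)--(e) are the defining properties of $(\usc)^!$ recalled from the cited literature.)

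First I would dispose of the case where $f$ is a localizing immersion, or more generally essentially \'etale. Then $f$ is flat, and by property~(a) the functor $f^!$ is naturally isomorphic to $f^*$, which is exact and carries coherent sheaves to coherent sheaves; hence $f^!$ preserves $\D^+_\coh(\QCoh(\usc))$. Next I would treat a proper morphism $f\colon Y\to X$. Since $X\in\FC_S$ it carries a dualizing complex, obtained from $\omega_S^\bullet$ along the essentially finite type structure morphism $X\to S$, and similarly for~$Y$; write $\Dual_X$ and $\Dual_Y$ for the associated dualizing functors, each of which carries $\D^+_\coh$ into $\D^-_\coh$ and $\D^-_\coh$ into $\D^+_\coh$ while preserving coherent cohomology. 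By property~(e) there is a natural isomorphism $f^!\cong \Dual_Y\circ Lf^*\circ \Dual_X$, and I would then trace boundedness through this composite: $\Dual_X$ sends $\D^+_\coh(\QCoh(X))$ into $\D^-_\coh(\QCoh(X))$; since $f$ is of finite type, $Lf^*$ (computed locally by flat resolutions of bounded-above complexes of coherent sheaves) sends $\D^-_\coh(\QCoh(X))$ into $\D^-_\coh(\QCoh(Y))$; and $\Dual_Y$ returns it into $\D^+_\coh(\QCoh(Y))$. Thus $f^!$ preserves $\D^+_\coh$ for proper~$f$ --- which is, of course, the classical coherence statement of Grothendieck--Serre duality.

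For an arbitrary $f\colon Y\to X$ in $\FC_S$ I would argue as follows. Such $f$ is separated (all our schemes being separated over $\BF_q$), so \autoref{Mor-FactOfEFT} provides a factorization $f = p\circ k$ with $k\colon Y\to Z$ a localizing immersion and $p\colon Z\to X$ proper. Here $Z$ again lies in $\FC_S$: it is of finite type over $X$, hence essentially of finite type over $S$, and it inherits a dualizing complex, so the proper case applies to~$p$. Pseudofunctoriality of $(\usc)^!$ gives $f^!\cong k^!\circ p^!$; combining the two cases, $p^!$ preserves $\D^+_\coh$ and then so does $k^!$, whence so does $f^!$.

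The only non-formal input in this scheme is the coherence-preservation for $f^!$ when $f$ is proper, which is the classical content of Grothendieck--Serre duality, \cite[Cor.~VII.3.4]{HartshorneRD} and \cite[Thm.~4.8.1]{LipmanGrothDual}; I expect the remaining genuine difficulty --- already subsumed in \autoref{Mor-FactOfEFT} and in property~(e) as stated --- to lie in the extension of duality to essentially finite type morphisms and in the verification that the needed compatibilities of $(\usc)^!$ survive the factorization, which is carried out in \cite[\S~5]{Na.CompEFT}.
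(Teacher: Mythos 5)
This theorem is stated in the paper as a recall of results from \cite{HartshorneRD}, \cite{LipmanGrothDual} and \cite{Na.CompEFT}; the paper offers no proof beyond the citations and the discussion in \autoref{RemOnfShriek}, so there is no internal argument to compare against. Your treatment is nonetheless sound: you correctly identify (a)--(e) as the imported content whose proof genuinely lives in the cited references, and your derivation of (f) from them --- essentially \'etale case via (a) and exactness of $f^*$, proper case via (e) together with the fact that $\Dual_X$ interchanges $\D^+_\coh$ and $\D^-_\coh$ while $Lf^*$ preserves $\D^-_\coh$, and the general case by Nayak's factorization $f=p\circ k$ from \autoref{Mor-FactOfEFT} plus pseudofunctoriality $f^!\cong k^!\circ p^!$ --- is exactly the standard argument carried out in those references. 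The one point worth being explicit about, which you gesture at correctly, is that the intermediate scheme $Z$ of the factorization again lies in $\FC_S$ and carries a dualizing complex (e.g.\ $p^!\omega_X^\bullet$), so that the proper case genuinely applies to $p$.
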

\begin{rem}\label{RemOnfShriek}
The abstract existence result of $f^!$ in (b) is attributed by Deligne to Verdier, see~\cite[App.~IV]{HartshorneRD}. The construction of $f^!$ for general $f$ proceeds as follows: By Nagata's compactification theorem, any finite type morphism $f$ can be factored as the composite $f=\bar f\circ j$ of an open immersion $j$ and a proper morphism $\bar f$. The extension by Nayak recalled in \autoref{Mor-FactOfEFT}, yields an analogous factorization for essentially finite type morphisms. One then verifies that $f^!=j^!\circ\bar f^!$ using (a) and (b). In \cite{HartshorneRD} $f^!$ is actually defined via such a factorization and a great deal of work goes into showing that this is independent of the factorization.

In the presence of a compatible system of dualizing complexes, as in $\FC^p_S$, the approach indicated in (d) and (e) gives an alternative construction to $f^!$ for proper morphisms. This was carried out in \cite{HartshorneRD} based on notes by Grothendieck. The compatibility of the two approaches is explained in \cite[App.~4.10]{LipmanGrothDual}.

Since an adjoint can only be defined up to natural isomorphisms of pseudofunctors, in a setting as in (d)-(f), one may simply take $f^!=\Dual_Y \circ \, Lf^*\circ\Dual_X$ as the definition.
\end{rem}

In general there is no explicit description of the functor $f^!$, however in many important cases the situation is better:

\begin{ex}
If $f$ is a \emph{finite} morphism then $f^!(\ublank)\cong f^{-1}\CRHom(f_*\CO_X,\ublank)$; in particular $f^!$ is a right derived functor in this case. If in addition $f$ is flat, then $f^!(\ublank)\cong f^{-1}\CHom(f_*\CO_X,\ublank)$ and one can verify the identity $f^!(\ublank) \cong f^! \CO_X \tensor f^*(\ublank)$ by hand, \cf \cite[Lemma 5.7]{BliBoe.CartierFiniteness}.
\end{ex}

More generally, in the case of a smooth morphism $f \colon Y \to X$ one has the explicit relationship between $f^!$ and $Lf^*$ given by $f^!(\usc) = f^!\CO_X \Ltensor_{\CO_Y} Lf^*(\usc)$, and $f^!\CO_X=\omega_f[-d]$ is the relative dualizing sheaf for $f$ placed in degree $d=$ relative dimension of $f$. This can be generalized as follows:

\begin{rem}\label{EssPerfBounded}
A morphism $f\colon Y\to X$ is called {\em essentially perfect} if it is essentially of finite type, if $Rf_*\CO_Y\in\D^b_\coh(\QCoh(X))$ and if there exist integers $m\le n$ such that for all $y\in Y$ the module $\CO_{Y,y}$ is quasi-isomorphic to a complex of flat $\CO_{X,f(y)}$-modules concentrated in degrees between $m$ and~$n$. This notion is a relative version of being of finite $\Tor$-dimension, \cf~\cite[II.4]{HartshorneRD}. As shown in \cite{Na.CompEFT} the key points are:
\begin{enumerate}
\item A morphism $f$ is essentially perfect if and only if $f^!\CO_X$ is bounded and there is a natural isomorphism of functors
\begin{equation}\label{ExplicitUpperShriek}
f^!(\ublank) \cong f^!\CO_X \Ltensor_{\CO_Y} Lf^*(\ublank).
\end{equation}
\item If $f$ is essentially perfect, then $f^!$ is bounded.
\end{enumerate}
The isomorphism \autoref{ExplicitUpperShriek} is obtained as follows: First assume that $f$ is proper, then $Rf_*\big( Lf^*(\ublank)  \Ltensor_{\CO_Y}  f^!\CO_X \big) \to[\mathrm{Proj.\,formula}]  (\ublank) \Ltensor_{\CO_X}  Rf_* f^!\CO_X   \to[\trace_f] (\ublank)$ is just the adjoint of \autoref{ExplicitUpperShriek} via the adjunction $(Rf_*,f^!)$. In general one uses the factorization of $f$ into $h \circ i$ where $i$ is essentially \'etale and $h$ is proper, see \cite[Section 5.7]{Na.CompEFT} for details.

Since $f$ factors as $f = h \circ i$ where $i$ is localizing and $h$ is perfect, and $i^!=i^*$ is exact, we may assume that $f$ is perfect. This means $\CO_X$ is $f$-perfect, \ie there is $m \in \mathbb{N}$ such that for all $x \in X$ one has that $\CO_{X,x}$ has a flat $\CO_{Y,f(x)}$-resolution of length less than $m$. This immediately implies that $Lf^*$ is bounded. Hence by the formula \autoref{ExplicitUpperShriek} and the fact that for essentially perfect $f$ one has $f^!\CO_Y$ is $f$-prefect \cite[Theorem 5.9]{Na.CompEFT}, \cite[Theorem 4.9.4]{LipmanGrothDual} it follows that $f$ is bounded.
\end{rem}
\begin{ex}
Examples for essentially perfect morphisms are \cite{Na.CompEFT}:
\begin{enumerate}
\item Any locally complete intersection morphism, as can be seen from the Koszul complex.
\item Any Cohen-Macaulay morphism $f$: Such $f$ are flat by definition, so that $Lf^*=f^*$. Moreover they satisfy $f^!\CO_Y\cong\omega_f[-d]$, where $\omega_f$ is the dualizing sheaf relative to $f$ and $d$ is the relative dimension (locally constant on $X$). Special cases are Gorenstein morphisms, where in addition $\omega_f$ locally free of rank~$1$, and essentially smooth morphism in which case $\omega_f=\bigwedge^d \Omega_{Y/X}$ for $\Omega_{Y/X}$ is the relative sheaf of differentials.
\item Any finite type morphism $f$ with $X$, $Y$ regular; this will be discussed in detail in \autoref{LemOnFTMorOfRegSchemes} below.
\end{enumerate}
\end{ex}

In order to show that the functor $f^!$ descends to $\kappa$-sheaves we use the duality for a finite morphism applied to the Frobenius $\Frid$. This states that the functors $\Frid_*$ and $\Frid^!$ form an adjoint pair of functors. Observing \autoref{CSh-PropFShriek}, it says that in the derived category $\D^+_?(\QCoh(X\times\CScheme))$ giving a map $\kappa \colon \Frid_*\CV^\bullet \to \CV^\bullet$ is equivalent to giving a map $\wt{\kappa}:\CV^\bullet \to \Frid^! \CV^\bullet$. To state this observation, let us define (ad hoc) the category $\D^{+/b}_?(\QCoh(X \times \CScheme))_{\wt{\kappa}}$ as the category whose objects are pairs $(\CV^\bullet, \wt{\kappa})$ with $\CV^\bullet \in \D^{+/b}_?(\QCoh(X \times \CScheme))$ and $\wt{\kappa} \colon \CV^\bullet \to \Frid^! \CV^\bullet$. The homomorphisms are the ones which commute with $\wt{\kappa}$.
\begin{lem}\label{LemOnFrobAdj}
The derived category $\D^{+/b}_?(\QCoh(X \times \CScheme))_\kappa$ is equivalent to $\D^{+/b}_?(\QCoh(X \times \CScheme))_{\wt{\kappa}}$.  The equivalence is given by sending $(\CV^\bullet,\kappa) \in \D^+_?(\QCoh(X \times \CScheme))_\kappa$ to $(\CV^\bullet, \wt\kappa)$ where $\wt\kappa$ and $\kappa$ correspond to one another via the adjunction (duality of finite maps)
\[
    \Hom_{\D^+}(\CV^\bullet,\Frid^!\CV^\bullet) \cong \Hom_{\D^+}(\Frid_*\CV^\bullet,\CV^\bullet).
\]
recalled in \autoref{CSh-PropFShriek}.
\end{lem}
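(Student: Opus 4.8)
The plan is to realise the claimed equivalence as the identity on underlying complexes and on underlying morphisms, transporting only the structural maps across the Grothendieck--Serre duality adjunction for the finite morphism $\Frid$. First I would record the setup: since $\Fr$ is finite by our blanket assumption, so is $\Frid=\Fr\times\id$; being affine it satisfies $R\Frid_*=\Frid_*$ (exact), and being proper, property~(b) of \autoref{CSh-PropFShriek} identifies $\Frid^!$ with the right adjoint of $\Frid_*$ on $\D^+(\QCoh(X\times\CScheme))$. For a finite morphism one moreover has $\Frid^!(\ublank)\cong\Frid^{-1}\CRHom(\Frid_*\CO_{X\times\CScheme},\ublank)$, which is bounded below, so $\Frid^!\CV^\bullet\in\D^+$ whenever $\CV^\bullet$ is; this is all that is needed for the target category to make sense, since the decoration $?$ is imposed only on $\CV^\bullet$. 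Write $\Psi_{\CV^\bullet,\CW^\bullet}\colon\Hom_{\D^+}(\Frid_*\CV^\bullet,\CW^\bullet)\isoto\Hom_{\D^+}(\CV^\bullet,\Frid^!\CW^\bullet)$ for the adjunction bijection, which is natural in both arguments as part of the pseudofunctor formalism of \cite{HartshorneRD} and \cite{LipmanGrothDual}.

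Next I would define the candidate functor $\Phi$ on objects by $\Phi(\CV^\bullet,\kappa)\colonequals(\CV^\bullet,\wt\kappa)$ with $\wt\kappa\colonequals\Psi_{\CV^\bullet,\CV^\bullet}(\kappa)$, and on a morphism $\phi\colon\CV^\bullet\to\CW^\bullet$ by $\Phi(\phi)\colonequals\phi$. The only non-formal point is that $\Phi$ is well defined on morphisms, \ie that the square expressing $\kappa$-compatibility,
\[
\phi\circ\kappa_{\CV}=\kappa_{\CW}\circ\Frid_*\phi
\]
as morphisms $\Frid_*\CV^\bullet\to\CW^\bullet$, is \emph{equivalent} to the square expressing $\wt\kappa$-compatibility,
\[
\Frid^!\phi\circ\wt\kappa_{\CV}=\wt\kappa_{\CW}\circ\phi
\]
as morphisms $\CV^\bullet\to\Frid^!\CW^\bullet$. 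Applying the bijection $\Psi_{\CV^\bullet,\CW^\bullet}$ to the first identity and using naturality of $\Psi$ in its second argument with respect to $\phi$ rewrites the left-hand side as $\Frid^!\phi\circ\Psi_{\CV^\bullet,\CV^\bullet}(\kappa_{\CV})=\Frid^!\phi\circ\wt\kappa_{\CV}$, while naturality of $\Psi$ in its first argument with respect to $\phi$ rewrites the right-hand side as $\Psi_{\CW^\bullet,\CW^\bullet}(\kappa_{\CW})\circ\phi=\wt\kappa_{\CW}\circ\phi$. As $\Psi_{\CV^\bullet,\CW^\bullet}$ is bijective, the two squares are equivalent, so $\Phi$ is a functor; the decorations $*$ and $?$ survive because $\Phi$ leaves the underlying complex unchanged.

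Finally I would observe that $\Phi$ is an equivalence essentially by construction: it is faithful since it is the identity on underlying morphisms; it is full because the computation just given, read backwards, shows that any $\phi$ compatible with the $\wt\kappa$'s is automatically compatible with the $\kappa$'s; and it is essentially surjective because $(\CV^\bullet,\Psi_{\CV^\bullet,\CV^\bullet}^{-1}(\wt\kappa))$ is a preimage of $(\CV^\bullet,\wt\kappa)$. The only step requiring genuine care --- and hence the main obstacle --- is the two-variable naturality of the duality isomorphism $\Psi$ for the finite morphism $\Frid$; once this bifunctoriality is invoked, the rest is a formal manipulation of adjunctions.
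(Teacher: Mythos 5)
Your argument is correct and is precisely the formal verification the paper leaves implicit: the lemma is stated without proof, being regarded as an immediate consequence of the $(\Frid_*,\Frid^!)$ adjunction for the finite morphism $\Frid$. Your identification of the two compatibility squares via the two-variable naturality of the adjunction bijection is exactly the intended justification, so there is nothing to add.
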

It is clear how for $f \colon Y \to X$ essentially of finite type the functor $(f \times \id)^!$ from \autoref{CSh-PropFShriek} induces a functor on $\D^+_?(\QCoh(X \times \CScheme))_{\wt{\kappa}}$, just by the compatibility of $(\usc)^!$ with composition.  By the preceding \autoref{LemOnFrobAdj} this induces an $(f \times \id)^!$ on $\D^+_?(\QCoh(X \times \CScheme))_\kappa$ for $?\in\{\emptyset,\coh\}$. Concretely, given an element $\UCV^\bullet \in \D^+_?(\QCoh(X \times \CScheme))_\kappa$ with its adjoint  $\wt\kappa \colon \CV^\bullet \to \Frid^! \CV^\bullet$ we apply $(f \times \id)^!$ and obtain a functorial morphism
\begin{equation}\label{ShriekDef0}
    (f\times \id)^!\CV^\bullet \to[(f\times \id)^!(\wt{\kappa})] (f\times \id)^!\Frid^!\CV^\bullet \cong \Frid^!((f\times \id)^!\CV^\bullet) .
\end{equation}
This naturally gives $(f \times \id)^! \CV^\bullet$ the structure of an element of $\D^+_?(\QCoh(X \times \CScheme))_\kappa$ and hence defines a functor
\begin{equation}\label{DefFshriekOnD()kappa}
    f^!\colon \D^+_?(\QCoh(X\times\CScheme))_\kappa \longto \D^+_?(\QCoh(Y\times\CScheme))_\kappa .
\end{equation}
\begin{defn}\label{DF-ShriekDef}
From \autoref{DefFshriekOnD()kappa}, for $?\in\{\coh,\emptyset\}$\footnote{In \autoref{CSh-FShriekNilpotenceForEssFinite} we shall prove the same for $?\in\{\lnil,\lncoh\}$ if $f$ is essentially of finite type.}, we define the functor
$$f^!\colon \D^{+}_?(\QCohC(X,\Coeff))\longto \D^{+}_?(\QCohC(Y,\Coeff))$$
via the equivalence $\D^{+}_?(\QCohC(\ublank,\Coeff))\to  \D^+_?(\QCoh(\ublank\times\CScheme))_\kappa$ of \autoref{CompDCI-EquivOfTwoCats}.% from \autoref{DefFshriekOnD()kappa}.%the previously defined functor~$f^!$.
\end{defn}

If $f^!$ has bounded cohomological dimension, then by viewing the datum of (\ref{ShriekDef0}) as an object $\CV^\bullet \in \D^b_?(\QCoh(Y \times \CScheme))_\kappa$ one easily verifies that $f^!\CV^\bullet$ lies in $\D^b_?(\ldots)_\kappa$. In particular, by \autoref{EssPerfBounded}, we obtain:

\begin{prop}\label{ShriekForRegXY}\label{RemOnFShriekBounded}
Suppose $f\colon Y\to X$ is essentially perfect. Then for $?\in\{\coh,\emptyset\}$ one has
$$f^!\big( \D^{b}_?(\QCohC(X,\Coeff))\big) \subset \D^{b}_?(\QCohC(Y,\Coeff)).$$
This holds in particular if $X$ and $Y$ are regular.
\end{prop}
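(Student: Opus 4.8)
The plan is to reduce the assertion, by means of the equivalence of \autoref{CompDCI-EquivOfTwoCats}, to the corresponding boundedness statement for the twisted inverse image on the underlying quasi-coherent complexes, and then to feed in \autoref{EssPerfBounded}. Recall from \autoref{DefFshriekOnD()kappa} and \autoref{DF-ShriekDef} how $f^!$ on $\kappa$-sheaves is built: one passes to the $\wt\kappa$-picture of \autoref{LemOnFrobAdj}, applies the quasi-coherent functor $(f\times\id)^!$ to the underlying complex $\CV^\bullet$, and equips $(f\times\id)^!\CV^\bullet$ with the structural map of \autoref{ShriekDef0}. Thus the complex underlying $f^!\UCV^\bullet$ is \emph{precisely} $(f\times\id)^!\CV^\bullet$, and the decorating $\kappa$-structure is immaterial for cohomological boundedness. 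Hence everything reduces to the claim that, for $?\in\{\coh,\emptyset\}$, the functor $(f\times\id)^!$ carries $\D^b_?(\QCoh(X\times\CScheme))$ into $\D^b_?(\QCoh(Y\times\CScheme))$; applying \autoref{CompDCI-EquivOfTwoCats} with $*=b$ then transports the conclusion back to $\kappa$-sheaves. Preservation of the coherence condition is already part of \autoref{CSh-PropFShriek}(f), so the only real content left is boundedness of the cohomological dimension of $(f\times\id)^!$.

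By \autoref{EssPerfBounded}(b) this boundedness follows as soon as $f\times\id\colon Y\times\CScheme\to X\times\CScheme$ is essentially perfect, and the step I would carry out is to deduce this from the essential perfectness of $f$ by base change. The morphism $f\times\id$ is the pullback of $f$ along the projection $\pr\colon X\times\CScheme\to X$, which is flat since $\Coeff$ is flat over the field~$\BF_q$, and $X\times\CScheme$ is Noetherian by our standing hypotheses on~$\CScheme$. One then checks that the two defining properties of essential perfectness survive this flat base change: $R(f\times\id)_*\CO_{Y\times\CScheme}$ is the flat base change along $\pr$ of $Rf_*\CO_Y\in\D^b_\coh(\QCoh(X))$, hence is again bounded with coherent cohomology; and for $w\in Y\times\CScheme$ lying over $y\in Y$ the local ring $\CO_{(Y\times\CScheme),w}$ is a localization of $\CO_{Y,y}\tensor\Coeff$, so the uniform bound $m\le n$ on the flat dimension of $\CO_{Y,y'}$ over $\CO_{X,f(y')}$ is inherited, flatness of $\Coeff$ making $\Tor$ commute with the base change. (Alternatively one may just invoke the stability of essential perfectness under base change from \cite{Na.CompEFT}.) Combined with the first paragraph this proves the main assertion.

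For the final sentence it remains to note that an essentially finite type morphism between regular Noetherian schemes is essentially perfect; this is the example recorded right after \autoref{EssPerfBounded}, proved in \autoref{LemOnFTMorOfRegSchemes}, and it is exactly here that regularity is used, through the finiteness of the global dimensions of the local rings $\CO_{X,x}$. The main clause then applies. I expect the base-change claim of the second paragraph to be the one point requiring care --- keeping track of the boundedness and coherence of $R(f\times\id)_*\CO_{Y\times\CScheme}$, and of the uniformity of the $\Tor$-dimension bound, under the base change $\pr$, which is flat but not of finite type; the rest is a formal consequence of \autoref{CompDCI-EquivOfTwoCats}, \autoref{CSh-PropFShriek} and \autoref{EssPerfBounded} established above.
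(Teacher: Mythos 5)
Your argument is correct and follows essentially the same route as the paper: the complex underlying $f^!\UCV^\bullet$ is $(f\times\id)^!\CV^\bullet$, so the claim reduces to the boundedness of the quasi-coherent $(f\times\id)^!$ supplied by \autoref{EssPerfBounded}, together with \autoref{LemOnFTMorOfRegSchemes} for the regular case. Your explicit verification that $f\times\id$ inherits essential perfectness from $f$ by flat base change along $X\times\CScheme\to X$ is a detail the paper leaves implicit at this point (it invokes the same base-change stability later, in the proof of \autoref{ShriekForFTMorBetweenReg}), but it is exactly the intended justification.
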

\begin{rem}\label{FirstRemOnfShriekCohomBdd}
For general essentially finite $f$ one cannot expect that $f^!$ is bounded, since this already fails for a closed immersion $\Spec R/I \into \Spec R$ if $R/I$ is not of finite projective dimension over $R$. However, in \autoref{CohDimOfShriek} we shall prove that the induced functor $f^!$ on complexes of crystals is bounded. In light of a duality theory that is the content of \cite{BliBoe.CartierDuality} such a boundedness result is expected. The duality relates $\kappa$-crystals to $\tau$-crystals of \cite{BoPi.CohomCrys} and the functor corresponding to $f^!$ is $Lf^*$ which is shown to be exact in \cite[Prop.~6.1.5, Thm.~5.3.1(c)]{BoPi.CohomCrys}.
\end{rem}

\begin{rem}\label{DF-RemShriekDef}
In \autoref{ShriekDef0} we defined the structural map for $f^!$ by a twofold application of the adjunction of pairs $(\CV^\bullet,\kappa)$ and $(\CV^\bullet,\wt\kappa)$. If one goes through the definitions, then the sheaf underlying $f^!\UCV^\bullet$ is $(f\times\id)^!\CV^\bullet\in \D^+_\coh(\QCoh(Y,\Coeff))$ and the homomorphism $\kappa_{f^!\CV^\bullet}$ is defined by the following commutative diagram:
 \begin{equation}\label{KappafShriek}
  \xymatrix@C+20pt @R-.8pc{
  \Frid_* (f\times\id)^! \CV^\bullet
  \ar[dd]^-{\kappa_{\scriptstyle f^!\CV^\bullet}} \ar[r]_-{\id\to\Fr^!\Fr_*}&%*{\begin{picture}(0,0)\put(2.6,5){\circle{10}}\put(0,1){1}\end{picture}}&
  \Frid_* (f\times\id)^! \Frid^!\Frid_*\CV^\bullet\ar[d]^\cong\\
  & \Frid_* \Frid^!
  (f\times\id)^!\Frid_*\CV^\bullet\ar[d]^{\Fr_*\Fr^!\to\id} \\
  (f\times\id)^!\CV^\bullet
  &(f\times\id)^!\Frid_* \CV^\bullet\ar[l]_{(f\times\id)^!(\kappa)}\rlap{.}\\}
  \end{equation}
For later use, we call the map from top left to bottom right a base change map:
\begin{equation}\label{DefSigmaFbasechange}
 \Frid_* (f\times\id)^! \to[\mathrm{base\,change}] (f\times\id)^!\Frid_* .
\end{equation}
\end{rem}

\begin{rem} \m?{Expanded this remark to explain the base change morphism in this case}
For finite maps $f$ this base change morphism \autoref{DefSigmaFbasechange} can be made more explicit as follows. Let us first assume that $X= \Spec R$ and $Y = \Spec S$ are affine, and $f$ is given by a finite map of rings $f \colon R \to S$. To
\[
    \xymatrix{
        R \ar[r]^{\Fr_R}\ar[d]_{f} & \Fr_*R \ar[d]^{f} \\
        S \ar[r]^{\Fr_S} & \Fr_*S }
\]
and a single Cartier module $V$ over $R$ the base change morphism \autoref{DefSigmaFbasechange} on zeroth cohomology is given by the composition
\begin{equation*}
\begin{split}
     \Hom_{\Fr_*R}(f_*\Fr_*S,V) & \to \Hom_{\Fr_*R}(f_*\Fr_*S,\Hom_R(\Fr_*R,\Fr_*V)) \to \Hom_R(f_*\Fr_*S,\Fr_*V)  \\ &\to \Hom_S(\Fr_*S,\Hom_R(f_*S,\Fr_*V)) \to \Hom_R(f_*S,\Fr_*V)\, ,
\end{split}
\end{equation*}

Disregarding the first map for a moment, the middle two steps are given by $\Hom$-$\tensor$ adjunction and express the commutation of $f^!\Fr^! \cong \Fr^! f^!$ and the last is just evaluation at 1. But tracing through these last three steps one observes that their composition is just given by precomposition with the Frobenius $\Fr_S$ and post-composition with the evaluation at 1 on $\Hom_R(\Fr_*R,\Fr_*V) \to \Fr_*V$.\footnote{The second map is given by $\psi \mapsto [ \alpha \mapsto (\psi(\alpha))(1)]$. The third map by $\phi \mapsto [\gamma \mapsto [\beta \mapsto \phi(\gamma\Fr_S(\beta))]]$ and the third is evaluation at $1$. Combining these we get that $\psi$ under this composition is a map that sends $s$ to $\psi(\Fr_S(s))(1)=ev_1 \circ \psi \circ \Fr_S(s)$ as claimed.} But the first map is just given by post-composition with the natural map $V \to \Hom_R(\Fr_*R,\Fr_*V)$ which composed with the evaluation at $1$ is the identity. It follows that the whole composition of all four maps is just given by precomposition with $\Fr_S$.
This explicit computation in the affine case shows that more generally, for any finite map $f \colon Y \to X$ and a complex of Cartier sheaves $\UCV^\bullet$ the base change
\[
    \Fr_* f^! \UCV^\bullet = \Fr_* f^{-1} \CRHom(f_*\CO_X,\UCV^\bullet) \to f^{-1}\CRHom(f_*\CO_X,\Fr_*\UCV^\bullet)=f^! \Fr_* \UCV^\bullet
\]
is given on underlying $\CO_X$-modules and after choosing some injective resolution $\CI^\bullet$ of $\CV^\bullet$ by the composition
\[
    \sigma_*f^{-1}\CHom(f_*\CO_X,\CI^\bullet) \to f^{-1}\CHom(f_*\Fr_*\CO_X,\sigma_*\CI^\bullet) \to f^{-1}\CHom(f_*\CO_X,\Fr_*\CI^\bullet)
\]
where the first map sends $\phi$ to $\sigma_*\phi$ and the second is composition with the Frobenius action $\Fr_X$ on $\CO_X$. Therefore, the extraordinary pullback $f^!\UCV^\bullet$ is given by the pair consisting of the complex of sheaves
$$ (f\times\id)^{-1}\CRHom((f\times\id)_*\CO_{X\times \CScheme},\CV^\bullet) $$
and the morphism of complexes
\begin{eqnarray*}
\lefteqn{\Frid_* (f\times\id)^{-1}\CRHom((f\times\id)_*\CO_{X\times \CScheme},\CV^\bullet) }\\
&\to &  (f\times\id)^{-1}\CRHom((f\times\id)_*\Frid_*\CO_{X\times C},\Frid_*\CV^\bullet) \\
&\to &  (f\times\id)^{-1}\CRHom((f\times\id)_*\CO_{X\times \CScheme},\CV^\bullet),
\end{eqnarray*}
where the second morphism uses on the right argument the structural homomorphism $\kappa_\CV\colon\Frid_* \CV^\bullet\to\CV^\bullet$ and on the left simply the Frobenius morphism $\Frid \colon \CO_{X \times C} \to \Frid_*\CO_{X \times C}$.
\end{rem}

\begin{ex}
If $i \colon Y \into X$ is a hypersurface in affine $n$-space given by the vanishing of a regular element $f \in R=k[x_1,\ldots,x_n]$.    The Koszul complex $K_\bullet(f,R) = 0 \to R \to[f] R \to 0$ is a $R$-free resolution of $S= R/f$. The Frobenius on $R/f$ lifts uniquely to the Koszul complex (homological degree on the right is zero)
\[
\xymatrix{
    0 \ar[r] & R \ar[r]^f \ar[d]_{f^{p-1}\Fr} & R  \ar[d]^\Fr\\
    0 \ar[r] & \Fr_*R \ar[r]^{\Fr_*f}  & \Fr_*R
    }
\]
yielding a map $\tau \colon K_\bullet(f,R) \to \Fr_* K_\bullet(f,R)$ which is compatible with the Frobenius action $\sigma \colon R/f \to \Fr_* R/f$.  Using this free resolution of $R/f$ we compute that for any Cartier module $(M,\kappa)$ the restriction $i^!M$ is represented by the complex $\Hom_R(K_\bullet(f,R),M)$. As by the above recipe we may use the lift $\tau$ of the Frobenius to the Koszul complex to compute that the structural map on $i^!M$ is given by
\[
\Fr_* \Hom_R(K_\bullet(f,R),M) \to \Hom_R(\Fr_* K_\bullet(f,R),\Fr_* M) \to[\kappa \circ \usc \circ \tau] \Hom_R(K_\bullet(f,R),M).
\]
Concretely, identifying $i^!M = \Hom_R(K_\bullet(f,R),M)$ with the complex $M \to[f] M$ via evaluation at $1$, the structural map $i^!\kappa$ is given by
\[
\xymatrix{
     \Fr_* M \ar[r]^{\Fr_*f}\ar[d]_\kappa & \Fr_* M\ar[d]^{\kappa f^{p-1}} \\
      M \ar[r]^{f} &  M \\
}
\]
More generally, a similar description can be obtained for any complete intesection $Y \into X$ by noting that the Koszul complex $K(f_1,\ldots,f_t,R)$ on a regular sequence is just the tensor product of the Koszul complexes $K(f_i,R)$ for $i= 1,\ldots, t$. In this case, if the Cartier module $(M,\kappa)$ is locally free, then $i^!M$ is equal to $(M/(f_1,\ldots,f_t)M)[-t]$ and the structural map is given by $\kappa((f_1\cdot \ldots \cdot f_t)^{p-1} \cdot \usc)$.
\end{ex}

Next we establish that the previously defined functor $j^*$ for essentially \'etale morphisms $j \colon U \to X$ agrees with $j^!$ here.
\begin{prop}\label{jUpperShriekIsjUpperStar}
Suppose $j \colon Y \to X$ is essentially \'etale, then $j^*$ as defined in \autoref{CSh-EtalePullback} is isomorphic to $j^!$ as defined above.
\end{prop}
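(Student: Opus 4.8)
The plan is to identify the underlying quasi-coherent sheaves of $j^!\UCV$ and $j^*\UCV$ first, and then to match the two resulting $\kappa$-structures. Since $j$, hence $j\times\id$, is essentially \'etale, \autoref{CSh-PropFShriek}(a) supplies a natural isomorphism $(j\times\id)^!\cong(j\times\id)^*$ of functors on $\D^+(\QCoh(X\times\CScheme))$; as $(j\times\id)^*$ is exact this restricts to an isomorphism of the associated functors on quasi-coherent sheaves, so through the equivalence of \autoref{CompDCI-EquivOfTwoCats} used to define $j^!$ in \autoref{DF-ShriekDef} one sees that $j^!\UCV$ is concentrated in degree $0$ with underlying sheaf canonically $(j\times\id)^*\CV$. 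Thus $j^!$ and $j^*$ have the same underlying quasi-coherent-sheaf functor $(j\times\id)^*$, and it remains to compare the two $\kappa$-structures it carries.

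By \autoref{DF-RemShriekDef} the structural map of $j^!\UCV$ is the composite of the base-change morphism $\Frid_*(j\times\id)^!\to(j\times\id)^!\Frid_*$ of \autoref{DefSigmaFbasechange} with $(j\times\id)^!(\kappa_\CV)$, whereas by \autoref{CSh-EtalePullback} the structural map of $j^*\UCV$ is the composite of the base-change isomorphism $\Frid_*(j\times\id)^*\cong(j\times\id)^*\Frid_*$ of \autoref{CSh-FrobEtale} with $(j\times\id)^*(\kappa_\CV)$. Under the identification $(j\times\id)^!=(j\times\id)^*$ the second factors agree, so the whole statement reduces to checking that, under this identification, the base-change morphism of \autoref{DefSigmaFbasechange} for $j$ coincides with the flat base-change isomorphism of \autoref{CSh-FrobEtale}; in particular it must be shown to be an isomorphism.

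To see this I would apply \autoref{CSh-PropFShriek}(c) to the cartesian square of \autoref{CSh-FrobEtale}, base-changed along $\CScheme$, whose vertical arrows are the finite --- hence proper --- Frobenius $\Frid$ and whose horizontal arrows are $j\times\id$, which is essentially \'etale and so in particular flat. This produces a flat base-change isomorphism $(j\times\id)^*\Frid^!\cong\Frid^!(j\times\id)^*$, and, because the horizontal arrows are moreover essentially \'etale, the final clause of \autoref{CSh-PropFShriek}(c) identifies it with the isomorphism assembled from the composition isomorphisms of the pseudofunctor $(\usc)^!$ and the comparison $(\usc)^!\cong(\usc)^*$ for essentially \'etale maps --- that is, with precisely the isomorphism $(j\times\id)^!\Frid^!\cong\Frid^!(j\times\id)^!$ entering the construction \autoref{ShriekDef0}, \autoref{KappafShriek} of \autoref{DefSigmaFbasechange}. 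Feeding this in, and using that \autoref{DefSigmaFbasechange} is built from the unit $\id\to\Frid^!\Frid_*$ and counit $\Frid_*\Frid^!\to\id$ of the finite-map duality for $\Frid$, while the isomorphism of \autoref{CSh-FrobEtale} is, by its construction there, obtained from the unit and counit of the $((j\times\id)^*,(j\times\id)_*)$-adjunction and the evident commutation $(j\times\id)_*\Frid_*\cong\Frid_*(j\times\id)_*$, one deduces the desired equality by a chase with the triangle identities. As an alternative that avoids most of the bookkeeping, note that $\Frid$ is finite, so $\Frid^!(\usc)=\Frid^{-1}\RCHom(\Frid_*\CO_{X\times\CScheme},\usc)$; flatness of $j\times\id$ then gives canonical isomorphisms $(j\times\id)^*\Frid^!\CV\cong\Frid^{-1}\RCHom((j\times\id)^*\Frid_*\CO_{X\times\CScheme},(j\times\id)^*\CV)\cong\Frid^!(j\times\id)^*\CV$, using $(j\times\id)^*\Frid_*\CO_{X\times\CScheme}\cong\Frid_*\CO_{Y\times\CScheme}$, and one checks by hand that the $\kappa$-structure of $j^!\UCV$ read off through these isomorphisms is the one prescribed by \autoref{CSh-EtalePullback}.

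The main obstacle is this last comparison. Matching the two base-change morphisms is formal but fiddly: it requires juggling the coherences of three adjunctions --- the finite-map duality $(\Frid_*,\Frid^!)$, the adjunction $((j\times\id)_*,(j\times\id)^!)$ transported along $(\usc)^!\cong(\usc)^*$, and the ordinary $((j\times\id)^*,(j\times\id)_*)$ --- together with the base-change compatibilities of \autoref{CSh-PropFShriek} relating them. The essential input, namely that for an independent square with a proper side and a flat side the base change for $(\usc)^!$ is the mate of the base change for $R(\usc)_*$, belongs to the Grothendieck--Serre formalism of Lipman and Nayak, so the real work is either to locate it there in the precise form needed or to reproduce the relevant portion of the diagram chase; the explicit route via $\RCHom$ indicated above is a convenient way to bypass it.
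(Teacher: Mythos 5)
Your reduction is the same as the paper's: identify the underlying sheaf functors via $(j\times\id)^!\cong(j\times\id)^*$ for essentially \'etale $j$, and then show that, under this identification, the base-change morphism $\Frid_*(j\times\id)^!\to(j\times\id)^!\Frid_*$ of \autoref{DefSigmaFbasechange} is inverse to the flat base-change isomorphism of \autoref{CSh-FrobEtale}. Your fallback route --- writing $\Frid^!$ as $\Frid^{-1}\CHom(\Frid_*\CO,\usc)$ for the finite map $\Frid$, using $(j\times\id)^*\Frid_*\CO_{X\times\CScheme}\cong\Frid_*\CO_{Y\times\CScheme}$, and checking the resulting $\kappa$-structure by hand --- is precisely what the paper does: it reduces the claim to the commutativity of a pentagon and verifies it by an explicit local computation with $S\otimes_R R'\cong S'$ and $\Hom_R(R',M)$, evaluating on generators $1\otimes m$. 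One caution about your preferred route: the assertion that the two base changes can be matched ``by a chase with the triangle identities'' understates the difficulty. The needed input --- that for a cartesian square with a proper side and an essentially \'etale side the $(\usc)^!$-base change is the mate of the $(\usc)^*$-base change, so that the loop $j^*s_*\to s'_*j'^*\to j^*s_*$ built from the two is the identity --- is exactly the non-formal compatibility of the Lipman--Nayak formalism that the paper's remark following this proof attributes to a communication of Nayak; it does not follow from the triangle identities of the individual adjunctions alone. So either cite that compatibility explicitly or carry out the local computation; your own closing sentence already points to the right escape hatch.
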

\begin{proof}
This comes down to verifying that the flat base change isomorphism $(j \times \id)^*\Frid_* \to\Frid_*(j \times \id)^*$ is inverse to the base change isomorphism $\Frid_*(j \times \id)^!  \to[\simeq] (j \times \id)^! \Frid_*$ from \autoref{DefSigmaFbasechange} under the functorial isomorphism $(j\times \id)^* \to[\simeq] (j \times \id)^!$ of \autoref{CSh-PropFShriek}. To ease notation we omit $\times \id$ until the end of this proof. The two morphisms are the top and bottom row of the diagram
\[
\xymatrix{
j^*\Fr_* \ar[r]\ar[dd]^\cong & \Fr_*\Fr^*j^*\Fr_* \ar[r]^\cong & \Fr_*j^*\Fr^*\Fr_* \ar[r]\ar[d] &\Fr_*j^*\ar[dd]^\cong\ar[ld] \\
&\Fr_*\Fr^!j^*\Fr_* \ar[lu]\ar[d]^\cong &\Fr_*j^*\Fr^!\Fr_* \ar[l]\ar[d]^\cong & \\
j^!\Fr_* & \Fr_*\Fr^!j^!\Fr_* \ar[l] & \Fr_*j^!\Fr^!\Fr_* \ar[l]^\cong &\Fr_*j^! \ar[l]
}
\]
By \autoref{CSh-PropFShriek}, the vertical isomorphisms are just the functorial identifications of $j^* \to j^!$ and hence the bottom parts of the diagram commute. Therefore one has to check that the composition around the top-left pentagon starting at the top-left vertex is the identity. This will be done locally where one can write down all maps explicitly: let $j:R \to S$ be an essentially \'etale map of rings. Denote $\Fr \colon R \to R'$ the Frobenius and similarly for $S$. We have by \autoref{CSh-FrobEtale} that $S \tensor_R R' \cong S'$ which is needed to identify the bottom isomorphism below as given by sending $1 \tensor \phi$ to $\id_S \tensor \phi$. With $M=\Fr_*N$ an $R$-module we can now make the top left diagram more explicit:
\[
\xymatrix{
S \tensor_R M \ar[r] & \Fr_*(S' \tensor_S S \tensor_R M) \ar[r]^\cong &\Fr_*(S'\tensor_{R'} R' \tensor_R M) \ar[d] \\
& \Fr_*\Hom_S(S \tensor_R R',S \tensor_R M) \ar[lu]^{\operatorname{ev}_1} & \Fr_* (S'\tensor_{R'} \Hom_R(R',M)) \ar[l]_-{\cong}
}
\]
Since the maps are $S$-linear, it is enough to check on generators, \ie on elements of the form $1 \tensor m$, that the total composite is the identity:
\[
    1 \tensor m \mapsto 1 \tensor 1 \tensor m \mapsto 1 \tensor 1 \tensor m \mapsto 1 \tensor [\phi_m \colon r \mapsto rm] \mapsto \id_{S} \tensor \phi_m \mapsto 1 \tensor m
\]
This shows our claim.
\end{proof}
\begin{rem}
The compatibility of the two base changes for the Frobenius under an essentially \'etale morphism is a consequence of a more general result about cartesian squares: Suresh Nayak explained to us \cite{Nayak.letter} that for any cartesian square
\[
\xymatrix{
    Y' \ar[r]^{j'} \ar[d]_{s'} & Y \ar[d]^{s} \\
    X' \ar[r]^{j} & X
}
\]
with $j$ essentially \'etale and $s$ proper, the composition
\[
   j^*s_* \to s'_*{s'}^*j^*s_* \to s'_*{j'}^*{s}^*s_* \to s'_*{j'}^* \to s'_*{j'}^*s^!s_* \to s'_*{s'}^!j^*s_* \to j^*s_*
\]
is the identity. Here composition of the first three maps is the base change under $(\usc)^*$ and the composition of the last three maps the base change under $(\usc)^!$ after the identification of $j^*$ with $j^!$ by \autoref{CSh-PropFShriek}.
\end{rem}

Our next goal is to show that $f^!$ preserves cohomological nilpotence. Since $f^!$ was not constructed as a derived functor, for arbitrary essentially finite type $f$ this does not follow from a spectral sequence as was the case with $Rf_*$ before. One way to bypass this problem, is to factor $f$ in a convenient way and reduce thereby to cases which can either be handled directly or by a spectral  sequence argument. To prepare for this, we give some basic results on injective $\kappa$-sheaves and on adjunction. Let us point out, that in  \autoref{sec.Stalks} we will develop a theory of stalks. This will an give alternative proof of cohomological nilpotence by reduction to points. Then morphisms correspond to inclusion of fields, and cohomological nilpotence will be obvious.

\paragraph{Adjunction for $(j^*,Rj_*)$ and injective $\kappa$-sheaves}

\begin{prop}\label{DF-UpperStar-Adj-New}%\m?{NEW: Is the + boundedness necessary?}
Let $j\colon  U\into X $ be an essentially \'etale morphism. Then the functor
\[
    j^*=j^! \colon \D^+(\QCohC(X,\Coeff)) \to \D^+(\QCohC(U,\Coeff))
\]
of \autoref{jUpperShriekIsjUpperStar} is naturally left adjoint to the functor
\[
    Rj_* \colon \D^+(\QCohC(U,\Coeff)) \to \D^+(\QCohC(X,\Coeff))
\]
 of \autoref{DF-RegIm}.
\end{prop}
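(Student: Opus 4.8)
The plan is to obtain the adjunction on $\kappa$-sheaves by transporting the classical adjunction $\bigl((j\times\id)^*,\,R(j\times\id)_*\bigr)$ on $\D^+(\QCoh(\usc\times\CScheme))$. By \autoref{CompDCI-EquivOfTwoCats} one may work inside $\D^+(\QCoh(\usc\times\CScheme))_\kappa$. There, by the definitions recalled in \autoref{CSh-EtalePullback} and in \autoref{DF-RegIm} and \autoref{DF-DerImages}, the functor $j^*$ carries a pair $(\CV^\bullet,\kappa_{\CV^\bullet})$ to $(j\times\id)^*\CV^\bullet$ with structural map $(j\times\id)^*\kappa_{\CV^\bullet}$ transported along the flat base change isomorphism $b_j\colon(j\times\id)^*\Frid_*\to[\simeq]\Frid_*(j\times\id)^*$ of \autoref{CSh-FrobEtale}, whereas $Rj_*$ carries $(\CW^\bullet,\kappa_{\CW^\bullet})$ to $R(j\times\id)_*\CW^\bullet$ with structural map $R(j\times\id)_*\kappa_{\CW^\bullet}$ precomposed with the canonical isomorphism $c_j\colon\Frid_*R(j\times\id)_*\to[\simeq]R(j\times\id)_*\Frid_*$ coming from the commutation of the Frobenius with $j\times\id$ (together with $R\Frid_*=\Frid_*$, since $\Frid$ is affine).

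Next one would take the unit $\eta\colon\id\to R(j\times\id)_*(j\times\id)^*$ and the counit $\epsilon\colon(j\times\id)^*R(j\times\id)_*\to\id$ of the classical adjunction and verify that, on every object of $\D^+(\QCoh(\usc\times\CScheme))_\kappa$, their components are morphisms of $\kappa$-complexes, \ie that they commute with the structural maps just described. Once this is in hand, $\eta$ and $\epsilon$ become natural transformations of functors between $\D^+(\QCohC(X,\Coeff))$ and $\D^+(\QCohC(U,\Coeff))$, and the two triangle identities are inherited from $\D^+(\QCoh(\usc\times\CScheme))$ because the forgetful functor $\D^+(\QCoh(\usc\times\CScheme))_\kappa\to\D^+(\QCoh(\usc\times\CScheme))$ is faithful; this yields the desired adjunction, realised by the classical unit and counit, hence natural.

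The hard part is the $\kappa$-linearity of $\eta$ and $\epsilon$, and this is exactly where the essential \'etaleness of $j$ enters. It reduces to the single coherence statement that $c_j$ is the \emph{mate} of $b_j$ under the adjunction $\bigl((j\times\id)^*,\,R(j\times\id)_*\bigr)$; granted this, the $\kappa$-linearity of $\eta$, and dually of $\epsilon$, follows by a formal diagram chase from the naturality of $\eta$ and the triangle identities. To establish the mate compatibility one can either invoke the base-change formalism of Grothendieck--Serre duality (\cf~\cite{LipmanGrothDual}), or argue as in the proof of \autoref{CSh-FrobEtale}: the assertion is Zariski-local on $X$ and then reduces to the three elementary cases where $j$ is finite \'etale, an open immersion, or a localization, in each of which $b_j$, $c_j$, $\eta$ and $\epsilon$ have explicit descriptions that make the identity a direct check. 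I expect this to be the only real obstacle; everything else is formal.
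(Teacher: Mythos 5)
Your proposal is correct and follows essentially the same route as the paper: transport the classical unit and counit $\cad\colon\id\to Rj_*j^*$ and $\ad\colon j^*Rj_*\to\id$, and reduce their $\kappa$-linearity to the single coherence statement that the canonical isomorphism $\Fr_*Rj_*\to[\simeq]Rj_*\Fr_*$ and the base change isomorphism $j^*\Fr_*\to\Fr_*j^*$ are mates under the adjunction. The only difference is that the ``hard part'' you isolate needs no local verification and no appeal to \cite{LipmanGrothDual}, because in \autoref{CSh-FrobEtale} the base change map is \emph{defined} as the mate of the canonical commutation isomorphism, so the compatibility holds by construction and the rest is the formal diagram chase you describe.
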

\begin{proof}
There is a well-known adjunction between $Rj_*$ and $j^*$ on the derived category of quasi-coherent sheaves. The adjunction is given by the natural transformations $\ad\colon j^*Rj_*\to\id$ and $\cad\colon\id\to Rj_*j^*$ satisfying the usual compatibilities, see~\cite[Thm.~VI.1.2]{MacLane.Cat}. We need to show that both of these transformations are compatible with the $\kappa$-structure. As we shall see, this will come down to the verification of natural isomorphism $\ad\circ\Fr_*\cong\Fr_*\circ\ad$ and $\cad\circ\Fr_*\cong\Fr_*\circ\cad$. We now give details. To simplify notation, we omit $\times \id$ from it until the end of the proof.

For the natural transformation $\ad$ we need to verify that the trapezoid in the following diagram commutes:
\[\xymatrix{ \Fr_* \ar[d]_\kappa & \Fr_*j^* Rj_*  \ar[drr]_{\kappa_{j^*Rj_*}}\ar[l]_-{\Fr_*\ad} & \ar[l]_-{\mathrm{b.c.}}^{\sim}  j^*\Fr_*Rj_* &  \ar[l]^{~\sim}_{\can}  j^* Rj_*\Fr_* \ar[d]_{j^*Rj_*\kappa} \ar[r]^{\ad\Fr_*}& \Fr_* \ar[d]^\kappa \\
\id &&&\ar[lll]^{\ad}j^*Rj_* \ar[r]^\ad &\id \rlap{.}}\]
Here by $\mathrm{b.c.}\colon j^*\Fr_*\to \Fr_* j^*$ we denote the base change isomorphism from \autoref{CSh-FrobEtale} and by $\can\colon \Fr_* Rj_* \to[\sim] Rj_*\Fr_*$ the canonical isomorphism. By functoriality of the adjunction, the square on the right is commutative. The triangle in the middle is the definition of~$\kappa_{j^*Rj_*}$ as $(j^*Rj_*\kappa)\circ \can^{-1}\circ(\mathrm{b.c.})^{-1}$.

We claim that the two maps from $j^* Rj_*\Fr_*$ to $\Fr_*$ in the top row are equal; this will complete the proof of commutativity of the above diagram. For the claim, consider
\begin{equation}\label{Eqn:DiagForAd}
\xymatrix@C+2pc@R-.5pc{ & \Fr_*j^* Rj_* \ar[dl]_{\Fr_*\ad} &  & Rj_* \Fr_*j^* Rj_* \ar[dl]_{Rj_*\Fr_*\ad}  \\
\Fr_*&  j^*\Fr_* Rj^* \ar[u]_{\mathrm{b.c.}}    & Rj_* \Fr_*&  \Fr_* Rj^* \ar[u]_{\ad(\mathrm{b.c.})}    \\
&\ar[ul]^{\ad\Fr_*} j^* Rj_* \Fr_*  \ar[u]_{j^*\can}      &  &\ar@{=}[ul]   Rj_* \Fr_*  \ar[u]_{\can}     \rlap{,} \\ }
\end{equation}
where the right diagram is the adjoint of the left one. It suffices to verify the commutativity on the right. By the definition of $\mathrm{b.c}$, its adjoint $\ad(\mathrm{b.c})$ is the composite $$\Fr_*Rj_* \to[\Fr_*\cad Rj_*] \Fr_* Rj_* j^* Rj_* \to[\can] Rj_*\Fr_*  j^* Rj_* .$$ The commutativity of \autoref{Eqn:DiagForAd} now follows from the fact that the composite
$$Rj_* \to[ \cad Rj_*] Rj_* j^* Rj_* \to[ Rj_* \ad] Rj_*$$
is the identity. (Switch all $\Fr_*$ in the right diagram of \autoref{Eqn:DiagForAd} to the left using $\can$.)

We leave it to the reader to draw the diagram that needs to commute for the compatibility of $\kappa$ with $\cad$. Here the proof comes down to the commutativity of
\begin{equation}\label{Eqn:DiagForCad}
\xymatrix@C+2pc@R-.5pc{ & Rj*j^*\Fr_*  \ar[d]^{\mathrm{b.c.}} \\
\Fr_*\ar[ur]^{\cad\Fr_*} \ar@{-->}[r]^\delta \ar[dr]^{\Fr_*\cad}  & Rj_*\Fr_*  j^*\ar[d]^{\can}\\
&j^* Rj_* \Fr_*  \rlap{,}\\}
\end{equation}
where $\delta$ is defined as $\delta= \can^{-1} \circ \Fr_*\cad$. This time the base change map is the left adjoint of $\delta$, hence $\delta$ is the right adjoint of $\mathrm{b.c}$. But the right adjoint of $\mathrm{b.c}$ is precisely the composite around the top triangle, and hence that triangle commutes.
\end{proof}

The adjunction in \autoref{DF-UpperStar-Adj-New} yields some basic results on injectives:
\begin{lem}\label{BasicInj}
Let $j\!:U\into X$ be an open immersion. If $\UCI\in\QCohC(U,\Coeff)$ is injective, then so is $j_*\UCI\in \QCohC(X,\Coeff)$. The same assertion holds for $\IndCohC$ in place of $\QCohC$. Finally if $\UCI$ is any $\kappa$-sheaf in $\QCohC(U,\Coeff)$ whose underlying sheaf is injective in $\QCoh(U\times \CScheme)$, then $Rj_*\UCI=j_*\UCI$ and the sheaf underlying $j_*\UCI$ is injective in $\QCoh(X\times \CScheme)$.
\end{lem}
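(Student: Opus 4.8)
The plan is to derive all three assertions from the adjunction $(j^*, Rj_*)$ established in \autoref{DF-UpperStar-Adj-New}, together with the elementary fact that for an open immersion $j$ the underived $j_*$ computes $Rj_*$ (since $j$ is affine, or more basically since quasi-coherent sheaves on $X$ restrict surjectively and $j_*$ of a flasque-resolution argument applies), and that $j^*$ is exact. First I would record that, since $j$ is an open immersion, $Rj_* = j_*$ already at the level of $\kappa$-sheaves: this is because the \v{C}ech computation of $Rj_*$ in \autoref{DF-RegIm} against the one-element cover $\{U\}$ of $U$ collapses, or alternatively because $j_*$ is exact on $\QCoh(U\times\CScheme)$ for an affine $j$ and the $\kappa$-structure is transported functorially. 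So from now on $Rj_*\UCI = j_*\UCI$ in all cases.

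For the injectivity of $j_*\UCI$: let $\UCI\in\QCohC(U,\Coeff)$ be injective. Since $j^*$ is an exact functor (it is exact on underlying sheaves by \autoref{CSh-FrobEtale}-flatness, and the $\kappa$-structure is carried along by functoriality), and $j^*$ is left adjoint to $j_*=Rj_*$, the right adjoint $j_*$ of an exact functor preserves injectives. Concretely, for any monomorphism $\UCV\into\UCW$ in $\QCohC(X,\Coeff)$, applying $\Hom(\usc, j_*\UCI)\cong\Hom(j^*(\usc),\UCI)$ turns the lifting problem into one for $j^*\UCV\into j^*\UCW$, which is still a monomorphism by exactness of $j^*$, and this lifts because $\UCI$ is injective. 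The identical argument works verbatim with $\IndCohC$ in place of $\QCohC$, using that $j^*$ preserves ind-coherence (\autoref{CSh-EtalePullbackPreservesNil}) so that it still defines a functor $\IndCohC(X,\Coeff)\to\IndCohC(U,\Coeff)$ left adjoint to $j_*$, and that $j^*$ remains exact there.

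For the last assertion: if $\UCI$ is a $\kappa$-sheaf on $U$ whose \emph{underlying} sheaf $\CI$ is injective in $\QCoh(U\times\CScheme)$, then already on underlying sheaves $Rj_*\CI = j_*\CI$ and $j_*\CI$ is injective in $\QCoh(X\times\CScheme)$ by the classical analog of the statement just proved (the adjunction $(j^*,j_*)$ on quasi-coherent sheaves with $j^*$ exact). Since $Rj_*$ of a $\kappa$-sheaf is computed on underlying sheaves by $R(j\times\id)_*$ (this is the content of \autoref{DF-DerImages}, or directly of the \v{C}ech description), we get $Rj_*\UCI = j_*\UCI$ as $\kappa$-sheaves and its underlying sheaf $j_*\CI$ is injective, as claimed. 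I do not expect a genuine obstacle here; the only point requiring a little care is to be explicit that the abstract ``right adjoint of an exact functor preserves injectives'' argument is legitimately available in each of the three categories, i.e.\ that $j^*$ is defined, exact, and left adjoint to $j_*=Rj_*$ on each — all of which is furnished by \autoref{DF-UpperStar-Adj-New}, \autoref{CSh-EtalePullbackPreservesNil}, and the collapse of the \v{C}ech complex for an open immersion.
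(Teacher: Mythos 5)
Your core argument is the same as the paper's: $j_*$ is right adjoint to the exact functor $j^*$, and a right adjoint of an exact functor preserves injectives. For the first and third assertions this is essentially complete (for the third, the correct justification that $Rj_*\UCI=j_*\UCI$ is that an injective object of $\QCoh(U\times\CScheme)$ is flasque, so its higher direct images vanish; your ``$j$ is affine'' aside is not true for a general open immersion, and the one-element \v{C}ech cover only collapses if $U$ happens to be affine).

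There is, however, a genuine gap in the $\IndCohC$ case. You verify that $j^*$ preserves ind-coherence, but the statement to be proved is that $j_*\UCI$ is injective \emph{in} $\IndCohC(X,\Coeff)$, and for that you first need to know that $j_*\UCI$ is an object of $\IndCohC(X,\Coeff)$ at all, i.e.\ that $j_*$ carries $\IndCohC(U,\Coeff)$ into $\IndCohC(X,\Coeff)$ so that the adjunction $(j^*,j_*)$ restricts to these subcategories. This is not formal: it is exactly \autoref{DF-DerImOnIndCoh}, whose proof goes through the Anderson contraction argument of \autoref{DF-DerImOnLNilCoh}. The paper's proof explicitly invokes this; your proposal omits it, and without it the phrase ``left adjoint to $j_*$'' as a functor between the ind-coherent categories is unjustified. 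Adding a citation of \autoref{DF-DerImOnIndCoh} at that point closes the gap.
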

\begin{proof}
By \cite[2.3.10]{Weib.hom} a functor that is right adjoint to an exact functor, preserves injectives. In the case at hand the functor $j_*$ is right adjoint to the exact functor $j^*$ since $j$ is an open immersion. The first assertion is then immediate; for the second assertion, one also needs to invoke \autoref{DF-DerImOnIndCoh}. The last assertion follows in the same way.
\end{proof}

\begin{prop}\label{EnoughInj}%\g?{This may be proved earlier/later; I need to check}
Suppose that $\{U_i\}_{i\in I}$ is a finite affine cover of a scheme $X$ with  corresponding open immersions $j_i\!:U_i\into X$. Then $\QCohC(X,\Coeff)$ contains enough injectives of the form $\bigoplus_{i\in I}j_{i*}\UCI_i$ where the $\UCI_i\in\QCohC(U_i,\Coeff)$ are injective.

In particular $\QCohC(X,\Coeff)$ contains enough injectives.
\end{prop}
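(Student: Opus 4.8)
The plan is to glue together injective objects found locally on the given cover. Let $\UCV\in\QCohC(X,\Coeff)$ be arbitrary. For each $i\in I$ the open subscheme $U_i$ is again Noetherian and separated over $\BF_q$, so \autoref{CSh-QisGrothCat} applied to $U_i$ shows that $\QCohC(U_i,\Coeff)$ is a Grothendieck category and hence has enough injectives. I then choose for every $i$ a monomorphism $\iota_i\colon j_i^*\UCV\into\UCI_i$ in $\QCohC(U_i,\Coeff)$ with $\UCI_i$ injective.

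Since $j_i$ is an open immersion, $j_i^*$ is exact and admits $j_{i*}$ as a right adjoint (this is the adjunction already used in the proof of \autoref{BasicInj}); write $\eta_i\colon\id\to j_{i*}j_i^*$ for its unit. For each $i$ I form the composite $\UCV\to[\eta_i]j_{i*}j_i^*\UCV\to[j_{i*}\iota_i]j_{i*}\UCI_i$, and let $\phi\colon\UCV\to\bigoplus_{i\in I}j_{i*}\UCI_i$ be the morphism whose components are these composites (a product, which agrees with the direct sum since $I$ is finite). I claim $\phi$ is a monomorphism with injective target; this proves both assertions at once. For injectivity of the target: each $j_{i*}\UCI_i$ is injective in $\QCohC(X,\Coeff)$ by \autoref{BasicInj}, and a finite direct sum of injective objects in an abelian category is again injective.

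It remains to check that $\phi$ is a monomorphism, and here I would argue locally: the underlying sheaf of $\ker\phi$ is the kernel of the underlying morphism of quasi-coherent $\CO_{X\times\CScheme}$-modules, so it suffices to see $\phi|_{U_i}$ is a monomorphism for every $i$. Restricting the $i$-th component of $\phi$ to $U_i$ and using the triangle identity $j_i^*\eta_i=\id$ together with $j_i^*j_{i*}=\id$ for the open immersion $j_i$, one identifies $(\UCV\to j_{i*}\UCI_i)|_{U_i}$ with $\iota_i$, which is a monomorphism; hence $\phi|_{U_i}$ is a monomorphism, and since $\{U_i\}_{i\in I}$ covers $X$, $\phi$ is a monomorphism. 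The only points needing care are the triangle identity $j_i^*\eta_i=\id$ for an open immersion and the availability of enough injectives in $\QCohC(U_i,\Coeff)$, both of which are immediate from what has been recalled, so I expect no genuine obstacle.
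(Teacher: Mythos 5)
Your proof is correct and follows essentially the same route as the paper's: embed $j_i^*\UCV$ into an injective of $\QCohC(U_i,\Coeff)$, push forward along $j_i$, sum over the finite cover, and invoke \autoref{BasicInj} for injectivity of the target; your local verification that $\phi$ is a monomorphism is just an unwinding of the paper's appeal to the sheaf axiom for $\id\into\bigoplus_i j_{i*}j_i^*$. The only cosmetic difference is that you obtain enough injectives on $U_i$ from the Grothendieck-category property (\autoref{CSh-QisGrothCat}), whereas the paper uses that $\QCohC(U_i,\Coeff)$ is equivalent to a module category over $R_i[\Fr]\otimes\Coeff$ for $U_i=\Spec R_i$; both are valid.
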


\begin{proof}
By the previous lemma, the object $\bigoplus_{i\in I}j_{i*}\UCI_i$ is injective in $\QCohC(X,\Coeff)$. Next observe that the defining property of sheaves implies that the natural transformation $\id\into \bigoplus_{i\in I}j_{i*}j_i^{*}$ is a monomorphism. Hence it suffices to prove that each category $\QCohC(U_i,\Coeff)$ for $U_i=\Spec R_i$ has enough injectives. This is clear, however, since $\QCohC(U_i,\Coeff)$ is equivalent to the category of right $R_i[\Fr]\otimes \Coeff$-modules, and as a module category, the latter has enough injectives, e.g.~\cite[\S~3D]{Lam.Rings}.
\end{proof}

\begin{lem}\label{KappaInjIsInj}
Let $\UCI$ be an injective object of $\QCohC(X,\Coeff)$. Then its underlying sheaf is injective as a sheaf of $\CO_{X\times\CScheme}$-modules.
\end{lem}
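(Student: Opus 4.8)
The plan is to exhibit the forgetful functor $U\colon\QCohC(X,\Coeff)\to\QCoh(X\times\CScheme)$, $(\CV,\kappa_\CV)\mapsto\CV$, as the right adjoint of an \emph{exact} functor, and then invoke the standard fact already used in the proof of \autoref{BasicInj}, namely \cite[2.3.10]{Weib.hom}, that a right adjoint of an exact functor preserves injective objects. Concretely, I would introduce the ``free Cartier sheaf'' functor
\[
F\colon\QCoh(X\times\CScheme)\longto\QCohC(X,\Coeff),\qquad \CM\longmapsto\Bigl(\bigoplus_{n\ge0}(\Fr^n\times\id)_*\CM,\ \kappa_F\Bigr),
\]
where $\kappa_F$ is the tautological degree shift $\Frid_*\bigl(\bigoplus_{n\ge0}(\Fr^n\times\id)_*\CM\bigr)=\bigoplus_{n\ge0}(\Fr^{n+1}\times\id)_*\CM\to\bigoplus_{n\ge0}(\Fr^n\times\id)_*\CM$ sending the $n$-th summand identically onto the $(n{+}1)$-st. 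First I would check that $F(\CM)$ genuinely lies in $\QCohC(X,\Coeff)$: since the absolute Frobenius, hence each $\Fr^n\times\id$, is affine, every $(\Fr^n\times\id)_*\CM$ is quasi-coherent, and a direct sum of quasi-coherent sheaves on the Noetherian scheme $X\times\CScheme$ is again quasi-coherent.

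Next I would verify the adjunction bijection $\Hom_{\QCohC(X,\Coeff)}(F(\CM),\UCV)\cong\Hom_{\QCoh(X\times\CScheme)}(\CM,\CV)$, natural in $\CM$ and in $\UCV=(\CV,\kappa_\CV)$. In one direction, restrict a $\kappa$-sheaf homomorphism $\phi\colon F(\CM)\to\UCV$ to the zeroth summand to obtain an $\CO_{X\times\CScheme}$-linear map $\CM\to\CV$; compatibility of $\phi$ with $\kappa_F$ and $\kappa_\CV$ forces $\phi$ on the $n$-th summand to equal $\kappa_\CV^n\circ(\Fr^n\times\id)_*(\phi|_0)$, so $\phi$ is determined by $\phi|_0$. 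In the other direction, assign to an $\CO_{X\times\CScheme}$-linear $\phi_0\colon\CM\to\CV$ the map whose restriction to $(\Fr^n\times\id)_*\CM$ is $\kappa_\CV^n\circ(\Fr^n\times\id)_*\phi_0$, and check that this is a morphism of $\kappa$-sheaves using the iterate identity $\kappa_\CV^{n+1}=\kappa_\CV\circ\Frid_*\kappa_\CV^n$ from \autoref{CSh-Nilpotence}. Alternatively, over an affine $\Spec R\subseteq X$ one may identify $U$ with restriction of scalars along $\OXL\hookrightarrow\OXLFr$ and $F$ with the corresponding induction functor, which makes both the adjunction and the following exactness statement transparent, the latter because $\OXLFr$ is free as a one-sided $\OXL$-module.

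Then I would record that $F$ is exact: each $(\Fr^n\times\id)_*$ is exact because $\Fr^n\times\id$ is affine, arbitrary direct sums are exact in the Grothendieck category $\QCoh(X\times\CScheme)$, and $\kappa_F$ is produced functorially, so a short exact sequence of quasi-coherent $\CO_{X\times\CScheme}$-modules is carried to a short exact sequence in $\QCohC(X,\Coeff)$. With the exact left adjoint $F$ of $U$ in hand, \cite[2.3.10]{Weib.hom} applied to the pair $(F,U)$ shows that $U$ preserves injectives; this is precisely the assertion that the sheaf underlying an injective $\kappa$-sheaf is injective as an $\CO_{X\times\CScheme}$-module. I do not expect a genuine obstacle: the construction and exactness of $F$ are immediate, and the only point needing a little care is the bookkeeping in the adjunction, i.e.\ confirming that $\phi_0\mapsto(\kappa_\CV^n\circ(\Fr^n\times\id)_*\phi_0)_{n\ge0}$ respects $\kappa_F$ and $\kappa_\CV$, which is exactly where the iterate identity for $\kappa_\CV^n$ enters.
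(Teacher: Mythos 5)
Your proof is correct, but it takes a genuinely different route from the paper's. The paper argues directly with injective objects: it embeds the underlying sheaf $\CI$ into an injective $\CO_{X\times\CScheme}$-module $\CJ$, uses the injectivity of $\CJ$ to extend $\kappa_{\CI}$ along the monomorphism $\Frid_*\CI\into\Frid_*\CJ$ to a Cartier structure $\kappa_{\CJ}$ on $\CJ$, so that $\UCI\into\UCJ$ becomes a monomorphism of $\kappa$-sheaves; injectivity of $\UCI$ in $\QCohC(X,\Coeff)$ then splits this map, exhibiting $\CI$ as a direct summand of the injective $\CJ$. You instead build an exact left adjoint $F(\CM)=\bigoplus_{n\ge0}(\Fr^n\times\id)_*\CM$ (with the shift as structural map) to the forgetful functor and quote the standard fact that right adjoints of exact functors preserve injectives; your adjunction computation, which pins down $\phi$ on the $n$-th summand as $\kappa_{\CV}^n\circ(\Fr^n\times\id)_*(\phi|_0)$ via the iterate identity, is exactly right, and exactness of $F$ follows from the affineness of $\Frid$ as you say. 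The two proofs buy slightly different things: the paper's argument is shorter but relies on the existence of enough injectives in $\QCoh(X\times\CScheme)$, while yours is more structural, identifies $F$ with induction along the twisted polynomial algebra (which is indeed free over $\CO_{X\times\CScheme}$ on the side relevant for exactness of induction), and makes transparent that only affineness, not finiteness, of the Frobenius is used. The one small point you pass over silently is that $\Frid_*$ commutes with infinite direct sums of quasi-coherent sheaves, which is needed to write $\Frid_*F(\CM)=\bigoplus_{n\ge0}(\Fr^{n+1}\times\id)_*\CM$; this is immediate for an affine morphism but deserves a word.
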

\begin{proof}
Let $\phi\colon\CI\into \CJ$ be an injective homomorphism of $\CO_{X\times\CScheme}$-modules with $\CJ$ injective, and consider the diagram
$$\xymatrix@C+1pc{ \Frid_* \CI \ar[r]^-{\Frid_*\phi} \ar[d]_{\kappa_\CI}& \Frid_*\CJ\ar@{-->}[d]_{\kappa_\CJ}\\
\CI \ar[r]^-{\phi} & \CJ\\
}$$
with the dashed arrow missing at first. Since $\Fr$ is finite and affine, the morphism $\Frid_*\phi$ is injective. The injectivity of $\CJ$ as an $\CO_{X\times\CScheme}$-module now yields the dashed morphism, making $\UCJ:=(\CJ,\kappa_{\CJ})$ into a $\kappa$-sheaf and $\phi\colon \UCI\to\UCJ$  into a monomorphism of $\kappa$-sheaves. Because $\UCI$ is an injective $\kappa$-sheaf, $\phi$ is split and thus $\CI$ is a direct summand of $\CJ$ as a  $\CO_{X\times\CScheme}$-module. As direct summands of injectives are injective, the proof  is now complete.
\end{proof}

\subsubsection*{Cohomological nilpotence of $f^!$}

We begin with a consequence of the above results on injective $\kappa$-sheaves.
\begin{prop}\label{RightDerivedOfFlat}
Suppose $f\colon Y\to X$ is a finite morphism. Then the pullback functor $f^\flat=f^{-1}\CHom_Y(f_*\CO_X,\ublank)$ on $\QCohC(X,\Coeff)$ is left exact.  It induces a right derived functor on $\D^+(\QCohC(X,\Coeff))$ that agrees with $f^!$. In particular there exists a convergent spectral sequence
\[ E_2^{i,j} = R^if^! (H^j\UCV^\bullet) \To R^{i+j} f^! \UCV^\bullet. \]
\end{prop}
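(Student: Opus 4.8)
The plan is to reduce the statement to classical Grothendieck--Serre duality for the finite morphism $f\times\id$, combined with the results on injective $\kappa$-sheaves proved above.

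First I would establish left exactness. Since $f$ is finite, $(f\times\id)_*\CO_{X\times\CScheme}$ is a coherent $\CO_{X\times\CScheme}$-algebra, the sheaf-theoretic functor $(f\times\id)^{-1}$ is exact, and $\CHom$ is left exact in its second argument; hence the underlying functor $(f\times\id)^\flat = (f\times\id)^{-1}\CHom((f\times\id)_*\CO_{X\times\CScheme},\ublank)$ on $\QCoh(X\times\CScheme)$ is left exact. The $\kappa$-structure on $f^\flat\UCV$ is, by construction, the composite of $(f\times\id)^\flat\kappa_{\CV}$ with the (underived) base change map $\Frid_*(f\times\id)^\flat \to (f\times\id)^\flat\Frid_*$ from \autoref{DefSigmaFbasechange}. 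Since kernels and cokernels in $\QCohC(X,\Coeff)$ are computed on the underlying quasi-coherent sheaves (with the induced $\kappa$), left exactness of $f^\flat$ on $\QCohC(X,\Coeff)$ follows at once.

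Next, by \autoref{EnoughInj} the category $\QCohC(X,\Coeff)$ has enough injectives, so the left exact functor $f^\flat$ admits a right derived functor $Rf^\flat\colon\D^+(\QCohC(X,\Coeff))\to\D^+(\QCohC(Y,\Coeff))$, computed by applying $f^\flat$ to injective resolutions. To identify $Rf^\flat$ with $f^!$, pick an injective resolution $\UCV\to\UCI^\bullet$ in $\QCohC(X,\Coeff)$. By \autoref{KappaInjIsInj} each $\CI^n$ is injective as an $\CO_{X\times\CScheme}$-module, so $\CV\to\CI^\bullet$ is an injective resolution in $\QCoh(X\times\CScheme)$; since for a finite (hence affine) morphism one has $(f\times\id)^! \cong R(f\times\id)^\flat = (f\times\id)^{-1}\CRHom((f\times\id)_*\CO_{X\times\CScheme},\ublank)$ (see the Example following \autoref{EssPerfBounded}), the complex underlying $Rf^\flat\UCV = f^\flat\UCI^\bullet$ is exactly $(f\times\id)^!\CV$. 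It remains to match the $\kappa$-structures: both the structural map on $f^\flat\UCI^\bullet$ and the one defining $f^!$ in \autoref{DF-ShriekDef} arise from the base change $\Frid_*(f\times\id)^! \to (f\times\id)^!\Frid_*$ followed by $(f\times\id)^!(\kappa)$, and the chain-level description of this base change for finite morphisms worked out after \autoref{DF-RemShriekDef} shows that on the resolution $\CI^\bullet$ it is precisely precomposition with the Frobenius on $(f\times\id)_*\CO_{X\times\CScheme}$ composed with $\kappa_{\CV}$. Hence $Rf^\flat \cong f^!$ on $\D^+(\QCohC(X,\Coeff))$.

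Finally, the spectral sequence is the standard second hyper-derived-functor spectral sequence of the left exact functor $f^\flat$ applied to the bounded-below complex $\UCV^\bullet$; its convergence is automatic in $\D^+$, and by the previous step its abutment is $R^{i+j}f^!\UCV^\bullet$. The only genuinely delicate point is the comparison of the two $\kappa$-structures in the third step; once the underlying functors are matched, everything else is a formal consequence of \autoref{EnoughInj}, \autoref{KappaInjIsInj}, and the fact that $(f\times\id)^!$ is a derived functor for finite $f\times\id$.
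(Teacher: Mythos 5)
Your proof is correct and follows essentially the same route as the paper's own argument: left exactness of $f^\flat$, existence of $Rf^\flat$ via the enough-injectives result \autoref{EnoughInj}, identification $Rf^\flat\cong f^!$ via \autoref{KappaInjIsInj} (injective $\kappa$-sheaves have injective underlying sheaves, so a $\kappa$-injective resolution computes $(f\times\id)^!$), and the standard hypercohomology spectral sequence. The only difference is that you spell out the comparison of the two $\kappa$-structures explicitly, a point the paper leaves implicit.
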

\begin{proof}
Clearly $f^\flat$ is left exact. Since by \autoref{EnoughInj} the category $\QCohC(X,\Coeff)$ possesses enough injectives, the functor $f^\flat$ induces a right derived functor $Rf^\flat$ on $\D^+(\QCohC(X,\Coeff))$. By standard arguments, it follows that $Rf^\flat$ has a convergent spectral sequence as claimed for $f^!$. It remains to show that $f^!\cong Rf^\flat$.

The functor $f^!$ is defined as the functor $(f\times \id)^!$ on $\D^+(\QCoh(X\times \CScheme))$ with an induced endomorphism (of complexes), see \autoref{DF-RemShriekDef}. It therefore suffices to show that $f^!$ can be computed by resolutions by injective $\kappa$-sheaves. But this follows from \autoref{KappaInjIsInj} which asserts that the sheaves underlying an injective $\kappa$-sheaf are injective in $\QCoh(X\times\CScheme)$.
\end{proof}

The next two results clarify in particular cohomological nilpotence for~$f^!$.
\begin{lem}\label{CSh-FShriekNilpotenceFiniteEssEtaleFinite}
Let $?$ be in $\{\lnil,\lncoh,\indcoh\}$. If $f\colon Y \to X$ is essentially \'etale or finite, then $f^!\big( \D^+_?(\QCohC(X,\Coeff)) \big)\subset \D^+_?(\QCohC(Y,\Coeff))$.
\end{lem}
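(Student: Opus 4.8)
The plan is to prove the two cases of the hypothesis separately, since the functor $f^{!}$ has a quite different description when $f$ is essentially \'etale and when $f$ is finite.

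For an essentially \'etale $f$ the argument is short: by \autoref{jUpperShriekIsjUpperStar} the functor $f^{!}$ coincides with the functor $f^{*}$ of \autoref{CSh-EtalePullback}, which is induced termwise by the exact pullback on $\QCoh$, so that $H^{i}(f^{!}\UCV^\bullet)\cong f^{*}(H^{i}\UCV^\bullet)$ for every complex $\UCV^\bullet$ and every $i$. Since \autoref{CSh-EtalePullbackPreservesNil} already records that $f^{*}$ preserves local nilpotence, local nil-coherence and ind-coherence of $\kappa$-sheaves, the inclusion $f^{!}\big(\D^{+}_{?}(\QCohC(X,\Coeff))\big)\subset\D^{+}_{?}(\QCohC(Y,\Coeff))$ follows at once for $?\in\{\lnil,\lncoh,\indcoh\}$.

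For a finite $f$ I would start from \autoref{RightDerivedOfFlat}, which presents $f^{!}$ on $\D^{+}(\QCohC(X,\Coeff))$ as the right derived functor of the left exact functor $f^{\flat}=f^{-1}\CHom_{Y}(f_{*}\CO_{X},\ublank)$ and supplies the convergent spectral sequence
\[
   E_{2}^{i,j}=R^{i}f^{!}(H^{j}\UCV^\bullet)\To R^{i+j}f^{!}\UCV^\bullet .
\]
This lets me invoke \autoref{DF-LemmaOnNil} (with the roles of $X$ and $Y$ interchanged and $F=f^{!}=Rf^{\flat}$), reducing the claim to showing that each $R^{i}f^{!}$ carries $\LNilC$, $\LNilCohC$, respectively $\IndCohC$, into the corresponding subcategory of $\QCohC(Y,\Coeff)$; by \autoref{CSh-Serre} and \autoref{CSh-LNilIsSerre} these are Serre subcategories (the blanket hypothesis that $\Fr$ is finite is in force). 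Coherence is preserved because the complex underlying $f^{!}\UCC$ is $(f\times\id)^{!}\CC$, which lies in $\D^+_\coh(\QCoh(Y\times\CScheme))$ for a finite (hence essentially finite type) morphism by \autoref{CSh-PropFShriek}. For nilpotence I would argue as in the proof of \autoref{DF-DerImAndNil}: since $f$ is finite, $f_{*}\CO_{X}$ is coherent, so $f^{\flat}$, and therefore each $R^{i}f^{!}$, commutes with filtered direct limits, and it suffices to show that $R^{i}f^{!}$ preserves nilpotence of a single $\kappa$-sheaf $\UCV$. Using the description of the structural map of $f^{!}$ from \autoref{DF-RemShriekDef} as the composite $\kappa_{f^{!}\CV}=(f\times\id)^{!}(\kappa_{\CV})\circ\mathrm{bc}$, with $\mathrm{bc}$ the base change map of \autoref{DefSigmaFbasechange}, one checks that the $n$-th iterate is $\kappa^{n}_{f^{!}\CV}=(f\times\id)^{!}(\kappa^{n}_{\CV})\circ\mathrm{bc}^{(n)}$ for the iterated base change $\mathrm{bc}^{(n)}$; hence $\kappa^{n}_{\CV}=0$ forces $\kappa^{n}_{f^{!}\CV}=0$, and applying $H^{i}$ gives $\kappa^{n}_{R^{i}f^{!}\UCV}=0$, so $R^{i}f^{!}\UCV$ is nilpotent. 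Local nil-coherence is then deduced from the long exact cohomology sequence together with the two preceding properties and the fact that $\LNilCohC(Y,\Coeff)$ is Serre, and ind-coherence is immediate from the compatibility of $R^{i}f^{!}$ with filtered colimits of coherent $\kappa$-sheaves.

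The step I expect to be the main obstacle is the iterate identity $\kappa^{n}_{f^{!}\CV}=(f\times\id)^{!}(\kappa^{n}_{\CV})\circ\mathrm{bc}^{(n)}$. For $Rf_{*}$ the analogous bookkeeping is trivial because $\Fr_{*}$ and $f_{*}$ commute on the nose; for $f^{!}$ the base change $\Frid_{*}(f\times\id)^{!}\to(f\times\id)^{!}\Frid_{*}$ is in general \emph{not} an isomorphism, so one cannot simply transport the $\kappa$-structure across an identification, but must compose the relevant base change maps and repeatedly apply their naturality with respect to $\kappa_{\CV}\colon\Frid_{*}\CV\to\CV$. Once this diagram chase is carried out, the remaining arguments are routine.
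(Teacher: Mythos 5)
Your argument is correct and follows essentially the same route as the paper: the \'etale case via $f^!\cong f^*$ and \autoref{CSh-EtalePullbackPreservesNil}, and the finite case via \autoref{RightDerivedOfFlat} plus \autoref{DF-LemmaOnNil}, checking that each $R^if^\flat$ preserves nilpotence, coherence, and filtered colimits. The iterate identity $\kappa^n_{f^!\CV}=(f\times\id)^!(\kappa^n_\CV)\circ\mathrm{bc}^{(n)}$ that you single out does hold by naturality of the base-change map applied to $\kappa_\CV$, and is exactly what the paper's terse phrase ``by functoriality'' is standing in for.
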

\begin{proof}
If $f$ is essentially \'etale it is in particular flat and therefore $f^! \cong f^*$ is exact so that it commutes with taking cohomology. The result now follows from \autoref{CSh-EtalePullbackPreservesNil}.

If $f$ is finite, we apply \autoref{DF-LemmaOnNil} with $\CA=\CA'=?$, and so we need to verify conditions (a) and (b) of that lemma. The required spectral sequence (b) was established in \autoref{RightDerivedOfFlat} above. For (a), we need to show that the functors $R^if^!=R^if^\flat=f^{-1}R^i\CHom_Y(f_*\CO_X,\ublank)$ preserve local nilpotence and local nil-coherence. By functoriality, $R^if^\flat$ preserves nilpotence: if the $n$-th iterate of $\kappa_\CV$ is zero, the same holds for $\kappa_{R^if^\flat\CV}$. The preservation of coherence under the $R^i f^\flat$ is clear, since it holds already in the absence of a $\kappa$-structure. Moreover the functors $R^if^\flat$ commute with direct limits, since they are constructed via injective resolutions. From the three properties just explained, condition (a) of  \autoref{DF-LemmaOnNil}  is immediate.

\end{proof}
\begin{prop}\label{CSh-FShriekNilpotenceForEssFinite}
%Let $\UCV^\bullet$ be in $\D^+(\QCohC(X,\Coeff))$.
Let $?$ be in $\{\lnil,\lncoh,\indcoh\}$. If $f\colon Y \to X$ is essentially of finite type, then $f^!\big( \D^+_?(\QCohC(X,\Coeff)) \big)\subset \D^+_?(\QCohC(Y,\Coeff))$.
%$f^!$ preserves cohomological nilpotence.
\end{prop}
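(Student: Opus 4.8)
The plan is to factor an arbitrary essentially finite type morphism $f\colon Y\to X$ using Nayak's theorem (\autoref{Mor-FactOfEFT}) as $f = p\circ k$, where $k\colon Y\to Z$ is a localizing immersion and $p\colon Z\to X$ is proper, and then to reduce the preservation of cohomological $?$-ness to the cases already handled. Since $(\usc)^!$ is a pseudofunctor, one has $f^! \cong k^!\circ p^!$ (as functors on the $\kappa$-side, by the construction in \autoref{DF-ShriekDef}), so it suffices to treat the two factors separately. A localizing immersion is in particular essentially \'etale, hence $k^! = k^*$ is exact and preserves $\lnil$, $\lncoh$, $\indcoh$ by \autoref{CSh-FShriekNilpotenceFiniteEssEtaleFinite} (equivalently by \autoref{CSh-EtalePullbackPreservesNil}). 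So the heart of the matter is to show that $p^!$ preserves $\D^+_?$ for $p$ \emph{proper}.

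For proper $p$ I would use \autoref{DF-LemmaOnNil}, but applied to a derived functor presentation of $p^!$. The key input is the adjoint pair $(Rp_*, p^!)$ from \autoref{CSh-PropFShriek}(b): since $p$ is proper, $p^!$ is right adjoint to $Rp_*$, and one may compute it by a right derived functor of a left exact functor on $\QCohC(X,\Coeff)$ (using that $\QCohC(X,\Coeff)$ has enough injectives by \autoref{EnoughInj}, and that injective $\kappa$-sheaves have injective underlying $\CO_{X\times\CScheme}$-module by \autoref{KappaInjIsInj}, so that the $\kappa$-theoretic derived functor agrees with the one on underlying quasi-coherent sheaves). This yields a convergent spectral sequence
\[
    E_2^{i,j} = R^ip^!\bigl(H^j\UCV^\bullet\bigr) \Longrightarrow R^{i+j}p^!\UCV^\bullet,
\]
which is condition (b) of \autoref{DF-LemmaOnNil}. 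For condition (a) one must check that each $R^ip^!$ preserves $\lnil$, $\lncoh$ and $\indcoh$. Preservation of nilpotence is by functoriality (if $\kappa_\CV^n=0$ then $\kappa_{R^ip^!\CV}^n=0$), and $R^ip^!$ commutes with filtered direct limits since it is computed via injective resolutions, giving preservation of $\lnil$. Preservation of coherence in degrees where it holds comes from \autoref{CSh-PropFShriek}(f) (that $p^!$ preserves $\D^+_\coh$), combined with the spectral sequence; extending to $\lncoh = $ extension of coherent by locally nilpotent is then formal since $\LNilCohC$ is a Serre subcategory (\autoref{CSh-LNilIsSerre}). For $\indcoh$ one uses that ind-coherence is a filtered-colimit condition on coherent data and that $R^ip^!$ preserves both coherence and filtered colimits.

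The main obstacle I anticipate is not the bounded-below case at all — it is ensuring the spectral sequence argument goes through for the \emph{unbounded} case $*=\emptyset$ (and $*=-$), which requires $p^!$ to have \emph{finite cohomological dimension}. For proper $p$ this is true because $p$ is a finite type morphism between Noetherian schemes, so $Rp_*$ has finite cohomological dimension and its adjoint $p^!$ does as well (cf.\ the boundedness discussion around \autoref{EssPerfBounded} and \cite{Na.CompEFT}); one then invokes the second paragraph of \autoref{DF-LemmaOnNil}. A secondary technical point is the compatibility of the $\kappa$-structure with the factorization $f^!\cong k^!\circ p^!$: one must verify that the structural map (\ref{ShriekDef0}) on $f^!\UCV^\bullet$ agrees, under the pseudofunctor isomorphism $(pk)^!\cong k^!p^!$, with the structural map obtained by applying $k^!$ to $p^!\UCV^\bullet$. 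This is a diagram chase using the naturality of the base change map (\ref{DefSigmaFbasechange}) under composition, which I would state as a lemma and verify by contemplating the pasting of the two hexagons of type (\ref{KappafShriek}); it is routine but needs care. With that in hand, the reduction is complete and the proposition follows.
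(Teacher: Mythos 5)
Your overall strategy (Nayak factorization, dispose of the localizing immersion via exactness of $k^!=k^*$, then handle the remaining piece) matches the paper's opening move, but the way you treat the remaining piece has a genuine gap. You keep the proper morphism $p$ whole and propose to apply \autoref{DF-LemmaOnNil} to it by presenting $p^!$ as ``a right derived functor of a left exact functor on $\QCohC(X,\Coeff)$''. No such presentation exists for a general proper $p$: a right derived functor of a left exact functor has cohomology concentrated in non-negative degrees, whereas already $p^!\CO_X=\omega_{\BP^n_X/X}[n]$ for the projection $p\colon\BP^n_X\to X$ has cohomology in degree $-n$. Consequently the spectral sequence $E_2^{i,j}=R^ip^!(H^j\UCV^\bullet)\Rightarrow R^{i+j}p^!\UCV^\bullet$ you invoke is not available in this form, and neither is the claim that ``$R^ip^!$ commutes with filtered direct limits since it is computed via injective resolutions''. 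The paper flags exactly this obstruction just before the proposition: since $f^!$ is not constructed as a derived functor, preservation of cohomological nilpotence does not follow from a spectral sequence as it did for $Rf_*$. The only case in which a derived-functor presentation (and hence the spectral sequence of \autoref{DF-LemmaOnNil}) is available is the \emph{finite} case, via $f^\flat=f^{-1}\CHom(f_*\CO_X,\ublank)$ in \autoref{RightDerivedOfFlat}.

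The paper's actual route avoids proper morphisms altogether. After reducing to $f$ of finite type via Nayak and \autoref{CSh-FShriekNilpotenceFiniteEssEtaleFinite}, it uses \autoref{CSh-PropertiesLocalize} (membership in $?$ is Zariski-local) to reduce to an affine finite type morphism $V\to U$, which factors as a \emph{closed immersion} $V\into\BA^n_U$ followed by the projection $\pr\colon\BA^n_U\to U$. The closed immersion is finite, so the previous lemma applies with its genuine spectral sequence; and for the smooth projection one has $\pr^!\cong\pr^*\otimes\omega_{\pr}$ with $\omega_{\pr}\cong\CO_{\BA^n_U}$ (up to a shift), which is \emph{exact}, visibly preserves nilpotence and coherence, and commutes with filtered colimits --- no spectral sequence is needed there. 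To repair your argument, replace the proper piece by this local factorization; the alternative would be to construct a hypercohomology spectral sequence for the exact functor $p^!$ by truncation (not as a derived functor) and to prove separately that $H^i(p^!\UCV)$ lies in $?$ for a single sheaf $\UCV$ and proper $p$, which is essentially the content of the proposition itself. (Your worry about the unbounded case is moot, since the statement concerns $\D^+$ only; and your ``secondary technical point'' about compatibility of the $\kappa$-structure with composition is indeed just pseudofunctoriality of $(\usc)^!$ together with naturality of the base change map, as you say.)
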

\begin{proof}
By the factorization result \autoref{Mor-FactOfEFT} of Nayak, and the previous lemma, it suffices to prove the result when $f$ is a morphism of finite type. Let $\FU$ be an affine cover of $X$ and $\FV$ an affine cover of $Y$ refining the open cover $f^{-1}(\FU)$. Since by \autoref{CSh-PropertiesLocalize} containment in $?$ is a local property, it suffices to prove the proposition for the affine finite type morphisms $V\to U$ for $V\in\FV$ and $U\in\FU$ which arise by restriction from~$f$. Such a morphism factors as $V\into \BA^n_U\to U$ for a closed immersion $ V\to\BA^n_U$ and the canonical projection $\pr\colon \BA^n_U\to U$. By the previous lemma, it remains to prove the assertion for the smooth morphism~$\pr$. By \autoref{ExplicitUpperShriek} we find that $f^!=f^*\otimes \omega_f$ with $\omega_f\cong \CO_{\BA^n_X}$ is an exact functor. It clearly preserves nilpotence and coherence and it commutes with filtered direct limits. As in the previous lemma, these three properties suffice to complete the proof.
\end{proof}

Together with \autoref{DF-CompDCII} and \autoref{DF-ConstrDFonCrys}, the previous proposition~yields:
\begin{theorem}\label{DF-ShriekOnD-Crys}
If $Y\to[f] X$ is essentially of finite type, then the functor $f^!$ from \autoref{DF-ShriekDef} induces an exact $\Coeff$-linear functor
\[
    f^!\!:\D^+_\crys(\QCrysC(X,\Coeff))\longto \D^+_\crys(\QCrysC(Y,\Coeff))\, .
\]
\end{theorem}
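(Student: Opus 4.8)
The plan is to invoke the abstract machinery assembled in the preceding subsections, so that essentially all the content of the theorem reduces to two already-established facts: that $f^!$ is defined on the level of $\kappa$-sheaves (\autoref{DF-ShriekDef}) and that it preserves cohomological nilpotence and nil-coherence (\autoref{CSh-FShriekNilpotenceForEssFinite}). Concretely, the first step is to recall from \autoref{DF-ShriekDef} that for $?\in\{\coh,\emptyset\}$ we already have an exact, $\Coeff$-linear functor
\[
    f^!\colon \D^+_?(\QCohC(X,\Coeff))\longto \D^+_?(\QCohC(Y,\Coeff)),
\]
and that by \autoref{CSh-FShriekNilpotenceForEssFinite}, since $f$ is essentially of finite type, this functor also carries $\D^+_{\lnil}(\QCohC(X,\Coeff))$ into $\D^+_{\lnil}(\QCohC(Y,\Coeff))$ and $\D^+_{\lncoh}(\QCohC(X,\Coeff))$ into $\D^+_{\lncoh}(\QCohC(Y,\Coeff))$.

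The second step is to feed this into \autoref{DF-ConstrDFonCrys}. Taking $F=f^!$ and $*=+$, the hypotheses of that proposition are precisely: $F$ is an exact functor of triangulated categories (which $f^!$ is, being induced from the exact functor $(f\times\id)^!$ on $\D^+$ of quasi-coherent sheaves via the equivalence of \autoref{CompDCI-EquivOfTwoCats}), and $F$ preserves cohomological nilpotence (which is the $?=\lnil$ case of \autoref{CSh-FShriekNilpotenceForEssFinite}). The conclusion of \autoref{DF-ConstrDFonCrys} is then that $f^!$ maps nil-quasi-isomorphisms to nil-quasi-isomorphisms and hence descends, via the localization at $\SLocQNilInv$ and the equivalence of \autoref{DF-CompDCII}, to an exact $\Coeff$-linear functor
\[
    f^!\colon \D^+(\QCrysC(X,\Coeff))\longto \D^+(\QCrysC(Y,\Coeff)).
\]
The final clause of \autoref{DF-ConstrDFonCrys} — ``if the given functor preserves cohomology in $\LNilCohC$, then the resulting functor preserves cohomology in $\crys$'' — applies because \autoref{CSh-FShriekNilpotenceForEssFinite} gives exactly the case $?=\lncoh$, so $f^!$ restricts to the subcategories with cohomology in $\crys$, i.e.\ to
\[
    f^!\colon \D^+_\crys(\QCrysC(X,\Coeff))\longto \D^+_\crys(\QCrysC(Y,\Coeff)),
\]
which is the asserted statement.

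Since all the genuine work has been front-loaded into the earlier results, the proof itself is essentially a citation of \autoref{CSh-FShriekNilpotenceForEssFinite}, \autoref{DF-CompDCII} and \autoref{DF-ConstrDFonCrys}. The only point that deserves an explicit remark is that \autoref{DF-ConstrDFonCrys} and \autoref{DF-CompDCII} require the Frobenius on $X$ (and $Y$) to be finite, so that $\LNilCohC$ is a Serre subcategory and $\QCrysC$ exists — but this is our standing assumption throughout the paper. The ``main obstacle'', such as it is, was dispatched already in \autoref{CSh-FShriekNilpotenceForEssFinite}: the difficulty there is that $f^!$ is not a derived functor for general essentially finite type $f$, so one cannot invoke a spectral sequence directly; instead one factors $f$ (using \autoref{Mor-FactOfEFT} and the local structure $V\hookrightarrow\BA^n_U\to U$) and treats the smooth, finite and essentially étale pieces separately. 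Nothing further is needed at this stage, so the write-up of the theorem is short.

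\begin{proof}
By \autoref{DF-ShriekDef} the functor $(f\times\id)^!$ on $\D^+_?(\QCoh(\usc\times\CScheme))_\kappa$ induces, via the equivalence of \autoref{CompDCI-EquivOfTwoCats}, an exact $\Coeff$-linear functor
\[
    f^!\colon \D^+_?(\QCohC(X,\Coeff))\longto \D^+_?(\QCohC(Y,\Coeff))
\]
for $?\in\{\coh,\emptyset\}$. Since $f$ is essentially of finite type, \autoref{CSh-FShriekNilpotenceForEssFinite} shows moreover that $f^!$ preserves $\D^+_\lnil$ and $\D^+_\lncoh$. In particular $f^!$ preserves cohomological nilpotence, so we may apply \autoref{DF-ConstrDFonCrys} with $F=f^!$ and $*=+$: the functor $f^!$ sends nil-quasi-isomorphisms to nil-quasi-isomorphisms and therefore induces an exact $\Coeff$-linear functor
\[
    f^!\colon \D^+(\QCrysC(X,\Coeff))\longto \D^+(\QCrysC(Y,\Coeff)).
\]
Finally, since by \autoref{CSh-FShriekNilpotenceForEssFinite} the functor $f^!$ also preserves cohomology in $\LNilCohC$, the last assertion of \autoref{DF-ConstrDFonCrys} shows that this induced functor restricts to
\[
    f^!\colon \D^+_\crys(\QCrysC(X,\Coeff))\longto \D^+_\crys(\QCrysC(Y,\Coeff)),
\]
as claimed. (As throughout, we use that $\Fr$ is finite on the schemes in question, which guarantees that $\LNilCohC$ is a Serre subcategory and hence that $\QCrysC$ and the localizations used in \autoref{DF-CompDCII} and \autoref{DF-ConstrDFonCrys} exist.)
\end{proof}
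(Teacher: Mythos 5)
Your proposal is correct and follows exactly the route the paper takes: the paper derives \autoref{DF-ShriekOnD-Crys} immediately by combining \autoref{CSh-FShriekNilpotenceForEssFinite} with \autoref{DF-CompDCII} and \autoref{DF-ConstrDFonCrys}, which is precisely the citation chain you spell out. Your added remarks (the reliance on $\Fr$-finiteness for the existence of the localized categories, and the observation that the real work was front-loaded into \autoref{CSh-FShriekNilpotenceForEssFinite}) are accurate and consistent with the paper's own presentation.
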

In \autoref{CohomDimOfShriek} we shall prove the stronger result that $f^!$ maps $\D^b_\crys(\QCrysC(X,\Coeff))$ into $\D^b_\crys(\QCrysC(Y,\Coeff))$, which means that $f^!$ is a bounded functor on crystals.

\paragraph{Adjunctions for crystals.}

\begin{prop}\label{DF-Trace-Adj}
Let $f\colon  Y\to X$ be a proper morphism. As functors on categories of the form $\D^+_?(\QCohC(\ldots)) $ for $?\in\{\lncoh,\ind,\emptyset\}$, the functor $f^!$ of \autoref{CSh-FShriekNilpotenceForEssFinite}  is naturally right adjoint to the functor $Rf_*$ of \autoref{GenDefDerImQCohC}.
\end{prop}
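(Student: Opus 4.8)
The plan is to deduce this adjunction from the classical Grothendieck--Serre adjunction $(Rf_*,f^!)$ on quasi-coherent sheaves by checking that its unit and counit respect the $\kappa$-structures. Throughout I would suppress the factor $\times\,\id$, writing $f$ for $f\times\id$ and abbreviating $\Fr\times\id$ by $\Fr$ (on $X\times\CScheme$ or on $Y\times\CScheme$ as dictated by context); since the Frobenius is functorial, $f\circ\Fr=\Fr\circ f$. First I would use \autoref{CompDCI-EquivOfTwoCats} to pass to the categories $\D^+_?(\QCoh(\usc\times\CScheme))_\kappa$, on which $Rf_*$ and $f^!$ are the ordinary quasi-coherent functors equipped with their induced structural maps (\autoref{DF-DerImages}, \autoref{DF-RemShriekDef}). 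On $\D^+(\QCoh(X\times\CScheme))$, \autoref{CSh-PropFShriek}(b) gives the adjunction $(Rf_*,f^!)$ with unit $\cad\colon\id\to f^!Rf_*$ and counit (trace) $\ad\colon Rf_*f^!\to\id$. For $f$ proper both functors preserve the cohomology condition $?\in\{\lncoh,\ind,\emptyset\}$ (by \autoref{DF-DerImOnLNilCoh}, \autoref{DF-DerImOnIndCoh} and \autoref{CSh-FShriekNilpotenceForEssFinite}), so it remains to show that $\cad$ and $\ad$ are morphisms in the $\kappa$-decorated categories, i.e.\ that they intertwine the structural maps; the triangle identities are then inherited verbatim.

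To verify this compatibility I would first rewrite the structural map on $f^!\CW^\bullet$ through its adjoint $\wt\kappa$ via \autoref{LemOnFrobAdj}, so that it reads $f^!\CW^\bullet\to[f^!\wt\kappa_\CW]f^!\Fr^!\CW^\bullet\isoto\Fr^!f^!\CW^\bullet$, the last arrow being the pseudofunctoriality isomorphism of $(\usc)^!$ for the square $f\circ\Fr=\Fr\circ f$; dually the structural map on $Rf_*\CV^\bullet$ uses $\Fr_*Rf_*\isoto Rf_*\Fr_*$. The crux is then the observation that composing the adjoint pair $(Rf_*,f^!)$ with the duality pair $(\Fr_*,\Fr^!)$ of the finite map $\Fr$ in the two possible orders gives two adjoint pairs for the \emph{same} morphism $f\circ\Fr=\Fr\circ f$; by uniqueness of adjoints these are canonically identified, and under this identification the comparison $2$-cell between the two composites is exactly the base change morphism $\Fr_*f^!\to[\mathrm{b.c.}]f^!\Fr_*$ of \autoref{DefSigmaFbasechange}. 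Since the unit (resp.\ counit) of a composite adjunction is the evident pasting of the units (resp.\ counits) of its factors, unwinding this identification produces precisely the two commuting squares saying that $\cad$ and $\ad$ carry one structural map to the other.

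The step I expect to be the main obstacle is not conceptual but the bookkeeping of the coherence diagrams: one must check that the base change isomorphisms $\Fr_*Rf_*\isoto Rf_*\Fr_*$ and $f^!\Fr^!\isoto\Fr^!f^!$, the comparison map \autoref{DefSigmaFbasechange}, the $(\Fr_*,\Fr^!)$-adjunction isomorphism of \autoref{LemOnFrobAdj}, and the pair $(\cad,\ad)$ all paste together coherently. This is a diagram chase of the same kind as --- though lighter than --- the one already carried out in \autoref{DF-UpperStar-Adj-New} for the pair $(j^*,Rj_*)$, and I would model the write-up on that proof. Alternatively, the needed compatibility of the trace $\ad$ and of the unit $\cad$ with base change along the square $f\circ\Fr=\Fr\circ f$ (with $\Fr$ finite and $f$ proper) is a standard instance of the coherence of Grothendieck--Serre duality and may be quoted from \cite{LipmanGrothDual} and \cite{Na.CompEFT}; no geometric input beyond the naturality of Frobenius and the finiteness of $\Fr$ is needed.
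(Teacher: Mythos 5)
Your proposal is correct and follows essentially the same route as the paper: the paper likewise starts from the classical trace $\tr\colon Rf_*f^!\to\id$ and its bi-adjoint $\ctr\colon\id\to f^!Rf_*$, and reduces everything to showing these are compatible with the structural map $\kappa$ via diagram chases "parallel to" those in \autoref{DF-UpperStar-Adj-New}, using only formal properties of adjunction (the paper leaves the details to the reader). Your identification of the base change $2$-cell \autoref{DefSigmaFbasechange} as the comparison between the two composite adjunctions for $f\circ\Fr=\Fr\circ f$ is exactly the mechanism implicit in the paper's sketch.
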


\begin{proof}
The proof is parallel to that of \autoref{DF-UpperStar-Adj-New}. The adjunction between $Rf_*$ and $f^!$ on the derived category of quasi-coherent sheaves can be formulated in terms of the trace homomorphism $\tr\colon Rf_*f^!\to \id$ and its bi-adjoint $\ctr\colon \id\to f^!Rf_*$; see \cite[p.~383 or p.~417]{HartshorneRD} or \cite[4.1]{LipmanGrothDual}.\footnote{Note that for proper $f$ neither Deligne's appendix to \cite{HartshorneRD} or \cite{LipmanGrothDual} require complexes with coherent cohomology. Note also that since our schemes are separated, we may stay within the derived categories of quasi-coherent sheaves.} To extend this adjunction to functors on $\D^+_\coh(\QCohC(\ldots)) $, one has to show that $\tr$ and $\ctr$ are compatible with the structural map $\kappa$. This can be expressed as the commutativity of diagrams like \autoref{Eqn:DiagForAd} and \autoref{Eqn:DiagForCad} with $\ad$ and $\cad$ replaced by $\tr$ and $\ctr$. As with \autoref{DF-UpperStar-Adj-New}, the argument only uses formal properties of adjunction. Details are left to the reader.
\end{proof}

\begin{cor}\label{DF-Crys-Adj}
Let $f \colon Y \to X$ be proper and $j \colon U \to X$ be essentially \'etale. Then the adjoint pairs $(Rf_*,f^!)$ from \autoref{DF-Trace-Adj} and $(j^*,Rj_*)$ from \autoref{DF-UpperStar-Adj-New} induce adjuctions on the corresponding categories $\D^+:=\D^+_\crys(\QCrysC(\ldots))$\footnote{The adjunction $(j^*,Rj_*)$ also holds in the unbounded derived category by the same argument.} of $\kappa$-crystals. Concretely, there are natural functorial isomorphisms
\[
    \Hom_{\D^+}(Rf_*\ublank,\ublank) \cong \Hom_{\D^+}(\ublank,f^!\ublank)
\]
and
\[
    \Hom_{\D^+}(j^*\ublank,\ublank)\cong \Hom_{\D^+}(\ublank,Rj_*\ublank)
\]
\end{cor}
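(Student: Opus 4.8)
The statement to prove is Corollary~\ref{DF-Crys-Adj}: that the adjunctions $(Rf_*,f^!)$ for proper $f$ and $(j^*,Rj_*)$ for essentially \'etale $j$, which are established on categories of $\kappa$-sheaves in \autoref{DF-Trace-Adj} and \autoref{DF-UpperStar-Adj-New}, descend to the corresponding categories of $\kappa$-crystals. The plan is to use the fact, proved in \autoref{DF-CompDCII}, that $\D^+_\crys(\QCrysC(\usc,\Coeff))$ is obtained from $\D^+_\lncoh(\QCohC(\usc,\Coeff))$ by localizing at the nil-quasi-isomorphisms $\SLocQNilInv$, together with the general principle that an adjoint pair of functors passes to localizations provided each functor sends the multiplicative system to itself. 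This is exactly the content underlying \autoref{DF-ConstrDFonCrys}, which we have already invoked to get the functors on crystals; here we need the slightly finer statement that the unit and counit transformations descend as well.

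First I would record the elementary localization lemma: if $(L,R)$ is an adjoint pair of functors between categories $\CA$ and $\CB$, with unit $\eta\colon\id\to RL$ and counit $\varepsilon\colon LR\to\id$, and if $S_\CA$, $S_\CB$ are multiplicative systems with $L(S_\CA)\subseteq S_\CB$ and $R(S_\CB)\subseteq S_\CA$, then the induced functors $\bar L\colon S_\CA^{-1}\CA\to S_\CB^{-1}\CB$ and $\bar R\colon S_\CB^{-1}\CB\to S_\CA^{-1}\CA$ form an adjoint pair, with unit and counit induced by $\eta$ and $\varepsilon$ via the localization functors. The triangle identities for $(\bar L,\bar R)$ follow immediately from those for $(L,R)$ by applying the localization functors, which are full and essentially surjective; no extra work is needed once one checks that $\eta$ and $\varepsilon$, viewed after localization, are still natural transformations of the induced functors (this is automatic since localization is a functor). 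So the only genuine input is the preservation of the multiplicative systems.

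Next I would verify the preservation of nil-quasi-isomorphisms in each of the two cases. For a morphism $\varphi$ in $\D^+(\QCohC(\usc,\Coeff))$, being a nil-quasi-isomorphism means its cone lies in $\D^+_\lnil$, i.e.\ is cohomologically nilpotent. Since $Rf_*$, $f^!$, $j^*=j^!$, and $Rj_*$ are all exact functors of triangulated categories, they take cones to cones, so it suffices to know that each takes cohomologically nilpotent complexes to cohomologically nilpotent complexes. For $Rf_*$ with $f$ proper this is \autoref{DF-DerImAndNil}; for $j^*$ (essentially \'etale) and $f^!$ (finite, hence a fortiori the finite and \'etale pieces needed here, or directly proper) this is \autoref{CSh-FShriekNilpotenceFiniteEssEtaleFinite} and \autoref{CSh-FShriekNilpotenceForEssFinite}; for $Rj_*$ with $j$ essentially \'etale it again follows from \autoref{DF-DerImAndNil} since an open (or essentially \'etale) immersion is in particular a morphism with $\Fr$-finite source. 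One must also check these functors preserve the cohomology condition $\lncoh$ so that they land in $\D^+_\lncoh$ and hence, after localization, in $\D^+_\crys$; this is recorded in the same cited results ($Rf_*$ for proper $f$ preserves $\lncoh$ by the second part of \autoref{DF-DerImAndNil}, and $j^*$, $f^!$, $Rj_*$ preserve it by \autoref{CSh-FShriekNilpotenceFiniteEssEtaleFinite} and \autoref{jUpperShriekIsjUpperStar} together with the fact that an essentially \'etale immersion has $Rj_*$ preserving coherence on an affine cover — alternatively just use that $j$ is of finite type so \autoref{DF-DerImOnLNilCoh} applies).

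Finally I would assemble these pieces: apply the localization lemma with $\CA=\D^+_\lncoh(\QCohC(Y,\Coeff))$, $\CB=\D^+_\lncoh(\QCohC(X,\Coeff))$ (and the reversed roles for $j$), $S_\CA=S_\CB=\SLocQNilInv$, $L=Rf_*$, $R=f^!$ (resp.\ $L=j^*$, $R=Rj_*$), and identify the localized categories with $\D^+_\crys(\QCrysC(\usc,\Coeff))$ via \autoref{DF-CompDCII} (more precisely its refinement \autoref{DF-CorCompDCII}). The resulting adjunction isomorphisms $\Hom_{\D^+}(Rf_*\ublank,\ublank)\cong\Hom_{\D^+}(\ublank,f^!\ublank)$ and $\Hom_{\D^+}(j^*\ublank,\ublank)\cong\Hom_{\D^+}(\ublank,Rj_*\ublank)$ are then those induced by the sheaf-level adjunctions. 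The main obstacle — if there is one — is purely bookkeeping: ensuring that the functors as constructed on crystals in \autoref{GenDefDerIm}, \autoref{DF-ShriekOnD-Crys}, and \autoref{DF-UpperStar-Adj-New} really are the functors $\bar L$, $\bar R$ produced by the abstract localization lemma, rather than merely abstractly isomorphic ones; but this is exactly how those functors were defined (via the universal property of localization in \autoref{DF-ConstrDFonCrys}), so the identification is immediate and the proof is formal. I would state this explicitly and leave the routine diagram-chase of the triangle identities to the reader, as is done for the analogous verifications in \autoref{DF-UpperStar-Adj-New} and \autoref{DF-Trace-Adj}.
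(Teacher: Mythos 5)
Your proposal is correct and takes essentially the same route as the paper: the paper's proof also descends the unit and counit of the sheaf-level adjunctions from \autoref{DF-UpperStar-Adj-New} and \autoref{DF-Trace-Adj} through the localization at nil-(quasi-)isomorphisms, citing \cite[Prop.~2.2.5]{BoPi.CohomCrys} for the abstract fact that the triangle identities are preserved under localization --- which is precisely your localization lemma --- with the preservation of the multiplicative system supplied by the same nilpotence-preservation results you invoke.
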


\begin{proof}
Let $(F,G)$ be either $(j^*,Rj_*)$ for an essentially \'etale morphism $j\colon U\into X$, $(Rf_*,f^!)$ for a proper morphism $f\colon Y\to X$.
By \autoref{DF-UpperStar-Adj-New} and \autoref{DF-Trace-Adj}, we have natural transformations $a\colon FG\to \id$ and $c\colon\id\to GF$ on $\D^+_\lncoh(\QCohC(\ldots))$ such that $F\to[Fc] FGF \to[aF] F$ and $G\to[cG] GFG\to[Ga] G$ are the identity. Under localization at nil-isomorphisms we obtain induced natural transformations on categories $\D^+_\crys(\QCrysC(\ldots))$ and the formulae yielding $\id_F$ and $\id_G$, respectively, are preserved; see~\cite[Prop.~2.2.5]{BoPi.CohomCrys}.
\end{proof}

\section{Support, Kashiwara equivalence, and applications}

In this section we shall establish a basic exact triangle which allows us to decompose complexes of Cartier crystals on a scheme $X$ into their restriction to an open subscheme $U$ and a closed complement $Z$. Applications of this triangle are given in the subsections. First we show in \autoref{CohDimOfShriek} that $f^!$ on Cartier crystals has bounded cohomological dimension. In \autoref{CrysSupp} we obtain some basic results on the crystalline support of a complex of Cartier crystals.

\subsection{An exact triangle and Kashiwara equivalence}
\label{ExactTriangleSubsec}
Throughout this subsection, we fix an open immersion $j\colon U\into X$ and a complement $i\colon Z\into X$, i.e. a closed immersion $i\colon Z\into X$ such that $U=X\setminus i(Z)$ on points. Due to \autoref{CorOnReducedSubscheme} there is no harm in assuming that $Z$ carries its reduced scheme structure.

The following is the first main result of this subsection, see also \cite[Section 3.3]{BliBoe.CartierFiniteness}:
\begin{theorem}\label{ExactTriangleThm}
In $\D^+_?(\QCrysC(X,\Coeff))$, for $?\in\{\emptyset,\crys\}$, there is a natural exact triangle
\[
    i_*i^!\to \id \to Rj_*j^* \to i_*i^![1]
\]
\end{theorem}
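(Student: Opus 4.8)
The plan is to produce the triangle from the classical localization triangle for quasi-coherent sheaves, transported to $\kappa$-sheaves via the machinery already assembled in \autoref{CSh-DefFunc}, and then descend to crystals using \autoref{DF-ConstrDFonCrys}. First I would recall that on $\D^+(\QCoh(X\times\CScheme))$ one has the well-known exact triangle $\underline{R\Gamma}_Z(\usc)\to\id\to R(j\times\id)_*(j\times\id)^*\to[+1]$, together with the identification of local cohomology $R\Gamma_Z$ with $(i\times\id)_*(i\times\id)^!$ in the derived category, valid for a closed immersion with reduced structure via the functor $(i\times\id)^!$ of \autoref{CSh-PropFShriek}; concretely, for $i$ a closed immersion, $(i\times\id)^!$ is computed by $R\CHom((i\times\id)_*\CO_?,\usc)$ and the counit $(i\times\id)_*(i\times\id)^!\to\id$ is the natural evaluation map, whose cone is exactly $R(j\times\id)_*(j\times\id)^*$ by excision on $X\setminus Z=U$.

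Next I would upgrade this to a triangle in $\D^+_?(\QCoh(X\times\CScheme))_\kappa$. The point is that every functor in sight — $(i\times\id)_*$, $(i\times\id)^!$, $R(j\times\id)_*$, $(j\times\id)^*$ — has already been given a $\kappa$-lift: $Rj_*$ and $i_*$ from \autoref{DF-RegIm}/\autoref{DF-pushforward}, $j^*=j^!$ from \autoref{CSh-EtalePullback} and \autoref{jUpperShriekIsjUpperStar}, and $i^!$ from \autoref{DF-ShriekDef}. The natural transformations $i_*i^!\to\id$ (the counit, which is the trace/evaluation map of \autoref{DF-Trace-Adj} for the proper — indeed finite — morphism $i$) and $\id\to Rj_*j^*$ (the unit of $(j^*,Rj_*)$ from \autoref{DF-UpperStar-Adj-New}) are morphisms of $\kappa$-complexes, since those propositions establish precisely that these adjunction maps commute with the $\kappa$-structures. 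Composing, $i_*i^!\to\id\to Rj_*j^*$ is a two-term complex of $\kappa$-complexes whose underlying diagram of quasi-coherent complexes is the classical localization sequence; hence its cone, computed on underlying complexes, is $0$, and by \autoref{CompDCI-EquivOfTwoCats} the induced map $i_*i^!\to\ker(\id\to Rj_*j^*)$, resp. the octahedron filling it to a triangle, lifts uniquely to $\D^+_?(\QCoh(X\times\CScheme))_\kappa$. Via the equivalence $\D^+_?(\QCohC(X,\Coeff))\isoto\D^+_?(\QCoh(X\times\CScheme))_\kappa$ this gives the triangle $i_*i^!\to\id\to Rj_*j^*\to i_*i^![1]$ in $\D^+_?(\QCohC(X,\Coeff))$ for $?\in\{\emptyset,\coh\}$, and for $?=\emptyset$ it suffices for what follows.

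Finally I would pass to crystals. By \autoref{DF-DerImAndNil} the functor $Rj_*$ preserves cohomological nilpotence, as do $i_*$ (likewise, $i$ being proper) and $i^!$ (by \autoref{CSh-FShriekNilpotenceForEssFinite}, $i$ being essentially of finite type); $j^*$ preserves it by \autoref{CSh-EtalePullbackPreservesNil}. Hence all four composite functors $i_*i^!$, $\id$, $Rj_*j^*$ descend, by \autoref{DF-ConstrDFonCrys} and \autoref{DF-CompDCII}, to exact functors on $\D^+_?(\QCrysC(X,\Coeff))$ for $?\in\{\emptyset,\crys\}$, and the three natural transformations in the triangle descend to natural transformations between these induced functors. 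Since localization at a multiplicative system is exact and in particular takes exact triangles to exact triangles, the image of the above triangle is an exact triangle in $\D^+_?(\QCrysC(X,\Coeff))$, which is the assertion. The main obstacle is the bookkeeping in the second paragraph: one must check that the classical unit and counit really do assemble into a morphism in the \emph{category of $\kappa$-complexes} — i.e. that they are compatible with the respective base-change maps for the Frobenius — but this is exactly the content of \autoref{DF-UpperStar-Adj-New} and \autoref{DF-Trace-Adj} (applied to the finite morphisms $i$ and to the open immersion $j$), so no genuinely new computation is needed; one only has to glue the cone consistently, which \autoref{CompDCI-EquivOfTwoCats} makes automatic.
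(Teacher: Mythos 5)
There is a genuine gap, and it sits at the heart of your second paragraph: the claim that on $\D^+(\QCoh(X\times\CScheme))$ the counit $(i\times\id)_*(i\times\id)^!\to\id$ has cone $R(j\times\id)_*(j\times\id)^*$ is false. The classical localization triangle is $R\Gamma_Z\to\id\to R(j\times\id)_*(j\times\id)^*\to[+1]$, where $R\Gamma_Z(\usc)=\dirlim_n R\CHom(\CO_{X\times\CScheme}/\Fa^n,\usc)$ is the derived $\Fa$-\emph{power}-torsion functor; by contrast $i_*i^!(\usc)=R\CHom(\CO_{X\times\CScheme}/\Fa,\usc)$ is the derived $\Fa$-torsion. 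These two functors are \emph{not} isomorphic on $\D^+(\QCoh)$: already for $X=\BA^1_k$, $Z$ the origin and $\CF=\CO_X$ one gets $R\Gamma_Z(\CO_X)\cong (k[x,x^{-1}]/k[x])[-1]$, which is infinite dimensional, while $i_*i^!\CO_X\cong k[-1]$. Consequently the triangle you write down does not exist in $\D^+(\QCohC(X,\Coeff))$ at all (note that the theorem is deliberately stated only for $\QCrysC$, not $\QCohC$), and no amount of descent through \autoref{DF-ConstrDFonCrys} can produce it from a triangle that is not there upstairs. The adjunction compatibilities of \autoref{DF-Trace-Adj} and \autoref{DF-UpperStar-Adj-New} do give you the two maps $i_*i^!\to\id$ and $\id\to Rj_*j^*$ as morphisms of $\kappa$-complexes, but they do not make the composite into an exact triangle.

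What is actually needed --- and what the paper's proof supplies --- is the statement that the discrepancy between $\Fa$-torsion and $\Fa$-power-torsion is \emph{locally nilpotent for the Cartier structure}. Concretely, one resolves by injective $\kappa$-sheaves (whose underlying sheaves are injective by \autoref{KappaInjIsInj}, so that $Rj_*j^*$ is computed termwise), reduces to the affine case, and observes that for an injective $I$ the sequence $0\to I[\Fa]\to I\to\Gamma(U,j_*j^*\wt I)\to 0$ has as its ``defect of exactness'' exactly the quotients $I[\Fa^n]/I[\Fa]$; these are killed because $\kappa^e\bigl(I[\Fa^{[p^e]}]\bigr)\subset I[\Fa]$ and the ordinary powers $\Fa^n$ and the bracket powers $\Fa^{[p^e]}$ are cofinal in one another. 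This is the one genuinely new computation in the proof, it uses the Frobenius in an essential way, and it is precisely the step your proposal declares to be ``automatic.'' The third paragraph of your argument (descent to crystals once a triangle of $\kappa$-complexes, or of quasi-crystals, is in hand) is fine, but the object being descended has to be constructed as above, up to nil-isomorphism, rather than borrowed from the quasi-coherent theory.
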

\begin{proof}
Let $\Fa\subset\CO_X$ denote the ideal sheaf corresponding to $i\colon Z\into X$. Since $i$ is finite, the functor $i^!$ is the right derived functor $i^{-1}R\CHom_{\CO_{X\times C}}(\CO_{Z\times C},\ublank)$. Moreover the homomorphism $ i_*i^!\to \id$ is the evaluation at $1$ map
\[ R\CHom_{\CO_{X\times C}}(\CO_{Z\times C},\ublank) \longto \ublank.\]
Explicitly, if one is given a complex $\UCI^\bullet$ of injective $\kappa$-sheaves, this is the map of complexes which in each degree $i$ is the evaluation at the $1$ section
\[ \CHom_{\CO_{X\times C}}(\CO_{Z\times C},\UCI^i) \longto \UCI^i.\]
Next note that if $\UCI^i$ is an injective $\kappa$-sheaf, then by \autoref{KappaInjIsInj} its underlying sheaf $\CI^i$ is injective as an $\CO_{X\times C}$-module. Hence by \cite[Exer.~III.3.6]{Hartshorne} the sheaf $\CI^i$ is flasque and hence so is $j^*\CI^i$. Thus $Rj_* j^* \UCI^\bullet$ is given by $j_* j^* \UCI^\bullet$ since $Rj_*$ can be computed via flasque resolutions. Since we may compute all terms in the asserted triangle of the theorem via injective resolutions of $\kappa$-sheaves,  to prove the theorem it suffices to show that the natural short sequence of complexes
\[ \CHom_{\CO_{X\times C}}(\CO_{Z\times C},\UCI^\bullet) \longto \UCI^\bullet \to  j_*j^* \UCI^\bullet \]
is degree wise exact as a complex of $\kappa$ quasi-crystals. To verify this we may assume that $X=\Spec R$ is affine and hence $\Fa$ is an ideal of $R$. We shall simply write $I$ for the injective $\kappa$-module on $R\times\Coeff$ and $\wt I$ for the associated sheaf on $\Spec R$. Denoting by $I[\Fa] = \Hom (R/\Fa,I)$ the $\Fa$-torsion submodule of $I$, we need to show that
\begin{equation} \label{SESforFundTriangle}
0 \to I[\Fa] \to I \to \Gamma(U,j_*j^*\wt I) \to 0
\end{equation}
is exact in the category of $\kappa$ quasi-crystals. By \cite[Exer.~III.3.7]{Hartshorne} the right hand term is given by a formula of Deligne as $ \Gamma(U,j_*j^*\wt I) = \dirlim_n\Hom_{R\times\Coeff}(\Fa^n\times\Coeff,I)$. The injectivity of $I$ and the exactness of $0\to \Fa^n\to R\to R/\Fa^n\to 0$ imply that for all $n$
\[ 0 \to I[\Fa^n] \to I \to \Hom_{R\times\Coeff}(\Fa^n\times\Coeff,I) \to 0\]
is a short exact sequence, and hence so is the direct limit over this. The key observation now is that for all $n$ the inclusion
\[ I[\Fa]\into I[\Fa^n]\]
is a local nil-isomorphism: this is so, because $\kappa^e(I[\Fa^{[p^e]}])\subset I[\Fa]$ for all $e\in\BN$ (\cf the proof of \autoref{CSh-LemmaNilStalksLocal}) and because the powers $\Fa^n$ and $\Fa^{[p^e]}$ generate the same topology on~$R$, since in the ordinary- and bracket-powers are co-final within one another. It follows that \autoref{SESforFundTriangle} is exact in $D^+(\QCrysC(X,\Coeff)$ and hence the proof of the theorem is complete for $?=\emptyset$. For $?=\crys$ we only need to observe that the triangle preserves the subcategory $D^+_\crys(\QCrysC(X,\Coeff)$.
\end{proof}

As an immediate consequence we record here an analog of the Kashiwara equivalence. As above let $i \colon Y \into X$ be a closed subset with open complement
\[
    Y \into[i] X \xhookleftarrow{\ j\ } U\, .
\]
We define, that a $\kappa$-crystal $\UCM$ on $X$ \emph{is supported in $Y$} if and only if its restriction to $U$ is zero (as a crystal), \ie $j^!\UCM=j^*\UCM=0$. Likewise, a complex $\UCM^\bullet \in \D^+(\QCrysC(X,\Coeff))$ \emph{is supported in $Y$} if $j^*\UCM^\bullet$ is a acyclic as a complex of $\kappa$-crystals.\footnote{Below in \autoref{CrysSupp} we will introduce stalks for $\kappa$-crystal and a notion of \emph{crystalline support} of a $\kappa$-sheaf. The condition that $\UCM^\bullet$ is supported in $Y$ means then precisely that its crystalline support is contained in $Y$, justifying the \emph{ad hoc} definition above.} Since $j^*$ is exact for open immersions, this is equivalent to the cohomology $H^i(\UCM^\bullet)$ of $\UCM^\bullet$ being supported in $Y$ for all $i$. We obtain the following equivalence of derived categories of crystals on $Y$ with crystals on $X$ which are supported on $Y$.

\begin{theorem}[Kashiwara equivalence]\label{thm.DerivedKashiwara}%\g?{Question:\\ Why does this hold for $*=b$? The previous result is only for $+$. Does $X$ need to be regular for $*=b$?.}
Let $i \colon Y \into X$ be a closed immersion. For $? \in \{\emptyset,\crys\}$ and $*\in\{b,+\}$, the essential image of the functor
\[
    i_* \colon \D^*_?(\QCrysC(Y,\Coeff))   \to \D^*_?(\QCrysC(X,\Coeff))
\]
is the full triangulated subcategory $\D^*_{?,Y}(\QCrysC(X,\Coeff))$ of $\D^*_?(\QCrysC(X,\Coeff)$ consisting of complexes which are \emph{supported in $Y$}. The inverse to $i_*$ is given by $i^!$ which hence yield a pair of inverse equivalences of categories
\[
\xymatrix{
      \D^*_?(\QCrysC(Y,\Coeff)) \ar@<.5ex>[r]^-{i_*} & \D^*_{?,Y}(\QCrysC(X,\Coeff)) \, . \ar@<.5ex>[l]^-{i^!}
}
\]
\end{theorem}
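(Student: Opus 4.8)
The plan is to combine the exact triangle of \autoref{ExactTriangleThm} with the adjunction $(Rf_*, f^!)$ from \autoref{DF-Trace-Adj} (applied to the proper — indeed finite — morphism $i$) and the fact that $j^*$ is exact. First I would check that the functor $i_*$ lands in $\D^*_{?,Y}(\QCrysC(X,\Coeff))$: for $\UCM^\bullet\in\D^*_?(\QCrysC(Y,\Coeff))$ we have $j^*i_*\UCM^\bullet = 0$ since set-theoretically $j$ and $i$ have disjoint images, so $(i\times\id)$ and $(j\times\id)$ factor through disjoint closed/open pieces and $j^*(f\times\id)_* \UCM^\bullet$ vanishes already on underlying sheaves. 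Hence $i_*\UCM^\bullet$ is supported in $Y$. Conversely, given $\UCN^\bullet\in\D^*_{?,Y}(\QCrysC(X,\Coeff))$, the triangle
\[
    i_*i^!\UCN^\bullet \to \UCN^\bullet \to Rj_*j^*\UCN^\bullet \to i_*i^!\UCN^\bullet[1]
\]
has $j^*\UCN^\bullet$ acyclic as a complex of crystals, so (using \autoref{DF-CompDCII}/\autoref{DF-CorCompDCII} to interpret acyclicity correctly) $Rj_*j^*\UCN^\bullet = 0$ in $\D^*_?(\QCrysC(X,\Coeff))$, whence $i_*i^!\UCN^\bullet\to\UCN^\bullet$ is an isomorphism. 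This shows the essential image of $i_*$ is exactly $\D^*_{?,Y}$ and that $i_*i^!\to\id$ is the identity on that subcategory.

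The second half is to show $i^!i_*\to\id$ (or rather a natural isomorphism $\id\to[\simeq] i^!i_*$) on $\D^*_?(\QCrysC(Y,\Coeff))$. The unit $\ctr\colon\id\to i^!Ri_*$ of the adjunction from \autoref{DF-Trace-Adj} gives a natural transformation $\UCM^\bullet\to i^!i_*\UCM^\bullet$; I would verify this is an isomorphism of $\kappa$-crystals. Since $i$ is a closed immersion the statement is local on $X$, so one reduces to $X=\Spec R$, $Z=\Spec R/\Fa$ with $\Fa$ the (radical) defining ideal, and checks that for an injective resolution $\UCI^\bullet$ of $i_*\UCM^\bullet$ the evaluation map identifies $R\CHom_{R\tensor\Coeff}(R/\Fa\tensor\Coeff, i_*\UCM^\bullet)$ with $\UCM^\bullet$ itself as $R/\Fa\tensor\Coeff[\Fr]$-modules; this is the usual fact that $i^!i_* \cong \id$ for a closed immersion on the level of underlying complexes (coherent duality), together with the observation that the induced $\kappa$-structure is carried over correctly — which is exactly the compatibility of $\ctr$ with $\kappa$ already established in the proof of \autoref{DF-Trace-Adj}. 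Boundedness ($*\in\{b,+\}$ and the subscript $?$) is preserved because $i^!$ for a finite morphism has cohomological dimension $0$ on $\kappa$-sheaves that are already supported on $Y$ (equivalently: $i^!i_*$ is computed by the complex $R\CHom(R/\Fa,\usc)$ applied to an $\Fa$-torsion module, which after the nil-isomorphism with $\Fa^{[p^e]}$-torsion as in the proof of \autoref{ExactTriangleThm} is concentrated in degree $0$ up to local nilpotence).

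The main obstacle I expect is the careful bookkeeping of the $\kappa$-structures: verifying that the unit $\ctr\colon\id\to i^!i_*$ and counit $\tr\colon i_*i^!\to\id$ of the Grothendieck--Serre adjunction commute with the Frobenius-semilinear structural maps, so that the equivalences hold in the categories of $\kappa$-sheaves (and then descend to crystals via \autoref{DF-ConstrDFonCrys} and \autoref{CSh-FShriekNilpotenceForEssFinite}). Fortunately most of this work is already subsumed in \autoref{DF-Trace-Adj}, so here one mainly needs to trace the triangle of \autoref{ExactTriangleThm} through the localization and confirm that ``supported in $Y$'' in the crystal sense matches the vanishing of $Rj_*j^*$. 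Everything else — the local coherent-duality computation $i^!i_*\cong\id$ and the disjointness argument $j^*i_*=0$ — is standard and routine.
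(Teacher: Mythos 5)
Your proposal is correct and follows essentially the same route as the paper: $j^*i_*=0$ on underlying sheaves, the unit $\id\to i^!i_*$ is an isomorphism because it already is one on underlying quasi-coherent complexes (coherent duality for the closed immersion), and essential surjectivity onto the supported subcategory comes from the triangle of \autoref{ExactTriangleThm} once $Rj_*j^*$ is seen to vanish. The only cosmetic difference is in the bounded case, where the paper simply invokes the general boundedness of $i^!$ on crystals (\autoref{CohomDimOfShriek}) while you give a direct torsion-module argument; both work.
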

\begin{proof}
Let $j \colon U \into X$ be the open complement of $Y$ in $X$. For $\UCN^\bullet \in \D^+_?(\QCrysC(Y,\Coeff))$ clearly $j^*i_*(\UCN^\bullet)=0$ since this is already the case on underlying quasi-coherent sheaves. Hence $i_*\UCN^\bullet$ is supported in $Y$. Likewise, the natural transformation $\id \to i^!i_*$ is an isomorphism since this is already the case on the underlying quasi-coherent sheaves.\footnote{We have
\[
i^!i_*(\usc)=i^{-1}\RCHom(i_*\CO_Y,i_*\usc) \cong \RCHom(Li^*i_*\CO_Y,\usc) \ot[\cong] \RCHom(\CO_Y,\usc)=\id(\usc)
\]
where the last isomorphism is induced from $Li^*i_*\CO_Y \to \CO_Y$ which is an isomorphism for any closed immersion.}
Now, for $\UCM \in \D^+_?(\QCrysC(X,\Coeff))$ consider the triangle from \autoref{ExactTriangleThm}
\[
      i_*i^!\UCM^\bullet \to \UCM^\bullet \to Rj_*j^* \UCM^\bullet \to i_*i^![1]
\]
and note that by definition, if $\UCM^\bullet$ is supported in $Y$, then the term $Rj_*j^* \UCM^\bullet$ is zero, and hence we have a quasi-isomorphism $i_*i^!\UCM^\bullet \to \UCM^\bullet$, showing essential surjectivity of $i_*$. This shows the claimed equivalence of categories for $*=+$.

The case $*=b$ follows from the case $*=+$ since $i_*$ and $i^!$ both have bounded cohomological dimension on crystals by \autoref{GenDefDerIm} and \autoref{CohomDimOfShriek}.\footnote{The proof of \autoref{CohomDimOfShriek} below only uses \autoref{ExactTriangleThm} and not \autoref{thm.DerivedKashiwara}, so there is no circular reasoning.}
\end{proof}
As an immediate corollary we obtain:
\begin{cor}\label{CorOnReducedSubscheme}
Let $i \colon X_\red\to X$ be the closed immersion of the reduced subscheme of a scheme. Let  $?$ be in $\{\emptyset,\crys\}$ and $*\in\{b,+\}$. Then $i^!$ and $i_*$ define natural isomorphisms
\[\xymatrix{  \D^*_?(\QCrysC(X_\red,\Coeff))   \ar@<.5ex>[r]^-{i_*} &   \D^*_?(\QCrysC(X,\Coeff)) \ar@<.5ex>[l]^-{i^!} } \]
\end{cor}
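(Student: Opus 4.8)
The plan is to deduce this corollary directly from the Kashiwara equivalence \autoref{thm.DerivedKashiwara}. First I would observe that the closed immersion $i\colon X_\red\into X$ is a \emph{homeomorphism} on underlying topological spaces, so the open complement $U$ of $X_\red$ in $X$ is empty. Concretely, the ideal sheaf $\Fa$ cutting out $X_\red$ is the nilradical of $\CO_X$, hence is locally nilpotent: since $X$ is Noetherian, Zariski-locally on an affine $\Spec R$ there is an $N$ with $\Fa^N=0$, and then the bracket powers $\Fa^{[p^e]}$ vanish for $p^e\ge N$ as well.

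The main point is then to check that $X_\red$ satisfies the support hypothesis of \autoref{thm.DerivedKashiwara} vacuously: every object $\UCM^\bullet\in\D^*_?(\QCrysC(X,\Coeff))$ is supported in $X_\red$, simply because $j^*\UCM^\bullet$ lives on the empty scheme and is therefore acyclic. Thus the full triangulated subcategory $\D^*_{?,X_\red}(\QCrysC(X,\Coeff))$ appearing in \autoref{thm.DerivedKashiwara} is all of $\D^*_?(\QCrysC(X,\Coeff))$, and that theorem yields that
\[
   i_*\colon \D^*_?(\QCrysC(X_\red,\Coeff)) \longto \D^*_?(\QCrysC(X,\Coeff))
\]
is an equivalence for $?\in\{\emptyset,\crys\}$ and $*\in\{b,+\}$, with quasi-inverse $i^!$. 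This already gives the assertion. Alternatively, and perhaps more transparently, one can invoke the fundamental exact triangle \autoref{ExactTriangleThm}
\[
    i_*i^!\UCM^\bullet \to \UCM^\bullet \to Rj_*j^*\UCM^\bullet \to i_*i^!\UCM^\bullet[1],
\]
note that $Rj_*j^*\UCM^\bullet=0$ because $U=\emptyset$, and conclude that the counit $i_*i^!\UCM^\bullet\to\UCM^\bullet$ is an isomorphism in $\D^*_?(\QCrysC(X,\Coeff))$; combined with the isomorphism $\id\isoto i^!i_*$ (valid on underlying quasi-coherent complexes for any closed immersion, as recalled in the proof of \autoref{thm.DerivedKashiwara}) this exhibits the desired pair of inverse equivalences.

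I do not anticipate a genuine obstacle here; the only things to verify are the harmless facts that $X$ and $X_\red$ have the same underlying space (so $U=\emptyset$) and that the relevant boundedness of cohomological dimension of $i^!$ and $i_*$ on crystals — needed to pass from $*=+$ to $*=b$ — is already available via \autoref{GenDefDerIm} and \autoref{CohomDimOfShriek}, exactly as in the proof of \autoref{thm.DerivedKashiwara}. One small point worth stating explicitly is that $i$ is finite (a closed immersion), so $i^!$ and $i_*$ are indeed among the functors for which those results apply, and the Frobenius-finiteness blanket hypothesis is preserved by passing to the reduced subscheme.
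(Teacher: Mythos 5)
Your proof is correct and is exactly the paper's argument: the paper's entire proof is ``apply \autoref{thm.DerivedKashiwara} with $Y=X_\red$ and hence $U=\emptyset$.'' The additional remarks you make (the exact triangle alternative, the boundedness via \autoref{GenDefDerIm} and \autoref{CohomDimOfShriek}) are already packaged inside the proof of \autoref{thm.DerivedKashiwara}, so nothing further is needed.
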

\begin{proof}
We apply the above with $Y=X_\red$ and hence $U = \emptyset$.
\end{proof}
From this equivalence on the level of the derived category we immediately obtain an equivalence on the underlying abelian category which was already shown in the case of $\CrysC$ in \cite[Prop. 3.19]{BliBoe.CartierFiniteness}
\begin{cor}
Let $i \colon Y \to X$ be a closed immersion. Denote by ${\QCrysC}_{,Y}(X,\Coeff)$ the category of Cartier crystals supported on $Y$. Then $i^!$ and $i_*$ define natural isomorphisms
\[\xymatrix{  \QCrysC(Y,\Coeff)  \ar@<.5ex>[r]^-{i_*} &   {\QCrysC}_{,Y}(X,\Coeff) \ar@<.5ex>[l]^-{i^!} } \]
and in particular $R^ai^!$ is zero on ${\QCrysC}_{,Y}(X,\Coeff)$ for $a \neq 0$. The same statement hold for $\CrysC$.
\end{cor}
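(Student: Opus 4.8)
The plan is to extract the asserted equivalence of abelian categories directly from the derived Kashiwara equivalence of \autoref{thm.DerivedKashiwara}. Throughout, let $j\colon U\into X$ be the open complement of $Y$, and write $\D^*_{?,Y}(\QCrysC(X,\Coeff))$ for the subcategory of complexes supported on $Y$ as in that theorem; note that a Cartier crystal $\UCM$, viewed as a complex concentrated in degree $0$, lies in $\D^*_{?,Y}$ precisely when it is supported on $Y$ in the sense of the corollary, since $j^*$ is exact. First I would record the elementary properties of $i_*$ on quasi-crystals: as $i$ is a closed (in particular affine) immersion, $i_*$ is exact on $\QCohC$, commutes with filtered colimits and direct sums, and preserves nilpotence and local nilpotence; hence it descends to an exact functor $i_*\colon\QCrysC(Y,\Coeff)\to\QCrysC(X,\Coeff)$ which agrees with $Ri_*$ placed in degree $0$, and $j^*i_*\UCN=0$ already on underlying sheaves, so $i_*$ factors through ${\QCrysC}_{,Y}(X,\Coeff)$. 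Restricting the full faithfulness of $i_*$ from \autoref{thm.DerivedKashiwara} to objects concentrated in degree $0$ shows $i_*\colon\QCrysC(Y,\Coeff)\to{\QCrysC}_{,Y}(X,\Coeff)$ is fully faithful; being also exact and faithful, it reflects zero objects.

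The second step is essential surjectivity of $i_*\colon\QCrysC(Y,\Coeff)\to{\QCrysC}_{,Y}(X,\Coeff)$. Given $\UCM\in{\QCrysC}_{,Y}(X,\Coeff)$, regard it as an object of $\D^b_{?,Y}(\QCrysC(X,\Coeff))$ concentrated in degree $0$. By \autoref{thm.DerivedKashiwara} there is $\UCN^\bullet\in\D^b_?(\QCrysC(Y,\Coeff))$ with $i_*\UCN^\bullet\cong\UCM$ in the derived category. Since $i_*$ is exact, $i_*H^a(\UCN^\bullet)\cong H^a(i_*\UCN^\bullet)\cong H^a(\UCM)$ vanishes for $a\neq 0$, and since $i_*$ reflects zero objects this forces $H^a(\UCN^\bullet)=0$ for $a\neq 0$. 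Hence $\UCN^\bullet$ is quasi-isomorphic to $\UCN:=H^0(\UCN^\bullet)\in\QCrysC(Y,\Coeff)$, and $\UCM\cong i_*\UCN$. Together with the previous step this gives an equivalence $i_*\colon\QCrysC(Y,\Coeff)\to[\simeq]{\QCrysC}_{,Y}(X,\Coeff)$. The same reasoning yields the statement for $\CrysC$: a closed immersion is proper, so $i_*$ preserves coherence, and $i^!$ preserves boundedness and coherence of cohomology by \autoref{GenDefDerIm} and \autoref{CohomDimOfShriek}, so one runs the argument inside the bounded coherent variants.

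It remains to identify the quasi-inverse with $i^!$ and to deduce the vanishing of $R^ai^!$. From \autoref{thm.DerivedKashiwara} -- equivalently, from the exact triangle of \autoref{ExactTriangleThm} applied to a complex supported on $Y$, where the $Rj_*j^*$ term is acyclic -- one has a natural isomorphism $i^!i_*\cong\id$ on $\D^b_?(\QCrysC(Y,\Coeff))$. Applying this to $\UCN\in\QCrysC(Y,\Coeff)$, the complex $i^!(i_*\UCN)\cong\UCN$ is concentrated in degree $0$, i.e. $R^ai^!(i_*\UCN)=0$ for $a\neq 0$; since every object of ${\QCrysC}_{,Y}(X,\Coeff)$ is of the form $i_*\UCN$ by the second step, $R^ai^!$ vanishes on ${\QCrysC}_{,Y}(X,\Coeff)$ for $a\neq 0$. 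Consequently $H^0\circ i^!$ is exact on ${\QCrysC}_{,Y}(X,\Coeff)$, and the isomorphisms $\id\cong i^!i_*$ and $i_*i^!\cong\id$ of \autoref{thm.DerivedKashiwara} exhibit it as quasi-inverse to $i_*$, completing both assertions; the $\CrysC$ case follows verbatim.

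The argument is essentially formal, so there is no serious obstacle; the two points that need care are (i) interpreting $i^!$ as a functor between abelian categories via $H^0$ of the derived functor and checking that it is exact there, which is exactly the content of the vanishing $R^ai^!=0$ on ${\QCrysC}_{,Y}$, and (ii) the bookkeeping that ${\QCrysC}_{,Y}(X,\Coeff)$ is the heart of $\D^b_{?,Y}(\QCrysC(X,\Coeff))$ together with the conservativity of $i_*$, which together let one transport the degree-$0$ condition back and forth along $i_*$. Once \autoref{thm.DerivedKashiwara} is in hand, everything else is a diagram-free unraveling of definitions.
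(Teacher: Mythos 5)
Your argument is correct and follows the same route as the paper: everything is deduced from the derived Kashiwara equivalence of \autoref{thm.DerivedKashiwara}, with the vanishing of $R^ai^!$ for $a\neq 0$ coming from $i^!i_*\cong\id$ applied to objects in degree zero. The only difference is that you spell out the conservativity/exactness bookkeeping needed to pass from the derived equivalence to the hearts, which the paper leaves implicit.
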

\begin{proof}
For a single (coherent) $\kappa$-crystal $\UCM$ \autoref{thm.DerivedKashiwara} shows that $i^!i_*\UCM \cong \UCM$. In particular, the higher cohomology $R^ai^!$ vanishes for $a \neq 0$. Now all claims follow from \autoref{thm.DerivedKashiwara}.
\end{proof}

\subsection{Finite cohomological dimension of the twisted inverse image}\label{CohDimOfShriek}

As noted before in \autoref{FirstRemOnfShriekCohomBdd}, given a morphism $f\colon Y\to X$ of essentially finite type, the functor $f^!$ on quasi-coherent sheaves may have infinite cohomological dimension. However, in \cite{BliBoe.CartierDuality} we show that Grothendieck-Serre duality induces a duality between $\D^b_{\crys}(\QCrysC(X,\Coeff))$ and the corresponding category of $\tau$-crystals of \cite{BoPi.CohomCrys}; the functor $f^!$ then corresponds to $f^*$ which is shown to be exact for $\kappa$-crystals in op. cit. Hence $f^!$ has finite cohomological dimension of $\kappa$-crystals. The bounds on the cohomological dimension of $f^!$ one obtains via this reasoning are however not optimal.

In fact, we show in this section directly that for $\kappa$-crystals the cohomlogical width of $f^!$ on $\D^+_\crys(\QCrysC((\ldots))$ satisfies the same bounds as one would expect if $X$ and $Y$ were both smooth. In the smooth case one has -- already on the underlying sheaves -- that $f^!(\CM)$ for $\CM$ a sheaf has cohomology only in the range $[-\dim Y, \dim X]$; this is well known and explained below in some detail.
\begin{theorem}\label{CohomDimOfShriek}
Let $f\colon Y\to X$ be a morphism essentially of finite type and suppose that $X$ has finite Krull dimension $\dim X$. Then for any $\UCV$ in $\CrysC(X,\Coeff)$, the complex $ f^!\UCV$ has cohomology supported in degrees $[-\dim Y,\dim X]$.
\end{theorem}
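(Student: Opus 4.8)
The plan is to reduce the statement to the two cases of a closed immersion and a smooth morphism, using the factorization theory and a dévissage along a stratification by regular locally closed subschemes.

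\medskip

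\textbf{Step 1: Reduce to working on crystals via the triangle.}
First I would observe that $f^!$ on $\D^+_\crys(\QCrysC(X,\Coeff))$ has, by \autoref{ShriekForRegXY} and \autoref{EssPerfBounded}, a bounded amplitude whenever $f$ is essentially perfect; in particular this holds when both $X$ and $Y$ are regular, with the amplitude given by the relative $\Tor$-dimension, which is bounded by $\dim X$ on the one side and $\dim Y$ on the other. So the heart of the matter is to bootstrap from the regular case to the general case, and the tool for that is the exact triangle of \autoref{ExactTriangleThm}, $i_*i^!\to\id\to Rj_*j^*\to i_*i^![1]$, together with the Kashiwara equivalence \autoref{thm.DerivedKashiwara}.

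\medskip

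\textbf{Step 2: Dévissage on $X$ by a regular stratification.}
Since $X$ is Noetherian of finite Krull dimension, its regular locus is open and dense, and by Noetherian induction $X$ admits a finite stratification $X=\coprod_\alpha S_\alpha$ into locally closed regular subschemes, with $\dim S_\alpha\le\dim X$. I would argue by induction on $\dim X$: pick $U\subseteq X$ open regular with reduced closed complement $Z$ of strictly smaller dimension, $j\colon U\into X$, $i\colon Z\into X$. Applying $f^!$ to the triangle $i_*i^!\UCV\to\UCV\to Rj_*j^*\UCV\to$ and using compatibility of $f^!$ with composition gives a triangle relating $f^!\UCV$ to $f^!Rj_*j^*\UCV$ and $f^!i_*i^!\UCV$. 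For the first term one base-changes: form the cartesian square with $j'\colon f^{-1}U\to Y$, $g\colon f^{-1}U\to U$; then $f^!Rj_*j^*\UCV$ is controlled by $Rj'_*$ of $g^!(j^*\UCV)$, where now $g$ is a morphism to the \emph{regular} scheme $U$, and $Rj'_*$ is exact on crystals up to the bounded shift governed by \autoref{GenDefDerIm} (it does not increase the upper bound beyond $\dim f^{-1}U\le\dim Y$, and $j^*$ is exact). For the second term, $i^!\UCV$ is a complex of crystals on $Z$ with cohomology in a bounded range by the inductive hypothesis applied to $Z$ (note $i\colon Z\into X$ with both... actually $Z$ need not be regular, but $\dim Z<\dim X$, so induction applies to the identity-type bound), and then $f^!i_*=(f')^!$ via base change along $Z\into X$, so one reduces to $(f')^!$ on crystals over $Z$, a scheme of strictly smaller dimension; here the inductive hypothesis gives cohomology in $[-\dim Y,\dim Z]\subseteq[-\dim Y,\dim X]$.

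\medskip

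\textbf{Step 3: The regular base case.}
The remaining input is the case where $X$ is regular (reached after finitely many dévissage steps, since each step drops $\dim X$). Here I would invoke \autoref{LemOnFTMorOfRegSchemes} (referenced in the excerpt as the assertion that a finite type morphism between regular schemes is essentially perfect), so \autoref{ShriekForRegXY} applies and $f^!$ is bounded on $\D^b_\crys$; one then checks the explicit range: $f^!\CO_X$ for $f$ of finite type between regular schemes, written locally via a factorization $Y\into \BA^n_X\to X$, is a perfect complex whose amplitude is bounded above by $\dim X$ (the projective dimension of $\CO_Y$ over the regular ring $\CO_{\BA^n_X}$ is at most $\dim\BA^n_X$, but after the Koszul-type computation and the smooth shift by $n$ the net upper bound is $\dim X$) and below by $-\dim Y$ (the relative dualizing sheaf of the smooth part sits in degree $-n$, contributing the cohomological spread downward, bounded by $\dim Y$). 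Combined with the fact that $Lf^*$ and the tensor formula \autoref{ExplicitUpperShriek} only translate a single sheaf $\UCV$ into this range, one gets cohomology of $f^!\UCV$ in $[-\dim Y,\dim X]$.

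\medskip

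\textbf{Main obstacle.}
The hardest part is bookkeeping the \emph{exact} numerical bounds $[-\dim Y,\dim X]$ through the dévissage: one must check that neither the pushforward $Rj'_*$ (whose cohomological amplitude is bounded by $\dim$ of the open set) nor the passage to the closed stratum (where $f^!i_*$ re-enters with $(f')^!$ over a lower-dimensional base) ever pushes cohomology outside the stated window, and that in the regular base case the Krull-dimension bound on $X$ — rather than on $\BA^n_X$ — is what controls the positive end, which requires the non-trivial input that for a finite type map of regular schemes $f^!\CO_X$ has the sharp amplitude $[-\dim Y,\dim X]$ rather than the a priori $[-\dim Y, \dim\BA^n_X]$. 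I would expect to isolate this sharp bound as a separate lemma (essentially \autoref{LemOnFTMorOfRegSchemes}) and then the dévissage is a formal, if careful, triangle-chase.
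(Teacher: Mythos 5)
Your overall strategy --- dévissage via the exact triangle $i_*i^!\to\id\to Rj_*j^*$ down to finite type morphisms between regular schemes, where \autoref{ShriekForFTMorBetweenReg} supplies the bound --- is indeed the paper's strategy. But there is a genuine gap: you only stratify the base $X$. After your Step 2 you are left with $X$ regular but $Y$ still arbitrary, and at that point neither \autoref{LemOnFTMorOfRegSchemes} nor \autoref{ShriekForRegXY} applies, since both require $Y$ regular as well. Your Step 3 does not close this: for a singular $Y\into \BA^n_X$ the projective-dimension/Koszul estimate only puts $\CExt^i_{\BA^n_X}(\CO_Y,\usc)$ in degrees $[0,\dim X+n]$, hence after the shift by $-n$ an amplitude $[-n,\dim X]$ with $n$ the \emph{embedding} dimension, not $[-\dim Y,\dim X]$; upgrading $-n$ to $-\dim Y$ for an arbitrary coefficient sheaf is not a formal consequence of regularity of the ambient space and is not how the theorem is proved. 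The paper instead runs a \emph{second} induction, on $\dim Y$: choose an affine open regular equi-dimensional dense $\wt j\colon V\into Y$ with reduced closed complement $\wt i\colon W\into Y$, apply the triangle on $Y$ to get $\wt i_*(f\wt i)^!\to f^!\to \wt j_*(f\wt j)^!$, handle $(f\wt j)^!$ by \autoref{ShriekForFTMorBetweenReg} (now both source and target are regular) and $(f\wt i)^!$ by the induction on $\dim Y$.

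A second, quantitative point you gloss over is \autoref{IndStepCohomDimShriek}(a): for an \emph{affine} open immersion $j$ with closed complement $i$, the functor $i^!$ applied to a single crystal has cohomology only in degrees $[0,1]$, because $j_*j^*$ is then exact on crystals and the triangle forces $i_*i^!\UCV$ into two degrees. This $+1$ is exactly absorbed by $\dim Z\le\dim X-1$ in the induction, which is why the closed-stratum term lands in $[-\dim Y,(\dim X-1)+1]$; your bookkeeping treats $i^!\UCV$ as a single crystal in degree $0$ and never records the requirement that the open strata be affine (which is also what makes $Rj_*$ and $Rj'_*$ exact rather than merely bounded). Finally, the base changes you invoke, $f^!Rj_*\cong Rj'_*g^!$ and $f^!i_*\cong \wt i_*f_Z^!$, are not among the compatibilities established in the paper; the proof avoids them by applying the triangle upstairs on $Y$ to $f^!\UCV$ and using only pseudofunctoriality $\wt i^!f^!=(f\wt i)^!$ and the flat base change $\wt j^*f^!=f_U^!j^*$ from \autoref{CSh-PropFShriek}.
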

The proof is a typical application of our triangle \autoref{ExactTriangleThm}: One chooses $U$ to be a dense open subset with desirable properties (in our case a dense affine regular subset). On $U$ the statement is known. Then the triangle \autoref{ExactTriangleThm} allows to reduce the general case to the corresponding statement on the complement of $U$, which is dealt with by induction on dimension. In the present situation this type of argument is applied twice, on the base $X$ and then on $Y$. But before we carry out the proof in detail, let us recall the situation when $X$ and $Y$ are regular:
\begin{lem} \label{LemOnFTMorOfRegSchemes}
Let $f\colon Y \to X$ be a finite type morphism of equidimensional regular schemes with $X$ (and hence $Y$) of finite Krull dimension. Then $f^!\CO_X$ is quasi-isomorphic to a complex $\omega_f[\dim Y-\dim X]$ with $\omega_f$ locally free of rank $1$. Furthermore, one has a functorial isomorphism
\[  f^!(\usc) = f^!\CO_X \Ltensor Lf^*(\usc), \]
so in particular $f$ is perfect.
\end{lem}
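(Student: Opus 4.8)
The plan is to reduce to the standard structure theory of $f^!$ for finite type morphisms of regular schemes, which is classical but scattered across \cite{HartshorneRD}, \cite{LipmanGrothDual} and \cite{Na.CompEFT}. First I would recall that a finite type morphism $f\colon Y\to X$ between regular schemes of finite Krull dimension is automatically \emph{perfect}: since $X$ is regular, $\CO_X$ has finite flat dimension over $\CO_Y$ locally (every finitely generated module over a regular local ring has finite projective dimension, bounded by the dimension), and the boundedness of $Rf_*\CO_Y$ follows from $X$, $Y$ being Noetherian of finite dimension. More precisely, by Serre's theorem the local ring $\CO_{X,f(y)}$ has a finite free $\CO_{Y,y}$-resolution of length at most $\dim Y$, so $f$ is perfect in the sense of \autoref{EssPerfBounded}; this is exactly item (c) of the example list following that remark. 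Consequently \autoref{EssPerfBounded}(a) applies and gives the functorial isomorphism
\[
    f^!(\usc) \cong f^!\CO_X \Ltensor_{\CO_Y} Lf^*(\usc),
\]
which is the displayed formula, and simultaneously tells us $f^!\CO_X$ is bounded.

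The remaining point is to identify $f^!\CO_X$ as $\omega_f[\dim Y - \dim X]$ with $\omega_f$ an invertible sheaf. Here I would argue locally, using a factorization of $f$ into a regular closed immersion followed by a smooth morphism, which exists after passing to an affine open and embedding $Y\into \BA^n_X\to X$. For the smooth projection $\pr\colon \BA^n_X\to X$ one has $\pr^!\CO_X \cong \omega_\pr[n] = \bigwedge^n\Omega_{\BA^n_X/X}[n]$, which is free of rank one in degree $-n$; this is the Cohen--Macaulay (indeed essentially smooth) case recalled in the example after \autoref{EssPerfBounded}. For the closed immersion $i\colon Y\into \BA^n_X$, both $Y$ and $\BA^n_X$ are regular, so $i$ is a regular immersion of some codimension $c$, and the Koszul complex on a regular sequence cutting out $Y$ computes $i^!$; one gets $i^!\CG \cong (\det \CN_{Y/\BA^n_X})\tensor i^*\CG[-c]$ for the (locally free, rank $c$) conormal bundle, in particular $i^!\CO_{\BA^n_X}$ is invertible placed in degree $c$. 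Composing via the pseudofunctoriality of $(\usc)^!$ from \autoref{CSh-PropFShriek} yields $f^!\CO_X \cong \omega_f[c-n]$ with $\omega_f = (\det\CN_{Y/\BA^n_X})\tensor i^*\bigwedge^n\Omega_{\BA^n_X/X}$ locally free of rank one; and since $f$ (being a composite of a regular immersion of codimension $c$ into an $n$-dimensional-fibre smooth morphism) has relative dimension $n-c = \dim Y - \dim X$ by equidimensionality and regularity, the shift is $\dim Y - \dim X$ as claimed. One checks that the globally defined $\omega_f$ does not depend on the chosen local factorization because $f^!\CO_X$ itself does not (this is the hard part of \cite{HartshorneRD}), so the locally defined invertible sheaves glue.

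The main obstacle I anticipate is purely bookkeeping rather than conceptual: making the identification $f^!\CO_X \cong \omega_f[\dim Y - \dim X]$ clean and coordinate-independent, i.e.\ verifying that the local computations through the Koszul complex and the smooth projection patch together compatibly on overlaps. This compatibility is precisely the content of the uniqueness of $f^!$ up to isomorphism of pseudofunctors in \autoref{CSh-PropFShriek}, so I would invoke that rather than re-prove it; once $f^!\CO_X$ is known to be a bounded complex, the fact that it is concentrated in a single degree with invertible cohomology there can be checked stalk-locally, where the Koszul resolution makes it transparent. Everything else — perfectness, the projection formula \eqref{ExplicitUpperShriek}, exactness properties — is imported directly from \autoref{EssPerfBounded} and the accompanying examples.
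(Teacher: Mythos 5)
Your proposal is correct and follows essentially the same route as the paper: reduce to the local factorization $Y\into\BA^n_X\to X$, compute $i^!$ via the Koszul complex for the locally complete intersection closed immersion and $\pr^!\CO_X$ explicitly for the smooth projection, and import perfectness together with the formula $f^!(\usc)\cong f^!\CO_X\Ltensor Lf^*(\usc)$ from \cite{Na.CompEFT} as recalled in \autoref{EssPerfBounded}. The only blemish is the transient shift $[c-n]$, which should read $[n-c]$; your subsequent identification of the shift as $\dim Y-\dim X$ is the correct one and matches the statement.
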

\begin{proof}
To prove the assertion on the shape of $f^!\CO_X$ it suffices to show that its representing complex is acyclic outside degree $\dim X-\dim Y$ and has a locally free sheaf of rank $1$ as the cohomology in that degree. Since this is a local assertion, we may factor $f$ (locally but without changing the notation) as
\[ Y \to[i] \BA^n\times X \to[\pr_2] X\]
with $i$ a closed immersion. Because $Y$ and $ \BA^n\times X$ are regular, $i$ is a locally complete intersection morphism. Via the Koszul complex and the explicit formula for $i^!$ for the finite morphism $i$, the latter implies that $i^!\CO_{\BA^n\times X}$ is represented by a complex of the form $\omega_i[\dim Y-\dim X - n]$ as stated. The assertion on $f^!\CO_X$ follows since $\pr_2^!\CO_X=\CO_{\BA^n\times X}[n]$. The remaining assertions follow from \cite{Na.CompEFT}, see also \autoref{ExplicitUpperShriek} on page \pageref{ExplicitUpperShriek}.
\end{proof}
This description of $f^!(\usc)$ in the case of a finite type morphism between regular schemes immediately implies the following result which shows \autoref{CohomDimOfShriek} in this case.
\begin{cor}\label{ShriekForFTMorBetweenReg}
Let $f\colon Y \to X$ be a finite type morphism of equi-dimensional regular schemes with $X$ (and hence $Y$) of finite Krull dimension. Then for any quasi-coherent sheaf $\CV$ on $X \times \CScheme$, the complex $(f \times \id)^!\CV$ has cohomology concentrated in degrees $[-\dim Y,\dim X-\dim Y]$.
\end{cor}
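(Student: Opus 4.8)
The plan is to deduce \autoref{ShriekForFTMorBetweenReg} directly from the structural description of $(f\times\id)^!$ furnished by \autoref{LemOnFTMorOfRegSchemes}, applied to the morphism $f\times\id\colon Y\times\CScheme\to X\times\CScheme$. First I would observe that since $X$ and $Y$ are equidimensional regular schemes of finite Krull dimension and $\CScheme=\Spec\Coeff$ is Noetherian, the products $X\times\CScheme$ and $Y\times\CScheme$ are Noetherian; moreover $f\times\id$ is of finite type, and the morphism is between schemes which are, at least locally over each component of $\CScheme$, regular (one should either restrict $\Coeff$ to be regular, as in the stated cases of interest, or work componentwise / note that only the fibre dimensions matter for the cohomological estimate). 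In any case, by \autoref{LemOnFTMorOfRegSchemes} (or its underlying input, \autoref{ExplicitUpperShriek} together with the Koszul-complex computation) we get a natural isomorphism
\[
    (f\times\id)^!\CV \;\cong\; (f\times\id)^!\CO_{X\times\CScheme}\Ltensor L(f\times\id)^*\CV,
\]
and $(f\times\id)^!\CO_{X\times\CScheme}$ is represented by $\omega_{f}[\dim Y-\dim X]$ with $\omega_f$ locally free of rank one.

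Next I would run the degree bookkeeping. The functor $L(f\times\id)^*$ applied to an honest quasi-coherent sheaf $\CV$ produces a complex with cohomology in degrees $[-n,0]$, where $n$ is the relative dimension of $f$; concretely, locally factoring $f$ as a closed immersion into $\BA^n\times X$ followed by the projection (exactly as in the proof of \autoref{LemOnFTMorOfRegSchemes}), $Lf^*$ has $\Tor$-amplitude bounded by $n=\dim Y-\dim X$ since regular schemes have finite global dimension and the relative situation is a locally complete intersection. Tensoring with the line bundle $\omega_f$ placed in cohomological degree $\dim X-\dim Y$ shifts this range: the product $(f\times\id)^!\CV$ then has cohomology concentrated in degrees
\[
   [\,(\dim X-\dim Y)-(\dim Y-\dim X),\ \dim X-\dim Y\,]\;=\;[\,2\dim X-2\dim Y,\ \dim X-\dim Y\,].
\]
Here I would reconcile this with the asserted range $[-\dim Y,\dim X-\dim Y]$: the upper bound $\dim X-\dim Y$ is exactly what we obtained, and for the lower bound one uses the cruder estimate that $Lf^*\CV$ has cohomology in degrees $[-\dim Y,0]$ (its $\Tor$-amplitude over the base is bounded by $\dim Y$, since a regular local ring of dimension $\le\dim Y$ has every module resolved by a projective resolution of length $\le\dim Y$), so after the shift by $\dim X-\dim Y$ the complex $(f\times\id)^!\CV$ sits in degrees $[-\dim Y,\dim X-\dim Y]$, as claimed. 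Because $\omega_f$ is locally free it is exact for $\Ltensor$, so no further spreading occurs.

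The step I expect to require the most care is the interaction with the coefficient scheme $\CScheme$: \autoref{LemOnFTMorOfRegSchemes} is stated for finite type morphisms of regular schemes, and $X\times\CScheme$ need not be regular when $\Coeff$ is, say, a local Artin ring. The clean fix is to note that the cohomological-dimension estimate only depends on the $\Tor$-amplitude of $L(f\times\id)^*$ and on the boundedness of $(f\times\id)^!\CO_{X\times\CScheme}$, both of which are \emph{relative} over $\CScheme$: $f\times\id$ is the base change of $f$ along $\CScheme\to\Spec\BF_q$, so $L(f\times\id)^*$ has the same $\Tor$-amplitude $\le\dim Y$ as $Lf^*$, and $(f\times\id)^!\CO_{X\times\CScheme}$ is the pullback to $X\times\CScheme$ of $f^!\CO_X$ by flat base change (\autoref{CSh-PropFShriek}(c)), hence still $\cong\omega_f[\dim Y-\dim X]$ with $\omega_f$ locally free. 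With this relative viewpoint the argument of \autoref{LemOnFTMorOfRegSchemes} goes through verbatim over $\CScheme$, and the degree count above gives the result. Finally, since the statement is about cohomology sheaves it is local on $X$ and on $\CScheme$, so one may assume everything affine and invoke the local factorization through $\BA^n$ used throughout.
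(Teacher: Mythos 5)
Your overall strategy is the same as the paper's: invoke \autoref{LemOnFTMorOfRegSchemes} (extended over $\CScheme$ by base change) to write $(f\times\id)^!\CV\cong\omega_{f\times\id}\otimes L(f\times\id)^*\CV\,[\dim Y-\dim X]$ with $\omega_{f\times\id}$ locally free of rank one, and then read off the cohomological range from the $\Tor$-amplitude of $L(f\times\id)^*$. Your handling of the coefficient scheme (reducing the amplitude question to $\Tor$ over $\CO_X$ via flat base change) is also exactly what the paper does.

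However, the degree count — the only substantive step — contains a genuine error. The $\Tor$-amplitude of $L(f\times\id)^*$ is governed by the global dimension of the \emph{base}: locally, $L^{i}(f\times\id)^*\CV$ is $\Tor_{-i}^{R\otimes\Coeff}(S\otimes\Coeff,M)\cong\Tor_{-i}^{R}(S,M)$, and this vanishes for $-i>\dim X$ because $R$ is regular of Krull dimension $\dim X$, hence of global dimension $\dim X$. Your ``cruder estimate'' bounds the amplitude by $\dim Y$ on the grounds that ``a regular local ring of dimension $\le\dim Y$ resolves every module in length $\le\dim Y$'' — but the relevant local rings are those of $X$, not of $Y$. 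For the closed immersion of a point into $\BA^2$ one has $\dim Y=0$ yet $\Tor_2^R(k,M)\neq0$ in general, so the bound by $\dim Y$ is simply false when $\dim Y<\dim X$; when $\dim Y>\dim X$ it is true but too weak to give the statement. Moreover the shift arithmetic as written is inconsistent: starting from the interval $[-\dim Y,0]$ and applying the shift $[\dim Y-\dim X]$ yields $[\dim X-2\dim Y,\ \dim X-\dim Y]$, not $[-\dim Y,\dim X-\dim Y]$; you shifted only the upper endpoint. (Your first attempt, which lands in $[2\dim X-2\dim Y,\dim X-\dim Y]$, is likewise off and you rightly flag the mismatch, but the proposed repair does not repair it.) The correct bookkeeping is: $L(f\times\id)^*\CV$ lives in $[-\dim X,0]$, and the shift by $[\dim Y-\dim X]$ sends $i$ to $i-(\dim Y-\dim X)$, turning $[-\dim X,0]$ into $[-\dim Y,\ \dim X-\dim Y]$. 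With that single correction your argument coincides with the paper's proof.
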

\begin{proof}
Observe that the argument in the proof of the previous lemma applies to $f \times \id_{\CScheme}$ where $C$ is any essentially finite type $\BF_q$-scheme $\CScheme$. As a finite type morphism between regular schemes, $f$ and hence $f \times \id_{\CScheme}$ are essentially perfect, due to the fact that essentially perfect is preserved by base change. It yields that $(f \times \id)^!\CO_{X\times\CScheme}$ is quasi-isomorphic to $\omega_{f \times \id}[\dim Y-\dim X]$ for some locally free sheaf $\omega_{f\times \id}$ of rank one on $Y\times \CScheme$ and that
\[ (f \times\id)^!(\usc) = \omega_{f\times \id} \otimes_{\CO_{X\times\CScheme}}  L(f\times\id)^*(\usc)[\dim Y-\dim X] .\]
Thus it remains to prove that $L(f\times\id)^*$ has cohomology concentrated in degrees $[-\dim X, 0]$ which is a local assertion. So suppose $X=\Spec R$, $Y=\Spec S$ for regular rings $R,S$, $f$ is a ring homomorphism $f\colon R\to S$ and the module underlying $\CV$ is $M$ on $R\otimes \Coeff$. Then $L^{i}(f\times\id)^*\CV$ is the sheaf associated to the module
\[\Tor_{-i}^{R\otimes\Coeff}(S\otimes\Coeff,M) \cong \Tor_{-i}^R(S,M). \]
Using that $R$ is regular of dimension $\dim X$, the right hand side shows that $L^{i}$ vanishes if $i$ is not in $[-\dim X,0]$.
\end{proof}
If one drops the assumption of equi-dimensionality in the preceding Corollary we can only conclude that the cohomology of $(f \times \id_{\CScheme})^!(\CV)$ is concentrated in degrees $[-\dim Y,\dim X]$. The following lemma prepares the induction step of our main theorem:
\begin{lem}\label{IndStepCohomDimShriek} Let $X$ be a scheme of finite Krull dimension. Let $j\colon U\into X$ be an \emph{affine} open immersion with $i\colon Z\to X$ the closed complement such that $\dim Z < \dim X$.
\begin{enumerate}
\item The right derived functor of
\[R^0i^!\colon \CrysC(X,\Coeff) \to \CrysC(Z,\Coeff) \]
has cohomological dimension at most one.
\item For a morphism $f\colon Y\to X$ essentially of finite type consider the fiber squares
\[\xymatrix{
U\times_XY \ar[r]^{\wt j} \ar[d]_{f_U}& Y \ar[d]_f&\ar[l]_{\wt i} \ar[d]^{f_Z} Y\times_XZ\\
U\ar[r]^j & X & \ar[l]_i Z\rlap{.}
}\]
If $f^!_U$ and $f_Z^!$ take Cartier-crystals to complexes whose cohomology is supported in degrees $[a,b]$ and $[a,b-1]$, respectively, then $f^!$ takes Cartier-crystals to complexes whose cohomology is supported in degrees $[a,b]$.
\end{enumerate}
\end{lem}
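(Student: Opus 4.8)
The plan is to derive both parts from the fundamental exact triangle \autoref{ExactTriangleThm}, applying it on $X$ for (a) and on $Y$ for (b).

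For (a): given a crystal $\UCV\in\CrysC(X,\Coeff)$, I feed it into the triangle
\[
 i_*i^!\UCV\to\UCV\to Rj_*j^*\UCV\to i_*i^!\UCV[1]\,.
\]
Since $j$ is \emph{affine}, $Rj_*=j_*$ is exact and $j^*\UCV$ is again a crystal, so $Rj_*j^*\UCV$ is concentrated in cohomological degree $0$; as $\UCV$ is too, the cone of $\UCV\to Rj_*j^*\UCV$ has cohomology only in degrees $0$ and $1$, hence so does $i_*i^!\UCV$. Because $i$ is a closed immersion, $i_*$ is exact and detects vanishing of crystals (it reflects nilpotence up to the canonical isomorphism $\Frid_*i_*\cong i_*\Frid_*$), so $i^!\UCV$ itself has cohomology concentrated in degrees $0$ and $1$; equivalently $R^ki^!=0$ on $\CrysC(X,\Coeff)$ for $k\ge 2$. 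Since $i$ is finite, $i^!$ is the right derived functor of $R^0i^!=i^\flat$ by \autoref{RightDerivedOfFlat}, which is exactly the assertion of (a).

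For (b): fix $\UCV\in\CrysC(X,\Coeff)$ and apply \autoref{ExactTriangleThm} on $Y$, relative to $\widetilde j$ and its closed complement $\widetilde i$, to the complex $f^!\UCV\in\D^+_\crys(\QCrysC(Y,\Coeff))$ (legitimate by \autoref{DF-ShriekOnD-Crys}):
\[
 \widetilde i_*\widetilde i^!f^!\UCV\to f^!\UCV\to R\widetilde j_*\widetilde j^*f^!\UCV\to\widetilde i_*\widetilde i^!f^!\UCV[1]\,.
\]
The two ingredients are the base-change identifications $\widetilde j^*f^!\cong f_U^!j^*$ and $\widetilde i^!f^!\cong f_Z^!i^!$, which I obtain from pseudofunctoriality of $(\usc)^!$ applied to $f\circ\widetilde j=j\circ f_U$ and $f\circ\widetilde i=i\circ f_Z$, together with $j^!=j^*$ and $\widetilde j^!=\widetilde j^*$ (\autoref{jUpperShriekIsjUpperStar}; $j$ and $\widetilde j$ are open immersions). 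For the third term, $j^*\UCV\in\CrysC(U,\Coeff)$, so $f_U^!(j^*\UCV)$ has cohomology in $[a,b]$ by hypothesis; since $\widetilde j$ is affine (a base change of the affine $j$), $R\widetilde j_*=\widetilde j_*$ is exact and $R\widetilde j_*\widetilde j^*f^!\UCV\cong\widetilde j_*f_U^!(j^*\UCV)$ still has cohomology in $[a,b]$. For the first term, by (a) the complex $i^!\UCV$ has cohomology concentrated in degrees $0$ and $1$, and both cohomologies lie in $\CrysC(Z,\Coeff)$ because $i^!$ preserves coherence (\autoref{CSh-PropFShriek}(f)); applying the exact triangulated functor $f_Z^!$ to the truncation triangle $H^0(i^!\UCV)\to i^!\UCV\to H^1(i^!\UCV)[-1]\to H^0(i^!\UCV)[1]$ and invoking the hypothesis (which puts $f_Z^!$ of a crystal in $[a,b-1]$, hence $f_Z^!(H^1(i^!\UCV))[-1]$ in $[a+1,b]$) shows, via the long exact cohomology sequence, that $f_Z^!i^!\UCV$ has cohomology in $[a,b]$; applying the exact functor $\widetilde i_*$ keeps it there. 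Thus both outer terms of the triangle on $Y$ have cohomology in $[a,b]$, and its long exact cohomology sequence forces the same for $f^!\UCV$.

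Most of this is bookkeeping with long exact sequences and the exactness of the pushforwards $\widetilde j_*$ and $\widetilde i_*$. The point requiring genuine care — the main obstacle — is to make sure the identities $\widetilde j^*f^!\cong f_U^!j^*$, $\widetilde i^!f^!\cong f_Z^!i^!$ and $i^!=R\,i^\flat$ hold at the level of $\kappa$-crystals and not merely for the underlying quasi-coherent complexes; this rests on the comparison \autoref{CompDCI-EquivOfTwoCats} and on the compatibility of $(\usc)^!$ with composition after descent to crystals (\autoref{CSh-FShriekNilpotenceForEssFinite}). One should also stress that the hypothesis that $j$ is \emph{affine} is precisely what pins the cohomological amplitude of $i^!$ on a crystal down to $[0,1]$; without it one would only get a bound in terms of $\dim X$, which would defeat the intended induction.
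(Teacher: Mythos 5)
Your argument is correct and follows the paper's own proof essentially verbatim: part (a) by feeding $\UCV$ into the exact triangle of \autoref{ExactTriangleThm} and using that $j_*j^*$ is exact for affine $j$ while $i_*$ is exact and faithful, and part (b) by applying the same triangle on $Y$ to $f^!\UCV$ with the identifications $\wt j^*f^!\cong f_U^!j^*$ and $\wt i^!f^!\cong f_Z^!i^!$. The only difference is that you spell out, via the truncation triangle, why the amplitude of $\wt i_*f_Z^!i^!\UCV$ is $[a,b-1+1]=[a,b]$, a step the paper leaves implicit.
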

\begin{proof}
Since the map $j$ is affine, the functor $j_*$ and hence $j_*j^*$ is exact, as a composite of exact functors on  $\CrysC(X,\Coeff)$. In the exact triangle $i_*i^! \to \id \to j_*j^*$ of functors on the category $\D^+_\crys(\QCrysC(X,\Coeff))$, and for $\UCV$ in $\CrysC(X,\Coeff)$ it follows that both, $\id\UCV=\UCV$ and $j_*j^*\UCV$ are concentrated in degree zero. Hence $i_*i^!\UCV$ is concentrated in degrees $0$ and  $1$ only.  Since $i_*$ is exact and faithful part (a) follows.

For part (b) we use the same triangle applied to $f^!(\UCV)$
\[  (\wt i_*\wt i^!f^!(\UCV) \to f^!(\UCV) \to \wt j_*\wt j^*f^!(\UCV)) \cong (\wt i_*f_Z^!i^!(\UCV) \to f^!(\UCV) \to \wt j_* f_U^!j^*(\UCV) ) .\]
By the definition of fiber squares, for any affine $V$ in $Y$ the subscheme $U\times_X V\subset V$ is affine and hence $\wt j_*$ is affine. Thus $\wt j_*$ is exact. Hence by our assumption on $f_U^!$ the last entry of the the triangle has cohomology only in degrees $[a,b]$. By part (a) and the assumption on $f_Z^!$ the first entry of the triangle has cohomology only in degrees $[a, b-1+1]$. The long exact cohomology sequence shows now that $f^!(\UCV)$ has cohomology only in degrees $[a,b]$.
\end{proof}

\begin{proof}[Proof of \autoref{CohomDimOfShriek}]
Since we may factor $f$ as the composite of a finite type morphism and a localizing immersion and since for a localizing immersion the functor extra-ordinary inverse image is exact, we may assume that $f$ is of finite type. Note also that by \autoref{CorOnReducedSubscheme} we may assume that $X$, and if necessary $Y$, is reduced. We induct on the dimension of $X$ and for a fixed such dimension, on the dimension of~$Y$:

So let $f\colon Y\to X$ be a morphism of finite type with $X$ and $Y$ reduced. Let $j\colon U\into X$ be the embedding of an affine open regular subscheme $U$ of $X$ such that $U$ is equi-dimensional and dense in all components of $X$ of maximal dimension---the existence of $U$ is provided by generic regularity, cf.~\cite[Thm.~24.4]{Matsumura}. Let $i\colon Z\to X$ be the closed reduced complement of $U$. By \autoref{IndStepCohomDimShriek} and the induction hypothesis applied to $Z$, it suffices to prove the result if $X=U$ is regular and equi-dimensional. This is treated by induction on $\dim Y$:

We choose an \emph{affine} open embedding $\wt j\colon V\into Y$ such that $V$ is regular, equi-dimensional and dense in all components of $Y$ of maximal dimension and we denote by $\wt i\colon W\to Y$ the inclusion of the reduced closed complement. As above, we consider the triangle
\[ \wt i_*(f\wt i)^! \to f^! \to \wt j_* (f\wt j)^!  \]
obtained from \autoref{ExactTriangleThm}. Since the underlying morphisms are affine $\wt i_*$ and $\wt j_*$ are exact. Moreover the cohomological dimension of $f\wt i$ is controlled by induction hypothesis. Thus it remains to show that $(f\wt j)^!$ maps Cartier-crystals to complexes with cohomology concentrated in degrees $[-\dim Y,\dim X]$. But with notation as in the diagram in \autoref{IndStepCohomDimShriek} we have that $(f\wt j)^!=f_U^! \circ \wt{j}^*$, and since $f_U$ is a finite type morphism between equi-dimensional regular schemes, this follows from \autoref{ShriekForFTMorBetweenReg}.
\end{proof}

\begin{rem}
By considering the projection $\pi \colon \mathbb{A}^n_k \to \Spec k$ to the prefect ground field, we see that $\omega^\bullet_{\mathbb{A}^n_k} \colonequals \pi^!k$ is a complex concentrated in degree $-\dim \mathbb{A}^n_k = - n$. This shows that the left bound in our theorem is sharp. Similarly, considering the inclusion $i \colon \Spec k \into \mathbb{A}_k^n$ and the Cartier module $\omega_{\mathbb{A}^n_k} \colonequals \pi^!k[-n]$, we get that $i^!\omega_{\mathbb{A}_k^n}= k[-n]$, showing that the right bound is also sharp.
These two cases can be pieced together to get an example where there is cohomology precisely in the range allowed by \autoref{CohomDimOfShriek}.
\end{rem}

\subsection{Stalks of Cartier crystals and the crystalline support}
\label{CrysSupp}\label{sec.Stalks}%\g?{check references to the two sections}

In this section, we discuss a notion of stalks for $\kappa$-crystals. Their main use will be a criterion for nilpotence of $\kappa$-sheaves. Related to this, we also define a notion of crystalline support. In parts we are able to clarify the relation between the crystalline support and the non-vanishing of stalks.

For a (not necessarily closed) point $x \in X$ denote by $i_x \colon \Spec k_x = x \into X$ the natural inclusion. For any $\kappa$-crystal $\UCV$ on $X$ over $\Coeff$, we define $i_x^!\UCV$ as the stalk of $\UCV$ at $x$. In general this is a complex not necessarily concentrated in one degree. The reason for this name is that we shall show that a $\kappa$-sheaf $\UCV$ is locally nilpotent if and only if for all $x\in X$ the complex $i_x^!\UCV$ has locally nilpotent cohomology.

For $x \in X$ we observe that the inclusion $i_x \colon \Spec k_x = x \into X$ may be factored as
\[
x \into[i'_{x}] \Spec \CO_{X,x} \to[j_x] X
\]
where $i'_x$ is a closed immersion (and hence finite) and $j_x$ is a localization (hence in particular essentially \'etale). In \autoref{CSh-FShriekNilpotenceFiniteEssEtaleFinite} we observed that if $\UCV$ is (locally) nilpotent, then so is $i_x^!\UCV$ due to its factorization into an essentially \'etale map followed by a finite map. In \autoref{Csh-NilpoenceLocal} we show that $\UCV$ is locally nilpotent if and only if this is the case for all $j_x^!\UCV = j_x^*\UCV= \UCV_x$. To handle the case of the closed immersion $i'_x \colon x \into \Spec \CO_{X,x}$ we first show the following simple, but crucial, lemma.
\begin{lem}\label{CSh-LemmaNilStalksLocal}
Let $(R,\frm)$ be a local Noetherian ring, and denote by $i_x \colon \{\frm\} = x \to X = \Spec R$ the inclusion of the closed point. Assume that $\UCV=(\CV,\kappa)$ is a $\kappa$-sheaf that is supported at $x$.
\begin{enumerate}
\item  If $\UCV$ is coherent then it is nilpotent if and only if $R^0i_x^!\UCV$ is nilpotent.
\item  If $\UCV$ is arbitrary and if $\Fr_R$ is finite, then $\UCV$ is locally nilpotent if and only if $R^0i_x^!\UCV$ is locally nilpotent.
\end{enumerate}
\end{lem}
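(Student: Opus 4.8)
The plan is to reduce the statement to a computation with the $\frm$-torsion submodule and to use that ordinary powers $\frm^n$ and Frobenius powers $\frm^{[p^e]}$ generate the same topology. I would argue as follows.

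\textbf{Setup and reduction.} Since $\UCV$ is supported at the closed point $x$, every local section of $\CV$ is $\frm$-power torsion, \ie $\CV = \bigcup_n \CV[\frm^n]$ where $\CV[\frm^n] = \{v \in \CV \mid \frm^n v = 0\}$. By the explicit description of $i_x^! = (i_x')^!$ for the closed immersion $i_x'\colon x\into\Spec R$ (the finite-morphism case of \autoref{CSh-PropFShriek}), one has $i_x^!\UCV = (i_x')^{-1}R\CHom_{R\tensor\Coeff}(k_x\tensor\Coeff,\CV)$, and in particular $R^0i_x^!\UCV$ has underlying module $\CV[\frm]$ with its induced $\kappa$-structure. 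Thus for both (a) and (b) the task is to compare (local) nilpotence of $(\CV,\kappa)$ with that of $(\CV[\frm],\kappa|_{\CV[\frm]})$. One direction is immediate: $\CV[\frm]$ is a $\kappa$-subsheaf of $\CV$ (it is $\kappa$-stable because $\kappa$ is $\Fr^{-1}$-semilinear, so $\kappa(\frm^{[p]}\cdot) \subseteq \frm\kappa(\cdot)$, hence $\kappa$ maps $\CV[\frm^{[p]}]\supseteq\CV[\frm]$ into $\CV[\frm]$), so if $\UCV$ is (locally) nilpotent so is $R^0i_x^!\UCV$; this was already noted via \autoref{CSh-FShriekNilpotenceFiniteEssEtaleFinite}.

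\textbf{The essential direction.} Conversely, assume $R^0i_x^!\UCV = (\CV[\frm],\kappa)$ is (locally) nilpotent. The key observation, which I would isolate as a sublemma, is the containment
\[
    \kappa^e\bigl(\Frid^e_*\,\CV[\frm^{[p^e]}]\bigr) \subseteq \CV[\frm]
\]
for all $e\ge 0$, valid because $\kappa^e$ is $\Fr^{-e}$-semilinear and $\frm\cdot\frm^{[p^e]\text{-part}}$ argument: if $\frm^{[p^e]}v=0$ then for $a\in\frm$ we have $a^{p^e}v = 0$, so $a\kappa^e(v) = \kappa^e(a^{p^e}v) = 0$. Now I use that the ideals $\{\frm^n\}_n$ and $\{\frm^{[p^e]}\}_e$ are cofinal in one another (Noetherianness gives $\frm^{[p^e]}\subseteq\frm^{p^e}\subseteq\frm^{[p^e']}$ for suitable $e'$), so $\CV = \bigcup_n\CV[\frm^n] = \bigcup_e\CV[\frm^{[p^e]}]$.

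\textbf{Coherent case.} For (a), since $\CV$ is coherent and $R$ is Noetherian with $\CV$ $\frm$-power-torsion, $\CV = \CV[\frm^N]$ for some $N$, hence $\CV = \CV[\frm^{[p^E]}]$ for some $E$. If $\kappa^m$ kills $\CV[\frm]$, then applying $\kappa^m$ to the containment $\kappa^E(\Frid^E_*\CV) = \kappa^E(\Frid^E_*\CV[\frm^{[p^E]}]) \subseteq \CV[\frm]$ gives $\kappa^{m+E}(\CV) = 0$, so $\UCV$ is nilpotent. \textbf{General case.} For (b), by \autoref{CSh-CharLocNilCoh} it suffices to show every coherent $\CO_{X\tensor\Coeff}$-submodule $M\subseteq\CV$ is killed by a power of $\kappa$ after applying $\Frid^e_*$. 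Such $M$ is finitely generated and $\frm$-power-torsion, hence $M\subseteq\CV[\frm^{[p^E]}]$ for some $E$; then $\kappa^E(\Frid^E_*M)\subseteq\CV[\frm]$, which is locally nilpotent, and $\kappa^E(\Frid^E_*M)$ is a coherent submodule of $\CV[\frm]$ (coherent since $\Fr$ is finite, using this hypothesis), so by \autoref{CSh-CharLocNilCoh} applied to $\CV[\frm]$ there is $e'$ with $\kappa^{e'}(\Frid^{e'}_*\kappa^E(\Frid^E_*M)) = 0$, \ie $\kappa^{e'+E}(\Frid^{e'+E}_*M) = 0$. Hence $\UCV$ is locally nilpotent. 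The main obstacle is bookkeeping the interaction of the semilinear $\kappa$ with the two topologies on $R$ and making sure, in the non-coherent case, that the finiteness of $\Fr$ is invoked exactly where needed to keep the relevant submodules coherent; the topological cofinality is standard.
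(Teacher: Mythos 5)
Your proof is correct and follows essentially the same route as the paper: identify $R^0i_x^!\UCV$ with the $\frm$-torsion submodule $\CV[\frm]$, use the semilinearity computation $\kappa^e(r^{p^e}v)=r\kappa^e(v)$ to obtain $\kappa^e(\Frid^e_*\CV[\frm^{[p^e]}])\subseteq\CV[\frm]$, and combine this with the cofinality of ordinary and bracket powers of $\frm$. The only cosmetic difference is in part (b), where you apply \autoref{CSh-CharLocNilCoh} directly to the coherent submodule $\kappa^E(\Frid^E_*M)$ of $\CV[\frm]$, while the paper routes through ind-coherence via \autoref{Csh-IndCohCharacterization}; both are valid and invoke the finiteness of $\Fr$ at the same point.
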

\begin{proof}
If $\UCV$ is (locally) nilpotent, then, by functoriality, and since $i_x^!$ commutes with filtered direct limits, so is $R^0i_x^!\UCV$. This shows the direction from left to right for both claims.

Assume now that $\UCV$ is finitely generated. Since $x$ is the closed point and $i_x$ is a finite map, then $R^0i_x^!(\UCV)=\CV[\frm] \colonequals  \Hom_{R \tensor \Coeff} (R/\frm \tensor \Coeff,\CV)$ is just the $\frm$-torsion part of $\CV$ viewed as an $R/\frm \tensor \Coeff$-submodule of $\CV$. Note that the Cartier structure on  $\UCV[\frm]$ is just the one given on  $\UCV=(\CV,\kappa)$ restricted to this submodule. Since $\UCV$ is finitely generated and supported in $x$ we may find $e$ sufficiently big such that $\frm^{[p^e]}$ annihilates $\UCV$. Then we have $\kappa^e(\CV)=\kappa^e(\CV[\frm^{[p^e]}]) \subseteq \CV[\frm]$ since $0=\kappa^e(r^{p^e}v)=r\kappa^e(v)$ for all $r \in \frm \tensor \Coeff$ and $v \in \CV$. If now $R^0i_x^!\UCV$ is nilpotent, then $\kappa^s(\CV[\frm])=0$ for some $s \geq 0$. But this implies that $\kappa^{e+s}(\CV)\subseteq \kappa^s(\CV[\frm]) = 0$, and hence $\UCV$ itself is nilpotent. This shows part (a).

For part (b) we use the local nilpotence criterion of \autoref{CSh-CharNilLocNil}. Let $v \in \UCV$ be arbitrary and set $M= (R \tensor \Coeff)v \subseteq \UCV$. Since $M$ is supported at $x$ and finitely generated we argue exactly as above that we may find $e \geq 0$ such that $\kappa^e(\Frid^e_*M) \subseteq \UCV[\frm]=i_x^!\UCV$. The assumption that $\Fr$ is finite on $R$ implies that $\Frid^e_*M$ is finitely generated such that this is also true for $\kappa^e(\Frid^e_*M)$. Assuming now that $R^0i_x^!\UCV$ is locally nilpotent, it is in particular ind-coherent by \autoref{CSh-LNilIsSerre}. Hence by \autoref{Csh-IndCohCharacterization} $\kappa^e(\Frid^e_*M)$ is contained in a coherent, and hence nilpotent, $\kappa$-subsheaf of $R^0i_x^!\UCV$. As above we conclude that $\kappa^{e+e'}\Frid^{e+e'}_*M=0$ for some $e'$. By the local nilpotence criterion of \autoref{CSh-CharNilLocNil} this shows that $\CV$ is locally nilpotent.
\end{proof}
An alternative approach to prove the above lemma would be to use the Kashiwara equivalence of the preceding subsection. However, since one point of our notion of stalks is a point-wise nilpotence criterion, which will lead to a new proof that $f^!$ preserves local nilpotence, we have given a direct proof above.  We now turn to our point-wise criterion for nilpotence of a $\kappa$-sheaf.
\begin{theorem}
Let $\UCV$ be a $\kappa$-sheaf on $X$.
Assuming that either $\UCV$ is ind-coherent or $\Fr$ is finite on $X$, then $\UCV$ is locally nilpotent if and only if $R^0i_x^!\UCV$ is locally nilpotent for all $x\in X$.
\end{theorem}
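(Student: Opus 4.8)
The plan is to prove the two implications separately. For the ``only if'' direction (which is formal), fix $x\in X$ and factor $i_x$ as $x\into[i'_x]\Spec\CO_{X,x}\to[j_x]X$, where $i'_x$ is a closed immersion --- hence finite --- and $j_x$ is the canonical localization, hence essentially \'etale. If $\UCV$ is locally nilpotent, then $j_x^*\UCV=\UCV_x$ is locally nilpotent by \autoref{Csh-NilpoenceLocal}, and \autoref{CSh-FShriekNilpotenceFiniteEssEtaleFinite}, applied with $?=\lnil$ to the finite morphism $i'_x$, shows that $i_x^!\UCV\cong(i'_x)^!\UCV_x$ has locally nilpotent cohomology; in particular so does $R^0i_x^!\UCV$.

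For the ``if'' direction, I would first reduce to the case $X=\Spec R$ with $(R,\frm)$ local Noetherian and $x=\{\frm\}$. For a localization $j\colon\Spec\CO_{X,x}\to X$ the functor $R^0j^!=j^*$ is exact, so $R^0i_y^!(\UCV_x)\cong R^0i_y^!\UCV$ for every point $y$ of $\Spec\CO_{X,x}$ regarded as a point of $X$; since local nilpotence is checked on the stalks $\UCV_x$ by \autoref{Csh-NilpoenceLocal}, it suffices to prove each $\UCV_x$ is locally nilpotent, and $\Fr$ stays finite on $\Spec\CO_{X,x}$ when it is finite on $X$ (base change along the flat map $j_x$, \autoref{CSh-FrobEtale}). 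Now induct on $\dim R$. Let $\UCK=\bigcup_{n}\UCV[\frm^n]\subseteq\UCV$ be the $\frm$-power-torsion $\kappa$-subsheaf and put $\UCW=\UCV/\UCK$. Since $\UCK$ is supported at $x$ with $R^0i_x^!\UCK=\UCV[\frm]=R^0i_x^!\UCV$ locally nilpotent, \autoref{CSh-LemmaNilStalksLocal} shows $\UCK$ is locally nilpotent --- by part~(b) if $\Fr$ is finite on $R$, and otherwise (when $\UCV$, hence $\UCK$, is ind-coherent) by applying part~(a) to the coherent $\kappa$-subsheaves exhausting $\UCK$.

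It then remains to see that $\UCW$ is locally nilpotent. It has no nonzero $\frm$-torsion, hence none that is $\frm$-power-torsion, and $\UCW|_U=\UCV|_U$ on the punctured spectrum $U=\Spec R\setminus\{x\}$. For $y\in U$ one has $\dim\CO_{X,y}<\dim R$, and $R^0i_z^!(\UCV_y)\cong R^0i_z^!\UCV$ is locally nilpotent for all $z\in\Spec\CO_{X,y}$, so the induction hypothesis makes $\UCV_y$ locally nilpotent; applying \autoref{Csh-NilpoenceLocal} on the Noetherian scheme $U$ shows $\UCW|_U$ is locally nilpotent. Now for any coherent $\CO_{X\times\CScheme}$-submodule $M\subseteq\UCW$, \autoref{CSh-CharLocNilCoh} on $U$ provides an $e$ with $\kappa^e(\Frid^e_*M)|_U=0$, so the coherent subsheaf $\kappa^e(\Frid^e_*M)\subseteq\UCW$ is supported at $x$, hence $\frm$-power-torsion, hence zero; by \autoref{CSh-CharLocNilCoh} again $\UCW$ is locally nilpotent, and then $\UCV$ is locally nilpotent as an extension of $\UCW$ by $\UCK$, using that $\LNilC$ is a Serre subcategory (\autoref{CSh-LNilIsSerre}). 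The base case $\dim R=0$ is covered, since then $U=\emptyset$, $\UCW=0$ and $\UCV=\UCK$.

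I expect the induction step for $\UCW$ to be the main obstacle: \autoref{CSh-LemmaNilStalksLocal} applies only to sheaves supported at the point, which forces one to split off the $\frm$-power-torsion part and control the torsion-free quotient over the complement of $x$ by induction on dimension. The technical point requiring care is the compatibility $R^0i^!\circ j^*\cong R^0i^!$ for localizations $j$ (valid since $j^!=j^*$ is exact), which is what transports the hypothesis to the local rings so that the inductive hypothesis applies; one must also keep track of which branch of the hypothesis --- ind-coherent or $\Fr$ finite --- is in force, since \autoref{CSh-LemmaNilStalksLocal} handles the two cases by different arguments.
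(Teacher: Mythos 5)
Your proof is correct, but it takes a genuinely different route from the paper's. Both arguments ultimately rest on the same two ingredients --- the local statement \autoref{CSh-LemmaNilStalksLocal} for sheaves supported at the closed point of a local ring, and the coherent-submodule criterion \autoref{CSh-CharLocNilCoh} --- but they organize the reduction to that local statement differently. The paper fixes a coherent submodule $M\subseteq\CV$, sets $M_i=\kappa^i(\Frid^i_*M)$, observes that $\Ann M_i\subseteq\Ann M_{i+1}$ so that the supports $\Supp M_i$ form a descending chain stabilizing to some closed $Z$, reduces to the case $\Supp M_i=\Supp\CV=Z$ for all $i$, and then localizes at the image in $X$ of the generic point of a maximal-dimensional component of $Z$: there \autoref{CSh-LemmaNilStalksLocal} forces $(M_i)_x=0$, which contradicts $\Supp M_i=Z$ unless $M_i=0$. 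You instead localize first (via \autoref{Csh-NilpoenceLocal}), induct on the Krull dimension of the local ring, and perform a torsion devissage $0\to\UCK\to\UCV\to\UCW\to0$ with $\UCK$ the $\frm$-power-torsion part: the lemma handles $\UCK$, the inductive hypothesis handles $\UCW$ over the punctured spectrum, and the absence of $\frm$-power torsion in $\UCW$ glues the two. The paper's support-stabilization trick is shorter and sidesteps both the induction and the torsion decomposition; your devissage is more systematic, mirrors at the abelian level the triangle $i_*i^!\to\id\to Rj_*j^*$ of \autoref{ExactTriangleThm}, and makes explicit exactly where each of the two standing hypotheses (ind-coherence versus finiteness of $\Fr$) is consumed. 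All the delicate points you flag --- the compatibility $R^0i_z^!\circ j^*\cong R^0i_z^!$ for localizations, the descent of the hypotheses to the local rings, and the two separate appeals to parts (a) and (b) of \autoref{CSh-LemmaNilStalksLocal} --- do check out.
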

\begin{proof}
We show the result under either hypothesis at once. The `only if' direction has been shown before, so we concentrate on the converse and assume that $R^0i_x^!\UCV$ is locally nilpotent for all $x \in X$. Let $M=M_0$ be a coherent $\CO_{X \times C}$-subsheaf of $\CV$ and consider $M_i \colonequals  \kappa^i(\Frid^i_*M) = \kappa(\Frid_*M_{i-1})$ for $i \geq 0$. If $\UCV$ is ind-coherent we may and do assume that $M$ is a coherent $\kappa$-subsheaf. By \autoref{CSh-CharLocNilCoh}, we have to show that $M_i=0$ for $i \gg 0$. We first show that all $M_i$ are coherent $\CO_{X \times C}$-subsheaves of $\CV$: In the ind-coherent case this is clear since $M$ is a coherent $\kappa$-sheaf and $M_i \subseteq M$ for all $i$. If $\Fr$ is finite on $X$ then $M_i$ is coherent because it is the image under $\kappa$ of the coherent subsheaf $\Frid^i_*M\subset \UCV$.

Since, if $r M_i=0$ then $r M_{i+1}=r\kappa(\Frid_*M_i)=\kappa(r\Frid_*M_i)=\kappa(\Frid_*r^{q}M_i)=0$ we have $\Ann M_i \subseteq \Ann M_{i+1}$. This implies that $Z_i \defeq \Supp M_i \supseteq \Supp M_{i+1}$. Thus the $Z_i$ form a descending sequence of closed subsets of $X \times \CScheme$ which stabilizes to $Z$. Replacing $M$ by $M_i$ for $i \gg 0$ we may assume $\Supp M_i=Z$ for all $i$. Replacing $\UCV$ by the $\kappa$-sheaf generated by $M$, \ie by $\sum_i M_i$ we may also assume that $\Supp \CV = Z$ as well. Let $\eta$ be the generic point of a maximal dimensional component of $Z$ and let $x=\pr_1 \eta$ be the projection of $\eta$ to $X$. Localizing at $x$ we are now in the situation of \autoref{CSh-LemmaNilStalksLocal}, so that $\UCV_x$ is locally nilpotent. Using the local nilpotence criterion of \autoref{CSh-CharLocNilCoh} it follows that $(M_i)_x=0$ for $i \gg 0$. But by the choice of $x$ this implies that $M_i$ itself is zero for $i \gg 0$, which finishes our proof.
\end{proof}
The above yields the following criterion for local nilpotence:
\begin{cor}\label{CSh-LocalNilForIndonStalks}
For a $\kappa$-sheaf $\UCV \in \IndCohC(X,\Coeff)$ (or $\UCV \in \QCohC(X,\Coeff)$ if $\Fr$ is finite on $X$) the following conditions are equivalent.
\begin{enumerate}
\item $\UCV$ is locally nilpotent.
\item $R^0i^!_x\UCV$ is locally nilpotent for all $x \in X$.
\item $i^!_x\UCV$ is cohomologically nilpotent for all $x \in X$ (\ie $R^a i_x^!\UCV$ is locally nilpotent for all $a$ and $x\in X$).
\end{enumerate}
\end{cor}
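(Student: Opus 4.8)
The plan is to prove the cyclic chain of implications (a) $\Rightarrow$ (c) $\Rightarrow$ (b) $\Rightarrow$ (a), of which only one link requires any real work. The implication (b) $\Rightarrow$ (a) is, together with its (trivial) converse, exactly the content of the theorem proved immediately above, so I would simply cite that. The implication (c) $\Rightarrow$ (b) is immediate: $R^0i_x^!\UCV$ is one of the cohomology sheaves $R^ai_x^!\UCV$ occurring in the definition of cohomological nilpotence in (c), so (b) is literally the special case $a=0$ of (c).

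Thus the substance of the argument is the implication (a) $\Rightarrow$ (c). First I would record that the inclusion $i_x\colon\Spec k_x\to X$ of any (not necessarily closed) point is essentially of finite type, since it factors as the localization $j_x\colon\Spec\CO_{X,x}\to X$ followed by the closed immersion $i'_x\colon\Spec k_x\into\Spec\CO_{X,x}$. Next, a locally nilpotent $\kappa$-sheaf, viewed as a complex concentrated in degree zero, lies in $\D^+_\lnil(\QCohC(X,\Coeff))$, since its only nonzero cohomology is itself. I would then apply \autoref{CSh-FShriekNilpotenceForEssFinite} with $?=\lnil$ to conclude that $i_x^!\UCV$ lies in $\D^+_\lnil(\QCohC(\Spec k_x,\Coeff))$; alternatively, one can break $i_x^!$ into $(i'_x)^!$ and $j_x^!=j_x^*$ and invoke \autoref{CSh-FShriekNilpotenceFiniteEssEtaleFinite} twice. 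Either way every $R^ai_x^!\UCV$ is locally nilpotent, which is precisely assertion (c).

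I do not expect a genuine obstacle here: all the work sits in the preceding theorem and in the nilpotence-preservation statements for $f^!$, and the corollary only has to assemble them. The one point worth making explicit is that the standing hypothesis ``$\UCV\in\IndCohC(X,\Coeff)$, or $\Fr$ is finite on $X$'' enters only through the implication (b) $\Rightarrow$ (a) (inherited from the preceding theorem), whereas the implications (a) $\Rightarrow$ (c) $\Rightarrow$ (b) hold for an arbitrary $\kappa$-sheaf.
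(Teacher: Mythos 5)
Your proposal is correct and assembles the corollary exactly as the paper intends: the paper gives no separate proof, since (a)$\Leftrightarrow$(b) is the theorem immediately above, (c)$\Rightarrow$(b) is the case $a=0$, and (a)$\Rightarrow$(c) is the already-recorded fact (via \autoref{CSh-FShriekNilpotenceFiniteEssEtaleFinite} applied to the factorization $i_x=j_x\circ i'_x$) that $i_x^!$ preserves cohomological nilpotence. Your closing remark about where the hypothesis ``ind-coherent or $\Fr$ finite'' enters is also accurate.
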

\begin{rem}\label{RemOnNonclosednessOfSupport2}
If $X$ is of finite type over a field, we shall see in \autoref{CrysSupp.NonzeroFibersAreDense} that the above conditions are equivalent to

\begin{enumerate}
\item[\itshape (d)] $i^!_x\UCV$ is cohomologically nilpotent for all \emph{closed} points $x \in X$.
\end{enumerate}
\end{rem}
\begin{ex}\label{Ex:NonclosednessOfSupport}
For arbitrary $X$, condition (d) is not equivalent to the others: Let $\Coeff=\BF_q$ and let $X=\Spec V$ be the spectrum of a discrete valuation ring $V$ with uniformizer $\pi$ and (for simplicity) perfect residue field $k$. Let $i \colon s \to X$ be its closed point and $j \colon \eta \to X$ be its generic point.  Let $\Uomega_X\cong V\mathrm{d}\pi$ be the dualizing sheaf of $X$ with $\kappa$-structure obtained from an isomorphism $\omega_X \to[\cong] \Fr^! \omega_X$. Setting $\UCV = j_*j^*\omega_X$ we get from the triangle
\[
    i_*i^! \omega_X \to \omega_X \to j_*j^*\omega_X = \UCV
\]
by applying $i^!$ (and using that $i^!i_* = \id$) that $i^!M=0$. However, $\UCV$ is cleraly nonzero.
\end{ex}

\begin{cor}\label{CSh-CohomNilonStalks}
For $\UCV^\bullet \in \D^+_{\ind}(\QCohC(X,\Coeff))$ (or $\UCV^\bullet \in \D^+(\QCohC(X,\Coeff))$ if $\Fr$ is finite on $X$) the following conditions are equivalent:
\begin{enumerate}
\item $\UCV^\bullet$ is cohomologically nilpotent.
\item For all $x \in X$ the complexes $i^!_x\UCV^\bullet$ are cohomologically nilpotent.
\end{enumerate}
\end{cor}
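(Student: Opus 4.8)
The key input is the single-sheaf criterion \autoref{CSh-LocalNilForIndonStalks}; the plan is to reduce the statement for the complex $\UCV^\bullet$ to that case by a dévissage along the canonical truncation filtration. Throughout, both the ind-coherent hypothesis and the alternative $\Fr$-finite hypothesis ensure that $i_x^!$ preserves the relevant subcategories (by \autoref{CSh-FShriekNilpotenceForEssFinite}, since each $i_x\colon\Spec k_x\into X$ is essentially of finite type) and that \autoref{CSh-LocalNilForIndonStalks} is applicable to the cohomology sheaves that arise.

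The implication (a)$\Rightarrow$(b) is immediate: $i_x$ factors as a localization followed by a closed immersion and is in particular essentially of finite type, so by \autoref{CSh-FShriekNilpotenceForEssFinite} applied with $?=\lnil$ the functor $i_x^!$ carries $\D^+_\lnil(\QCohC(X,\Coeff))$ into $\D^+_\lnil(\QCohC(k_x,\Coeff))$; hence cohomological nilpotence of $\UCV^\bullet$ forces that of each $i_x^!\UCV^\bullet$.

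For (b)$\Rightarrow$(a) I would proceed by climbing up the complex. If $\UCV^\bullet$ is acyclic there is nothing to prove; otherwise let $q_0$ be the least degree with $H^{q_0}\UCV^\bullet\neq0$, which exists since $\UCV^\bullet\in\D^+$. Consider the canonical truncation triangle
\[
    H^{q_0}\UCV^\bullet[-q_0]\longto\UCV^\bullet\longto\tau_{\ge q_0+1}\UCV^\bullet\longto H^{q_0}\UCV^\bullet[-q_0+1]
\]
and apply the exact functor $i_x^!$. Writing $i_x=i'_x\circ j_x$ with $j_x$ a localization (so $j_x^*$ exact) and $i'_x$ finite (so $(i'_x)^!\cong R(i'_x)^\flat$ is the right-derived functor of \autoref{RightDerivedOfFlat}), one sees that $i_x^!$ does not lower cohomological degree, so $i_x^!(\tau_{\ge q_0+1}\UCV^\bullet)$ has cohomology concentrated in degrees $\ge q_0+1$. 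The long exact cohomology sequence of the $i_x^!$-image of the triangle then gives an injection $R^0i_x^!(H^{q_0}\UCV^\bullet)\into H^{q_0}(i_x^!\UCV^\bullet)$. By hypothesis (b) the target is locally nilpotent, and $\LNilC$ is a Serre subcategory by \autoref{CSh-LNilIsSerre}, so $R^0i_x^!(H^{q_0}\UCV^\bullet)$ is locally nilpotent for every $x\in X$; by \autoref{CSh-LocalNilForIndonStalks} it follows that $H^{q_0}\UCV^\bullet$ is locally nilpotent.

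To continue, I would note that $H^{q_0}\UCV^\bullet$ being locally nilpotent makes $i_x^!(H^{q_0}\UCV^\bullet)$ cohomologically nilpotent for all $x$, again by \autoref{CSh-FShriekNilpotenceForEssFinite}. Feeding this back into the long exact sequence of the $i_x^!$-image of the truncation triangle, and using once more that $\LNilC$ is Serre, one obtains that $i_x^!(\tau_{\ge q_0+1}\UCV^\bullet)$ is cohomologically nilpotent for all $x$. Thus the pair $(\tau_{\ge q_0+1}\UCV^\bullet,\ \text{condition (b)})$ satisfies the hypotheses of the previous paragraph with $q_0$ replaced by the next nonzero cohomological degree, so the argument applies again and yields local nilpotence of $H^{q_0+1}\UCV^\bullet$. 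Iterating, every $H^{q}\UCV^\bullet$ is shown to be locally nilpotent after finitely many steps (only finitely many truncations lie below any fixed degree, so no transfinite induction is needed), and hence $\UCV^\bullet$ is cohomologically nilpotent. The only point requiring care is the left-$t$-exactness of $i_x^!$ used above; it is what makes the bottom row of the truncation triangle isolate $H^{q_0}$, and it must be read off from the factorization $i_x=i'_x\circ j_x$ together with the explicit description of $(i'_x)^!$ as a right-derived functor. The rest is a formal dévissage resting on \autoref{CSh-LocalNilForIndonStalks} and the Serre property of $\LNilC$.
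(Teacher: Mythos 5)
Your proof is correct and follows essentially the same route as the paper: an inductive d\'evissage via truncation that reduces, degree by degree, to the single-sheaf stalk criterion of \autoref{CSh-LocalNilForIndonStalks}, using that $i_x^!$ preserves cohomological nilpotence for the converse direction. The only cosmetic difference is that you peel off the lowest nonvanishing cohomology with the canonical triangle and use left-$t$-exactness of $i_x^!$ to get the injection $R^0i_x^!(H^{q_0}\UCV^\bullet)\into H^{q_0}(i_x^!\UCV^\bullet)$, whereas the paper truncates from above and reads the same identification off the lowest edge term of the spectral sequence of \autoref{RightDerivedOfFlat}.
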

\begin{proof}
The implication  (a) $\Rightarrow$ (b) is a particular case of \autoref{CSh-FShriekNilpotenceFiniteEssEtaleFinite}.

The proof  (b) $\Rightarrow$ (a) is a standard inductive devissage argument based on \autoref{CSh-LocalNilForIndonStalks}: Assume that $H^l(\UCV^\bullet)$ is locally nilpotent for all $l < l_0$. Let the truncation $\tau_{\leq l_0}\UCV^\bullet$ be the sub-complex of $\UCV^\bullet$ defined by
\[
    (\tau_{\leq l_0}\UCV^\bullet)^l = \begin{cases} \UCV^l & \text{for } l < l_0 \\ \image(\UCV^{l-1}\to \UCV^{l}) & \text{for } l = l_0, \text{ and} \\ 0 & \text{for } l > l_0. \end{cases}
\]
Note that by our choice of $l_0$ the complex $\tau_{\leq l_0}\UCV^\bullet$ is cohomologically nilpotent. If $\UCH^\bullet$ denotes the degree-wise quotient of $\UCV$ by $\tau_{\leq l_0}\UCV^\bullet$ one has the exact triangle
\[
    \tau_{\leq l_0}\UCV^\bullet \to \UCV^\bullet \to \UCH^\bullet \to[+1]\, .
\]
Applying $i_x^!$ to this triangle and the direction (a) $\Rightarrow$ (b) for $\tau_{\leq l_0}\UCV^\bullet$ we see that $i_x^!\UCH^\bullet$ is cohomologically nilpotent for all $x \in X$. We will be done by induction once we show that $H^{l_0}(\UCH^\bullet) \cong H^{l_0}(\UCV^\bullet)$ is locally nilpotent. By construction,  $\UCH^l=0$ for all $l < l_0$. The lowest term of the spectral sequence $R^ii_*^!H^j(\usc) \Rightarrow R^{i+j}i^!_x(\usc)$  derived from \autoref{RightDerivedOfFlat} applied to $\UCH^\bullet$ asserts that $R^0i_x^!H^{l_0}(\UCH^\bullet) \cong H^{l_0}(i^!_x\UCH^\bullet)$. Since for all $x \in X$ the right term is locally nilpotent by assumption, \autoref{CSh-LocalNilForIndonStalks} implies that $H^{l_0}(\UCH^\bullet)$ is locally nilpotent as desired.
\end{proof}
\autoref{CSh-CohomNilonStalks}, which was shown without using any of the results in \autoref{CSh-sec-InverseImage}, yields a second proof of \autoref{CSh-FShriekNilpotenceForEssFinite} that $f^!$ preserves cohomological nilpotence because it allows one to reduce the statement to the case where $f$ is an essentially finite type map between the spectra of fields. Namely if $f\colon Y \to X$ is essentially of finite type and $\UCV^\bullet \in D^+_\lncoh(\QCohC(X,\Coeff)$ is cohomologically nilpotent, let $y \in Y$ be arbitrary and  $x = f(y)$ such that one has the commutative diagram
\[\xymatrix{
    y \ar[r]^{i_y}\ar[d]_{f_y} & Y \ar[d]^f \\ x \ar[r]^{i_x} &X
    }
\]
By \autoref{CSh-CohomNilonStalks} it suffices to show that $i^!_y f^! \UCV^\bullet = f_y^!( i^!_x \UCV^\bullet)$ is cohomologically nilpotent. Since $i^!_x\UCV^\bullet$ is cohomologically nilpotent we are reduced to the case of a map of a field $K$ into a finitely generated extension field $L$. In this case $f^!$ can be described explicitly, is exact and hence clearly preserves cohomological nilpotence:
\begin{ex}
If $f \colon \Spec L \to \Spec K$ is induced by the inclusion $K \subseteq L$ of finitely generated field extensions, then $f^!$ can be described explicitly as follows:
\begin{enumerate}
\item If $L/K$ is purely transcendental of transcendence degree $n$, then $f^!\underline{\phantom{n}} \!= \!L[n] \tensor_K \underline{\phantom{n}}$.
\item If $L/K$ is finite algebraic, then $f^!\underline{\phantom{n}} = \Hom_K (L,\underline{\phantom{n}})$.
\item If  $L/K$ is finite separable, the trace map $\tr_{L/K} \colon L \to K$ induces an isomorphism $L \tensor_K \underline{\phantom{n}}  \to[l \tensor v \mapsto \tr_{L/K}(l)v] \Hom_K(L,\underline{\phantom{n}})$ which identifies $f^!\usc \cong f^*\underline{\phantom{n}} = L \tensor_K \underline{\phantom{n}}$
\end{enumerate}
In each of these cases $f^!$ is an exact functor and hence preserves nilpotence.
\end{ex}

\begin{defn}\label{CrysSupp.Def1}
Let $\UCV$ be a coherent $\kappa$-sheaf on $X$ over $\Coeff$. The \emph{crystalline support} of $\UCV$ is defined as
\[ \SuppCrys \UCV := \pr_1 \Big(  \bigcap_{e\ge 0} \Supp \kappa^e(\CV) \Big) .\]
\end{defn}

\begin{rem}\label{CrysSupp.BecomesStationary}
For $\UCV\in\CohC(X,\Coeff )$, the sets $ \Supp \kappa^e(\CV) $ form a descending sequence of closed subsets of the noetherian scheme $X\times\CScheme$. Therefore the sequence becomes stationary, and in particular $\SuppCrys \UCV := \pr_1 \big(  \Supp \kappa^e(\CV) \big)$ for all $e\gg0$. 
\end{rem}

\begin{lem}\label{CrysSupp.LemOnSuppF_*}
For an affine morphism $f\colon Y\to X$ which is a bijection on points and a coherent sheaf $\CF$ on $Y$ one has \[\Supp(f_*\CF)=f(\Supp\CF).\]
\end{lem}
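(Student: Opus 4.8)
The plan is to reduce to the affine case and then verify the equality of the two closed sets on the level of rings. The statement is local on $X$, so I would first choose an affine open $\Spec R \subseteq X$; since $f$ is affine, its preimage is $\Spec S$ for some $R$-algebra $S$, and $\CF$ corresponds to a finitely generated $S$-module $M$. The claim then becomes $\Supp_R(M) = f(\Supp_S M)$ as subsets of $\Spec R$, where $f\colon \Spec S \to \Spec R$ is induced by a ring map $R \to S$ that is a bijection on prime spectra.

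First I would recall the standard facts: for a finitely generated module $M$ over a Noetherian ring, $\Supp M = V(\Ann M)$, and for the structural map $\varphi\colon R \to S$ one has $f^{-1}(V(\Ann_S M)) = V(\varphi^{-1}(\Ann_S M))$. Also $M$ is finitely generated as an $S$-module, and $\varphi^{-1}(\Ann_S M)$ contains $\Ann_R M$ (viewing $M$ as an $R$-module via $\varphi$); conversely any $r \in R$ killing $M$ lands in $\Ann_S M$, so in fact $\varphi^{-1}(\Ann_S M) = \Ann_R M$, giving $f(\Supp_S M) \subseteq \Supp_R M$ directly and with equality of the relevant ideals once we know the two supports have the same image. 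So the content is the reverse inclusion $\Supp_R M \subseteq f(\Supp_S M)$, i.e. that every prime $\Fp \in V(\Ann_R M)$ is the contraction of a prime $\Fq \in V(\Ann_S M)$.

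The key step is the following: since $f$ is a bijection on points, $f$ is in particular surjective on $\Spec$, so every $\Fp \in \Spec R$ is $\varphi^{-1}(\Fq)$ for a unique $\Fq \in \Spec S$; I must show that if $\Fp \supseteq \Ann_R M$ then this unique $\Fq$ satisfies $\Fq \supseteq \Ann_S M$. Equivalently, localizing at $\Fp$: the module $M_\Fp = M \otimes_R R_\Fp$ is nonzero (as $\Fp \in \Supp_R M$), and $S \otimes_R R_\Fp$ is a ring whose spectrum maps bijectively onto $\Spec R_\Fp$; in particular it is local with maximal ideal the extension $\Fq S_\Fp$ of $\Fq$. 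Then $M_\Fp$ is a finitely generated nonzero module over this local ring $(S_\Fp, \Fq S_\Fp)$, hence by Nakayama $M_\Fp / \Fq M_\Fp \neq 0$, so $\Fq \in \Supp_S M$. This gives $\Fp = f(\Fq) \in f(\Supp_S M)$, completing the reverse inclusion.

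I expect the main (minor) obstacle to be bookkeeping around the bijectivity hypothesis: one needs that $f$ being a bijection on points of the affine pieces implies that for each prime $\Fp$ of $R$ the fiber ring $S \otimes_R \kappa(\Fp)$ has exactly one prime, hence that $S_\Fp$ is local with maximal ideal above $\Fp$, which is exactly what powers the Nakayama argument. Everything else is routine commutative algebra (support equals vanishing of the annihilator, behaviour of annihilators under a ring map that is injective on the relevant part, compatibility of $\Supp$ with localization). No properness or finiteness of $\Fr$ is needed here; only that $f$ is affine and a bijection on points and that $\CF$ is coherent.
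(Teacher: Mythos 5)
Your argument is essentially the paper's: both reduce to the affine case and rest on the single key observation that, because $f$ is a bijection on points, $S\otimes_R R_\Fp$ is local and therefore coincides with $S_\Fq$ for the unique prime $\Fq$ lying over $\Fp$. The paper then concludes immediately from the resulting identification $M_\Fp=M_\Fq$ (no Nakayama, and no finite generation of $M$ is needed at that point), whereas you take a detour through annihilators and Nakayama; that detour is harmless, except that your derivation of the easy inclusion via $\Supp_R M=V(\Ann_R M)$ tacitly assumes $M$ is finitely generated over $R$, which need not hold --- that inclusion is better obtained by noting that $M_\Fq$ is a further localization of $M_{\varphi^{-1}(\Fq)}$.
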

\begin{proof}
Since $f$ is affine and the assertion is local, we may assume that $Y=\Spec S$ and $X=\Spec R$ are affine and that $f$ is given by a ring homomorphism $f^\#:R\to S$. Because $f\colon \Spec S\to\Spec R$ is a bijection on points, the kernel of $f^\#$ is nilpotent. Therefore it suffices to consider the case where $S$ is an extension ring of $R$. Again by the bijectivity of $f$, for any prime ideal $\Fp$ of $R$ the ring $S\otimes_RR_\Fp$ is local and thus equal to $S_\FP$ for $\FP=f^{-1}(\Fp)$. It follows that for any finite type $S$-module $M$ we have $M_\Fp=M_\FP$, which proves the lemma.
\end{proof}

\begin{lem}\label{CrysSupp.FridInvariance}
For $\UCV\in\CohC(X,\Coeff)$ and $T:=\Supp(\kappa^e(\CV))$ with $e\gg0$, one has $T=\Frid T$.
\end{lem}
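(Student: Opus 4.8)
The plan is to prove the two inclusions $T\subseteq\Frid T$ and $\Frid T\subseteq T$ separately, deducing the second from the first by a noetherian descending‑chain argument. Throughout I will use that $\Frid=\Fr\times\id$ is finite (because $\Fr$ is) and a bijection on points (because $\Fr$ is radicial and surjective), hence a homeomorphism of $X\times\CScheme$; in particular $\Frid^{-1}$ is again a self‑homeomorphism and coincides with the topological preimage.

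First I would establish $T\subseteq\Frid T$. Fix $e\gg0$ large enough that $\Supp\kappa^e(\CV)=\Supp\kappa^{e+1}(\CV)=T$, which is legitimate by \autoref{CrysSupp.BecomesStationary}. From the recursion $\kappa^{e+1}=\kappa\circ\Frid_*\kappa^e$ and the exactness of $\Frid_*$, the $\kappa$-subsheaf $\kappa^{e+1}(\CV)$ is precisely the image of the restriction $\kappa\colon\Frid_*\big(\kappa^e(\CV)\big)\to\CV$, hence a quotient of the coherent sheaf $\Frid_*\big(\kappa^e(\CV)\big)$. Therefore $T=\Supp\kappa^{e+1}(\CV)\subseteq\Supp\big(\Frid_*\kappa^e(\CV)\big)$, and since $\Frid$ is affine and a bijection on points, \autoref{CrysSupp.LemOnSuppF_*} identifies the right-hand side with $\Frid\big(\Supp\kappa^e(\CV)\big)=\Frid T$. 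This yields $T\subseteq\Frid T$.

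To upgrade this to an equality one cannot simply iterate $\Frid$ (the resulting ascending chain of closed subsets need not stabilize on a noetherian space); instead I would iterate $\Frid^{-1}$. From $T\subseteq\Frid T$ we get $\Frid^{-1}(T)\subseteq T$, and inductively $T\supseteq\Frid^{-1}(T)\supseteq\Frid^{-2}(T)\supseteq\cdots$ is a descending chain of closed subsets of the noetherian topological space $X\times\CScheme$. It therefore stabilizes, say $\Frid^{-n}(T)=\Frid^{-n-1}(T)$, and applying the bijection $\Frid^{n}$ gives $T=\Frid^{-1}(T)$, i.e.\ $\Frid T=T$, as desired. I do not expect a genuine obstacle: the only points requiring a little care are the identification of $\kappa^{e+1}(\CV)$ as a quotient of $\Frid_*\kappa^e(\CV)$ and the invocation of \autoref{CrysSupp.LemOnSuppF_*}, both routine, together with the (easy but essential) observation that the stabilization argument must be run on $\Frid^{-1}$ rather than on $\Frid$.
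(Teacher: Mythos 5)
Your argument is correct and is essentially identical to the paper's proof: both first deduce $T\subseteq\Frid T$ from $\Supp\kappa^{e+1}\CV\subseteq\Supp\Frid_*(\kappa^e\CV)=\Frid(\Supp\kappa^e\CV)$ via \autoref{CrysSupp.LemOnSuppF_*}, and then conclude equality from the stabilization of the descending chain $\Frid^{-n}(T)$ in the noetherian space $X\times\CScheme$. Your explicit remark that the stabilization must be run on $\Frid^{-1}$ rather than $\Frid$ is exactly the point the paper's proof also relies on.
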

\begin{proof}
Suppose that $\Supp(\kappa^e(\CV))$ becomes stationary for $e\ge e_0$. Then for any $e\ge e_0$ we have
\begin{eqnarray*}
 \Supp\kappa^e\CV&=&\Supp\kappa^{e+1}\CV \ \ = \ \ \Supp\big(\kappa(\Frid_*\kappa^e\CV)\big)\\&\subset &\Supp \Frid_*(\kappa^e\CV)\ \ =\ \ \Frid(\Supp\kappa^e\CV),
\end{eqnarray*}
where in the equality is by \autoref{CrysSupp.LemOnSuppF_*} using that $\Frid$ is a topological isomorphism. This shows that $T\subset \Frid T$. But then $\Frid^{-n}(T)$ is a descending sequence of closed subsets of $X\times\CScheme$, and so it becomes stationary. Using again that $\Frid$ is a topological isomorphism, we must have $\Frid T=T$.
\end{proof}
By \cite[Lem.~4.1.6]{BoPi.CohomCrys}, any closed subset $T$ of $X\times\CScheme$ such that $T=\Frid T$ is a finite union $T=\bigcup_i Y_i\times D_i$ for closed subsets $Y_i\subset X$ and $D_i\subset \CScheme$. We deduce:

\begin{cor}\label{CrysSupp.IsClosed1}
The crystalline support  of any $\UCV\in \CohC(X,\Coeff)$ is closed in~$X$.
\end{cor}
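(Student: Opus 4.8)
The plan is to deduce the statement directly from the three results that immediately precede it. First I would record what needs to be shown: by \autoref{CrysSupp.BecomesStationary} the descending chain of closed subsets $\Supp\kappa^e(\CV)$ of the Noetherian scheme $X\times\CScheme$ stabilizes, so that $\bigcap_{e\ge0}\Supp\kappa^e(\CV)$ equals a single closed subset $T:=\Supp\kappa^e(\CV)$ for all $e\gg0$, and by definition $\SuppCrys\UCV=\pr_1(T)$. Thus the task reduces to showing that $\pr_1(T)$ is closed in $X$.

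The key input is the product structure of $T$. By \autoref{CrysSupp.FridInvariance} the closed set $T$ is $\Frid$-invariant, i.e.\ $\Frid T=T$, so \cite[Lem.~4.1.6]{BoPi.CohomCrys} applies and furnishes a decomposition $T=\bigcup_{i=1}^r Y_i\times D_i$ with the $Y_i\subset X$ and $D_i\subset\CScheme$ closed. Discarding the indices $i$ with $D_i=\emptyset$ (which contribute nothing to $T$), every remaining $D_i$ is nonempty, and then $\pr_1(T)=\bigcup_{i=1}^r Y_i$ is a finite union of closed subsets of $X$, hence closed. This yields the corollary.

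The one point where care is needed is that the projection $\pr_1\colon X\times\CScheme\to X$ is \emph{not} closed in general, since $\CScheme$ is typically not proper over $\BF_q$; so the argument genuinely depends on replacing ``$\pr_1$ of an arbitrary closed set'' by ``$\pr_1$ of a finite union of rectangles'', which is exactly what Frobenius-invariance together with \cite[Lem.~4.1.6]{BoPi.CohomCrys} provides. Once that structural decomposition is in hand, everything else is formal, so there is no real obstacle beyond assembling the cited ingredients in the right order.
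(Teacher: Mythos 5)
Your proposal is correct and follows exactly the route the paper intends: stationarity of $\Supp\kappa^e(\CV)$ (\autoref{CrysSupp.BecomesStationary}), Frobenius-invariance of the stable support $T$ (\autoref{CrysSupp.FridInvariance}), the rectangle decomposition $T=\bigcup_i Y_i\times D_i$ from \cite[Lem.~4.1.6]{BoPi.CohomCrys}, and then $\pr_1(T)=\bigcup_i Y_i$ is closed. The paper leaves this deduction implicit ("We deduce:"), and your added remark that $\pr_1$ is not a closed map in general correctly identifies why the rectangle decomposition is the essential ingredient.
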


\begin{prop}\label{CrysSupp.IndepOfNilIso}
If $\UCV,\UCW\in \CohC(X,\Coeff)$ are nil-isomorphic, then $\SuppCrys \UCV=\SuppCrys\UCW$.
\end{prop}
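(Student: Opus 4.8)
The plan is to reduce the statement to a single nil-isomorphism going in one direction, and then to exploit the fact that the crystalline support is unchanged when one replaces $\UCV$ by the image of a power of $\kappa$.

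First I would observe that it suffices to treat the case of a nil-isomorphism $\phi\colon\UCV\to\UCW$ of coherent $\kappa$-sheaves (by \autoref{CSh-NilIso} any nil-isomorphism between coherent objects can be realized by an honest morphism, and a general nil-isomorphism in the localized sense is a zig-zag of such), and moreover it is enough to handle the two ``elementary'' cases where $\phi$ has nilpotent kernel (\ie $\phi$ is injective up to nilpotence) or nilpotent cokernel, since a general nil-isomorphism factors through its image. Actually the cleanest route: let $\phi\colon\UCV\to\UCW$ have nilpotent kernel $\UCK$ and nilpotent cokernel $\UCC$, say $\kappa^n$ kills both $\CK$ and $\CC$. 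The key computation is that for $e\gg0$ the subsheaves $\kappa^e(\CV)\subseteq\CV$ and $\kappa^e(\CW)\subseteq\CW$ are identified by $\phi$: indeed, since $\kappa^n(\CK)=0$, the map $\phi$ restricted to $\kappa^n(\Frid^n_*\CV)$ is injective, and since $\kappa^n(\CC)=0$ we have $\kappa^n(\Frid^n_*\CW)\subseteq\phi(\CV)$, hence $\kappa^{2n}(\Frid^{2n}_*\CW)\subseteq\phi(\kappa^n(\Frid^n_*\CV))=\kappa^n(\Frid^n_*\phi(\Frid^n_*\CV))$. Combining these, $\phi$ induces for $e\geq 2n$ an isomorphism of $\CO_{X\times\CScheme}$-modules $\kappa^e(\CV)\cong\kappa^e(\CW)$ compatible with the respective iterates of $\kappa$.

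Once this identification is in place, the conclusion is immediate from \autoref{CrysSupp.BecomesStationary}: for $e\gg0$ one has $\SuppCrys\UCV=\pr_1(\Supp\kappa^e(\CV))$ and $\SuppCrys\UCW=\pr_1(\Supp\kappa^e(\CW))$, and since $\kappa^e(\CV)$ and $\kappa^e(\CW)$ are isomorphic $\CO_{X\times\CScheme}$-modules (via $\phi$) for $e$ large, they have the same support in $X\times\CScheme$, whence the same image under $\pr_1$. I would phrase the containments carefully so as to get genuine equality of the subsheaves $\kappa^e(\CV)$ and $\kappa^e(\CW)$ (as submodules of $\CK=\ker(\text{restriction of }\phi)$-free quotients), rather than just mutual domination, but even a two-sided domination argument would suffice since $\Supp$ is monotone and we take the stationary value.

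The main obstacle I anticipate is purely bookkeeping: one must be a little careful that the identification $\kappa^e(\CV)\cong\kappa^e(\CW)$ is compatible with the $\kappa$-structures restricted to these subsheaves, so that the phrase ``$\SuppCrys$'' — which is defined via the descending chain $\Supp\kappa^e(\CV)$ — really transports. But since $\phi$ is by hypothesis a morphism of $\kappa$-sheaves, this compatibility is automatic once the set-theoretic/module-theoretic identification is established; the only genuine input is the contractivity estimate on the exponents coming from the nilpotence of $\ker\phi$ and $\coker\phi$, which is exactly the content of \autoref{CSh-NilIso}. No further geometry is needed.
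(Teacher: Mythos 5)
Your first inclusion is fine: from $\kappa^n(\Frid^n_*\CW)\subseteq\phi(\CV)$ you correctly get $\kappa^{2n}(\Frid^{2n}_*\CW)\subseteq\phi(\kappa^n(\Frid^n_*\CV))\subseteq\kappa^n(\Frid^n_*\CV)$, which yields $\SuppCrys\UCW\subseteq\SuppCrys\UCV$. The gap is in the opposite direction, and it sits in the sentence ``since $\kappa^n(\CK)=0$, the map $\phi$ restricted to $\kappa^n(\Frid^n_*\CV)$ is injective.'' The kernel of that restriction is $\CK\cap\kappa^n(\Frid^n_*\CV)$, and the nilpotence of $\CK$ controls $\kappa^n(\Frid^n_*\CK)$, not the intersection of $\CK$ with the image of $\kappa^n$ on all of $\CV$: an element $\kappa^n(v)$ lying in $\CK$ need not come from $v\in\CK$. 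Concretely, take $X=\Spec\BF_p$, $\Coeff=\BF_p$, $\CV=\BF_p e_1\oplus\BF_p e_2$ with $\kappa(e_1)=e_2$, $\kappa(e_2)=0$, $\CK=\BF_p e_2$ and $\phi$ the quotient map; then $n=1$ kills $\CK$, yet $\kappa^1(\CV)=\CK$ and $\phi$ vanishes on it. Without this injectivity you only know $\Supp\phi(\kappa^e(\CV))\subseteq\Supp\kappa^e(\CW)$, and $\Supp\kappa^e(\CV)$ may a priori be strictly larger than $\Supp\phi(\kappa^e(\CV))$ by the support of $\CK\cap\kappa^e(\CV)$, so the inclusion $\SuppCrys\UCV\subseteq\SuppCrys\UCW$ is not established. (In the toy example both supports happen to be empty, but the failed step is exactly the one carrying the hard inclusion.)

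The missing ingredient is the splitting homomorphism $\alpha\colon\Frid^e_*\CW\to\CV$ of \autoref{CSh-NilIso}, which you cite but never actually use: the identity $\kappa^e_{\CV}=\alpha\circ\Frid^e_*\phi$ shows that $\kappa^e(\Frid^e_*\CV)$ is a quotient of a submodule of $\Frid^e_*\CW$, whence $\Supp\kappa^e(\CV)\subseteq\Frid^e(\Supp\CW)$; after first replacing $\UCV$ and $\UCW$ by the images of a high power of $\kappa$ (so that supports are stationary, \autoref{CrysSupp.BecomesStationary}) one invokes \autoref{CrysSupp.FridInvariance} to identify $\Frid^e(\Supp\CW)$ with $\Supp\CW$. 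This is precisely how the paper closes the direction your argument leaves open; if you want to salvage your route instead, you must prove that $\CK\cap\kappa^e(\Frid^e_*\CV)$ contributes nothing to the stable support, which is not obvious and is not supplied by the nilpotence of $\CK$ alone.
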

\begin{proof}
Let $\phi\colon \UCV\to\UCW$ be a nil-isomorphism. After replacing both $\UCV$ and $\UCW$ by their respective image under $\kappa^e$ for $e\gg0$, see \autoref{CrysSupp.BecomesStationary}, we may assume that $\Supp(\kappa^e\CV)=\Supp\CV$ for all $e\ge0$ and the same for $\UCW$. Now choose $e\ge0$ such that there is a diagram
    \[
        \xymatrix{ \Frid^e_*\CV \ar[r]^-{\kappa^e} \ar[d]_{\Frid^e_*\phi}& \CV \ar[d]^\phi\\ \Frid^e_*\CW \ar[ur]^\alpha \ar[r]^-{\kappa^e} & \CW \rlap{.} \\
        }
    \]
as in \autoref{CSh-NilIso}, where $\alpha$ is a morphism of $\kappa$-sheaves. Note that $\Frid^e$ is regarded as a morphism $X\times\CScheme\to X\times\CScheme$.

Next note that if $\psi\colon\CF\to\CG$ is a homomorphism of sheaves then $\Supp(\image(\psi)) \subset\Supp\CF$. Hence $\kappa_\CW^e=\phi\circ\alpha$ implies that
\[ \Supp \CW =\Supp \kappa_\CW^e((\Fr^e\times\id)_*\CW)  \subset \Supp(\image(\alpha)) %\subset \Supp(\phi \CV)
\subset \Supp\CV.\]
The same argument for $\kappa^e_\CV=\alpha\circ \Frid^e_*\phi$ yields
\begin{eqnarray*}
 \Supp \CV &\!\!\!\!=\!\!\!\!& \Supp \kappa_\CV^e((\Fr^e\times\id)_*\CV)  \subset\Supp(\image(\Frid_*^e\phi))\subset \Supp ((\Fr^e\times\id)_*\CW)\\
 &=& (\Fr^e\times\id)\Supp\CW\stackrel{\ref{CrysSupp.FridInvariance}}= \Supp\CW.
\end{eqnarray*}

Combining both inclusions, the proposition follows.
\end{proof}
The following is an immediate consequence:
\begin{cor}
Let $\UCV$ be in  $\LNilCohC(X,\Coeff)$. If $\phi_i\colon \UCV\to\UCW_i$, $i=1,2$, are nil-isomorphisms with $\UCW_i\in\CohC(X,\Coeff)$ (and hence $\kernel\phi_i\in\LNilC(X,\Coeff)$ for $i=1,2$), then $\SuppCrys \UCW_1=\SuppCrys\UCW_2$.
\end{cor}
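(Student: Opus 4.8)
\textit{Proof proposal.} The plan is to reduce the statement to \autoref{CrysSupp.IndepOfNilIso} by replacing the possibly non-coherent $\UCV$ by a coherent $\kappa$-subsheaf. Since $\UCV\in\LNilCohC(X,\Coeff)$, by \autoref{CSh-NilDef} there is a coherent $\kappa$-subsheaf $\UCC\subseteq\UCV$ with $\UCV/\UCC\in\LNilC(X,\Coeff)$. Denote by $\psi_i\colon\UCC\to\UCW_i$ the restriction of $\phi_i$ to $\UCC$; this is a homomorphism of \emph{coherent} $\kappa$-sheaves. The first and essentially only step is to check that each $\psi_i$ is a nil-isomorphism; once this is done, \autoref{CrysSupp.IndepOfNilIso} applied to $\psi_1$ and to $\psi_2$ yields $\SuppCrys\UCW_1=\SuppCrys\UCC=\SuppCrys\UCW_2$, which is the claim.

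To see that $\psi_i$ is a nil-isomorphism I would argue as follows. Its kernel $\UCC\cap\Ker\phi_i$ is a $\kappa$-subsheaf of $\Ker\phi_i\in\LNilC(X,\Coeff)$; a subsheaf of a locally nilpotent $\kappa$-sheaf is again locally nilpotent (immediate from the criterion \autoref{CSh-CharLocNilCoh}), and it is coherent as a subsheaf of the coherent $\UCC$ over the Noetherian scheme $X$, hence it lies in $\NilC(X,\Coeff)$. For the cokernel, $\Coker\psi_i=\UCW_i/\psi_i(\UCC)$ is coherent, being a quotient of $\UCW_i$, and it sits in a short exact sequence
\[
    0\to \phi_i(\UCV)/\psi_i(\UCC)\to \Coker\psi_i\to \Coker\phi_i\to 0 .
\]
Here $\Coker\phi_i$ is locally nilpotent by hypothesis, and $\phi_i(\UCV)/\psi_i(\UCC)=\phi_i(\UCV)/\phi_i(\UCC)$ is a quotient of $\UCV/\UCC$, hence locally nilpotent as well; both outer terms are coherent, so they lie in $\NilC(X,\Coeff)$. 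Since $\NilC(X,\Coeff)$ is a Serre subcategory of $\CohC(X,\Coeff)$ by \autoref{CSh-LNilIsSerre}, the middle term $\Coker\psi_i$ lies in $\NilC(X,\Coeff)$ too. Thus both $\Ker\psi_i$ and $\Coker\psi_i$ are nilpotent, \ie $\psi_i$ is a nil-isomorphism of coherent $\kappa$-sheaves.

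There is no genuine obstacle in this argument: the whole content is the bookkeeping that restricting $\phi_i$ to the coherent part $\UCC$ of $\UCV$ preserves the property of being a nil-isomorphism, and this rests only on $\LNilC$ and $\NilC$ being stable under passage to subobjects, quotients and extensions, all of which is recorded in \autoref{CSh-LNilIsSerre} and \autoref{CSh-CharLocNilCoh}.
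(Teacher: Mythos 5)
Your proof is correct, and it supplies in the natural way the details that the paper omits (the corollary is stated there as an ``immediate consequence'' of \autoref{CrysSupp.IndepOfNilIso} with no written proof): you reduce to the coherent case by restricting both $\phi_i$ to a coherent $\kappa$-subsheaf $\UCC\subseteq\UCV$ witnessing $\UCV\in\LNilCohC(X,\Coeff)$, and the verification that the restrictions $\psi_i$ remain nil-isomorphisms correctly uses only that $\LNilC$ and $\NilC$ are Serre subcategories and that coherent locally nilpotent objects are nilpotent. Nothing further is needed.
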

This corollary allows us to extend the crystalline support to $\LNilCohC(X,\Coeff)$ and to complexes in the following way.
\begin{defn}\label{CrysSupp.Def2}
Suppose $\UCV\in\LNilCohC(X,\Coeff)$ and $\phi\colon\UCV\to \UCW$ is a nil-iso\-morph\-ism with $\UCW\in\CohC(X,\Coeff)$. Then one defines the \emph{crystalline support} of $\UCV$ as
\[ \SuppCrys \UCV :=  \SuppCrys \UCW .\]
For $\UCV^\bullet$ in $\D^b_\lnilcoh(\QCohC(X,\Coeff))$ we define the \emph{crystalline support} as
\[ \SuppCrys \UCV^\bullet := \bigcup_{i\in\BZ} \SuppCrys(H^i(\UCV^\bullet)) .\]
\end{defn}

The following result is now obvious:
\begin{cor}\label{CrysSupp.DependsOnCrys1}
\begin{enumerate}
\item The crystalline support of $\UCV\in\LNilCohC(X,\Coeff)$ only depends on the crystal in $\LNilCrysC(X,\Coeff)$ that it represents.
\item The crystalline support of $\UCV^\bullet\in\D^b_\lnilcoh(\QCohC(X,\Coeff))$ only depends on the complex in $\D^b_\crys(\QCrysC(X,\Coeff))$ that it represents.
\end{enumerate}

In both cases, the crystalline support is a closed subset of~$X$.
\end{cor}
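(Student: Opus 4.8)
The statement to prove is \autoref{CrysSupp.DependsOnCrys1}, which asserts two things: (a) the crystalline support of an object $\UCV\in\LNilCohC(X,\Coeff)$ depends only on the associated crystal in $\LNilCrysC(X,\Coeff)$, and likewise (b) for complexes, the crystalline support depends only on the associated object of $\D^b_\crys(\QCrysC(X,\Coeff))$; and in both cases the support is closed in $X$.

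\begin{proof}[Proof of \autoref{CrysSupp.DependsOnCrys1}]
For (a), suppose $\UCV,\UCV'\in\LNilCohC(X,\Coeff)$ represent the same crystal in $\LNilCrysC(X,\Coeff)$. By definition of the localization, there exist a $\kappa$-sheaf $\UCU\in\LNilCohC(X,\Coeff)$ and nil-isomorphisms $\UCV\larrover{} \UCU\arrover{}\UCV'$ (or more generally a finite zig-zag of such), so it suffices to treat the case of a single nil-isomorphism $\psi\colon\UCU\to\UCV$ with $\UCU,\UCV\in\LNilCohC(X,\Coeff)$. Choose, using \autoref{CrysSupp.IndepOfNilIso} and the \emph{ad hoc} extension of $\SuppCrys$ to $\LNilCohC$ in \autoref{CrysSupp.Def2}, nil-isomorphisms $\phi\colon\UCU\to\UCW$ and $\phi'\colon\UCV\to\UCW'$ with $\UCW,\UCW'\in\CohC(X,\Coeff)$; such exist because $\LNilCohC$ is precisely the category of objects admitting a coherent subobject with locally nilpotent quotient. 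Then $\SuppCrys\UCU=\SuppCrys\UCW$ and $\SuppCrys\UCV=\SuppCrys\UCW'$ by definition. Now the composite $\phi'\circ\psi\colon\UCU\to\UCW'$ is a nil-isomorphism between $\UCU$ and a coherent $\kappa$-sheaf, hence the preceding corollary (stating that $\SuppCrys$ of the target of a nil-isomorphism out of an $\LNilCohC$-object is independent of the chosen nil-isomorphism) gives $\SuppCrys\UCW=\SuppCrys\UCW'$, and therefore $\SuppCrys\UCU=\SuppCrys\UCV$. Replacing a general zig-zag by a chain of such steps proves (a).

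For (b), let $\UCV^\bullet,\UCW^\bullet\in\D^b_\lnilcoh(\QCohC(X,\Coeff))$ represent the same object of $\D^b_\crys(\QCrysC(X,\Coeff))$. By \autoref{DF-CorCompDCII} the localization $\D^b_\crys(\QCrysC(X,\Coeff))$ is obtained from $\D^b_\lncoh(\QCohC(X,\Coeff))$ by inverting nil-quasi-isomorphisms, so there is a nil-quasi-isomorphism realizing the identification; its cone is cohomologically nilpotent, i.e.\ lies in $\D^b_\lnil(\QCoh(X,\Coeff))_\kappa$. The long exact cohomology sequence of the associated distinguished triangle then shows that for each $i$ the cohomology sheaves $H^i(\UCV^\bullet)$ and $H^i(\UCW^\bullet)$ differ, up to locally nilpotent kernel and cokernel, by an exact sequence one of whose terms is locally nilpotent; more precisely, there is a nil-isomorphism between $H^i(\UCV^\bullet)$ and $H^i(\UCW^\bullet)$ (possibly after passing through the cohomology of the cone, which is locally nilpotent and hence has empty crystalline support). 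By part (a) applied to each cohomology sheaf we get $\SuppCrys H^i(\UCV^\bullet)=\SuppCrys H^i(\UCW^\bullet)$ for all $i$, and taking the (finite) union over $i$ as in \autoref{CrysSupp.Def2} yields $\SuppCrys\UCV^\bullet=\SuppCrys\UCW^\bullet$.

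Finally, closedness: for $\UCW\in\CohC(X,\Coeff)$ the set $\SuppCrys\UCW$ is closed in $X$ by \autoref{CrysSupp.IsClosed1}. Hence for $\UCV\in\LNilCohC(X,\Coeff)$ the set $\SuppCrys\UCV=\SuppCrys\UCW$ is closed, and for a complex $\UCV^\bullet\in\D^b_\lnilcoh(\QCohC(X,\Coeff))$ the crystalline support is the \emph{finite} union $\bigcup_i\SuppCrys(H^i(\UCV^\bullet))$ of closed subsets of $X$, hence closed. The main point throughout is simply to pass through a coherent representative and invoke \autoref{CrysSupp.IndepOfNilIso}; the only mild subtlety is keeping track, in the complex case, that nil-quasi-isomorphisms induce nil-isomorphisms on cohomology, which is exactly the content of the long exact sequence together with the fact that $\LNilC(X,\Coeff)$ is a Serre subcategory (\autoref{CSh-LNilIsSerre}).
\end{proof}
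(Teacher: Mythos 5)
Your proof is correct and follows exactly the route the paper has in mind: the paper dispenses with this corollary as ``now obvious'' after \autoref{CrysSupp.IndepOfNilIso}, the unnamed well-definedness corollary, and \autoref{CrysSupp.Def2}, and your argument simply spells out those details (reduction to a single leg of a roof of nil-isomorphisms for (a), the long exact sequence of the cone to get nil-isomorphisms on cohomology for (b), and \autoref{CrysSupp.IsClosed1} plus finiteness of the union for closedness). The only cosmetic slip is that your parenthetical justification for the existence of a nil-isomorphism $\UCV\to\UCW$ with $\UCW$ coherent actually produces one in the opposite direction ($\UCC\into\UCV$); but this existence is already presupposed by \autoref{CrysSupp.Def2}, so nothing is lost.
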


We summarize some properties of the crystalline supports with regards to its behavior under open and closed immersions.
\begin{lem}\label{CrysSupp.SuppAndFunctors}
Denote by $j\colon U\to X$ an open immersion with $i\colon Z\to X$ a closed complement. For $W\in \{X,U,Z\}$, let $\UCV_W^\bullet$ be in $\D^b_\crys(\QCrysC(W,\Coeff))$. Then the following hold:
\begin{enumerate}
\item $\SuppCrys j^*\UCV_X^\bullet=U\cap\SuppCrys\UCV_X^\bullet$.
\item $\SuppCrys j_*\UCV_U^\bullet\subset \overline{\SuppCrys\UCV_U^\bullet}$.
\item  If $U\cap\SuppCrys\UCV^\bullet=\emptyset$, then $\SuppCrys i^! \UCV_X^\bullet = \SuppCrys\UCV_X^\bullet$.
\item $\SuppCrys i_*\UCV_Z^\bullet=\SuppCrys\UCV_Z^\bullet$.
\end{enumerate}

\end{lem}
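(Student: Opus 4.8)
The plan is to prove the four statements of \autoref{CrysSupp.SuppAndFunctors} by reducing each to properties of the (set-theoretic) support of coherent sheaves under the relevant pullback/pushforward, combined with the naturality of the Cartier structure under these functors. Throughout I would fix a nil-isomorphism $\UCV^\bullet_W \to \UCW^\bullet_W$ with $\UCW^\bullet_W$ a complex of coherent $\kappa$-sheaves on $W$ (or work termwise on cohomology $H^i$, which is coherent up to nil-isomorphism), so that by \autoref{CrysSupp.DependsOnCrys1} the crystalline support is computed as $\bigcup_i\SuppCrys H^i$, and by \autoref{CrysSupp.BecomesStationary} each $\SuppCrys H^i$ equals $\pr_1(\Supp\kappa^e(\CH^i))$ for $e\gg0$, where $\kappa^e(\CH^i)$ is itself a coherent sheaf on which $\kappa$ is, after this replacement, ``surjective on supports'' (i.e. $\Supp\kappa(\kappa^e\CH^i)=\Supp\kappa^e\CH^i$).

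For part (a): since $j$ is an open immersion, $j^* = j^!$ is exact, so $H^i(j^*\UCV_X^\bullet)=j^*H^i(\UCV_X^\bullet)$, and on underlying sheaves $j^*$ is just restriction; one has $\Supp(j^*\CF)=U\times\CScheme\cap\Supp\CF$ for any coherent $\CF$ on $X\times\CScheme$, and $j^*$ commutes with iterating $\kappa$ since the $\kappa$-structure on $j^*$ is induced by functoriality (\autoref{CSh-EtalePullback}). Intersecting with $U\times\CScheme$ and then projecting to $U$ gives $\SuppCrys j^*\UCV^\bullet_X = U\cap\SuppCrys\UCV^\bullet_X$. For part (d): $i_*$ is exact (closed, hence affine, immersion) and a bijection onto its image on points, so by \autoref{CrysSupp.LemOnSuppF_*} we have $\Supp(i_*\CF)=i(\Supp\CF)$ for coherent $\CF$ on $Z\times\CScheme$; since the $\kappa$-structure on $i_*$ is again the functorial one, $i_*$ commutes with $\kappa^e$, and $\pr_1\circ i$ on $X$ equals $i\circ\pr_1$ on $Z$ (the projection formula commutes with the closed immersion), giving $\SuppCrys i_*\UCV^\bullet_Z=\SuppCrys\UCV^\bullet_Z$, where I tacitly identify the closed subset $\SuppCrys\UCV_Z^\bullet\subseteq Z$ with its image in $X$. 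Part (b) follows by a spectral-sequence/dévissage argument: $Rj_*$ need not be exact, but the spectral sequence $R^pj_*H^q(\UCV^\bullet_U)\Rightarrow H^{p+q}(Rj_*\UCV^\bullet_U)$ of \autoref{DF-RegIm} shows each $H^n(j_*\UCV^\bullet_U)$ is a subquotient of $\bigoplus_{p+q=n}R^pj_*H^q$, so it suffices to bound $\SuppCrys R^pj_*\UCW$ for a single coherent $\kappa$-sheaf $\UCW$ on $U$; since $R^pj_*$ is computed by the \v{C}ech resolution and its underlying sheaf is supported inside $\overline{\Supp\CW}$ (closure taken in $X\times\CScheme$), and $\kappa$ acts by functoriality, one gets $\SuppCrys R^pj_*\UCW\subseteq\pr_1(\overline{\Supp\CW})\subseteq\overline{\SuppCrys\UCW}$.

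For part (c): this is the one I expect to be the main obstacle, because $i^!$ is not exact in general and its underlying-sheaf behaviour is $i^{-1}R\CHom(\CO_{Z\times\CScheme},-)$, i.e. local cohomology, whose support is more delicate. The plan is to use the exact triangle $i_*i^!\to\id\to Rj_*j^*$ of \autoref{ExactTriangleThm}: under the hypothesis $U\cap\SuppCrys\UCV^\bullet_X=\emptyset$, part (a) gives $\SuppCrys j^*\UCV^\bullet_X=\emptyset$, so $j^*\UCV^\bullet_X$ is acyclic as a complex of crystals, hence $Rj_*j^*\UCV^\bullet_X\simeq 0$ in $\D^+_\crys$; the triangle then yields $i_*i^!\UCV^\bullet_X\simeq\UCV^\bullet_X$ in $\D^+_\crys(\QCrysC(X,\Coeff))$. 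Applying part (d) to $i_*i^!\UCV^\bullet_X$, we get $\SuppCrys\UCV^\bullet_X=\SuppCrys i_*i^!\UCV^\bullet_X=\SuppCrys i^!\UCV^\bullet_X$, where the last equality again identifies the support computed on $Z$ with its image in $X$. The only subtlety to check carefully is that ``crystalline support'' is well-defined and functorial at the level of $\D^b_\crys$ (via \autoref{CrysSupp.DependsOnCrys1}), so that an isomorphism in the derived category of crystals preserves it — this is exactly what \autoref{CrysSupp.DependsOnCrys1} guarantees, so no further work is needed there.
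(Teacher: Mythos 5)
Your parts (a), (c) and (d) coincide with the paper's argument: (a) and (d) are the observation that restriction and $i_*$ commute with the formation of $\image\kappa^e$ (plus \autoref{CrysSupp.LemOnSuppF_*} for $i_*$), and (c) is exactly the paper's deduction from the exact triangle of \autoref{ExactTriangleThm} together with (a) and (d). Part (b), however, is where you genuinely diverge. The paper avoids any computation with the underlying sheaves of $Rj_*$: it sets $Z_U=\SuppCrys\UCV_U^\bullet$ with closure $Z_X$ in $X$, uses the exact triangle on $U$ (a Kashiwara-type identity) to write $\UCV_U^\bullet\cong i_{U*}i_U^!\UCV_U^\bullet$ for the closed immersion $i_U\colon Z_U\into U$, commutes $j_*i_{U*}=\wt i_*\wt j_*$ through the evident square, and then invokes part (d) for $\wt i\colon Z_X\into X$. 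Your route instead runs the spectral sequence $R^pj_*H^q\Rightarrow H^{p+q}(Rj_*\UCV_U^\bullet)$ and bounds $\SuppCrys R^pj_*\UCW$ directly by $\pr_1(\overline{\Supp\CW})$ using the \v{C}ech description. This works, and it is arguably more elementary (it does not need \autoref{ExactTriangleThm} at all for part (b)), but it silently uses several auxiliary facts that you should record: (i) $\SuppCrys$ is monotone under subquotients and subadditive under extensions of locally nil-coherent $\kappa$-sheaves, so that the spectral-sequence filtration really reduces the problem to the $E_2$-terms; (ii) for a nil-isomorphism $\phi\colon\UCV\to\UCW$ with $\UCW$ coherent one has $\SuppCrys\UCW\subseteq\pr_1\Supp\CV$ (because $\kappa^e(\CW)\subseteq\image\phi$ for $e\gg0$), which is what converts the sheaf-level bound $\Supp R^pj_*\CW'\subseteq\overline{\Supp\CW'}$ into a bound on the crystalline support; and (iii) the replacement of $\UCW$ by $\kappa^e\UCW$ so that $\SuppCrys\UCW=\pr_1\Supp\CW$ on the nose, together with continuity of $\pr_1$ to pass $\pr_1(\overline{\;\cdot\;})\subseteq\overline{\pr_1(\;\cdot\;)}$. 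None of these is difficult, so your proof is correct; the paper's version trades these verifications for a second application of the support-of-the-triangle machinery, which is shorter given that \autoref{ExactTriangleThm} and part (d) are already available.
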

\begin{proof}
Part (a) is obvious, since the formation of $\image\kappa^e$ commutes with restriction to $U\times \CScheme$. Part (d) holds because the exact functor $i_*$ also commutes with the formation of $\image\kappa^e$. Next we prove~(c): Define $W:=\SuppCrys i^!\UCV^\bullet $. By (d), the complex $i_*i^!\UCV^\bullet $ has crystalline support $W$. By our hypothesis $j^*\UCV^\bullet=0$ and thus $j_*j^*\UCV^\bullet=0$ holds as well. It follows from the exact triangle in \autoref{ExactTriangleThm} that $\UCV^\bullet\cong i_*i^!\UCV^\bullet $ has crystalline support equal to $W$, and this proves~(c).

To prove (b), write $Z_U$ for $\SuppCrys\UCV_U^\bullet$ and $Z_X$ for the closure of $Z_U$ in $X$, and consider the diagram
\[ \xymatrix{
U \ar@{^{ (}->}[r]^j & X \\
Z_U \ar@{^{ (}->}[u]^{i_U} \ar@{^{ (}->}[r]^{\wt j} & Z_X \ar@{^{ (}->}[u]^{\wt i}  \\
 }\]
where the horizontal arrows are closed immersions and the vertical ones are open embeddings. From the exact triangle for $Z_U\into U \hookleftarrow U\setminus Z_U$ we obtain $\UCV_U^\bullet\cong i_{U*}i_U^!\UCV_U^\bullet$. Therefore
\[ j_* \UCV_U^\bullet =  j_*i_{U*} (i_U^!\UCV^\bullet) = \wt i_*( \wt j_{*} i_U^!\UCV^\bullet).\]
By part (d), the crystalline support of the right hand side lies in $Z_X$, which is~(b).
\end{proof}

\begin{cor}\label{CrysSupp.FibersAndSuppCrys}
For $\UCV^\bullet\in \D^b_\crys(\QCrysC(X,\Coeff))$ one has
\[ \{x\in X \mid i_x^! \UCV^\bullet\neq 0 \in \D^b_\crys(\QCrysC(x,\Coeff)) \} \subset
\SuppCrys \UCV^\bullet.\]
\end{cor}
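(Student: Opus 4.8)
The statement to prove is that for $\UCV^\bullet\in \D^b_\crys(\QCrysC(X,\Coeff))$ one has the inclusion $\{x\in X \mid i_x^! \UCV^\bullet\neq 0\} \subset \SuppCrys \UCV^\bullet$. By contraposition, it suffices to show: if $x\notin\SuppCrys\UCV^\bullet$, then $i_x^!\UCV^\bullet=0$ in $\D^b_\crys(\QCrysC(x,\Coeff))$, i.e. $i_x^!\UCV^\bullet$ is cohomologically nilpotent. The plan is to exploit that the crystalline support is (by \autoref{CrysSupp.Def2}) the union over $i$ of the crystalline supports of the cohomology sheaves $H^i(\UCV^\bullet)$, so that $x\notin\SuppCrys\UCV^\bullet$ means $x\notin\SuppCrys H^i(\UCV^\bullet)$ for every $i$; and a standard devissage (using the spectral sequence $R^ai_x^!H^b(\UCV^\bullet)\To R^{a+b}i_x^!\UCV^\bullet$ from \autoref{RightDerivedOfFlat} for the finite-then-\'etale factorization of $i_x$, together with the fact that $\LNilC$ is a Serre subcategory by \autoref{CSh-LNilIsSerre}) reduces us to proving the statement for a single coherent $\kappa$-sheaf $\UCV$ in place of $\UCV^\bullet$.

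So the core case is: $\UCV\in\CohC(X,\Coeff)$ with $x\notin\SuppCrys\UCV$, and we must show $i_x^!\UCV$ is cohomologically nilpotent. First I would replace $\UCV$ by $\kappa^e(\CV)$ for $e\gg0$, which is a nil-isomorphic $\kappa$-subsheaf by \autoref{CSh-TauIsNilIso}, and which does not change $i_x^!\UCV$ up to cohomological nilpotence (since $i_x^!$ applied to a nil-isomorphism is a nil-isomorphism, as $i_x^!$ preserves local nilpotence by \autoref{CSh-FShriekNilpotenceFiniteEssEtaleFinite}). After this replacement \autoref{CrysSupp.BecomesStationary} and \autoref{CrysSupp.FridInvariance} give $\Supp\CV=\bigcap_{e}\Supp\kappa^e(\CV)$, so that $\pr_1(\Supp\CV)=\SuppCrys\UCV\not\ni x$. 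This means $x$ is not in the image under $\pr_1$ of $\Supp\CV$; equivalently, the localization $\CV_x$ of $\CV$ at $x$ — i.e. the pullback $j_x^*\CV$ along $j_x\colon\Spec\CO_{X,x}\to X$, restricted further to the closed point via $i'_x$ — is supported away from the closed point, hence $R^0(i'_x)^!$ of it vanishes and all higher $R^a(i'_x)^!$ vanish too because they are computed by $\Hom_{\CO_{X,x}\tensor\Coeff}$ into the module, which is zero. More carefully: since $i_x$ factors as $x\xrightarrow{i'_x}\Spec\CO_{X,x}\xrightarrow{j_x}X$, we have $i_x^!\UCV=(i'_x)^!(j_x^*\UCV)=(i'_x)^!\UCV_x$; and $\SuppCrys\UCV\not\ni x$ forces the underlying module of $\UCV_x$ (after the stabilizing replacement) to have support not containing the closed point $\frm_x$, so $(i'_x)^!\UCV_x=(i'_x)^{-1}R\CHom_{\CO_{X,x}\tensor\Coeff}(k_x\tensor\Coeff,\CV_x)$ has \emph{all} cohomology zero — here one uses that $k_x\tensor\Coeff$ is $\frm_x$-torsion while $\CV_x$ has no $\frm_x$-torsion (indeed no sections killed by a power of $\frm_x$) once its support avoids $\frm_x$, because $\CV_x$ is a finitely generated $\CO_{X,x}\tensor\Coeff$-module whose support is a closed subset of $\Spec(\CO_{X,x}\tensor\Coeff)$ not meeting the fiber over $\frm_x$.

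The one subtle point — and the main obstacle — is the bookkeeping around the replacement of $\UCV$ by $\kappa^e(\CV)$: one must check that $\SuppCrys$ is genuinely detecting the "eventual" support and that the vanishing of $\Supp\kappa^e(\CV)$ over $x$ is compatible with the passage to the local ring $\CO_{X,x}$, i.e. that $\pr_1^{-1}(x)\cap\Supp\kappa^e(\CV)=\emptyset$ is equivalent to $(\kappa^e\CV)_x$ being zero on the fiber over $\frm_x$; this is \autoref{CrysSupp.LemOnSuppF_*}-type reasoning applied to the localization map, which is affine and injective on points. Once that is in place, the argument is complete. Note also that the converse inclusion (that the non-vanishing locus of stalks is \emph{dense} in $\SuppCrys\UCV^\bullet$, when $X$ is of finite type over a field) is the content of \autoref{CrysSupp.NonzeroFibersAreDense} and is \emph{not} needed here; \autoref{Ex:NonclosednessOfSupport} shows the reverse inclusion can fail in general, so the one-sided statement above is the best one can do at this level of generality. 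Assembling: apply the coherent case to each $H^i(\UCV^\bullet)$, then run the devissage up the truncation filtration of $\UCV^\bullet$ exactly as in the proof of \autoref{CSh-CohomNilonStalks}, using that an extension of cohomologically nilpotent complexes is cohomologically nilpotent because $\LNilC$ is Serre.
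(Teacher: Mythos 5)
Your argument is correct in substance, but it takes a considerably longer route than the paper's. The paper's proof is three lines: let $U=X\setminus\SuppCrys\UCV^\bullet$, which is open since the crystalline support is closed; by \autoref{CrysSupp.SuppAndFunctors}(a) the restriction $j^*\UCV^\bullet$ has empty crystalline support and is therefore zero in $\D^b_\crys(\QCrysC(U,\Coeff))$; and for $x\in U$ the functor $i_x^!$ factors through $j^*$, so $i_x^!\UCV^\bullet=0$. This works with the whole complex at once and needs neither your devissage to individual cohomology sheaves nor the explicit module computation over $\CO_{X,x}\tensor\Coeff$. Your version localizes at the point rather than restricting to the open complement of the support; since $j_x$ factors through $j$ for every $x\in U$, this is ultimately the same mechanism, unpacked by hand. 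What your route buys is the explicit observation that after stabilizing ($\CV\rightsquigarrow\kappa^e\CV$, $e\gg0$) the stalk complex vanishes on the nose and not merely up to nilpotence; what it costs is re-deriving facts the paper has already packaged in \autoref{CrysSupp.BecomesStationary}, \autoref{CrysSupp.IndepOfNilIso} and \autoref{CrysSupp.SuppAndFunctors}(a).

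One small repair is needed in the core computation. You justify the vanishing of all cohomology of $R\CHom_{\CO_{X,x}\tensor\Coeff}(k_x\tensor\Coeff,\CV_x)$ by saying that $\CV_x$ has no $\frm_x$-power torsion. That only kills $\Ext^0$; higher $\Ext$ groups into a torsion-free module need not vanish (for instance $\Ext^1_{\BZ}(\BZ/p,\BZ)\neq0$). The correct argument is the support one you gesture at in the same sentence: for finitely generated modules over a Noetherian ring, $\Ext^a(M,N)$ localizes, so $\Supp\Ext^a(M,N)\subseteq\Supp M\cap\Supp N$, and the right-hand side is empty here for every $a$ because $\Supp\CV_x$ misses the fiber over $\frm_x$. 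Alternatively, and more simply: after the replacement by $\kappa^e\CV$ the support has the product form $\bigcup_i Y_i\times D_i$ by \autoref{CrysSupp.FridInvariance}, and $x\notin\bigcup_i Y_i$ with each $Y_i$ closed excludes every generization of $x$ from $Y_i$, so already $\CV_x=0$ and there is nothing left to compute. With either fix the proof goes through.
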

\begin{proof}
Denote by $U$ the complement of $\SuppCrys \UCV^\bullet$ and by $j\colon U\into X$ the corresponding open immersion. By \autoref{CrysSupp.SuppAndFunctors}(a) we have $j^*\UCV^\bullet=0$. Since for any $x\in U$ we have $i_x$ factors through $j$, we deduce $i_x^!\UCV^\bullet=0$ for all such~$x$.
\end{proof}

\begin{lem}\label{CrysSupp.OnRegular}
Let $X$ be a regular connected scheme and $\UCV\in\CrysC(X,\Coeff)$ with $\SuppCrys\UCV=X$. Then there is a dense open subset $U\subset X$ such that for all $x\in U$ one has $R^{d_x}i_x^! \UCV\neq 0$ in $\CrysC(x,\Coeff)$ where $d_x$ is the dimension of~$x$.
\end{lem}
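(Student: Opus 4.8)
The plan is to reduce to a favourable situation and then to induct on $\dim X$ using the exact triangle of \autoref{ExactTriangleThm}. Write $n=\dim X$; since $X$ is connected and regular it is irreducible, with generic point $\eta$, and recall (cf.\ \autoref{ShriekForFTMorBetweenReg}) that $d_x=\dim\CO_{X,x}$ is exactly the top degree in which $i_x^!\UCV$ can have non-zero cohomology. First I would replace $\UCV$ by $\kappa^e\Frid^e_*\CV$ for $e\gg0$: this is nil-isomorphic to $\UCV$ by \autoref{CSh-TauIsNilIso}, so represents the same crystal and, by \autoref{CrysSupp.IndepOfNilIso}, has the same crystalline support; together with \autoref{CrysSupp.BecomesStationary} and the finiteness result of \cite{BliBoe.CartierFiniteness} (the descending chain $\kappa^e\Frid^e_*\CV$ of subsheaves is eventually stationary) this lets me assume $\kappa\colon\Frid_*\CV\to\CV$ is \emph{surjective}. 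As the assertion is local on $X$ and only asks for \emph{some} dense open, I may also assume $X=\Spec R$ is affine and, by generic flatness, shrink $X$ so that $\CV$ is flat over $X$. Finally $\SuppCrys\UCV=X$ gives, via \autoref{CrysSupp.SuppAndFunctors}(a), that $i_\eta^!\UCV=\UCV_\eta$ (concentrated in degree $0$, as $d_\eta=0$) is non-zero in $\CrysC(\eta)$; hence $\eta$ lies outside the bad locus $N:=\{x\in X\mid R^{d_x}i_x^!\UCV=0\text{ in }\CrysC(x)\}$, and the lemma is the assertion that $\overline N\ne X$.

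For the induction, $n=0$ is trivial. For $n\ge1$ choose a regular prime element $t\in R$ and, shrinking $X$ around the generic point of the relevant component of $V(t)$, arrange that $Z:=V(t)$ is a regular integral divisor, $i\colon Z\into X$. Flatness makes $t$ a non-zerodivisor on $\CV$, so $R^0i^!\UCV=\CV[t]=0$, and by the Koszul description of $i^!$ for a hypersurface (the example following \autoref{KappafShriek}) we get $i^!\UCV\simeq\UCW[-1]$, where $\UCW:=R^1i^!\UCV$ is the single coherent Cartier sheaf on $Z$ with underlying module $\CV/t\CV$ and structural map $\kappa_\UCW(\bar v)=\overline{\kappa(t^{q-1}\tilde v)}$. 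Granting the Key Claim below that $t$ may be chosen so that $\SuppCrys\UCW=Z$, the inductive hypothesis applied to $(Z,\UCW)$ (with $\dim Z=n-1$) yields a dense open $W_0\subseteq Z$ with $R^{d_x^Z}(i_x^Z)^!\UCW\ne0$ for all $x\in W_0$, where $i_x^Z\colon x\into Z$ and $d_x^Z=\dim\CO_{Z,x}=d_x-1$. Pseudofunctoriality gives $i_x^!\UCV=(i_x^Z)^!i^!\UCV\simeq(i_x^Z)^!\UCW[-1]$, so $R^{d_x}i_x^!\UCV=R^{d_x^Z}(i_x^Z)^!\UCW\ne0$ on $W_0$, i.e.\ $W_0\cap N=\emptyset$. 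To upgrade this from a dense open of the divisor to a dense open of $X$, I would run the same argument along a stratification of $X$ into regular locally closed subschemes (as in the devissage underlying \autoref{CohomDimOfShriek}) adapted to the crystalline supports of the complexes $i_S^!\UCV$: flatness forces each $i_S^!\UCV$ to be concentrated in degree $\codim(S,X)$, so $R^{d_x}i_x^!\UCV=R^{d_x^S}(i_x^S)^!\big(R^{\codim S}i_S^!\UCV\big)$ for $x\in S$, and applying the inductive hypothesis to the strata of dimension $<n$ (the open dense stratum being handled by the above) shows that $N$ meets the open dense stratum in a proper closed subset and every other stratum in a lower-dimensional closed set; hence $\overline N\ne X$ and one takes $U:=X\setminus\overline N$.

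The Key Claim — that a suitable regular prime $t$ (after shrinking) has $\SuppCrys\UCW=Z$ — is the main obstacle. Since crystalline support is detected at generic points, this is equivalent to non-nilpotence of $\UCW_y=(\CV_y/t\CV_y,\kappa_{\mathrm{tw}})$, where $y$ is the generic point of $Z$ and $\CV_y$ is the ($t$-torsion-free, by flatness) module over the discrete valuation ring $R_y=\CO_{X,y}$ with uniformiser $t$. Applying the triangle of \autoref{ExactTriangleThm} on $\Spec R_y$ to the open point $\eta$ and its closed complement $y$, and using $R^0i_y^!\UCV_y=0$, gives a short exact sequence of crystals $0\to\UCV_y\to j_*\UCV_\eta\to\UCW_y\to0$; so $\UCW_y$ is non-nilpotent precisely when $\UCV_y\into j_*\UCV_\eta$ is not an isomorphism of crystals. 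Since $\CV_y/t\CV_y\ne0$ by faithful flatness and $\kappa$ (hence each $\kappa^m$) is surjective on $\CV_y$, one then analyses the $t$-adic filtration of $\Frid^m_*\CV_y$ to show that, after passing once more to a deep iterate, $\kappa_{\mathrm{tw}}$ is itself surjective, hence non-nilpotent; the required estimates are of the contractive type used by Anderson (cf.\ the proof of \autoref{DF-DerImOnLNilCoh}), and controlling $\kappa_{\mathrm{tw}}^m$ \emph{simultaneously for all }$m$ — equivalently, that the ``bad'' divisors lie in a proper closed set — is where the real work lies. That genericity of $t$ is genuinely needed is visible already for $X=\BA^1_k$ ($k$ algebraically closed, $\Coeff=\BF_q$), $\CV=k[t]$, with $\kappa$ $\Frid$-semilinear and $\kappa(t^j)=t^{j/q}$ if $q\mid j$ and $0$ otherwise: $\kappa$ is surjective and $\SuppCrys\UCV=X$, yet for the particular divisor $V(t)$ one has $\kappa_{\mathrm{tw}}=0$ on $\CV/t\CV=k$, so $R^1i^!\UCV$ is nilpotent, whereas $V(t-c)$ for generic $c$ does give full support. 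An alternative to the divisor argument would use the stalk theory of \autoref{sec.Stalks} to reduce $f^!$ to a map of fields, where it is exact and explicit, but a genericity input is still required to locate the top cohomological degree.
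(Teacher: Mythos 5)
Your inductive divisor-cutting strategy is genuinely different from the paper's argument, but as written it has two gaps, one of which you flag yourself and one of which you do not. The flagged one is the ``Key Claim'': you never prove that a regular prime $t$ can be chosen so that $\SuppCrys(R^1i^!\UCV)$ is all of $Z=V(t)$, and your own example on $\BA^1_k$ shows this fails for particular divisors, so some genericity statement is indispensable. Sketching that one should ``control $\kappa_{\mathrm{tw}}^m$ simultaneously for all $m$'' via Anderson-type contraction is not a proof; this is precisely the hard content of the lemma, and the coefficient direction $\CScheme$ (which your sketch essentially suppresses) has to be handled as well, since $\SuppCrys$ is a projection from $X\times\CScheme$. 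The unflagged gap is structural: even granting the Key Claim, your induction does not close. The inductive hypothesis applied to $(Z,\UCW)$ produces a dense open $W_0$ of the divisor $Z$, which is nowhere dense in $X$; knowing $N\cap W_0=\emptyset$ for one divisor says nothing about $\overline{N}\neq X$. To conclude you would have to sweep out a dense open of $X$ by a family of good divisors with uniform control of the bad locus, and the ``stratification'' paragraph does not supply this --- the open dense stratum is exactly where the claim lives, and declaring it ``handled by the above'' is circular, since the divisor argument only reaches points lying on the chosen $Z$.

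For comparison, the paper avoids induction entirely. It normalizes in the dual way (replacing $\UCV$ by $\UCV/\UCV_{nil}$ so that the adjoint $\wt\kappa\colon\CV\to\Frid^!\CV$ is \emph{injective}, rather than making $\kappa$ surjective), observes by a length count that $\wt\kappa$ is then an isomorphism at the generic points of $T=X\times D$, so $\coker\wt\kappa$ is supported on a nowhere dense closed subset, and then invokes constructibility (\cite[Prop.~9.5.3]{EGA4}) for the projection $X\times D'\to X$ to produce in one stroke a dense open $U\subset X$ over which the fibers $x\times D'$ meet the good locus densely; the non-vanishing of $R^{d_x}i_x^!\UCV$ as a sheaf is then a Koszul computation ($\cong\CV/\Fm_x\CV$), and its non-nilpotence follows because the induced $\wt\kappa$ is generically an isomorphism on it. That constructibility step is exactly the uniform-in-$x$ genericity your approach is missing. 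If you want to salvage the divisor induction, you would need to prove a statement of that type anyway, at which point the direct argument is shorter.
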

\begin{proof}

We follow the argument in \cite[Thm.~4.6.2]{BoPi.CohomCrys}. We may replace $\UCV$ by $\UCV/\UCV_{nil}$ and assume that the $\kappa$-sheaf $\UCV$ has no nilpotent Cartier subsheaves. This means, that the adjoint to the structural map $\wt{\kappa} \colon \CV \to \Frid^! \CV$ is injective. Let $T:=\Supp\kappa^e(\CV)$ for $e\gg0$. As recalled above \autoref{CrysSupp.IsClosed1}, the set $T$ is a finite union $T=\bigcup_i Y_i\times D_i$ for suitable closed subsets $Y_i\subset X$ and $D_i\subset C$. By passing to a dense open subset of $X$, we may assume that $Y_i=X$ for all $i$, so that $T=X\times D$ for some closed $D\subset C$.

The injectivity of $\wt{\kappa} \colon \CV\to \Frid^!\CV$ shows that for any generic point of $X\times D$, $\wt{\kappa}$ is an isomorphism, since $\Frid^!\CV$ and $\CV$ have the same length, \cf \cite[proof of Proposition 4.1]{BliBoe.CartierFiniteness}.

Let $Z\subset X\times D$ to be the support of $\coker\wt{\kappa}$. By construction it is a nowhere dense closed subset. Choose $D'$ a scheme of finite type over $\BF_q$ such that $D$ is a localization of $D'$ and let $Z'$ be the closure of $Z$ in $X\times D'$. Then $W':=(X\times D')\setminus Z'$ is an open dense subset of $X\times D'$. Let $V$ be the set of points $x\in X$ for which $W'\cap x\times D'$ is dense in $x\times D'$. As the projection $\pr_1\colon X\times D' \to X$ is a morphism of finite type, \cite[Prop.~9.5.3]{EGA4} shows that $V$ is constructible. By assumption it contains the generic point of $X$; hence $V$ contains an open dense subset $U\subset X$.

By construction, for every $x \in U$ the induced morphism $\wt{\kappa} \colon (i_x\times \id)^!\CV \to \Frid^!(i_x \times \id)^!\CV$ is also an isomorphism at the generic points of $x \times D$. Hence in particular, $R^{d_x}i^!\UCV$ is nonzero as a Cartier crystal, provided it is non-zero as a sheaf. Using the Koszul complex for a system of parameters at $x$ one computes that $R^{d_x}(i \times \id)^!\CV \cong \CV/\Fm_x\CV$ which is nonzero since $x \times D \subseteq  \Supp \CV$.
\end{proof}
\begin{rem}\label{CrysSupp.MaxDimofI!}
Note that in the situation of \autoref{CrysSupp.OnRegular}, we have $R^di_x^!\UCV=0$ for all $x\in X$ and $d>d_x$. This is so because the Koszul resolution has length exactly $d_x$ and thus already $R^di_x^!\CV$ vanishes for all $d>d_x$.
\end{rem}

\begin{cor}\label{CrysSupp.NonzeroFibersAreDense}
For $\UCV^\bullet\in \D^b_\crys(\QCrysC(X,\Coeff))$  one has
\[ \overline{\{x\in X \mid i_x^! \UCV^\bullet\neq 0 \in \D^b_\crys(\QCrysC(x,\Coeff)) \}} = \SuppCrys \UCV^\bullet.\]
If moreover $X$ is of finite type over a field $k$, then
\[ \overline{\{x\in X\hbox{ closed} \mid i_x^! \UCV^\bullet\neq 0 \in \D^b_\crys(\QCrysC(x,\Coeff)) \}} = \SuppCrys \UCV^\bullet.\]
\end{cor}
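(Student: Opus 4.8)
The inclusion ``$\subseteq$'' in both assertions is free: by \autoref{CrysSupp.FibersAndSuppCrys} the set $\{x\in X\mid i_x^!\UCV^\bullet\neq0\}$ — and hence its subset of closed points — is contained in the closed set $\SuppCrys\UCV^\bullet$, so the same holds for the closures. Thus the plan is to prove the reverse inclusion, and it suffices to show that each irreducible component $Z_0$ of $Z:=\SuppCrys\UCV^\bullet$ lies in the closure of $\{x\mid i_x^!\UCV^\bullet\neq0\}$ (resp.\ of its closed points, in the finite type case). Fix $Z_0$ with generic point $\eta$; it is enough to produce a dense open $U$ of $Z_0$ with $i_x^!\UCV^\bullet\neq0$ for all $x\in U$ (resp.\ for all closed $x\in U$).

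\emph{Step 1: reduce to a regular base carrying full crystalline support.} First I would pass to the open neighborhood $W_1:=X\setminus\bigcup_{Z_j\neq Z_0}Z_j$ of $\eta$; since $X$ is $\Fr$-finite, hence excellent, the regular locus of the reduced scheme $Z_0\cap W_1$ is open and dense and contains $\eta$, and after deleting the (closed) singular locus I obtain an open $W\ni\eta$ such that $Y_0:=(Z_0\cap W)_{\red}$ is a regular irreducible closed subscheme of $W$ with generic point $\eta$. By \autoref{CrysSupp.SuppAndFunctors}(a), $\SuppCrys(\UCV^\bullet|_W)=Z\cap W=Y_0$, so with $i\colon Y_0\into W$ and $j$ the complementary open immersion one has $j^*(\UCV^\bullet|_W)=0$ as a complex of crystals; the exact triangle of \autoref{ExactTriangleThm} then makes $i_*i^!(\UCV^\bullet|_W)\to\UCV^\bullet|_W$ an isomorphism, exactly as in the Kashiwara argument of \autoref{thm.DerivedKashiwara}. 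Setting $\UCW^\bullet:=i^!(\UCV^\bullet|_W)$, we get $\SuppCrys\UCW^\bullet=Y_0$ by \autoref{CrysSupp.SuppAndFunctors}(d) and $i_x^!\UCV^\bullet\cong (i_{Y_0,x})^!\UCW^\bullet$ for every $x\in Y_0$, where $i_{Y_0,x}\colon x\into Y_0$. So we are reduced to the regular connected scheme $Y_0$ with $\SuppCrys\UCW^\bullet=Y_0$.

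\emph{Step 2: the corner of the hypercohomology spectral sequence.} Because $\bigcup_q\SuppCrys H^q(\UCW^\bullet)=Y_0$ is irreducible, there is a largest $k$ with $\eta\in\SuppCrys H^k(\UCW^\bullet)$; then $\SuppCrys H^k(\UCW^\bullet)=Y_0$, and after shrinking $Y_0$ to a smaller dense open neighborhood of $\eta$ I may assume $H^q(\UCW^\bullet)$ is zero as a crystal for all $q>k$. Apply \autoref{CrysSupp.OnRegular} to the coherent crystal $H^k(\UCW^\bullet)$ on $Y_0$: there is a dense open $U\subseteq Y_0$ with $R^{d_x}(i_{Y_0,x})^!H^k(\UCW^\bullet)\neq0$ in $\CrysC(x,\Coeff)$ for all $x\in U$, where $d_x=\dim\CO_{Y_0,x}$. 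For such $x$, consider the spectral sequence of \autoref{RightDerivedOfFlat}, read off in the (abelian) category of crystals,
\[
E_2^{p,q}=R^p(i_{Y_0,x})^!H^q(\UCW^\bullet)\ \Longrightarrow\ R^{p+q}(i_{Y_0,x})^!\UCW^\bullet .
\]
There $E_2^{p,q}=0$ for $p>d_x$ (the Koszul resolution at $x$ has length $d_x$, cf.\ \autoref{CrysSupp.MaxDimofI!}) and for $q>k$ (since $H^q(\UCW^\bullet)$ is nil and $f^!$ preserves nilpotence, \autoref{CSh-FShriekNilpotenceFiniteEssEtaleFinite}). Hence on the antidiagonal $p+q=d_x+k$ only $E_2^{d_x,k}$ can be nonzero, all differentials in and out of it vanish, so $E_\infty^{d_x,k}=E_2^{d_x,k}=R^{d_x}(i_{Y_0,x})^!H^k(\UCW^\bullet)\neq0$, whence $R^{d_x+k}(i_{Y_0,x})^!\UCW^\bullet\neq0$ and therefore $i_x^!\UCV^\bullet\neq0$. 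Since $U$ is dense in $Y_0$ which is dense in $Z_0$, the closure of $\{x\mid i_x^!\UCV^\bullet\neq0\}$ contains $\overline U=Z_0$; letting $Z_0$ range over the components of $Z$ gives the first equality. For the second, note that when $X$ is of finite type over $k$, $U$ is an irreducible variety, its closed points are dense in it, and each such point is closed in $X$; the argument of Step 2 applies unchanged to these points, so $\overline{\{x\ \mathrm{closed}\mid i_x^!\UCV^\bullet\neq0\}}\supseteq Z_0$ for every component, and the reverse inclusion is the first equality.

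\emph{Expected main obstacle.} The routine part is Step 1 (generic regularity plus the Kashiwara triangle). The delicate point is Step 2: \autoref{CrysSupp.OnRegular} only gives non‑vanishing of a single Tor‑type term $R^{d_x}(i_{Y_0,x})^!H^k$, and one must rule out cancellation against the other cohomologies of $\UCW^\bullet$ in the total complex $i_x^!\UCW^\bullet$; this is exactly what the simultaneous vanishing bounds ``$p>d_x$'' and ``$q>k$'' (together with the choice of $k$ maximal and the preliminary shrinking so that the higher $H^q$ die as crystals) are engineered to achieve, isolating the surviving term as a genuine corner of the spectral sequence.
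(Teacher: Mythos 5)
Your proposal is correct and follows essentially the same route as the paper: reduce via the exact triangle of \autoref{ExactTriangleThm} (Kashiwara) and generic regularity to a regular connected scheme carrying full crystalline support, then combine \autoref{CrysSupp.OnRegular} with the spectral sequence of \autoref{RightDerivedOfFlat} and the Koszul vanishing bound of \autoref{CrysSupp.MaxDimofI!}. The only (harmless) difference is organizational: you run the maximal-$k$ corner argument uniformly at every point of a dense open, thereby obtaining both equalities at once, whereas the paper proves the first equality separately at the generic point $\xi$ (where $d_\xi=0$ makes the spectral sequence degenerate outright, using the \emph{minimal} $d$) and reserves the corner argument with maximal $d$ for closed points.
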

\begin{proof}
For any closed immersion $i\colon Z\to X$ and $x\in Z$ we have $i_x^!=i_x^!i^!$. Taking $Z=\SuppCrys\UCV^\bullet$ and applying \autoref{CrysSupp.SuppAndFunctors}(c), we may thus assume that $X=\SuppCrys\UCV^\bullet$. To prove the assertion we may now replace $X$ by any dense open subset. By generic regularity, we can therefore assume that $X$ is regular. By passing to a component, we may in addition assume that $X$ is connected.

To prove the first assertion, we choose $d$ minimal such that $\SuppCrys H^d(\UCV^\bullet)$ contains the generic point $\xi$ of $X$. Since $d_\xi=0$, by \autoref{CrysSupp.MaxDimofI!}  the spectral sequence of \autoref{RightDerivedOfFlat} degenerates to
\[ i_\xi^! (H^j\UCV^\bullet) \cong R^j i_\xi^! \UCV^\bullet. \]
Now by \autoref{CrysSupp.OnRegular}, we have $i_\xi^! (H^d\UCV^\bullet)\neq0$. This shows that $R^d i_\xi^! \UCV^\bullet$ is non-zero and thus establishes the first assertion.

For the second assertion, let $d$ be maximal such that $\SuppCrys H^d(\UCV^\bullet)$ contains $\xi$. By passing from $X$ to a dense open subscheme we may assume that $\SuppCrys H^{d'}(\UCV^\bullet)=0$ for all $d'>d$. Then for any closed point in $x$, the spectral sequence of \autoref{RightDerivedOfFlat} gives
\[ R^ji_x^! (H^k\UCV^\bullet) \To R^{j+k} i_x^! \UCV^\bullet. \]
Since $X$ is of finite type over $k$, we have $d_x=\dim X$ for any closed point. Using \autoref{CrysSupp.MaxDimofI!}, we deduce for $j+k=d_x+d$ the isomorphism
\begin{equation}\label{DegenSSforClosedPt}
 R^{d_x}i_x^! (H^d\UCV^\bullet) \cong R^{d+d_x} i_x^! \UCV^\bullet.
\end{equation}
We choose a dense open subset $U$ for $H^d(\UCV^\bullet)$ as in \autoref{CrysSupp.OnRegular}. Then for any closed point $x\in U$ the left hand side of \autoref{DegenSSforClosedPt} is non-zero and thus for any such $x$ we deduce that $i_x^! \UCV^\bullet$ is non-zero. But since $X$ is of finite type over $k$, the closed points in $U$ are dense in $X$ and this concludes the proof.
\end{proof}

\begin{rem}\label{ExampleOnSecondDefOfSupp}
By \autoref{Ex:NonclosednessOfSupport} the set $\{x\in X \mid i_x^! \UCV^\bullet\neq 0 \in \D^b_\crys(\QCrysC(x,\Coeff)) \}$ is not closed in general. However, we presently do not know if it is closed if on assumes that $X$ is of finite type.
\end{rem}

%
%  Postamble. End of document declarations, Index, Bibliography %%%%%%%%%%%%%%%%%%%%%%%%%
%
\bibliographystyle{hep}
\bibliography{BB}
\end{document}